\documentclass[a4paper, 11pt, reqno]{amsart} 
\usepackage[normalem]{ulem}
\usepackage{amsmath, amsthm, amssymb,bbm, tikz, stmaryrd,mathtools} 
\usepackage[utf8]{inputenc} 
\usepackage{graphicx} 
\usepackage[small]{caption}
\usepackage{comment}
\usetikzlibrary{arrows,calc,decorations.pathreplacing}
\numberwithin{equation}{section}

\usepackage{lmodern} 
\usepackage[T1]{fontenc} 

\usepackage{enumerate} 
\usepackage{verbatim} 
\usepackage{color}
\usepackage{tabularx}

\usepackage{bm}
\usepackage{xspace}

\usepackage[colorlinks, linkcolor=blue, citecolor = green!55!black]{hyperref} 

\usepackage[letterpaper,nomarginpar, margin=1in]{geometry}

\usepackage[capitalise]{cleveref}

\newcommand{\Prob}{\mathbb{P}}
\renewcommand{\P}{\Prob}

\newcommand{\R}{\mathbb{R}}
\newcommand{\E}{\mathbb{E}}

\newcommand{\N}{\mathbb{N}}
\newcommand{\Z}{\mathbb{Z}}

\newcommand{\fB}{\mathfrak{B}}

\newcommand{\cA}{\mathcal A}

\newcommand{\cF}{\mathcal F}

\newcommand{\cI}{\mathcal{I}}

\newcommand{\ep}{\varepsilon}
\newcommand{\midd}{\ \Big|\ }

\newcommand{\mc}{\mathcal}
\newcommand{\mrm}{\mathrm}

\newcommand{\iid}{i.i.d.\ }
\newcommand{\intint}[1]{\llbracket#1\rrbracket}

\renewcommand{\th}{\textsuperscript{th}\xspace}

\newtheorem{maintheorem}{Theorem}

\newtheorem{theorem}{Theorem}[section]
\newtheorem*{theorem*}{Theorem}
\newtheorem{lemma}[theorem]{Lemma}
\newtheorem{claim}[theorem]{Claim}
\newtheorem{proposition}[theorem]{Proposition}

\newtheorem{corollary}[theorem]{Corollary}
\newtheorem{assumption}[theorem]{Assumption}

\theoremstyle{definition}{

	\newtheorem{definition}[theorem]{Definition}
	\newtheorem*{definition*}{Definition}
	
	\newtheorem*{problem*}{Problem}
	
	\newtheorem*{question*}{Question}
	
	\newtheorem*{example*}{Example}
	\newtheorem*{examples*}{Examples}
	\newtheorem{remark}[theorem]{Remark}
	\newtheorem*{remark*}{Remark}
	
}


\newcommand{\one}{\mathbbm{1}}

\newcommand{\indset}[1]{\mathbbm{1}_{#1}}

\newcommand{\given}{\;\big|\;}

\newcommand{\floor}[1]{\left\lfloor#1\right\rfloor}

\renewcommand{\bar}[1]{\overline{#1}}

\renewcommand{\u}{\mathbf{u}}
\renewcommand{\v}{\mathbf{v}}
\newcommand{\tP}{\tilde{\P}}
\newcommand{\Cl}{\mathsf{Cl}}

\newcommand{\B}{\mathfrak{B}}

\newcommand{\cC}{\mathcal{C}}

\renewcommand{\L}{\mathrm{L}}
\newcommand{\rR}{\mathrm{R}}

\makeatletter
\crefname{maintheorem}{Theorem}{Theorems}
\makeatother

\allowdisplaybreaks

\title[Scaling limit \& tail bounds for a random walk model of SOS level lines]{Scaling limit and tail bounds for a\\ random walk model of SOS level lines}
\author{Milind Hegde}
\author{Yujin H. Kim}
\author{Christian Serio}

\address{ Y. H. Kim
\newline
Courant Institute, New York University,
251 Mercer St,
New York, NY 10012, USA}
\email{yujin.kim@courant.nyu.edu}

\address{ M. Hegde 
\newline
Department of Mathematics, Columbia University,
2990 Broadway,
New York, NY 10027, USA}
\email{milind@math.columbia.edu}

\address{ C. Serio
\newline
Department of Mathematics, Stanford University,
450 Jane Stanford Way,
Stanford, CA 94305, USA}
\email{cdserio@stanford.edu}
\date{\today}

\begin{document}

	\begin{abstract}
		This paper analyzes a random walk model for the level lines appearing in the entropic repulsion phenomena of three-dimensional discrete random interfaces above a hard wall; we are particularly motivated by the low-temperature (2+1)D solid-on-solid (SOS) model, where the emergence of these level lines has been rigorously established. The model we consider is a line ensemble of non-crossing random walk bridges above a wall with geometrically growing area tilts. Our main result, which in particular resolves a question of Caputo, Ioffe, and Wachtel (2019), is an edge 1:2:3 scaling limit for this ensemble as the domain size $N$ diverges, with a growing number of walks (including the number of level lines of the SOS model) and high boundary conditions  (covering the maximum upper deviation of the SOS level lines). As a key input, we establish Tracy--Widom-type upper tail bounds for each of the relevant curves in the line ensemble. An ingredient which may be of independent interest is a ballot theorem for random walk bridges under a broader range of boundary values than available in the literature.
	\end{abstract}

	{\mbox{}
\maketitle
}
\vspace{-.5cm}
	
	\setcounter{tocdepth}{1}
	\tableofcontents

\section{Introduction}\label{sec:introduction}

In this paper, we study a random walk model of the level lines emerging from entropic repulsion phenomena in the (2+1)-dimensional solid-on-solid (SOS) model above a hard wall at low temperature. Similar entropic repulsion phenomena are expected to appear in a host of low-temperature interfaces; see \cite{IV18} for discussion. The random walk model in question was introduced by Caputo, Ioffe, and Wachtel in \cite{CIW19a}. 
Considering the SOS model in a box of side length $N$, the level lines appear to fluctuate at scale $N^{1/3}$ on intervals of length $N^{2/3}$; moreover, they resemble a line ensemble of $\asymp \log N$ many non-crossing random walks with a floor at zero and geometrically growing area tilts. Rigorously establishing this picture for the SOS model has been a longstanding open problem, with the ultimate goal being to show a \textit{1:2:3 edge scaling limit} for these level lines. Our main result (\cref{thm:simplifiedmain}) is such a scaling limit for the random walk model, along with Tracy--Widom-type upper tail bounds for the top curves (\cref{thm:simplifiedmain2}). Before describing our results, we discuss the SOS model, entropic repulsion in dimension $2+1$, the aforementioned level lines, and the connection with line ensembles of area-tilted random walks.

\subsection{Background and motivation}\label{s.background}
The $(2+1)$D SOS model with zero boundary conditions and a hard wall at height $0$ is a random surface $\varphi:\intint{-N,N}^2\to\Z_{\geq 0}$, where $\llbracket -N, N\rrbracket := [-N,N]\cap\mathbb{Z}$, with the probability of a given instance of $\varphi$ proportional to
\begin{align*}
\exp\Big(-\beta\sum_{x\sim y} |\varphi(x) - \varphi(y)|\Big) \,.
\end{align*}
Here, $\beta>0$ is the inverse temperature, the sum is over all pairs of adjacent sites $x,y \in \Z^2$, and we set $\varphi(x)=0$ for all $x\not \in \intint{-N,N}^2$ (zero boundary conditions). The ``hard wall'' at height $0$ refers to the restriction of the values of $\varphi$ to the nonnegative integers $\Z_{\geq 0}$.
The model (without a wall) was introduced in the 1950s (see \cite{BCF51,Temperley52}) as a low-temperature (large $\beta$) approximation of the random surface separating the macroscopic $+$ and $-$ phases in the 3D Ising model on a cube with Dobrushin boundary conditions: $+$ spins above the $xy$-plane and $-$ spins below it. 

Dimension $2+1$ for the SOS model is of special importance, as it is the unique dimension in which the no-floor model exhibits a \emph{roughening transition}: as $N\to\infty$, at low temperature the surface stays flat and localized ($\varphi(x) =O(1)$ for typical $x$), while at high temperature the surface becomes rough and delocalized ($\varphi(x)$ diverges for typical $x$). These two phases were established in \cite{brandenberger1982decay} and \cite{frohlich1981kosterlitz,frohlich1981kosterlitz2} respectively; establishing the roughening transition for the 3D Ising model remains a major open problem. See also  \cite{lammers22} showing sharpness of the phase transition.

\subsubsection{Entropic repulsion in the SOS model above a hard wall}\label{ssub:entropic_repulsion_in_the_sos_model_with_hard_wall}
Introducing a wall at $0$ in the low-temperature setting of the $(2+1)$D SOS model results in behavior drastically different from the flat geometry of the no-wall model, in a phenomenon known as \emph{entropic repulsion}. This was first observed by \cite{BEF86}, who showed that the surface raises in height so that  $\varphi(x)$ is typically of order $\log L$ for $x$ in the bulk, due to the increased entropy of the class of such height functions. 

Subsequently, a more detailed picture was obtained in \cite{CLMST12,CLMST14, caputo2016scaling}, described as follows. For large enough $\beta$, with high probability, the surface becomes rigid at a deterministic height $\mathfrak{h}$, where $\mathfrak{h}$ is either $H(N) := \smash{\lfloor \frac{1}{4\beta}\log (2N) \rfloor}$ or  $H(N)-1$.
Moreover, the surface resembles a wedding cake, featuring $\mathfrak{h}$ layers of height $1$ stacked on top of one another. Viewed from above, the boundaries of the layers form a collection of concentric loops in $\R^2$ which are called \emph{level lines}, as the boundary of the layer at height $h \in \{1,\dots,\mathfrak{h}\}$ delineates between macroscopic phases of height $\leq h$ and $\geq h+1$ in the surface. The innermost level line loop encloses a $(1-\ep_{\beta})$-fraction of sites in $\llbracket -N, N \rrbracket^2$, where $\ep_{\beta}\downarrow0$ as $\beta \uparrow\infty$. Upon scaling $[ -N, N]^2$ to $[-1,1]^2$, the level lines admit a scaling limit consisting of a nested collection of \emph{Wulff shapes}, which in particular feature four flat facets: the Wulff shapes all coincide with the boundary of the box except near the four corners, as illustrated in Figure \ref{fig:level-lines}. 

\subsubsection{Fluctuations of SOS level lines}\label{ssub:fluctuations_of_sos_level_lines}
We are interested in the fluctuations of the level lines away from these flat facets. In particular, zoom in, say, around the center of the bottom boundary of the box. The level lines restricted to this region form a stacked collection of non-crossing open contours, so that the top contour $\gamma_1$ in this stack is the restriction of the innermost level line to this region.
From now on, level lines will be identified with their restriction to this region.
Let $\gamma_i$ denote the $i^{\mathrm{th}}$ level line from the top of this stack.

\begin{figure}
\includegraphics{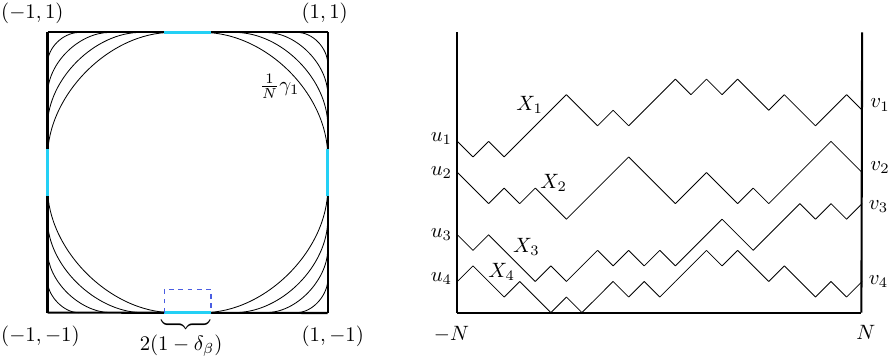}
	\vspace{-0.1 in}
	\caption{Left: an illustration of the limit shape of the SOS level lines after rescaling to $[-1,1]^2$. The loops $\frac{1}{N}\gamma_i$ converge to the nested Wulff shapes shown here. The limit shape is flat for all level lines on central $(1-\delta_\beta)$-portions of each side of the box (cyan), where $\delta_\beta\downarrow 0$ as $\beta\uparrow\infty$. Right: the effective random walk model \eqref{areatiltRW} of the level line fluctuations about their flat limit shape, as seen by zooming in on the blue dashed region in the figure on the left. In this region the level lines appear as a stack of ordered open contours above a floor. In an effective approximation one assumes these contours to be height functions with respect to the horizontal axis (ignoring possible microscopic overhangs), as shown here.}\label{fig:level-lines}
\end{figure}

In \cite{caputo2016scaling}, the fluctuations of $\gamma_1$ were shown to be, with high probability, at most $N^{1/3+\ep}$ above the bottom boundary, for any $\ep>0$.
In the recent work \cite{caddeo2024level}, the fluctuations of $\gamma_1$ were shown to be bounded from below by $N^{1/3}$ on intervals of size $N^{2/3}$.
It is an interesting open problem to prove a matching upper bound, which is complicated by the fact that the $\mathfrak{h}-1 \asymp \log N$ many level lines below $\gamma_1$ have the capacity to push $\gamma_1$ upwards. However, our results here for the area-tilted random walk model suggest that $N^{1/3}$ should indeed be the correct scale.

\subsubsection{SOS level lines and area tilting}
The following effective ``area tilt'' description of the joint law of the level lines was derived in \cite{caputo2016scaling}: 
\begin{equation}\label{areatiltZ}
	\mathbb{P}(\gamma_1,\dots,\gamma_\mathfrak{h}) \propto \exp\bigg(-\sum_{i=1}^\mathfrak{h} \beta |\gamma_i| - \sum_{i=1}^\mathfrak{h} \frac{a}{N} b^{i-1} |A_i| \bigg) \prod_{i=1}^\mathfrak{h} \hat{Z}_{\Delta_i} \,,
\end{equation}
where $a = a_\beta > 0$ and $b=e^{4\beta}$ are constants (note $\mathfrak{h} \asymp \log_b N$), $A_i$ is the region outside of $\gamma_i$, and $\hat{Z}_{\Delta_i}$ is the partition function for a certain SOS model with no wall on $\Delta_i := A_{i-1} \setminus A_{i}$.
We briefly sketch the origin of \eqref{areatiltZ}; see \cite[Section~1.3]{caputo2016scaling}, \cite[Section~1.2]{CIW19a} for further details.

The first term is an energy contribution that arises directly from the SOS Hamiltonian, since the surface height jumps by height (at least) $1$ across each level line. The second term is referred to as a \textit{geometric area tilt}: $|A_i|$ is the area of the region outside of the contour $\gamma_i$, and the prefactor $\frac{a}{N}b^{i-1}$ is geometrically increasing with the curve index. This term arises from the hard wall constraint: on a heuristic level, as level lines get closer to the boundary, they encompass more sites, increasing the entropy as these sites can feature pits going down towards the hard wall. Sites encompassed by level lines of higher curve index (corresponding to lower layers of the surface) cannot feature as-large pits, and so such level lines feel a greater reward for the area they encompass, or equivalently, a larger penalty for the area $|A_i|$ they do not encompass. 

After removing these area terms, one is left with the no-wall factors $\hat{Z}_{\Delta_i}$. Cluster expansion can be applied to each of these, resulting in a complicated interaction term $\Phi(\gamma_1, \dots, \gamma_{\mathfrak{h}})$, which itself is made up of terms that cause the level lines to both repel and attract. In principle, the effect of $\Phi$ should be negligible in the low-temperature regime,  though rigorously showing this is highly nontrivial. See \cite{ISV14,caddeo2024level} where this is shown for an SOS model with a single level line.

\subsubsection{A random walk model}
Per the last paragraph, we shall ignore the $\hat{Z}_{\Delta_i}$ terms in \eqref{areatiltZ} to produce our effective random walk model. 
Return to the region of the box  around the center of its bottom boundary, as in \cref{ssub:fluctuations_of_sos_level_lines}, where the level lines form a stack of non-crossing open contours and are asymptotically flat. Rescale the box and shift it upwards so that the bottom boundary in this region is $\llbracket -N, N \rrbracket \times \{0\}$. \cref{areatiltZ} leads to our random walk model as follows.

If one assumes that the contours $\gamma_i$ are single-valued, i.e., are height functions on $\llbracket -N, N\rrbracket$, then the factor $\exp(-\beta|\gamma_i|)$ from \eqref{areatiltZ} is simply the probability that a random walk with transition probability $p(x) \propto \exp(-\beta|x|)$ follows the trajectory $\gamma_i$.
(Indeed, random walk behavior is a feature of low-temperature interfaces \cite{IV18}. In a variety of models featuring a single interface, a coarse-graining strategy known as Ornstein--Zernike theory has been implemented to actually obtain a coupling between the interface and a random walk; this was pioneered by Campanino, Ioffe, and Velenik in \cite{CampaninoIoffe02,CIV03,CIV08} and implemented in the setting of the SOS model with boundary conditions resulting in a single level line in \cite{IV08,IST,caddeo2024level}.) The term $|A_i|$ is then the area enclosed between the random walk and the floor at $0$. 

This leads to the following random walk model for the SOS level lines; see Figure \ref{fig:level-lines} for an illustration. Fix $a>0$ and $b>1$. Let $\mathbb{P}_{n,N}^{\mathbf{u},\mathbf{v}}$ denote the law of $n = n(N) \to \infty$ (for instance $n = H(N)$) many independent mean zero random walk bridges $X_1, \dots, X_n$ on $\llbracket -N, N\rrbracket$ with a fixed common transition probability and suitable boundary conditions $\mathbf{u} = (u_1 \geq \dots \geq u_n \geq 0),\mathbf{v} = (v_1 \geq \dots \geq v_n \geq 0) \in \R^n$ at $-N$ and $N$ (for instance $\max(u_1,v_1) \leq N^{1/3+\varepsilon}$, in accordance with the aforementioned upper bound on SOS level line fluctuations). We then define a new non-crossing area-tilted line ensemble measure $\mathbb{P}_{n,N;0}^{a,b;\mathbf{u},\mathbf{v}}$ by the Radon--Nikodym derivative
\begin{equation}\label{areatiltRW}
	\frac{\mathrm{d}\mathbb{P}_{n,N;0}^{a,b;\mathbf{u},\mathbf{v}}}{\mathrm{d}\mathbb{P}_{n,N}^{\mathbf{u},\mathbf{v}}} \propto \exp\left(-\frac{a}{N}\sum_{i=1}^n b^{i-1} \mathcal{A}(X_i)\right) \one_{X_1(j) \geq \cdots \geq X_n(j) \geq 0 \;\; \forall j\in \llbracket -N, N \rrbracket},
\end{equation}
where $\mathcal{A}(X_i) = \sum_{j=-N}^{N} X_i(j)$ is the (discrete) area between $X_i$ and the floor at zero.

\subsubsection{Candidate for the scaling limit}
\label{ssub:candidate_for_the_scaling_limit}
Though establishing an edge scaling limit (or even tightness around $N^{1/3}$) for the law of the SOS level lines remains a challenging open problem, one can construct a candidate measure for the scaling limit. A natural candidate is the putative diffusive scaling limit of \eqref{areatiltRW} on compact windows as $N\to\infty$.
The definition \eqref{areatiltRW} can be naturally viewed as the prescription of a Gibbs measure. The Gibbs property is simply that for an interval $\intint{j,k}$ of curve indices on a spatial interval $\intint{\ell,r}\subseteq \intint{-N,N}$, the conditional distribution of $(X_{j}, \ldots, X_k)|_{\intint{\ell,r}}$, given $\{X_i(x): (i,x)\not\in \intint{j,k}\times \intint{\ell,r}\}$, is that of $k-j+1$ independent random walk bridges with endpoints prescribed by the conditioning, reweighted by the area tilt factor $\exp(-\frac{a}{N}\sum_{i=j}^k b^{i-1} \sum_{j=\ell}^r X_i(j))$ and conditioned to not cross each other, $X_{j-1}$, or $X_{k+1}$. Therefore, one would expect the scaling limit of \eqref{areatiltRW} to be an infinite-volume Gibbs measure involving Brownian paths.
 
Such an infinite-volume Gibbs measure $\mu_{a,b}$ was constructed, and later characterized, across a series of works \cite{CIW19a,CIW19b,DLZ,CG23} via a continuum Brownian analogue of the model \eqref{areatiltRW} with $n$ lines on a compact interval $[-T,T]$, sending $n,T\to\infty$. The measure $\mu_{a,b}$ thus has a Gibbs property in terms of area-tilted, non-intersecting Brownian bridges; see \cref{def:gibbs}.

The question of convergence under 1:2:3  scaling at the edge of the area-tilted random walk ensemble \eqref{areatiltRW} to the Gibbs state $\mu_{a,b}$ was originally posed in \cite[Section 3.5.3]{CIW19b}. The discrete, non-Gaussian nature poses several challenges compared to the Brownian model. For instance, the fluctuation scale $N^{1/3}$ on intervals of length $N^{2/3}$ does not appear in the above works on the Brownian model: in a sense, diffusive scaling of the random walk model has already occurred to obtain the Brownian model. Our main theorem, stated in \cref{sub:main_result},  resolves the question of \cite{CIW19b}. We make a detailed comparison of our work and past works in \cref{s.related}.

\subsection{Main results}\label{sub:main_result}

The goal of the present paper is to establish convergence to the measure $\mu_{a,b}$, described in \cref{ssub:candidate_for_the_scaling_limit}, under 1:2:3 scaling for the area-tilted random walk ensemble for a wide range of boundary conditions and number of curves. We comment on the precise range of parameters and their significance in \cref{rk:generalizations-of-theorems}.

We give a slightly simplified version of our main result here, with a more general statement given in Theorem \ref{thm:main} below. The result applies to a broad class of both lattice and nonlattice random walks satisfying certain technical conditions, detailed in Assumptions \ref{a.convex} and \ref{a.misc}.
In short, we require the increment distribution to have a log-concave probability mass/density function, with mean 0 and finite exponential moments (in particular finite variance $\sigma^2$).
The log-concavity condition (Assumption \ref{a.convex}) is needed for the sole purpose that it implies a stochastic monotonicity result (Lemma \ref{l.monotonicity}); that is, we could have assumed the conclusion of \cref{l.monotonicity} and removed Assumption \ref{a.convex}.
For instance, our results also apply to the simple random walk, which satisfies monotonicity as shown in the proof of Lemma \ref{l.monotonicity}. 

Below, we make the random walk trajectories of the line ensemble continuous functions by linear interpolation. The topology of uniform convergence on compact sets means we require convergence of the law of any finite number of top curves on any compact interval. We let $W_0^n := \{\mathbf{x}\in\mathbb{R}^n : x_1 \geq \cdots \geq x_n \geq 0\}$ denote the nonnegative Weyl chamber in $\mathbb{R}^n$.

\begin{maintheorem}\label{thm:simplifiedmain}
	Fix $a>0$, $b>1$, and $\ep\in(0,1)$. For $N\in\mathbb{N}$, consider the line ensemble of $n$ area-tilted random walk bridges $(X_1, \dots, X_n)$ on $\llbracket -N,N\rrbracket$ with law $\mathbb{P}_{n,N;0}^{a,b;\mathbf{u},\mathbf{v}}$ as given by \eqref{areatiltRW}, satisfying Assumptions \ref{a.convex} and \ref{a.misc}, and with boundary conditions $\u,\v \in W_0^n$ satisfying $\max(u_1, v_1) \leq N^{1-\varepsilon}$. There exists $\delta =\delta(\ep)>0$ such that for any sequence $n = n(N) \in \N$ satisfying $n\to\infty$ as $N\to\infty$ and $n\leq N^{\delta}$, the law of the rescaled line ensemble $(x_1^N,\dots,x_n^N)$ given by $x_i^N(t) := \sigma^{-2/3} N^{-1/3} X_i(t\sigma^{-2/3}N^{2/3})$ converges weakly as $N\to\infty$  to the infinite-volume Gibbs state $\mu_{a,b}$, in the topology of uniform convergence on compact sets.
\end{maintheorem}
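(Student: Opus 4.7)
The plan is to follow the standard tightness plus limit-identification framework, leveraging the uniqueness characterization of $\mu_{a,b}$ established in \cite{CG23,DLZ}. Concretely, I would show that (i) the family of laws of the rescaled line ensemble $(x^N_1,\dots,x^N_n)$ is tight in the topology of uniform convergence on compacts, (ii) every subsequential weak limit satisfies the non-intersecting Brownian Gibbs property with geometric area tilts at parameters $(a,b)$ and a hard floor at zero, and (iii) such a measure equals $\mu_{a,b}$. The discrete Gibbs property implicit in \eqref{areatiltRW} transfers cleanly under the 1:2:3 scaling, so most of the work sits in steps (i) and (ii).

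For tightness, fix a compact window $[-T,T]$ and a top index $k\in\N$, and establish uniform control of $(x^N_1,\dots,x^N_k)$ on $[-T,T]$. Pointwise upper tail bounds are furnished by the Tracy--Widom-type estimate of \cref{thm:simplifiedmain2}. For matching lower tail bounds I would first use the log-concavity-driven monotonicity of \cref{l.monotonicity} to stochastically dominate $X_i$ from below by a comparable ensemble with lowered boundary data, then combine an entropic repulsion estimate (the curves below curve $i$ are squeezed toward zero by their much larger area tilts) with the upper tail applied to the top $i-1$ curves to conclude that $x^N_i$ stays bounded below with high probability. A modulus of continuity estimate then follows from Gibbs resampling: conditional on $X_i$ at the endpoints of short sub-intervals, its law is that of a random walk bridge reweighted by a uniformly bounded area-tilt factor under the no-crossing and floor constraints, whose fluctuations are controlled by a ballot-type estimate for bridges. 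This is where the broader-range ballot theorem advertised in the abstract enters essentially, since the boundary data may reach scale $N^{1-\ep}$.

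To identify subsequential limits, fix $T>0$ and $k\in\N$, and condition on the scaled boundary values $x^N_i(\pm T)$ for $1\leq i\leq k$ together with the full trajectory of $x^N_{k+1}$ on $[-T,T]$. By the discrete Gibbs property, the conditional law of $(X_1,\dots,X_k)$ on $\intint{-T\sigma^{-2/3}N^{2/3},T\sigma^{-2/3}N^{2/3}}$ is that of $k$ independent random walk bridges with the prescribed endpoints, tilted by $\exp(-(a/N)\sum_{i=1}^k b^{i-1}\mathcal{A}(X_i))$ and constrained to avoid each other and to stay above $X_{k+1}$. Under the 1:2:3 rescaling, a KMT-type invariance principle for non-crossing random walk bridges yields convergence to non-intersecting Brownian bridges, while the Riemann-sum identity $(1/N)\mathcal{A}(X_i)\to\int_{-T}^T x_i(t)\,\mathrm{d}t$ delivers convergence of the tilt factor. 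Combined with tightness, this shows that every subsequential limit satisfies the Brownian Gibbs property of \cref{def:gibbs} for the top $k$ curves on $[-T,T]$. Letting $T,k\to\infty$ and invoking the uniqueness result of \cite{CG23,DLZ} identifies the limit as $\mu_{a,b}$.

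The principal obstacle is the lower tail, especially with boundary values potentially as large as $N^{1-\ep}$ and $n(N)$ diverging. With area tilts growing geometrically in $i$, one needs uniform-in-$N$ control that simultaneously prevents each fixed top curve from being crushed toward the floor (by the lower curves' strong area penalty propagating upward through the non-crossing constraint) and from being held aloft by atypically large boundary values that have not yet had the spatial room to relax to the equilibrium $N^{1/3}$-scale by the bulk of the window. Marrying the upper tail of \cref{thm:simplifiedmain2} with a monotonicity-based lower tail and the broader ballot input is the delicate technical core of the argument, and it is precisely this balance that dictates the regime $n\leq N^\delta$ and $\max(u_1,v_1)\leq N^{1-\ep}$ under which the theorem is stated.
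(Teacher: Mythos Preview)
Your overall architecture---tightness, then Brownian Gibbs property for subsequential limits, then uniqueness via the characterization of $\mu_{a,b}$ from \cite{CG23}---is exactly the paper's route. However, you have misidentified where the work lies.

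First, the lower tail is not an obstacle at all: the curves satisfy $X_i\geq 0$ deterministically because of the hard floor, so one-point tightness from below is trivial. The paper never proves or uses a lower tail bound, and your discussion of curves being ``crushed toward the floor'' as the ``principal obstacle'' is off the mark. What you do need, and omit, is the \emph{asymptotically pinned to zero} condition from Definition~\ref{def:utight}: for the uniqueness result of \cite{CG23} to apply, one must verify that for any $\varepsilon,T>0$ there is $j$ with $\sup_{[-T,T]} x_j<\varepsilon$ with high probability in the limit. This is precisely why Theorem~\ref{thm:simplifiedmain2} is stated for the $j$\textsuperscript{th} curve with fluctuation scale $b^{-(j-1)/3}$, and the paper invokes it with $j$ large to verify pinning. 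Your outline uses Theorem~\ref{thm:simplifiedmain2} only for the upper tail of $x_1^N$ and does not close this gap.

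Second, a KMT-type coupling is both unnecessary and in tension with the paper's assumptions; the authors explicitly avoid it (see the discussion at the end of Section~1.4) because it would require stronger hypotheses on the increment distribution than Assumptions~\ref{a.convex}--\ref{a.misc}. The Gibbs property for subsequential limits is obtained instead from the elementary invariance principle of Lemma~\ref{l.invar}, combined with the partition function lower bound (Proposition~\ref{Zlbd}) to control the Radon--Nikodym derivative arising from resampling. The broader ballot theorem feeds into the single-curve partition function estimate (Proposition~\ref{p.partition function lower bound}) and the dropping lemma, which in turn drive the recursive ceiling construction behind Theorem~\ref{thm:simplifiedmain2}; it is not used directly in the modulus-of-continuity step as you suggest.
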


A crucial input for the proof of \cref{thm:simplifiedmain} is a quantitative estimate: an upper tail bound on the top curves of the line ensemble away from $N^{1/3}$ on intervals of size $N^{2/3}$. This constitutes our second main result, \cref{thm:simplifiedmain2}, which features the Tracy--Widom tail exponent $3/2$. A more general statement is given ahead as Theorem~\ref{thm:max}.

\begin{maintheorem}\label{thm:simplifiedmain2}
Suppose the random walk increment distribution satisfies Assumptions~\ref{a.convex} and \ref{a.misc}. Fix $a_0>0$, $b_0 > 1$, $\ep \in(0,1)$, and $t\in \mathbb{R}$.  There exist positive constants $\delta = \delta(\ep)$, $K_0 = K_0(a_0,b_0, \ep)$, and $c = c(a_0,b_0)$ such that the following holds. For any $a\geq a_0$, $b\geq b_0$, there exists $N_0 = N_0(a,b,\varepsilon,t)\in\mathbb{N}$ so that for all $N\geq N_0$, $n\leq N^{\delta}$, $\u,\v\in W_0^n$ with $\max(u_1,v_1) \leq N^{1-\ep}$, $j \leq \min(n, \frac{\ep}{10}\log_b N)$, and $K \in [K_0, N^{2/3-\varepsilon}]$,
\begin{align*}
\mathbb{P}_{n,N;0}^{a,b;\mathbf{u},\mathbf{v}}\left(X_j(tN^{2/3}) > K a^{-1/3}b^{-(j-1)/3} N^{1/3}\right) \leq e^{-cK^{3/2}}.
\end{align*}
\end{maintheorem}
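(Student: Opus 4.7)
The exponent $3/2$ matches the Tracy--Widom heuristic: a path reaching height $H$ on an interval of length $\ell$ pays Gaussian bridge entropy cost $\asymp H^2/(\sigma^2\ell)$ and area-tilt cost $\asymp \lambda_j H\ell$ with $\lambda_j := ab^{j-1}/N$. Optimizing in $\ell$ gives total cost $\asymp H^{3/2}\lambda_j^{1/2}=K^{3/2}$ at $H=Ka^{-1/3}b^{-(j-1)/3}N^{1/3}$, attained at the window length $\ell^*:=c_0K^{1/2}a^{-2/3}b^{-2(j-1)/3}N^{2/3}$ (which is at most $N/2$ for $N$ large in the allowed $K$-range, using $K\leq N^{2/3-\ep}$).

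\textbf{Reduction to the top curve.} The first move is to reduce to $j=1$ via \cref{l.monotonicity}. Removing the non-crossing constraint $X_j \leq X_{j-1}$ stochastically increases $X_j$; once dropped, the curves $X_1,\ldots,X_{j-1}$ decouple entirely from $X_j,\ldots,X_n$, and the marginal of $X_j,\ldots,X_n$ is exactly the $(n-j+1)$-curve area-tilted ensemble with parameters $(a',b')=(ab^{j-1},b)$ after reindexing. All hypotheses transfer: $a'\geq a_0$ (since $b\geq 1$), $b'\geq b_0$, the reduced boundary data still satisfies $\max(u_j,v_j)\leq N^{1-\ep}$, and $n-j+1\leq N^\delta$. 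It therefore suffices to prove the theorem for $j=1$, uniformly in $a\geq a_0$.

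\textbf{Top curve via Gibbs resampling.} For $j=1$, set $H:=Ka^{-1/3}N^{1/3}$, $I:=[tN^{2/3}-\ell^*,tN^{2/3}+\ell^*]$, and the \emph{good endpoint event} $G:=\{X_1(tN^{2/3}\pm\ell^*)\leq H/4\}$. The bad endpoint event $G^c$ is controlled via a preliminary weaker-exponent version of the theorem (proven first by the same strategy with coarser bridge estimates, then bootstrapped; any logarithmic losses from a discretization of endpoint values are absorbed into the condition $K\geq K_0$). Conditional on $G$ and on $X_2$, the Gibbs property realizes $X_1|_I$ as a random walk bridge of length $2\ell^*$ with area tilt $a/N$, endpoints at most $H/4$, and conditioned to stay above $X_2|_I$. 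I would then apply
\[
\P\bigl(\max_I X_1>H \;\big|\; X_1\geq X_2 \text{ on }I\bigr)\leq\frac{\P_{\mathrm{free}}(\max_I X_1>H)}{\P_{\mathrm{free}}(X_1\geq X_2\text{ on }I)},
\]
where $\P_{\mathrm{free}}$ drops the non-crossing constraint. Under an a priori inductive bound on $X_2$'s endpoints (at the natural scale $\sim a^{-1/3}b^{-1/3}N^{1/3}\ll H/4$), the endpoint gap is macroscopic, and the ballot theorem for random walk bridges (the key input advertised in the abstract) yields a constant lower bound for the denominator. For the numerator, combining the Gaussian bridge maximum tail $\exp(-cH^2/\ell^*)$ with the exponential area-tilt factor via a Cameron--Martin-type shift (or a direct partition-function computation) balances at $\ell=\ell^*$ to give $\exp(-cK^{3/2})$.

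\textbf{Main obstacle.} The hardest step is the sharp free-bridge tail estimate underlying the numerator: obtaining the $3/2$ exponent rigorously with explicit constants requires pinning down the optimal window length $\ell^*$ via matching upper and lower partition function bounds, uniformly over all endpoint configurations permitted by $G$ and all realizations of the lower curve $X_2$. A second, tightly coupled difficulty is the coarser-exponent bootstrap: the a priori estimates needed to handle $G^c$ and to provide the inductive control on $X_2$'s endpoints must be obtained from a weaker rate (say $K$ in place of $K^{3/2}$) and iterated up to the sharp exponent, without accruing $N$-dependent prefactors that would spoil the constants $c=c(a_0,b_0)$ and $K_0=K_0(a_0,b_0,\ep)$.
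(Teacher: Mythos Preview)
Your reduction to $j=1$ via monotonicity is correct and matches the paper exactly (see \cref{rmk:remove-top} and Step~2 of the proof of \cref{thm:max}).

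However, the top-curve argument has a genuine gap. You propose to control the denominator $\P_{\mathrm{free}}(X_1\geq X_2\text{ on }I)$ using ``an a priori inductive bound on $X_2$'s endpoints'' together with the ballot theorem. But the ballot theorem estimates the probability that a bridge stays above a \emph{flat} floor; here the floor is the random, fluctuating curve $X_2$ on all of $I$, and endpoint control on $X_2$ does not prevent $X_2$ from rising in the interior of $I$. You need uniform-in-$x$ control of $X_2$ on the whole window, which is essentially the statement you are trying to prove for index $j=2$. The same circularity infects your treatment of $G^c$: establishing that $X_1$ at $tN^{2/3}\pm\ell^*$ is below $H/4$ with error $e^{-cK^{3/2}}$ is itself the one-point bound at a nearby point, and you cannot bootstrap from a weaker exponent without first solving the coupled-curves problem. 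A further difficulty you do not address is that the boundary values $u_1,v_1$ may be as large as $N^{1-\ep}$, far above $H$; the walk must be shown to drop from this height before any window-based argument can start.

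The paper's route is structurally different. It first proves the single-curve one-point bound (\cref{oneptbd}) and a dropping lemma (\cref{lem:dropping-lemma}, from the partition-function lower bound \cref{p.partition function lower bound}) for a walk with floor at $0$. It then builds deterministic ceilings $\Cl_j$ recursively \emph{from the bottom curve upward}: the lowest curves have the strongest area tilt and are handled crudely (\cref{prop:max-general-bottom-curves}); then, on the event $\{X_{j+1}\leq\Cl_{j+1}\}$, monotonicity replaces the random floor $X_{j+1}$ by the deterministic $\Cl_{j+1}$, and single-curve estimates give $X_j\leq\Cl_j$ (\cref{prop:recursive-bound}). The dropping lemma is what brings each curve down from the high boundary data $N^{1-\ep}$ to its equilibrium scale. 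The recursion terminates at $j=1$, yielding a global ceiling on $X_1$ from which \cref{thm:simplifiedmain2} is read off by specializing parameters. Your local-window/ratio decomposition never enters; the multi-curve interaction is dissolved by the bottom-up recursion rather than by estimating a non-crossing partition function over a random floor.
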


The proofs of Theorems \ref{thm:simplifiedmain} and \ref{thm:simplifiedmain2} are given following the statement of Theorem \ref{thm:max}.

\begin{remark}
Our arguments show that the same one-point tail bound in \cref{thm:simplifiedmain2} holds for $X_j(x)$ at any $x \in \llbracket -(1-\eta)N, (1-\eta)N\rrbracket$, for any $\eta \in (0,1)$. The bound for the top curve $X_1$ is optimal up to the constant $c$\hspace{.08em}: the matching  lower bound follows from an estimate on a single area-tilted walk, discussed in Remark~\ref{rmk:tail-lbd}. Furthermore, for each $j$, the fluctuation scale and $3/2$ exponent in the tail decay match those obtained for the Brownian model in \cite[Corollary 3.2]{CG23} (they also obtain an explicit value for $c$ in the exponent). Lastly, for each $j$, the fluctuation scale $a^{-1/3} b^{-(j-1)/3}$ is also the optimal fluctuation scale for a \emph{single} area-tilted random walk with parameter $ab^{j-1}$, the same area tilt experienced by $X_j$; see \cite[Theorem~1.2]{HV04} or \cref{oneptbd}.
\end{remark}

\begin{remark}[Parabolic decay of the random walks]\label{rk:parabolic-decay}
\cref{thm:main,thm:max} are the expanded  versions of Theorems \ref{thm:simplifiedmain} and \ref{thm:simplifiedmain2}. There, we state the results for line ensembles on intervals with length as small as $LN^{2/3}$, with $L \geq (\log N)^{1/3+\gamma}$, and boundary conditions that may all be as large as $L^{2-\kappa}N^{1/3}$, for arbitrarily small $\gamma,\kappa>0$. Modulo the $\kappa$ factor, this implies that after 1:2:3 scaling, all curves of the line ensemble decay \emph{parabolically} (on average) from high points before reaching equilibrium (i.e., descend from $L^{2-\kappa}$ to $O(1)$ in time $L$). This is expected to be the truth: for an area-titled Brownian excursion, such parabolic decay follows quickly from a Girsanov transformation (see \cref{rk:generalizations-of-theorems}). Because we do not have any analogue of Girsanov for our model, a crucial step is to develop a more robust argument to establish this roughly parabolic decay. 
\end{remark}

\begin{remark}[Range of parameters, relevance to SOS level lines]\label{rk:generalizations-of-theorems}
Here, we describe how the ranges of our parameters in Theorems \ref{thm:main} and \ref{thm:max}, discussed in \cref{rk:parabolic-decay},
were chosen sufficiently wide so as to address various features of the SOS level lines.

First, we treat boundary conditions much larger than the typical scale $N^{1/3}$ due to the aforementioned  existing upper bound on the fluctuation of the top SOS level line being $N^{1/3+\ep}$, for any $\ep >0$ (see \cref{ssub:fluctuations_of_sos_level_lines}, \cite[Theorem~3]{caputo2016scaling}). Moreover, near the corners of $[-N,N]^2$, the level lines lie at distance $O(N)$ from the boundary of the box. Because we handle boundary conditions up to $N^{1-\ep}$ on intervals of size $\asymp N$, our random walk model can describe the behavior of the level lines as soon as one looks away from the corners (i.e., as soon as the level lines become $\ll N$).
	
This range of boundary conditions is nearly optimal for  establishing convergence to the \textit{time-stationary} Gibbs state $\mu_{a,b}$, which is the result of  \cref{thm:simplifiedmain}.
For the Brownian model on an interval $[-T,T]$, the recent work \cite{CCG23} shows that if one pushes to the parabolic scale, there is a two-parameter family of limiting Gibbs states $\mu_{a,b}^{L,R}$ corresponding to boundary conditions $T^2 - LT$ at $-T$ and $T^2 - RT$ at $T$, where $L+R > 0$. In our setup on the interval $[-N,N]$ this corresponds to boundary conditions $N - LN^{2/3}$ and $N - RN^{2/3}$. Our bound of $N^{1-\ep}$ shows that essentially any boundary conditions below this scale will result in the stationary Gibbs state.
	
Second, the significance of considering the random walk model on intervals as small as scale\footnote{To be precise, we can handle  intervals as small as $N^{2/3}(\log N)^{1/3+\gamma}$, for any $\gamma >0$, as laid out in \eqref{def:boundary-conditions-thm:max}.} $N^{2/3}$ (as opposed to $N$, which is the size of the flat facets of the SOS level lines as mentioned in \cref{ssub:entropic_repulsion_in_the_sos_model_with_hard_wall}, and would be easier to analyze due to the slower rate of descent required) is as follows. With existing technology, a representation such as \eqref{areatiltZ} for the SOS level lines as area-tilted polymers is only valid in boxes of side length at most $N^{2/3+\eta}$, where $\eta \in (0,1/10)$ (see \cite[Proposition~A.1]{caputo2016scaling}). Therefore, currently, any hope of obtaining  a 1:2:3 scaling limit of the SOS level lines by coupling with area-tilted random walks (e.g., via the Ornstein--Zernike machinery) must occur on intervals of size $\leq N^{2/3+\eta}$. It is therefore of interest to determine whether or not a 1:2:3 scaling limit (which we recall refers to vertical scaling by $N^{1/3}$ and horizontal scaling by $N^{2/3}$) for the random walk model could be obtained on such small intervals, even with boundary conditions reflecting the ``worst-case scenario'' of the known upper bound on the maximum deviation of the top SOS level line ($N^{1/3+\ep}$, far above the size of the vertical scaling), and a diverging number of walks (exceeding $\mathfrak{h} \asymp \log_b N$). \cref{thm:main,thm:max} answer this in the affirmative.
	
Lastly, we mention that the restriction $n \leq N^\delta$ on the number of random walks is a result of our proof strategy. Since $n \leq N^\delta$ is already well beyond the $\mathfrak{h} \asymp \log_b N$ many SOS level lines, we do not attempt to loosen this condition.  
\end{remark}

\subsection{Related work}\label{s.related}
A number of works have considered idealized models of interfaces in terms of non-crossing random walks with area tilts. The case of a single random walk conditioned to stay above a floor at zero subject to an area tilt was considered in \cite{HV04,ISV14} (including generalized nonlinear area tilts). The work \cite{HV04} established estimates on the upper tail, enclosed area, decorrelation, and mixing. Under 1:2:3 scaling as above, \cite{ISV14} showed weak convergence as $N\to\infty$ to a stationary diffusion on $\R$ called the \emph{Ferrari--Spohn} (FS) \emph{diffusion}, first constructed in \cite{ferrari2005constrained} as the scaling limit of the gap between a Brownian bridge and a parabolic floor. This single area-tilted walk can be understood as a proxy for the interface in the 2D Ising model at critical prewetting or in the SOS model above a hard wall with one level line. The Ising interface was coupled with an area-tilted random walk and shown to converge under 1:2:3 scaling to the FS diffusion in \cite{IOSV}; analogous results for the single SOS level line were shown in \cite{caddeo2024level}.

A model of multiple interacting area-tilted walks as a proxy for SOS level lines was first analyzed in \cite{IVW}. The model considered there consists of a fixed number $n$ of non-intersecting random walks subject to a \emph{common} area tilt, i.e., with $b=1$ in \eqref{areatiltRW}; we refer to \cite{IV18,FS23} for interpretations of this model in terms of Ising-type interfaces without a hard wall. Weak convergence was shown in \cite{IVW} to a determinantal process consisting of $n$ non-intersecting copies of the Ferrari--Spohn diffusion, known as the \emph{Dyson Ferrari--Spohn diffusion}. It was recently shown in \cite{ds24} that as the number of curves $n$ tends to $\infty$, the Dyson FS diffusion converges after a vertical shift to the \emph{Airy line ensemble}, a fundamental object in the KPZ universality class first constructed in \cite{corwin2014brownian}. A significant technical simplification arises from the adoption of a common area-tilt parameter (taking $b=1$) in these works, as the marginal distribution of the curves (before imposing non-intersection) is the same, which allows access to integrable tools such as the Karlin-McGregor formula. This determinantal structure is not present in the case $b>1$ considered here.

In \cite{CIW19a,CIW19b} was proposed a model which considers geometrically growing area tilts, albeit with Brownian motions instead of random walks, in a further idealization of the discrete SOS level lines.  Let us describe the model in more detail. Let $n\in\N$, $x_1 \geq \cdots \geq x_n \geq 0$, and $y_1 \geq \cdots \geq y_n \geq 0$, and fix parameters $a>0$ and $b>1$. Consider the probability measure given by $n$ independent Brownian bridges $B_1, \ldots, B_n$ on an interval $[-T,T]$, with $B_i$ having boundary values $x_i$ and $y_i$, reweighed by the Radon--Nikodym derivative proportional to
\begin{align}\label{eqn:BrownianLE-informal}
\exp\bigg(-a\sum_{i=1}^n b^{i-1} \mc A(B_i)\bigg)\one_{B_1(t) > \cdots > B_n(t) > 0 \;\; \forall t\in(-T,T)},
\end{align}
where $\mc A(B_i) = \int_{-T}^T X_i(t)\,\mrm{d}t$ is the area enclosed between $B_i$ and the floor at 0. These objects are referred to in short as ``Brownian polymers.'' In
\cite{CIW19a}, weak convergence of the Brownian polymer law to a unique limit, namely the measure $\mu_{a,b}$ discussed above, was established as $T,n\to\infty$ for the special case of zero boundary conditions. For the case of ``free'' boundary conditions, tightness in $T,n$ was established for the model in \cite{CIW19b}, and \cite{DLZ} established convergence to the same law $\mu_{a,b}$, albeit with the requirement that $T\to\infty$ before $n\to\infty$ (unlike in the SOS model where $n$ diverges with $T$). The recent work \cite{CG23} strengthened these results, in particular proving convergence to $\mu_{a,b}$ for free boundary conditions as $T,n\to\infty$ in any order. We note that each of these works rely in a significant way on Brownian tools, such as scaling invariance, exact Gaussian formulas, and Girsanov transformations, which are not available in the random walk case.

Various important properties of the limit law $\mu_{a,b}$ were additionally proven in \cite{CG23}, including Tracy--Widom-type upper tails with exponent $3/2$, as well as strong mixing and ergodicity. That work also proved a strong characterization of $\mu_{a,b}$, showing in particular that it is the unique such infinite-volume Gibbs state which is stationary in time (up to deterministic vertical shifts). We also mention again the recent work \cite{CCG23}, which establishes a complete characterization of the class of area-tilted Brownian Gibbs states; see \cref{rk:generalizations-of-theorems} above.

Returning to the discrete setting, the work \cite{ser23} made progress on the scaling limit for area-tilted walks, but like \cite{DLZ}, required $N\to\infty$ before $n\to\infty$. This allowed the use of rather strong mixing estimates adapted from \cite{IVW}, which have complicated dependence on the number of curves and are only effective for $n$ fixed. By contrast, in Theorem~\ref{thm:simplifiedmain} we allow $n$ to diverge simultaneously with $N$, as is expected in the SOS model, requiring a substantially more delicate analysis. In particular, the mixing estimates used in \cite{ser23} bypass the need for any other quantitative estimates on the line ensemble with $n$ diverging, such as our \cref{thm:simplifiedmain2} here.

\subsection{Proof ideas}

In this section we give a brief overview of the ideas underlying the proof of Theorem \ref{thm:simplifiedmain}. Ultimately, we must establish the existence of weak subsequential limits of the ensemble and that all such subsequential limits are $\mu_{a,b}$. For the latter, we make use of \cite[Theorem 3.7]{CG23} (Theorem~\ref{muexist} here), which roughly says that $\mu_{a,b}$ is the unique law on line ensembles of $\N$-indexed continuous curves with the above Brownian Gibbs property for which the $j$\th curve converges to $0$ as $j\to\infty$ and for which there is uniform one-point upper tail control on the top curve. 

Thus the main points to establish are (i) that the prelimiting rescaled line ensemble is tight, and (ii) upper tail bounds on the $j$\th curve. Of course, (ii) is needed to establish (i), so we begin there and outline the proof of \cref{thm:simplifiedmain2}.

\subsubsection{Reduction to single-curve estimates via stochastic monotonicity}
\label{ssub:reduction_to_single_curve_estimates_via_stochastic_monotonicity}
The basic strategy is as follows: construct a sequence of ``ceiling'' functions $\Cl_j$ such that, with high probability (w.h.p.), $X_j(x) \leq \Cl_j(x)$ for all $j$ and $x$. We do this recursively, via a sequence of estimates for a \emph{single} area-tilted  walk. 
Suppose we have already shown $X_{j+1} \leq \Cl_{j+1}$ w.h.p. To show $X_{j}(x) \leq \Cl_{j}(x)$ for all $x$, we apply the Gibbs property and a stochastic monotonicity statement for area-tilted line ensembles (\cref{l.monotonicity,rmk:remove-top}) to replace $X_{j+1}$ with a floor at $\Cl_{j+1}$ and also remove the top $j-1$ curves, resulting in a single area-tilted walk. Thus, it suffices to prove the following:
letting $\lambda_j$ denote the area-tilt parameter of $X_j$, a walk with area tilt $\lambda_{j}$ conditioned to stay above $\Cl_{j+1}$ is pointwise dominated by $\Cl_{j}$ w.h.p.\ summable in $j$.  This is \cref{prop:recursive-bound}. This stochastic monotonicity property plays a central role in our arguments and has also been a common feature of the majority of models of line ensembles considered in the past. The condition on the random walk increments that we impose to ensure this property is Assumption~\ref{a.convex}.

The construction of the ceiling $\Cl_{j}$ on a random walk with area tilt $\lambda_{j}$ given a floor at $\Cl_{j+1}$ is based on the following idea: at a typical point $x$ in the ``bulk'' of the interval (away from the high boundary conditions), the walk should stay at height $\approx H_{j} + \Cl_{j+1}(x)$, where $H_{j} := \lambda_j^{-1/3}N^{1/3}$ is the fluctuation scale for a random walk with area tilt $\lambda_{j}$ and floor at $0$. Thanks to the geometric relation $\sum_{i \geq j} H_i \leq CH_j$, one can then hope to recursively construct each $\Cl_j$ on scale $H_j$. As the interval is much longer than $H_j^2$, atypically high fluctuations at random locations force an extra logarithmic factor, and ultimately in the bulk we obtain $\Cl_j(x) \approx H_j \log (|x|/H_j^2)^{2/3}$. In particular, we choose our ceilings to be non-decreasing in $|x|$. We now explain how we construct these ceilings.

\subsubsection{High boundary conditions and dropping estimates} \label{ssub:high_boundary_conditions_and_dropping_estimates}
It is a key result requiring several inputs that, as a consequence of the area tilt, the walk $X_j(x)$ drops as $|x|\to0$ just as $\Cl_{j+1}(x)$ does. This is far from immediate due to the lack of a Girsanov transformation, which for Brownian models converts the area tilt into a parabolic drift like $-x^2$. 
Moreover, complications arise because we consider very high boundary conditions (far above the usual fluctuation scale $H_{j}$) on relatively short intervals, i.e., we need the dropping to occur at a certain speed. Allowing such boundary conditions is motivated  in \cref{rk:parabolic-decay}; however, it is in fact \emph{necessary} to consider boundary conditions at scale $H_j(\log N)^{2/3} \gg H_{j}$, roughly because we must perform Gibbs resampling at many random points, which may have atypically high fluctuations   as mentioned above. Treating such high boundary conditions precludes us from directly applying existing results on area-tilted random walks. 

In Lemma~\ref{lem:dropping-lemma}, the ``dropping lemma,'' we manifest the area tilt in a dropping effect for random walk bridges on intervals $I$ above a \emph{floor at zero}. The lemma constructs a ``drop point'' $x$ such that $X_j(x)$ lies at scale $CH_{j}$ above the flat floor; however, the location of this point $x$ is random inside an interval of length $C^{-1}|I|$, and also in our application the floor is not flat (it is $\Cl_{j+1}$). Thus, there is some delicacy in the application of the lemma. 
Using another recursive scheme, we produce a mesh of appropriately spaced deterministic points along which the walk drops successively from its high boundary conditions. At mesh points $x(k)$ in the bulk, we show $X_j(x(k)) \approx H_{j} + \Cl_{j+1}(x(k))$. A detailed outline of this procedure is given in \cref{sub:outline_of_the_proof_of_cref_thm_max_general}.

We fill in the mesh using a bound on the maximum fluctuations of an area-tilted random walk bridge (\cref{maxbd}). \cref{maxbd} follows from a one-point upper tail bound of Tracy--Widom exponent 3/2 (\cref{oneptbd}). Such an estimate appeared in \cite{HV04} for a more general class of area-tilted random walks; we adapt their proof strategy to handle the high boundary values and short interval sizes we consider. Simplifications can be made thanks to stochastic monotonicity.

\subsubsection{Partition function lower bound and ballot theorem}
The key input for the proof of the dropping lemma (\cref{lem:dropping-lemma}) is a suitably precise lower bound on the partition function of a single area-tilted random walk on an interval $I\subseteq \intint{-N,N}$ with tilt parameter $\lambda>0$ and boundary conditions $u,v \geq 0$ (Proposition~\ref{p.partition function lower bound}). Here we allow $u$ and $v$ to be as large as $H_\lambda^3 := \lambda^{-1} N$ (the start of the large deviations regime). Estimates for such partition functions appeared in \cite{HV04,ISV14}, though with boundary conditions $O(H_\lambda)$.

The proof relies on controlling the random walk bridge on carefully chosen subdiffusive scales so as to force a parabolic descent. This in turn makes use of ballot theorems for random walk bridges, i.e., upper and lower bounds on the probability that a bridge on $\intint{-N,N}$ with boundary values $x$ and $y$ stays above $0$ for its entire lifetime. In the case of the simple random walk, it is well known that this probability behaves like $xy/N$. While such ballot theorems have been studied for a much more general class of walks a number of times previously (e.g., see \cite{addarioberryreed} and references therein), they have focused on the case of diffusive boundary conditions, i.e.,  $x,y = O(N^{1/2})$. 
To handle our high boundary conditions, we require a ballot theorem for general random walk bridges that also covers the case $\max(x,y) \gg N^{1/2}$ but $xy \ll N$. We establish such a ballot theorem in Theorem~\ref{t.ballot theorem} using monotonicity and again by analyzing the random walk bridge on a subdiffusive scale.

\subsubsection{Tightness, limiting Gibbs property, and convergence}

Once \cref{thm:simplifiedmain2} has been proven, the final ingredient to establish Theorem \ref{thm:simplifiedmain} is to show tightness of the rescaled line ensemble and that all subsequential limits have the Brownian Gibbs property. Indeed, if we do so, then a result of \cite{CG23} (included in Theorem~\ref{muexist} here) characterizing $\mu_{a,b}$ guarantees that all weak subsequential limits are $\mu_{a,b}$, as the above upper tail control will imply the conditions of the characterization. 

 For tightness, given the upper tail control outlined above and the fact that the curves are deterministically lower bounded by $0$, we only need to show a uniform modulus of continuity bound. Given the prelimiting Gibbs property, the idea (as has been used in many previous works on tightness of line ensembles) is to transfer the modulus of continuity estimates known for the base measure of independent random walk bridges by controlling the Radon--Nikodym derivative appearing in the Gibbs property. Essentially, this comes down to controlling a partition function of the form implicitly appearing in \eqref{areatiltRW} (though on an interval on scale $N^{2/3}$ rather than $N$).

There are by now many arguments establishing such statements for Brownian models (e.g. \cite{CIW19b,corwin2014brownian}) and also discrete ones \cite{dff,dimitrov2021tightness,ser1}. The existing arguments in discrete models generally have made use of a strong KMT coupling result for random walk bridges \cite{dimitrov2021kmt} (i.e., a quantitative coupling with Brownian bridge), which would require us to impose stronger assumptions on the increment distribution (and would exclude distributions of the form $\exp(-\beta|x|)$ as one might heuristically expect to arise in approximations to the SOS level lines). 

Here we give a shorter, more streamlined proof of tightness that relies only on monotonicity and an invariance principle for random walk bridges, requiring very minimal assumptions on the increment distribution. Our argument is a simplification of an argument in \cite{ach24}, which is possible due to the full monotonicity property we have at our disposal (in contrast to \cite{ach24}).

Finally, showing that the prelimiting Gibbs property becomes the Brownian Gibbs property in the limit is a standard application of an invariance principle for finitely many area-tilted random walks on the diffusive scale, which we adapt from \cite{ser23}.\\

We now outline the remainder of the paper. In \cref{s.RWest}, we define the area-tilted line ensembles we consider, specify the full range of parameters we address, and then state \cref{thm:main,thm:max}, which are the expanded versions of \cref{thm:simplifiedmain,thm:simplifiedmain2}, respectively. We then prove \cref{thm:main} given \cref{thm:max}, tightness, and the Gibbs property for subsequential limits.
In  \cref{s.ballot}, we develop basic results on random walks and bridges, stochastic monotonicity (\cref{l.monotonicity}), and the ballot theorem for arbitrarily high boundary conditions (\cref{t.ballot theorem}). \cref{sec:Zlbd} is devoted to the partition function lower bound (\cref{p.partition function lower bound}), from which the dropping lemma (\cref{lem:dropping-lemma}) follows. In \cref{sec:tail}, we establish the one-point (\cref{oneptbd}) and maximum (\cref{maxbd}) upper tail bounds on the single area-tilted random walk bridge. In \cref{sec:recursion,sec:recursion-proofs}, we prove \cref{thm:max} using the recursive scheme outlined above. In \cref{sec:tight}, we establish tightness and the Gibbs property for subsequential limits. Lastly, in \cref{s.monotonicity} we prove the stochastic monotonicity statement \cref{l.monotonicity}.

\subsection*{Notational conventions}

We lay out some notation that will be used throughout the paper. We implicitly work on a fixed probability space $(\Omega,\mathcal{F},\mathbb{P})$. We write $\mathbb{E}$ for expectation with respect to $\mathbb{P}$ and $\one_{\mathsf{E}}$ for the indicator of an event $\mathsf{E}\in\mathcal{F}$. The complement of $\mathsf{E}$ is denoted $\neg\mathsf{E}$. For a random variable $X$ and a probability measure $\mathbb{Q}$, we write $X\sim \mathbb{Q}$ to mean that $X$ has law $\mathbb{Q}$.
For a topological space $Y$, we let $C(Y)$ denote the space of continuous functions $Y\to\mathbb{R}$, which we endow with the topology of uniform convergence on compact sets and the induced Borel $\sigma$-algebra. By $\mathbb{N}$ we mean the set of positive integers. For two real numbers $\ell < r$, we write $\llbracket \ell, r \rrbracket := \{j \in \mathbb{Z} : \lfloor \ell \rfloor \leq j \leq \lceil r \rceil\}$. 
We denote the length of an interval $I$ by $|I|$. We  usually denote vectors by boldface and their components by italics, e.g., $\mathbf{x} = (x_1,\dots,x_n)$. Lastly, we use $C$ and $c$ to denote positive constants that may change from line to line and are universal in the following sense: neither $C$ nor $c$ depend on $N$, the curve index $j$, $a\geq a_0$, or $b\geq b_0$ (though they may depend on $a_0$, $b_0$, and other parameters).

\subsection*{Acknowledgments}
We thank Yvan Velenik for a useful explanation of aspects of his related work. C.S.\ thanks Amir Dembo for several helpful conversations about this project.
M.H.\ was partially supported by the NSF through grants DMS-1937254 and DMS-2348156.
Y.H.K.\ acknowledges the support of a Junior Fellowship at Institut Mittag-Leffler in Djursholm, Sweden, during the Fall semester of 2024, funded by the Swedish Research Council under grant no.\ 2021-06594.
C.S.\ acknowledges fellowship support from the Northern California Chapter of the ARCS Foundation.
Part of this work was completed during the workshop ``Universality and Integrability in KPZ'' at Columbia University, March 2024, supported by NSF grants DMS-2400990 and DMS-1664650.


\section{Setup, Gibbs property, and main results}\label{s.RWest}

Here we define the line ensembles we consider and state detailed versions of \cref{thm:simplifiedmain,thm:simplifiedmain2}.

\subsection{Random walk setup}\label{s:assumptions}

We work with random walk bridges $X$ on an interval $\llbracket \ell, r\rrbracket$ with boundary conditions $u, v \in \mathbb{R}$. It will be convenient to describe the random walk increments as an exponential of a \textit{random walk Hamiltonian}. Fix a function $H_{\mathrm{RW}} : \mathbb{R} \to \mathbb{R} \cup \{+\infty\}$. For a possible random walk bridge trajectory $X = (X(\ell), \dots, X(r)) \in \mathbb{R}^{r-\ell+1}$, define
\begin{equation}\label{HRW}
	p_{\ell,r}(X) = \exp\bigg(-\sum_{j=\ell}^{r-1} H_{\mathrm{RW}}(X(j+1) - X(j))\bigg) \,.
\end{equation}
We consider random walk bridges of two types:
\begin{enumerate}
	\item (Lattice) Fix $u,v\in\mathbb{Z}$. The random walk bridge is supported on integer-valued trajectories $X \in \mathbb{Z}^{r-\ell+1}$ with $X(\ell)=u$, $X(r)=v$, and probability mass proportional to $p_{\ell,r}(X)$. 
	
	\item (Nonlattice) The random walk bridge $X$ has density proportional to $\delta_u(X(\ell)) \cdot p_{\ell,r}(X) \cdot \delta_v(X(r))$ with respect to Lebesgue measure on $\mathbb{R}^{r-\ell+1}$. 
\end{enumerate}
In the lattice case, $H_{\mathrm{RW}}$ need only be defined on $\mathbb{Z}$. We make the following additional assumptions:

\begin{assumption}\label{a.convex}
	The function $H_{\mathrm{RW}} : \mathbb{R} \to \mathbb{R} \cup \{+\infty\}$ is convex. $($In the lattice case, this means that the linear interpolation of $H_{\mathrm{RW}}$ between integers is convex.$)$
\end{assumption}

\begin{assumption}\label{a.misc}
	The random walk increment distribution has zero mean, finite variance $\sigma^2 > 0$, and finite moment generating function near zero. That is,
	
	\noindent
	\emph{(Lattice)}  Let $Z_{\mathrm{lat}} = \sum_{k\in\mathbb{Z}} e^{-H_{\mathrm{RW}}(k)}$. Assume $Z_{\mathrm{lat}}\in (0,\infty)$,  and for $t$ in a neighborhood of $0$,
	\[
	\sum_{k\in\mathbb{Z}} ke^{-H_{\mathrm{RW}}(k)} = 0 \,,\, \quad  Z_{\mathrm{lat}}^{-1} \ \sum_{k\in\mathbb{Z}} k^2e^{-H_{\mathrm{RW}}(k)}  = \sigma^2 \in (0,\infty) \,,\, \quad \text{and} \quad \sum_{k\in\mathbb{Z}} e^{tk}e^{-H_{\mathrm{RW}}(k)} < \infty \,.
	\]

	\noindent
	\emph{(Nonlattice)} Let $Z_{\mathrm{nl}} = \int_{\mathbb{R}} e^{-H_{\mathrm{RW}}(x)}\,\mathrm{d}x$. Assume $Z_{\mathrm{nl}}\in(0,\infty)$, and for $t$ in a neighborhood of $0$,
	\[
		\int_{\mathbb{R}} xe^{-H_{\mathrm{RW}}(x)}\,\mathrm{d}x = 0 \,,\, \quad Z_{\mathrm{nl}}^{-1}\int_{\mathbb{R}} x^2e^{-H_{\mathrm{RW}}(x)}\,\mathrm{d}x = \sigma^2 \in (0,\infty)\,,\, \quad \text{and} \quad \int_{\mathbb{R}} e^{tx}e^{-H_{\mathrm{RW}}(x)}\,\mathrm{d}x < \infty\,.
	\]
\end{assumption}

\begin{remark}\label{rmk:bridgebc}
	Assumption \ref{a.convex} along with the zero mean condition in Assumption \ref{a.misc} force the support of the increment distribution to be a contiguous interval (in $\mathbb{Z}$ or $\mathbb{R}$) containing 0. In particular the walks are aperiodic, so there will always exist $C>0$ depending only on $H_{\mathrm{RW}}$ so that the bridge from $(\ell,u)$ to $(r,v)$ is well-defined as long as $|u-v| \leq C(r-\ell)$. Likewise, non-crossing bridges with boundary conditions $\mathbf{u}, \mathbf{v}$ are well-defined as long as $|u_i-v_i|$ is not too large for each $i$. We will always implicitly assume boundary conditions for which the bridges are well-defined.
\end{remark}

\subsection{Line ensembles and known results}

Fix $N\in\mathbb{N}$ and an interval $I = \llbracket \ell, r\rrbracket \subseteq \llbracket-N, N\rrbracket$, where $\ell,r\in\mathbb{Z}$ and $\ell < r$. By a \textit{discrete line ensemble}, we mean a collection $\mathbf{X} = (X_1,\dots,X_n)$ of paths $X_i : I \to \mathbb{R}$. For $j\in I$ we write $\mathbf{X}(j) = (X_1(j),\dots,X_n(j))$. These can be identified with elements of $\mathbb{R}^{n|I|}$ (or $\mathbb{Z}^{n|I|}$ in the lattice case).

For $n\in\mathbb{N}$ and $\mathbf{u},\mathbf{v}\in\mathbb{R}^n$, we let $\mathbb{P}_{n,I}^{\mathbf{u},\mathbf{v}}$ denote the law of $\mathbf{X} = (X_1,\dots,X_n)$, where $X_i$ are independent random walk bridges on $I$ with increments satisfying Assumptions \ref{a.convex} and \ref{a.misc}, $X_i(\ell) = u_i$, and $X_i(r) = v_i$. We now define the area-tilted line ensembles that are the focus of this paper. For $y\in\mathbb{R}$, define the closed Weyl chamber $W_y^n$ by 
$$W_y^n := \left\{\mathbf{x}\in\mathbb{R}^n : x_1 \geq x_2 \geq \cdots \geq x_n \geq y\right\}.$$ 

\begin{definition}\label{def:areatilt}
	Fix two \textit{area tilt parameters} $a>0$ and $b>1$ and a \textit{floor} $h : I\to\mathbb{R}$. Let $\mathbf{u} \in W^n_{h(\ell)}$ and $\mathbf{v} \in W^n_{h(r)}$, recalling $I = \llbracket \ell, r \rrbracket$. Define the \textit{area under} $X_i$ by
	\begin{equation}\label{A(X)}
		\mathcal{A}(X_i) := \sum_{j=\ell}^{r-1} [X_i(j) - h(j)],
	\end{equation}
	and define the non-crossing event $\mathsf{NC} := \{\mathbf{X}(j)\in W_{h(j)}^n \mbox{ for all } j\in I\}$. We then define the measure $\mathbb{P}_{n,I;h}^{a,b;\mathbf{u},\mathbf{v}}$ on discrete line ensembles by the Radon--Nikodym derivative
	\[
	\frac{\mathrm{d}\mathbb{P}_{n,I;h}^{a,b;\mathbf{u},\mathbf{v}}}{\mathrm{d}\mathbb{P}_{n,I}^{\mathbf{u},\mathbf{v}}} (\mathbf{X}) = \frac{1}{Z_{n,I;h}^{a,b;\mathbf{u},\mathbf{v}}} \exp\left(-\frac{a}{N}\sum_{i=1}^n b^{i-1}\mathcal{A}(X_i)\right) \one_{\mathsf{NC}}(\mathbf{X}),
	\]
	where the \textit{partition function} $Z_{n,I;h}^{a,b;\mathbf{u},\mathbf{v}}$ is the normalizing constant necessary to make $\mathbb{P}_{n,I;h}^{a,b;\mathbf{u},\mathbf{v}}$ a probability measure. We will use $\mathbb{E}_{n,I;h}^{a,b;\mathbf{u},\mathbf{v}}$ to denote expectation with respect to this measure.
	
\end{definition}

Note the implicit dependence of these measures on the parameter $N$ in the area tilt, which we omit from the notation for brevity. As above, we implicitly assume that $\mathbf{u},\mathbf{v},h$ are such that $Z_{n,I;h}^{a,b;\mathbf{u},\mathbf{v}} > 0$, for which it suffices to assume $\mathbb{P}_{n,I}^{\mathbf{u},\mathbf{v}}(\mathsf{NC}) > 0$. (Note that trivially $Z_{n,I;h}^{a,b;\mathbf{u},\mathbf{v}} \leq 1$.)

\begin{remark}[Notation for measures]\label{rmk:notation}
	
	We will simplify the above notation in various situations by removing superfluous indices. When the interval $I$ is symmetric, $I = \llbracket -M,M\rrbracket$, we often write $M$ instead of $I$ in the subscript, e.g., $\mathbb{P}_{n,M;h}^{a,b;\mathbf{u},\mathbf{v}}$. When $n=1$, we drop the subscript of $n$. The parameter $b$ plays no role when $n=1$, so we omit it from the superscript. We usually write $\lambda$ instead of $a$ for the area tilt coefficient in this case. When $\lambda=0$, i.e., a bridge with floor $h$ and no area tilt, we omit $\lambda$ from the superscript. Finally, we sometimes consider random walks with no conditioning at the endpoint; we let $\mathbb{P}_{I}^u$ denote the law of a random walk on $I$ started at $u$. (Strictly speaking only the left endpoint of $I$ is relevant, but as we often convert between walks and bridges we will write the whole interval for clarity.) We summarize the notation in the table below.

	\begin{figure*}[h!]
		\begin{center}
			\renewcommand{\arraystretch}{1.35}
			\begin{tabular}{|l|l|}
				\hline
				Notation & Meaning\\
				\hline
				$\mathbb{P}_{I}^u$ & random walk on interval $I$ started at $u$\\
				\hline
				$\mathbb{P}_{I}^{u,v}$ & random walk bridge (RWB) on $I$ from $u$ to $v$\\
				\hline
				$\mathbb{P}_{I;h}^{u,v}$ & RWB on $I$ from $u$ to $v$ with floor $h$\\
				\hline
				$\mathbb{P}_{I;h}^{\lambda;u,v}$ & RWB $X$ on $I$ from $u$ to $v$ with floor $h$ and area tilt $\exp(-\frac{\lambda}{N}\mathcal{A}(X))$\\
				\hline
				$\mathbb{P}_{n,I;h}^{a,b;\mathbf{u},\mathbf{v}}$ & $n$ non-crossing RWBs $X_i$ on $I$ from $u_i$ to $v_i$ with floor $h$ and area tilt\\[-4pt]
				& $\smash{\exp(-\frac{a}{N}\sum_{i=1}^n b^{i-1} \mathcal{A}(X_i))}$\\
				\hline
			\end{tabular}
		\end{center}
	\end{figure*}
	
	We will often abuse notation slightly by writing for instance $\mathbb{P}_{[\ell, r]}^{u,v}$ to mean $\mathbb{P}_{\llbracket \ell, r\rrbracket}^{u,v}$ in order to slightly declutter notation. In all of these cases, we replace $\mathbb{P}$ with $\mathbb{E}$ to denote expectation and with $Z$ to denote the partition function.
	
\end{remark}

\begin{remark}\label{rmk:floor}
	In previous works, the area in \eqref{A(X)} has usually been taken with respect to 0 instead of the floor $h$, i.e., $\sum_j X_i(j)$ instead of $\sum_j [X_i(j)-h(j)]$. These two choices are of course equivalent, because the exponent is a linear function and so the term $\sum_j h(j)$ cancels in the normalization. We choose to include this term as we will sometimes consider negative floors $h$ in our arguments, and it is convenient to keep the area term always positive so that the partition function remains bounded above by 1 and can thus be ignored when lower bounding various probabilities.
\end{remark}

\begin{remark}\label{rmk:shift}
	We record here a simple observation about invariance of the line ensemble under constant vertical shifts. If $\mathbf{X} \sim \P_{n,I;h}^{a,b;\u,\v}$ and $\zeta$ is a constant, then the law of $\mathbf{X} + \zeta := (X_1+\zeta,\dots,X_n+\zeta)$ is $\P_{n,I;h+\zeta}^{a,b;\u+\zeta,\v+\zeta}$, where $\u+\zeta := (u_1+\zeta, \dots, u_n+\zeta)$ and $\v+\zeta := (v_1+\zeta,\dots,v_n+\zeta)$. This property is immediate for non-crossing random walk bridges (i.e., $a=b=0$). The identity then follows by noting that $\mathcal{A}(X_i+\zeta) = \mathcal{A}(X_i) + \zeta|I|$, and the constant $\zeta|I|$ cancels in the normalization. 
\end{remark}

We will now discuss line ensembles on a continuum domain. By a \textit{line ensemble}, we will now mean a random variable $\mathbf{x}$ taking values in $C(\Sigma \times \Lambda)$, where $\Sigma = \llbracket 1,n\rrbracket$ or $\Sigma = \mathbb{N}$ and $\Lambda \subset\mathbb{R}$ is an interval or $\Lambda =\mathbb{R}$. Equivalently we can write $\mathbf{x} = (x_i)_{i\in\Sigma}$ where $x_i \in C(\Lambda)$. Note that any discrete line ensemble can naturally be viewed in this way by linearly interpolating between integer points so as to create continuous paths, and in the remainder of the paper we will make this identification.

As discussed in Section \ref{s.background}, 
 we aim to prove weak convergence as $N\to\infty$ of the rescaling $\mathbf{x}^N$ of $\mathbf{X}\sim \mathbb{P}_{n,N;0}^{a,b;\mathbf{u},\mathbf{v}}$ defined via
\begin{equation}\label{x^N}
	x_i^N(t) := \sigma^{-2/3}N^{-1/3} X_i(t\sigma^{-2/3}N^{2/3}), \qquad 1\leq i\leq n, \qquad t\in [-\sigma^{2/3}N^{1/3}, \sigma^{2/3}N^{1/3}].
\end{equation}
We define the following continuum analog of the measures in \eqref{def:areatilt}, formalizing \eqref{eqn:BrownianLE-informal}.

\begin{definition}\label{def:brownian}
	For $n\in\mathbb{N}$, an interval $I = [\ell, r]\subset\mathbb{R}$, and $\mathbf{u},\mathbf{v}\in\mathbb{R}^n$, let $\mathbf{P}_{n,I}^{\mathbf{u},\mathbf{v}}$ denote the law of $\mathbf{x} = (x_1,\dots,x_n)$, where $x_i$ are independent standard Brownian bridges on $I$ with $x_i(\ell) = u_i$, $x_i(r) = v_i$. For $h:I\to\mathbb{R}$, $\mathbf{u}\in W_{h(\ell)}^n$, and $\mathbf{v}\in W_{h(r)}^n$, define the non-intersection event $\mathsf{NI} := \{\mathbf{x}(t) \in W_{h(t)}^n \mbox{ for all } t\in I\}$, and let $\mathbf{P}_{n,I;h}^{a,b;\mathbf{u},\mathbf{v}}$ be the measure on line ensembles $\mathbf{x}$ specified by
	\[
	\frac{\mathrm{d}\mathbf{P}_{n,I;h}^{a,b;\mathbf{u},\mathbf{v}}}{\mathrm{d}\mathbf{P}_{n,I}^{\mathbf{u},\mathbf{v}}}(\mathbf{x}) \propto \exp\bigg(-a\sum_{i=1}^n b^{i-1} \int_I [x_i(t)-h(t)]\,\mathrm{d}t\bigg)\, \one_{\mathsf{NI}}(\mathbf{x}) \,.
	\]
	We denote expectation with respect to this measure by $\mathbf{E}_{n,I;h}^{a,b;\mathbf{u},\mathbf{v}}$.
\end{definition}

As above, when $n=1$ we omit it from the subscript, and when $I=[-T,T]$ we write $T$ in place of $I$ in the subscript. See Remark \ref{rmk:floor} above in relation to the $h$ term in the integral.

The measures in Definition \ref{def:areatilt} possess an important Gibbs property, described as follows. 

\begin{definition}\label{def:gibbs}
	The $\textit{Gibbs property}$ of the area-tilted line ensembles is the following. Consider $\mathbf{X} \sim \mathbb{P}_{n,I;h}^{a,b;\mathbf{u},\mathbf{v}}$, and fix a positive integer $k\leq n$ and a subinterval $J = \llbracket \ell', r'\rrbracket \subseteq I$. Let $\mathcal{F}_{k,J}$ denote the $\sigma$-algebra generated by $\{X_i(j) : i > k \mbox{ or } j \in I \setminus \llbracket \ell' + 1, r'-1\rrbracket\}$. Then the conditional law of $\mathbf{X}$ given $\mathcal{F}_{k,J}$ is $\mathbb{P}_{k,J;g}^{a,b;\mathbf{w},\mathbf{z}}$, where $\mathbf{w} = (X_1(\ell'),\dots,X_k(\ell'))$, $\mathbf{z} = (X_1(r'),\dots,X_k(r'))$, and $g = X_{k+1} |_J$. This is an immediate consequence of Definition \ref{def:areatilt}, in particular, the linearity of the area-tilt functional.
	
	More generally, we have the following \textit{strong Gibbs property}. For two integer-valued random variables $\sigma,\tau$ with $\sigma \leq \tau$, we say the random interval $J = \llbracket \sigma,\tau\rrbracket$ is a \textit{stopping domain} for $(X_1,\dots,X_k)$ if for all $\ell,r\in\mathbb{Z}$, the event $\{\sigma\leq \ell, \,\tau\geq r\}$ lies in $\mathcal{F}_{k,\llbracket \ell, r\rrbracket}$. The strong Gibbs property says that conditional on the $\sigma$-algebra generated by $\sigma$, $\tau$, and $\mathcal{F}_{k,J}$, the law of $\mathbf{X}$ is $\smash{\mathbb{P}_{k,J;g}^{a,b;\mathbf{w},\mathbf{z}}}$ where $\mathbf{w}=(X_1(\sigma),\dots,X_k(\sigma))$, $\mathbf{z}=(X_1(\tau),\dots,X_k(\tau))$, and $g = X_{k+1}|_J$. 
\end{definition}

The proof of the fact that $\mathbf{X}\sim \mathbb{P}_{n,N;0}^{a,b;\mathbf{u},\mathbf{v}}$ satisfies the strong Gibbs property is a straightforward consequence of the usual Gibbs property; 
	 see \cite[Lemma 2.5]{corwin2014brownian}.

As discussed in Section \ref{s.related}, we should expect any limit point in $C(\mathbb{N}\times\mathbb{R})$ of $\mathbf{x}^N$ in \eqref{x^N} to possess the following continuum Gibbs property.

\begin{definition}\label{def:BGP}
	Let $\mathbf{x}$ be a line ensemble on $\mathbb{N}\times\mathbb{R}$. For intervals $I = [\ell,r] \subset \mathbb{R}$ and $\Sigma = \llbracket 1,k\rrbracket \subset \mathbb{N}$,
	 let $\mathcal{F}_{k,I}$ denote the $\sigma$-algebra generated by $\{x_i(j) : i \notin \Sigma \mbox{ or } j\notin (\ell,r)\}$. We say that $\mathbf{x}$ satisfies the \textit{Brownian Gibbs property with respect to} $(a,b)$-\textit{area tilts}, or more briefly that $\mathbf{x}$ is an $(a,b)$-\textit{tilted line ensemble} if for any bounded Borel-measurable functional $F : C(\Sigma \times I) \to \mathbb{R}$, we have $\mathbb{P}$-a.s.
	\begin{equation}\label{eqn:bgp}
		\mathbb{E}[F(\mathbf{x}|_{\Sigma\times I}) \mid \mathcal{F}_{k,I}] = \mathbf{E}_{k,I;h}^{a,b;\mathbf{u},\mathbf{v}} [F],
	\end{equation}
	where $\mathbf{u} = (x_1(\ell),\dots,x_k(\ell))$, $\mathbf{v} = (x_1(r),\dots,x_k(r))$, and $h = x_{k+1}|_I$.
\end{definition}

The existence of an $(a,b)$-tilted line ensemble on all of $\mathbb{N}\times\mathbb{R}$ was established in \cite{CIW19b}, and its uniqueness in law was shown in \cite{CG23} under natural conditions which we now define. 

\begin{definition}\label{def:utight}
	A line ensemble $\mathbf{x}$ on $\mathbb{N}\times\mathbb{R}$ is said to be \textit{uniformly tight} if for any $\varepsilon>0$ there exists $M>0$ such that
	\begin{equation}\label{utight}
		\sup_{t\in\mathbb{R}} \mathbb{P}(x_1(t) > M) < \varepsilon.
	\end{equation}
	We say $\mathbf{x}$ is \textit{asymptotically pinned to zero} if for all $\varepsilon, T>0$ there exists $j = j(T,\varepsilon)\in\mathbb{N}$ such that
	\begin{equation}\label{pinned}
		\mathbb{P}\bigg(\sup_{t\in[-T,T]} x_j(t) < \varepsilon\bigg) > 1-\varepsilon.
	\end{equation}
\end{definition}

\begin{theorem}\cite[Theorems 1.4 and 1.5]{CIW19b}$;$ \cite[Theorem 3.7]{CG23}. \label{muexist}
	Fix $a>0$, $b>1$. Then there exists a unique probability measure $\mu_{a,b}$ on $C(\mathbb{N}\times\mathbb{R})$ that is the law of a uniformly tight, asymptotically pinned-to-zero, $(a,b)$-tilted line ensemble.
\end{theorem}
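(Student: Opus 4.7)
The plan is to construct $\mu_{a,b}$ as a subsequential limit of finite-volume area-tilted Brownian line ensembles, then prove uniqueness via a stochastic monotonicity sandwich argument. For existence, I would take the measures $\mu^{n,T} := \mathbf{P}_{n,[-T,T];0}^{a,b;\mathbf{0},\mathbf{0}}$, viewed as laws on $C(\mathbb{N}\times\mathbb{R})$ by extending each curve $x_i$ to be identically zero outside $[-T,T]$ and setting $x_i \equiv 0$ for $i > n$, and then extract a subsequential weak limit as $n,T\to\infty$. Tightness requires two ingredients: (i) a uniform one-point upper tail bound on the top curve, obtained by Girsanov-transforming the area tilt into a quadratic drift (giving Ferrari--Spohn/Tracy--Widom-type bounds for a single area-tilted bridge) and propagating to all curves via the Brownian Gibbs property; and (ii) a modulus of continuity estimate inherited from the base Brownian bridge measure through the Radon--Nikodym derivative in Definition \ref{def:brownian}, using uniform positivity of the relevant partition function on events with bounded boundary values.

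Any subsequential limit $\mu_{a,b}$ inherits the Brownian Gibbs property by standard weak-convergence arguments applied to the local Radon--Nikodym derivative, which is continuous and uniformly bounded on events of bounded boundary values, so the conditional identity defining the Gibbs property passes through the limit. The uniform tightness condition \eqref{utight} is immediate from the top-curve upper tail bound, which was established uniformly in $n, T$. For asymptotic pinning \eqref{pinned}, the $j$th curve experiences effective area-tilt coefficient $ab^{j-1}$, placing its typical height at scale $(ab^{j-1})^{-1/3}$, which decays geometrically in $j$; a union bound over a grid of $t \in [-T,T]$ combined with a modulus of continuity bound then gives the required supremum estimate.

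For uniqueness, given two candidate measures $\mu^{(1)}, \mu^{(2)}$, I would show they agree on the marginal over $\llbracket 1, K\rrbracket \times [-T, T]$ for each fixed $K, T$. Fix $\varepsilon > 0$. By asymptotic pinning applied on a larger window $[-T^*, T^*]$ with $T^* > T$, there is a deep index $K^*$ such that under each $\mu^{(i)}$, $\sup_{[-T^*, T^*]} x_{K^*+1} \leq \varepsilon$ with probability $\geq 1-\varepsilon$; by uniform tightness there is $M$ with $\max_{i \leq K^*} \max(x_i(-T^*), x_i(T^*)) \leq M$ on a further event of probability $\geq 1-\varepsilon$. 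On the intersection, the Brownian Gibbs property writes the marginal of $(x_1, \ldots, x_{K^*})|_{[-T^*, T^*]}$ as a finite-volume area-tilted measure, and stochastic monotonicity (the Brownian analog of Lemma \ref{l.monotonicity}) sandwiches it between the explicit laws $\mathbf{P}_{K^*, [-T^*, T^*]; 0}^{a,b; \mathbf{0},\mathbf{0}}$ and $\mathbf{P}_{K^*, [-T^*, T^*]; \varepsilon}^{a,b; \mathbf{M},\mathbf{M}}$, where $\mathbf{M} := (M, \ldots, M)$.

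The main obstacle is showing that both sandwich measures converge, as $T^*, K^* \to \infty$ followed by $\varepsilon \to 0$ and $M \to \infty$, to the same limit $\mu_{a,b}$ constructed in the existence step; once this is established, the marginal of each $\mu^{(i)}$ on the fixed interior window must coincide with that of $\mu_{a,b}$, completing the uniqueness proof. This convergence relies on a further monotonicity argument showing that the influence on the fixed interior window of the upper versus lower boundary and floor parameters decays as the outer window grows, exploiting the area-tilt penalty to force pinning of high-index curves. Executing this step carefully is the conceptual heart of the argument and corresponds to the characterization achieved in \cite[Theorem 3.7]{CG23}.
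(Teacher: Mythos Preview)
The paper does not prove this theorem: it is stated as a known result, with existence attributed to \cite[Theorems 1.4 and 1.5]{CIW19b} and the characterization/uniqueness to \cite[Theorem 3.7]{CG23}, and is then used as a black box in the proof of Theorem~\ref{thm:main}. So there is no proof in the paper to compare your proposal against.

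That said, your outline is a reasonable high-level sketch of how the cited works proceed. Existence via tightness of finite-volume Brownian ensembles with zero boundary data is essentially the content of \cite{CIW19a,CIW19b}, and your description of the monotonicity sandwich for uniqueness matches the strategy in \cite{CG23}. You correctly identify the main difficulty as the final step: showing that the upper and lower sandwich measures converge to the same limit as the outer window grows. In \cite{CG23} this is handled by establishing exponential upper-tail bounds uniform in the boundary data and then proving a strong mixing/coupling statement showing that the influence of boundary conditions decays; this is nontrivial and is indeed the heart of their argument. Your proposal gestures at this but does not supply the mechanism, so as a self-contained proof it would be incomplete at precisely the point you flag as the obstacle.
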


\subsection{Main results}

We now give a more precise statement of our main results, Theorems \ref{thm:simplifiedmain} and \ref{thm:simplifiedmain2} from the Introduction. We establish the convergence of the rescaled line ensemble defined in \eqref{x^N} to the infinite-volume Gibbs measure $\mu_{a,b}$ of Theorem \ref{muexist}. 

The theorems will be stated for line ensembles on an interval $\mathcal{I}$ with boundary conditions $\mathbf{u},\mathbf{v} \in W_0^n$, satisfying various conditions. For parameters $\gamma,\kappa >0$, we will require 
\begin{equation}\label{def:boundary-conditions-thm:max}
	\mathcal{I} = [-LN^{2/3},LN^{2/3}], \quad \mathrm{where} \quad L \in [(\log N)^{1/3+\gamma}, N^{1/3}],
\end{equation}
and
\begin{equation}\label{def:BL}
	\max(u_1,v_1) \leq L^{2-\kappa} N^{1/3}.
	\end{equation}
Define $\kappa_0 := 6\gamma/(1+3\gamma)$ (so  that $(1/3+\gamma)(2-\kappa_0) = 2/3$).

\begin{theorem}\label{thm:main}
	Fix $a>0$, $b>1$, $\gamma>0$, and $\kappa \in(0, \kappa_0)$. Let $\mathbf{X}^N \sim\mathbb{P}_{n,\mathcal{I};0}^{a,b;\mathbf{u},\mathbf{v}}$ with $\mathcal{I}$ as in \eqref{def:boundary-conditions-thm:max}, $n=n(N)$ a sequence tending to $\infty$ as $N\to\infty$, and $\smash{\mathbf{u},\mathbf{v}\in W_0^n}$. Define the line ensemble $\smash{\mathbf{x}^N}$ on $\mathbb{N}\times\mathbb{R}$ by 
	\begin{align}\label{def:x_i}
	x_i^N(t) = \sigma^{-2/3}N^{-1/3} X_i^N (t\sigma^{-2/3}N^{2/3})\,, \qquad 1\leq i\leq n \,, \quad t\sigma^{-2/3}N^{2/3} \in \mathcal{I} \,,
	\end{align}
	and extend to all $t\in\mathbb{R}$ and $i \in\mathbb{N}$ as continuous functions in an arbitrary fashion. Let $\mu_N$ denote the law of $\mathbf{x}^N$ on $C(\mathbb{N}\times\mathbb{R})$. There exist $\delta> 0$ so that if $n\leq N^{\delta}$ and $\u,\v$ satisfy \eqref{def:BL}, then $\mu_N$ converges weakly as $N\to\infty$ to $\mu_{a,b}$, in the topology of uniform convergence on compact sets. 
	\end{theorem}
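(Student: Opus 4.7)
The plan is to leverage the characterization theorem (Theorem~\ref{muexist}): it suffices to show that $(\mu_N)_{N\geq 1}$ is tight on $C(\mathbb{N}\times\mathbb{R})$ and that every weak subsequential limit $\mu_\infty$ is an $(a,b)$-tilted line ensemble that is uniformly tight and asymptotically pinned to zero. Items (iii)--(iv) of Definitions \ref{def:utight} follow almost directly from the quantitative tail bound \cref{thm:simplifiedmain2}. Indeed, rescaling \cref{thm:simplifiedmain2} gives, for any fixed $t$, a uniform-in-$N$ one-point upper tail bound of the form $\P(x_1^N(t) > K) \leq e^{-cK^{3/2}}$ for $K$ in a wide range, which yields uniform tightness of the top curve $x_1^N$. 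For asymptotic pinning, the same bound applied to $x_j^N$ gives $\P(x_j^N(t) > \varepsilon) \leq e^{-c(\varepsilon a^{1/3} b^{(j-1)/3}\sigma^{2/3})^{3/2}}$, and since the bound decays geometrically in $j$, a union bound over a mesh of $[-T,T]$ combined with monotonicity ($x_{j+1}^N(t) \leq x_j^N(t)$, and each curve is deterministically bounded below by $0$) produces the estimate \eqref{pinned} for $j$ sufficiently large depending on $T$ and $\varepsilon$. These properties pass to the subsequential limit by weak convergence.

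For tightness of $\mu_N$, the deterministic lower bound $x_j^N \geq 0$ and the upper tail bound described above already yield tightness of finite-dimensional marginals. What remains is a uniform modulus of continuity estimate for each fixed curve $x_i^N$ on compact intervals. The idea, standard in the line ensemble literature, is to restrict to a compact rectangle $\llbracket 1, k \rrbracket \times [-T,T]$, apply the (prelimiting) strong Gibbs property of \cref{def:gibbs} on a slightly larger rectangle, and transfer the modulus of continuity from the base measure of $k$ independent random walk bridges to the area-tilted measure. The Radon--Nikodym derivative of this transfer contains the indicator of non-crossing, an area-tilt factor that is bounded above by $1$, and the reciprocal of a partition function; the key point is thus lower bounding this partition function. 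Here one can use monotonicity (\cref{l.monotonicity}) to replace the bottom boundary of the rectangle by a suitable ceiling function producing by \cref{thm:max}, and then invoke the partition function lower bound machinery developed in \cref{sec:Zlbd} — now on a rescaled interval of size $N^{2/3}$ — to show this partition function is uniformly bounded below. Combined with the invariance principle for random walk bridges (Donsker's theorem in the conditioned form), one deduces a Brownian-type modulus of continuity for $x_i^N$, giving tightness. The paper's outline notes that this streamlined approach avoids stronger KMT-type couplings, using only monotonicity and an invariance principle.

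For the Brownian Gibbs property of any subsequential limit $\mu_\infty = \lim_{N_\ell\to\infty} \mu_{N_\ell}$: fix a compact rectangle $\llbracket 1, k \rrbracket \times [\ell, r]$ and a bounded continuous functional $F$ of paths on this rectangle. The prelimiting Gibbs property asserts that conditional on the data $\mathcal{F}_{k,[\ell,r]}$, the restricted ensemble has law $\P_{k,I_N;g_N}^{a,b;\u_N,\v_N}$ where $I_N = \llbracket \ell\sigma^{-2/3}N^{2/3}, r\sigma^{-2/3}N^{2/3}\rrbracket$, the boundary data $\u_N, \v_N$ converge to $(x_1(\ell),\dots,x_k(\ell))$ and $(x_1(r),\dots,x_k(r))$, and the floor $g_N$ is the rescaled $(k+1)$-st curve converging to $x_{k+1}|_{[\ell,r]}$. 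An invariance principle for finitely many area-tilted, non-crossing random walk bridges on the diffusive scale — adapted from \cite{ser23} — converts this into the Brownian analogue $\mathbf{P}_{k,[\ell,r];h}^{a,b;\u,\v}$ appearing in \cref{def:BGP}. Passing to the limit via the Skorokhod representation and continuity of $F$ yields \eqref{eqn:bgp}, establishing the $(a,b)$-tilted property of $\mu_\infty$.

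The main obstacle I foresee is the partition function lower bound underlying the modulus of continuity argument, specifically the need to handle a growing number of curves $n = n(N)$ up to $N^\delta$ uniformly, and boundary conditions that may be as large as $N^{1-\varepsilon}$ (equivalently $L^{2-\kappa}N^{1/3}$ after rescaling). This is closely tied to the ``dropping'' phenomenon and the recursive ceiling construction feeding into \cref{thm:max}; controlling the partition function requires precise enough estimates to descend from the high boundary conditions on the short scale $N^{2/3}$ and not just on the natural scale $N$. Once \cref{thm:max} is in hand, however, the tightness and Gibbs-property steps are relatively standard structural arguments, and Theorem~\ref{muexist} finishes the identification of the limit as $\mu_{a,b}$.
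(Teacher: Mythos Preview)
Your proposal is correct and follows essentially the same approach as the paper: reduce to the characterization in Theorem~\ref{muexist} by establishing (i) tightness via a modulus-of-continuity argument driven by the Gibbs property and a partition function lower bound, (ii) the Brownian Gibbs property for subsequential limits via an invariance principle, and (iii) uniform tightness and asymptotic pinning via the quantitative tail bounds of Theorem~\ref{thm:max}. One minor simplification the paper makes: since Theorem~\ref{thm:max} already controls $\sup_{s\in[-T,T]} x_j^N(s)$ directly, no mesh-plus-union-bound step is needed for asymptotic pinning; and for the partition function lower bound underlying tightness, only a \emph{fixed} number $k$ of top curves is resampled (Proposition~\ref{Zlbd}), so the growing $n\leq N^\delta$ does not enter that particular estimate.
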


Most of the work required to prove Theorem \ref{thm:main} lies in verifying tightness. A key input is \cref{thm:max} below, which estimates the maximum of the $j^{\mathrm{th}}$ curve of the line ensemble on intervals of size $N^{2/3}$.  The proof occupies the largest portion of this paper: Sections \ref{sec:recursion} and \ref{sec:recursion-proofs}. We note that the restriction on $\kappa<\kappa_0$ is simply to make the upper bound on $K$ in \cref{thm:max}  diverging.

\begin{theorem}\label{thm:max}
	Fix $a_0>0$, $b_0>1$, $\gamma>0$, $\kappa \in (0, \kappa_0)$, $t\in\mathbb{R}$, and $T>0$. There exist positive constants $\delta = \delta(\kappa)$, $K_0 = K_0(a_0,b_0,\kappa)$, $N_0 = N_0(a_0,b_0,\gamma,\kappa,t,T)\in\mathbb{N}$, and $c=c(a_0,b_0)$ such that the following holds. 
		For  $N\geq N_0$, $n\leq N^{\delta}$, 
	$a \in [a_0, N^{\kappa/30}]$, $ b\in [b_0, \exp(\frac{\kappa}{6}(\log N)^{1/3})]$,
	$j \leq \min(n,\frac{\kappa}{20} \log_b N)$, $\mathcal{I}$ and $L$ satisfying \eqref{def:boundary-conditions-thm:max}, $\u$ and $\v$ satisfying \eqref{def:BL}, and $K\in[K_0, L^{2-\kappa}(\log N)^{-2/3}]$,
\begin{equation*} 
	\mathbb{P}_{n,\mathcal{I};0}^{a,b;\mathbf{u},\mathbf{v}} \bigg( \exists s\in[-T,T] : x_j^N(s+t) > K a^{-1/3}b^{-(j-1)/3}\bigg[\log \bigg(2 + \frac{a^{2/3}b^{2(j-1)/3}}{cK^{1/2}} |s|  \bigg) \bigg]^{2/3} \bigg)
		\leq e^{-cK^{3/2}}.
	\end{equation*}

\end{theorem}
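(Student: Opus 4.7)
The plan is to prove \cref{thm:max} by constructing, for each index $j' \geq j$, a deterministic \textbf{ceiling function} $\mathsf{Cl}_{j'} : \mathcal{I} \to \mathbb{R}_{\geq 0}$ so that the event $\{X_{j'}(x) \leq \mathsf{Cl}_{j'}(x) \text{ for all } x \in \mathcal{I}\}$ holds with failure probability summable in $j'$. The shape of $\mathsf{Cl}_{j'}$ is dictated by the theorem's right-hand side: a prefactor of order $K a^{-1/3} b^{-(j'-1)/3} N^{1/3}$ multiplied by a $[\log]^{2/3}$ correction, lifted by the previous ceiling and augmented near the endpoints of $\mathcal{I}$ to accommodate the high boundary values up to $L^{2-\kappa} N^{1/3}$. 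Restricting to $j' = j$ on $[t-T, t+T]$ and rescaling via \eqref{def:x_i} will then yield the theorem.

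I would run the recursion in decreasing $j'$, starting from an index $j_{\max} \leq n$ below which the curves are effectively pinned near zero (using the deterministic lower bound $X_n \geq 0$). For the inductive step, assume $X_{j'+1} \leq \mathsf{Cl}_{j'+1}$ holds on a good event $\mathcal{G}_{j'+1}$. On $\mathcal{G}_{j'+1}$ I would apply the strong Gibbs property (\cref{def:gibbs}) together with the stochastic monotonicity of area-tilted line ensembles (\cref{l.monotonicity}): raising the floor beneath $X_{j'}$ from $X_{j'+1}$ to $\mathsf{Cl}_{j'+1}$ only pushes $X_{j'}$ up, and likewise removing the non-crossing constraints with $X_1, \ldots, X_{j'-1}$ only pushes $X_{j'}$ up. Hence $X_{j'}$ is stochastically dominated by a \emph{single} area-tilted random walk bridge on $\mathcal{I}$ with tilt $\lambda_{j'} = ab^{j'-1}$, floor $\mathsf{Cl}_{j'+1}$, and boundary conditions $u_{j'}, v_{j'}$, reducing the problem to single-curve estimates.

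For this single walk, I would combine two ingredients. First, to handle the high boundary conditions, I would iteratively apply the dropping lemma (\cref{lem:dropping-lemma}) to build a deterministic \emph{mesh} of points spaced geometrically inward from the endpoints of $\mathcal{I}$, at which the walk has demonstrably descended from scale $L^{2-\kappa} N^{1/3}$ to within $O(H_{j'})$ of the floor, where $H_{j'} := \lambda_{j'}^{-1/3} N^{1/3}$ is the natural fluctuation scale. The mesh is chosen so that this descent traces out the log-corrected parabolic profile of $\mathsf{Cl}_{j'}$. Second, between adjacent mesh points, the maximum upper tail bound for a single area-tilted walk (\cref{maxbd}) ensures the walk does not overshoot $\mathsf{Cl}_{j'}$; the Tracy--Widom-type $e^{-cK^{3/2}}$ decay, combined with the $[\log]^{2/3}$ slack built into the ceiling, leaves enough room to union-bound across the polynomially many mesh intervals. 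Summing failure probabilities over $j' \geq j$ relies on the geometric relation $\sum_{j' \geq j} H_{j'} \leq C H_j$ (valid since $b > b_0 > 1$), which also ensures that stacking the floors $\mathsf{Cl}_{j'+1}$ does not destroy the target shape of $\mathsf{Cl}_j$ at the prescribed index.

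The main obstacle I anticipate is executing the dropping step when the floor $\mathsf{Cl}_{j'+1}$ is \emph{non-flat}: \cref{lem:dropping-lemma} is cleanest above a flat floor, whereas the recursion feeds in the previous log-parabolic ceiling. My approach would be to exploit the shift invariance in \cref{rmk:shift} and to choose the mesh spacing so that $\mathsf{Cl}_{j'+1}$ varies by at most $O(H_{j'})$ across each horizontal block, effectively reducing to the flat-floor case up to controlled corrections that can be absorbed into the ceiling. A secondary difficulty is that the underlying partition function lower bound (\cref{p.partition function lower bound}) and ballot theorem (\cref{t.ballot theorem}) must remain effective with boundary values as large as $L^{2-\kappa}N^{1/3}$; fortunately the paper develops precisely such refined versions, and they enter only as black boxes through \cref{lem:dropping-lemma} and \cref{maxbd}, so once the monotonicity-based single-curve reduction is in place, what remains is careful bookkeeping along the recursive scheme.
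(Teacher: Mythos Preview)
Your proposal is essentially correct and mirrors the paper's approach: the recursive ceiling construction, the reduction to a single area-tilted walk via the Gibbs property and monotonicity (\cref{l.monotonicity}, \cref{rmk:remove-top}), the iterated use of the dropping lemma to build a mesh descending from the high boundary values, and the max bound between mesh points are exactly the ingredients of the paper's argument (Propositions~\ref{prop:recursive-bound} and~\ref{prop:max-general-bottom-curves} feeding into \cref{thm:max-general}).

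Two organizational points are worth noting. First, rather than running the recursion down to an arbitrary index $j$, the paper proves the full ceiling bound for the \emph{top} curve only (\cref{thm:max-general}), and then obtains the general-$j$ statement in one line: remove the top $j-1$ curves by monotonicity, so that $X_j$ becomes the top curve of a new ensemble with tilt parameter $\lambda_j = ab^{j-1}$ in place of $a$, and apply the top-curve result with this larger $a$. This is why the hypotheses allow $a$ up to a small power of $N$. Second, your base case ``$j_{\max}$ below which the curves are effectively pinned near zero'' hides a genuine issue: once $j'$ exceeds the threshold $m$ of \eqref{def:m}, the scale $H_{j'}$ is too small for \cref{lem:dropping-lemma} and \cref{oneptbd} to apply (cf.\ \cref{rk:dropping-lemma-condition-1}). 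The paper handles this separately in \cref{prop:max-general-bottom-curves} by first using monotonicity to reduce all tilts below level $m$ to the common value $\lambda_{m+1}$, and then running a cruder version of the same recursion; this is not difficult but does require its own argument rather than an appeal to $X_n \geq 0$.
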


The upper bounds on $a$ and $b$ in the hypotheses are not optimal and are only used in technical estimates in the proof of \cref{thm:max-general}; note in particular that $a$ and $b$ may diverge with $N$.

\begin{proof}[Proof of \cref{thm:simplifiedmain,thm:simplifiedmain2}]
Take $L = N^{1/3}$ and $\kappa < 3\ep$. Then $\mathcal{I} = [-N,N]$ in \eqref{def:boundary-conditions-thm:max}, and the condition $\max(u_1,v_1) \leq N^{1-\ep}$ implies \eqref{def:BL} (the parameter $\gamma$, and thus $\kappa_0$, can be taken arbitrarily large with this choice of $L$), so Theorem \ref{thm:simplifiedmain} follows immediately from Theorem \ref{thm:main}. Moreover, the condition $K \leq N^{2/3-\ep}$ in Theorem \ref{thm:simplifiedmain2} implies $K \leq L^{2-\kappa}(\log N)^{-2/3}$ for $N$ large, and as long as $\kappa\geq 2\ep$, the condition $j \leq \frac{\ep}{10} \log_b N$ implies $j\leq \frac{\kappa}{30}\log_b N$. Taking $N$ large enough so that $a\leq N^{\kappa/30}$ and $b\leq \exp(\frac{\kappa}{6}(\log N)^{1/3}))$, Theorem \ref{thm:max} applies to our situation, and taking $s=0$ yields the result.
\end{proof}

\cref{thm:max} will follow from the more technical \cref{thm:max-general}, showing that with high probability the top curve is dominated by a deterministic ``ceiling'' function on the entire interval which is roughly logarithmic in the bulk (a monotonicity argument immediately yields a similar ceiling on the $j\th$ curve of the line ensemble, see \cref{rk:thm6.1-j}).

The proof of our main result Theorem \ref{thm:main} will follow quickly from the above along with the following two propositions, which we prove in Section \ref{sec:tight}.

\begin{proposition}\label{prop:tight}
	The sequence $\{\mu_N\}_{N\geq 1}$ of laws on $C(\mathbb{N}\times\mathbb{R})$ is tight.
\end{proposition}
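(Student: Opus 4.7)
Tightness in $C(\mathbb{N}\times\mathbb{R})$ under the topology of uniform convergence on compact sets reduces, by a standard argument, to tightness in $C(\llbracket 1,n\rrbracket\times[-T,T])$ for each fixed $n\in\mathbb{N}$ and $T>0$. Since $\llbracket 1,n\rrbracket$ is finite, the Arzel\`a-Ascoli criterion further reduces the problem to two ingredients: (i) one-point tightness of $x_j^N(t)$ for each $j,t$, and (ii) a uniform-in-$N$ modulus of continuity bound on $[-T,T]$ for each $j$ and $T$. Ingredient (i) is immediate: the deterministic lower bound $x_j^N\geq 0$ comes from the floor at $0$ and non-crossing, while the upper bound follows from \cref{thm:simplifiedmain2} applied pointwise with $K$ a large constant.

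For (ii), I would fix $j, T$ and work on the enlarged interval $[-T',T']$ with $T'=T+1$. Using \cref{thm:max} applied to the top $j+1$ curves, I would first select $M=M(j,T,\epsilon)$ large enough that the event $E_N := \{\sup_{[-T',T']}x_i^N \leq M \text{ for all } 1\leq i\leq j+1\}$ has $\mathbb{P}(E_N)\geq 1-\epsilon/2$ uniformly in $N$ large. On $E_N$, the strong Gibbs property (\cref{def:gibbs}) with $k=j$ identifies the conditional law of $(x_1^N,\dots,x_j^N)|_{[-T',T']}$ with the area-tilted measure $\mathbb{P}_{j,I;g}^{a,b;\mathbf{w},\mathbf{z}}$ of \cref{def:areatilt}, where $I$ is the discrete interval (of length $\asymp N^{2/3}$) underlying $[-T',T']$, $g$ is the rescaled $x_{j+1}^N$, and $\mathbf{w},\mathbf{z}$ are the rescaled top-$j$ curve values at $\pm T'$, all bounded by $M$ on $E_N$. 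From \cref{def:areatilt}, for any event $A$ depending only on $x_j^N|_{[-T,T]}$,
\[
\mathbb{P}_{j,I;g}^{a,b;\mathbf{w},\mathbf{z}}(A) \;\leq\; \frac{1}{Z_{j,I;g}^{a,b;\mathbf{w},\mathbf{z}}}\,\mathbb{P}_I^{w_j,z_j}(A),
\]
since both the area-tilt factor and the non-crossing indicator are at most $1$, and marginalizing the product base measure leaves the single free bridge on the right. Taking $A$ to be the modulus-of-continuity event, (ii) reduces to (a) a uniform-in-$N$ positive lower bound on $Z_{j,I;g}^{a,b;\mathbf{w},\mathbf{z}}$ on $E_N$, and (b) a modulus of continuity bound for the free bridge $\mathbb{P}_I^{w_j,z_j}$ with $O(1)$ rescaled endpoints.

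Step (b) follows from Donsker's invariance principle for random walk bridges: under the rescaling \eqref{def:x_i}, the law of the free bridge converges in $C([-T,T])$ to a standard Brownian bridge, whose modulus of continuity satisfies the desired decay, and the convergence is uniform for endpoints in the compact set $[0,M]^2$. Step (a) is the main obstacle, as it requires controlling a multi-walk partition function over an interval of length $\asymp N^{2/3}$. My plan is to use stochastic monotonicity (\cref{l.monotonicity}) to replace the random floor $g$ by the deterministic floor at height $M$ (which only decreases $Z$), then estimate the resulting partition function from below by separately bounding (1) the probability that $j$ independent rescaled bridges stay non-crossing and above the flat floor, via an iterative application of the ballot theorem (\cref{t.ballot theorem}), and (2) the conditional expectation of the area-tilt factor on this event, using Jensen's inequality $\mathbb{E}[\exp(-X)]\geq\exp(-\mathbb{E}[X])$ together with typical-area bounds drawn from the single-walk results around \cref{p.partition function lower bound}. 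Since on the rescaled scale the interval has $O(1)$ length while the boundary data and floor are $O(1)$, this yields a positive lower bound on $Z$ independent of $N$. Combined with (b) and sending $\delta\to 0$ gives the desired uniform modulus of continuity bound and hence tightness.
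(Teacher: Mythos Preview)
Your overall architecture matches the paper's: Arzel\`a--Ascoli, one-point tightness from \cref{thm:max}, and a Gibbs resampling argument that reduces modulus-of-continuity to a free-bridge estimate divided by a partition function. Steps (i) and (b) are fine. The genuine gap is step (a), the partition function lower bound.

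First, the monotonicity move is in the wrong direction to be useful. On $E_N$ you know $g\leq M$, so raising the floor from $g$ to the constant $M$ does decrease $Z$; but the boundary values $w_j,z_j$ of the bottom curve also lie in $[0,M]$, and in particular can lie strictly below $M$. A floor at height $M$ with boundary data below $M$ forces the non-crossing indicator to vanish, so the partition function you pass to is zero. You cannot salvage this by lowering the floor instead, since that moves $Z$ the wrong way.

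Second, and more fundamentally, there is no uniform positive lower bound on $Z_{j,I;g}^{a,b;\mathbf{w},\mathbf{z}}$ over all data with $\mathbf{w},\mathbf{z}\in[0,M]^j\cap W_0^j$ and $\sup g\leq M$. The event $E_N$ gives you upper bounds on everything but no separation: nothing prevents $w_i-w_{i+1}$ or $w_j-g(\ell)$ from being $o(1)$, in which case the non-crossing probability (hence $Z$) is $o(1)$. A ballot-theorem product over the $j$ bridges does not help here, since each factor degenerates as the corresponding gap closes. So the claim ``boundary data and floor are $O(1)$, hence $Z$ is bounded below independently of $N$'' is false as stated.

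The paper does not attempt a uniform lower bound. Instead it proves a \emph{high-probability} lower bound (Proposition~\ref{Zlbd}) via a size-biasing identity \eqref{RNderiv}: one introduces an auxiliary measure with the interaction turned off on $[-T,T]$, so that the Radon--Nikodym derivative against the true law is proportional to $Z$ itself, which suppresses the event $\{Z\leq\eta\}$. The positivity of the normalizing expectation is then obtained on a good event where the curves are \emph{separated} by $\eta N^{1/3}$ at the resampling boundary (Lemma~\ref{UVsep}), established inductively in $k$. This separation step is precisely the missing idea in your sketch.
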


\begin{proposition}\label{prop:gibbs}
	Any subsequential limit of $\{\mu_N\}_{N\geq 1}$ is the law of a line ensemble satisfying the Brownian Gibbs property with respect to $(a,b)$-area tilts.
\end{proposition}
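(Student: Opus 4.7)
The plan is to pass the prelimit Gibbs property (\cref{def:gibbs}) through the 1:2:3 rescaling via an invariance principle for $k$ non-crossing area-tilted random walk bridges, as outlined in the Introduction. By \cref{prop:tight}, fix a weakly convergent subsequence $\mu_{N_m}\to\mu_\infty$, and by Skorokhod's representation theorem realize the convergence $\mathbf{x}^{N_m}\to\mathbf{x}$ almost surely on a common probability space, with $\mathbf{x}\sim\mu_\infty$. Fix $k\in\mathbb{N}$, a compact interval $I=[\ell,r]\subset\mathbb{R}$, a bounded continuous $F:C(\intint{1,k}\times I)\to\mathbb{R}$, and a bounded continuous $G$ depending only on the coordinates $\{x_i(t):i>k \text{ or } t\notin(\ell,r)\}$. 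By a standard monotone class argument, verifying \eqref{eqn:bgp} reduces to showing
\begin{equation*}
\mathbb{E}\big[F(\mathbf{x}|_{\intint{1,k}\times I})\,G(\mathbf{x})\big]=\mathbb{E}\Big[\mathbf{E}_{k,I;h}^{a,b;\mathbf{u},\mathbf{v}}[F]\,G(\mathbf{x})\Big],
\end{equation*}
with $\mathbf{u}=(x_i(\ell))_{i=1}^k$, $\mathbf{v}=(x_i(r))_{i=1}^k$, and $h=x_{k+1}|_I$.

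For each $N$, set $J_N:=\intint{\sigma^{-2/3}N^{2/3}\ell,\sigma^{-2/3}N^{2/3}r}$ and let $\widetilde{\mathbb{E}}_{k,J_N;g^N}^{a,b;\mathbf{w}^N,\mathbf{z}^N}$ denote expectation under the pushforward of $\mathbb{P}_{k,J_N;g^N}^{a,b;\mathbf{w}^N,\mathbf{z}^N}$ by the 1:2:3 rescaling \eqref{def:x_i}, where $(\mathbf{w}^N,\mathbf{z}^N,g^N)$ are the rescaled boundary values of $X_1^N,\dots,X_k^N$ at the endpoints of $I$ and the rescaled $(k+1)$\th curve on $I$. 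The discrete Gibbs property applied to $\mathbf{X}^N$ yields
\begin{equation*}
\mathbb{E}\big[F(\mathbf{x}^N|_{\intint{1,k}\times I})\,G(\mathbf{x}^N)\big]=\mathbb{E}\Big[\widetilde{\mathbb{E}}_{k,J_N;g^N}^{a,b;\mathbf{w}^N,\mathbf{z}^N}[F]\,G(\mathbf{x}^N)\Big],
\end{equation*}
and along the subsequence the a.s.\ convergence $\mathbf{x}^{N_m}\to\mathbf{x}$ implies $G(\mathbf{x}^{N_m})\to G(\mathbf{x})$ and $(\mathbf{w}^{N_m},\mathbf{z}^{N_m},g^{N_m})\to(\mathbf{u},\mathbf{v},h)$ a.s., the last with $g^{N_m}\to h$ uniformly on $I$.

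The crux is the following \emph{invariance principle}: if $(\mathbf{w}^N,\mathbf{z}^N,g^N)\to(\mathbf{u},\mathbf{v},h)$ with $g^N\to h$ uniformly on $I$, then $\widetilde{\mathbb{E}}_{k,J_N;g^N}^{a,b;\mathbf{w}^N,\mathbf{z}^N}[F]\to\mathbf{E}_{k,I;h}^{a,b;\mathbf{u},\mathbf{v}}[F]$. Without the area-tilt and non-crossing reweighting, this is classical Donsker's invariance principle for $k$ independent random walk bridges with order-one boundary data, which holds under Assumption~\ref{a.misc}. Incorporating the reweighting reduces to (i) Portmanteau convergence of the non-crossing indicator, which holds since under $\mathbf{P}_{k,I;h}^{a,b;\mathbf{u},\mathbf{v}}$ the Brownian bridges are strictly ordered and strictly above $h$ in the interior almost surely, and (ii) convergence of the discrete Riemann sum defining the area-tilt functional to its continuum integral, immediate from uniform convergence and the boundedness of the rescaled curves on $I$ (using tightness). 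The nontrivial point, and the main obstacle, is a uniform lower bound on the prelimit partition function $Z_{k,J_N;g^N}^{a,b;\mathbf{w}^N,\mathbf{z}^N}$ along the sequence, which justifies the convergence of the normalizing Radon--Nikodym density; this follows from the ballot theorem (\cref{t.ballot theorem}) and the partition function lower bound (\cref{p.partition function lower bound}), via an argument adapted from \cite{ser23}.

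Taking $N=N_m\to\infty$ in the prelimit Gibbs identity and applying bounded convergence on both sides—the left-hand side using $\mathbf{x}^{N_m}\to\mathbf{x}$ and continuity of $F,G$, the right-hand side using the invariance principle applied on the a.s.\ realization of $(\mathbf{w}^{N_m},\mathbf{z}^{N_m},g^{N_m})\to(\mathbf{u},\mathbf{v},h)$—delivers the desired identity, showing that $\mu_\infty$ is the law of an $(a,b)$-tilted line ensemble.
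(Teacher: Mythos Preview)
Your proof follows essentially the same route as the paper's: Skorohod representation, the discrete Gibbs identity, and an invariance principle for $k$ non-crossing area-tilted random walk bridges (the paper packages this as \cref{lem:invar}, citing \cite{ser23}), followed by bounded convergence. One correction: the uniform lower bound you flag on the prelimit partition function does not come from \cref{t.ballot theorem} or \cref{p.partition function lower bound}, which are single-curve estimates with floor at $0$ and say nothing about the $k$-curve partition function over a general floor $g^N$; rather, once the $k$ independent rescaled bridges converge weakly to Brownian bridges (\cref{l.invar}), the discrete partition function converges to the strictly positive Brownian one by continuity of the area-tilt Riemann sum and the non-crossing indicator, so no separate discrete lower bound is needed here.
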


\begin{proof}[Proof of Theorem \ref{thm:main}]
	It suffices to verify that the sequence $\{\mu_N\}_{N\geq 1}$ is tight with respect to the topology of uniform convergence on compact sets on $C(\mathbb{N}\times\mathbb{R})$, and that any weak subsequential limit must be the measure $\mu_{a,b}$ of Theorem \ref{muexist}. Tightness is provided by Proposition \ref{prop:tight}.
	
	Next we verify that any subsequential limit satisfies the two conditions of uniform tightness and asymptotic pinning to zero in Definition \ref{def:utight}. First, note from \eqref{def:BL} that $L\to\infty$ as $N\to\infty$, so that $tN^{2/3}\in\mc I$ for any $t\in\R$ for all large enough $N$. Next, note that due to the conditions $L\geq (\log N)^{1/3+\gamma}$ in \eqref{def:boundary-conditions-thm:max} and $\kappa < \kappa_0(\gamma)$, the upper bound of $L^{2-\kappa}(\log N)^{-2/3}$ on $K$ in \cref{thm:max} diverges as $N\to\infty$. Thus for any fixed $M>0$, we can choose $N$ large enough so that \cref{thm:max} with $j=1$ (and $s=0$) yields

	\[
	\lim_{M\to\infty}\sup_{t\in\mathbb{R}} \limsup_{N\to\infty} \mathbb{P}_{n,\mathcal I;0}^{a,b;\mathbf{u},\mathbf{v}} \Big(x_1^N(t) > M\Big) = 0\,,
	\] 
	which implies \eqref{utight}. To verify \eqref{pinned} we first take $K\geq K_0$ Theorem \ref{thm:max} large enough in depending on $\ep$ so that $e^{-cK^{3/2}} < \ep$, and then take $j$ large enough depending on $T,\varepsilon$ so that $K a^{-1/3}b^{-(j-1)/3}[\log(2+\frac{Ta^{2/3}b^{2(j-1)/3}}{cK^{1/2}})]^{2/3} < \varepsilon$.
	
	Proposition \ref{prop:gibbs} now yields that any subsequential limit is the law of a uniformly tight, asymptotically pinned-to-zero, $(a,b)$-tilted line ensemble. It follows from Theorem \ref{muexist} that $\mu_{a,b}$ is the only possible subsequential limit of $\{\mu_N\}_{N\geq 1}$, and so in view of tightness we in fact have $\mu_N\to\mu_{a,b}$.
\end{proof}


\section{Random walk bridge tools, monotonicity, and ballot theorems}\label{s.ballot}

In this section we state various basic results on weak convergence (Section~\ref{s.invariance}), stochastic monotonicity (Section~\ref{s.monotonicity statement}), and tail bounds for random walk bridges (Section~\ref{s.rw rail bounds}) that will be used throughout the paper.  Then, in Section~\ref{s.ballot theorems}, we state and prove a new ballot theorem estimating the probability that a random walk bridge remains nonnegative, which allows for boundary conditions far beyond the diffusive scale.

\subsection{Invariance principle}\label{s.invariance}

We first state a classical version of Donsker's invariance principle that applies to the random walk bridges that we consider in this paper. We will use this fact throughout the paper to obtain various estimates on random walk bridges. The result is essentially proven in \cite{liggett} in the lattice case and in \cite{borisov} in the nonlattice case. All of the conditions in Assumption \ref{a.misc} are used here, but Assumption \ref{a.convex} is not needed.

\begin{lemma}\label{l.invar}
	Fix $\ell < r$, $u,v\in\mathbb{R}$, and $\alpha\in\mathbb{R}$. Assume $u,v\in\mathbb{R}$ and $\{u^N\}_{N\geq 1}$, $\{v^N\}_{N\geq 1}$ are sequences such that $\sigma^{-(1+\alpha/2)} N^{-1/2}u^N \to u$ and $\sigma^{-(1+\alpha/2)} N^{-1/2}v^N \to v$ as $N\to\infty$. For $N\geq 1$, let $X^N$ denote a random walk bridge from $(\lfloor \ell N\rfloor, u^N)$ to $(\lceil rN\rceil, v^N)$, i.e., $X^N \sim \mathbb{P}^{u^N,v^N}_{[\ell N, rN]}$, satisfying Assumption \ref{a.misc}. Define $x^N(t) = \sigma^{-(1+\alpha/2)} N^{-1/2} X^N(\sigma^\alpha Nt)$ for $t\in[\ell,r]\cap \sigma^{-\alpha}N^{-1}\mathbb{Z}$, and extend to $t\in[\ell,r]$ by linear interpolation. Then the process $\{x^N(t) : t\in[\ell,r]\}$ converges weakly as $N\to\infty$ to a standard Brownian bridge from $(\ell,u)$ to $(r,v)$ in the uniform topology on $C([\ell,r])$.
\end{lemma}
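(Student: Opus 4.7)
The plan is to reduce Lemma~\ref{l.invar} to the classical invariance principle for random walk bridges established in \cite{liggett} (lattice case) and \cite{borisov} (nonlattice case). Both references treat bridges whose increment distribution has mean zero, finite variance, and finite exponential moments near the origin, which are precisely the content of Assumption~\ref{a.misc}; Assumption~\ref{a.convex} plays no role.

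The parameter $\alpha$ is purely a book-keeping device. Setting $\tilde N := \lceil \sigma^\alpha N \rceil$, the time rescaling $\sigma^\alpha N t$ coincides up to negligible error with $\tilde N t$, and the spatial factor $\sigma^{-(1+\alpha/2)} N^{-1/2}$ becomes $(1+o(1))\sigma^{-1} \tilde N^{-1/2}$. This is precisely the standard Donsker normalization for a random walk of variance $\sigma^2$ run for order $\tilde N$ steps, and the hypotheses on $u^N, v^N$ become the standard requirement that the renormalized endpoints tend to $u, v$. Since the discrepancy between $\sigma^\alpha N$ and $\tilde N$ is at most a bounded additive shift in time, it may be absorbed into the modulus of continuity of the limiting Brownian bridge, which is a.s.\ continuous; this step adds no content beyond the standard Donsker statement.

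With the scaling standardized, I would invoke the classical bridge invariance principle to conclude that $x^N$ converges weakly in $C([\ell,r])$, in the uniform topology, to a Brownian bridge from $(\ell,u)$ to $(r,v)$. To accommodate the $N$-dependent and non-zero boundary conditions, I would decompose the bridge as the sum of an independent bridge with zero endpoints (covered directly by the cited theorems) and the deterministic affine interpolant between $(\lfloor \ell N\rfloor, u^N)$ and $(\lceil rN\rceil, v^N)$, which after rescaling converges uniformly on $[\ell,r]$ to the affine function interpolating $(\ell,u)$ and $(r,v)$. An application of Slutsky's theorem then gives convergence to the corresponding Brownian bridge.

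The only mildly delicate point, and the one most prone to oversight rather than difficulty, is ensuring that the endpoint conditioning passes to the limit uniformly along the sequence $(u^N, v^N)$. In the lattice case this is controlled by Gnedenko's local central limit theorem applied to $\mathbb{P}(S_{\lceil rN\rceil - \lfloor \ell N\rfloor} = v^N - u^N)$, and in the nonlattice case by the analogous local density estimate; both are standard consequences of the exponential-moment hypothesis in Assumption~\ref{a.misc} and furnish the uniform Radon--Nikodym derivative comparison between the conditioned and unconditioned laws needed to invoke the cited references. I do not anticipate any genuine obstacle beyond stitching together these classical inputs.
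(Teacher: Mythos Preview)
Your overall strategy---invoke the classical bridge invariance principles of \cite{liggett} and \cite{borisov} and note that Assumption~\ref{a.misc} supplies exactly their hypotheses---is the same as the paper's, which simply cites these references and remarks that the extension from $[\ell,r]=[0,1]$, $u=v=0$ to general intervals and endpoints is a straightforward modification of their proofs.

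However, your specific reduction contains an error. You propose to ``decompose the bridge as the sum of an independent bridge with zero endpoints and the deterministic affine interpolant.'' This distributional identity is a special feature of Gaussian bridges and is \emph{false} for general random walk bridges: conditioning a random walk on $S_n = a$ does not yield the same law as conditioning on $S_n=0$ and then adding the line $k\mapsto ak/n$. (For a simple illustration, take bounded increments and $a$ near the boundary of the support of $S_n$.) What is true is only the trivial shift $X^{u^N,v^N} \stackrel{d}{=} u^N + X^{0,\,v^N-u^N}$, which still leaves you with a bridge having nonzero right endpoint. So Slutsky applied to ``zero-endpoint bridge $+$ affine'' does not give the claimed reduction.

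Your final paragraph is closer to a correct route: one handles the moving endpoint $v^N - u^N$ directly, using a local limit theorem to control the conditioning $\{S_n = v^N - u^N\}$ uniformly along the sequence. That is essentially what ``straightforward modifications of the proof'' means in the paper's treatment, and it replaces (rather than supplements) the incorrect affine decomposition.
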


The exponent in the prefactor $\sigma^{-(1+\alpha/2)}$ ensures the correct covariance, as follows from a simple calculation. For the lattice case, weak convergence of the process follows from Theorem 1 of \cite{liggett}, while the limit is identified as the Brownian bridge by the conditional density specified in Theorem 2. For the nonlattice case, the result follows from Theorem 1 of \cite{borisov}. This result is only stated in the case $[\ell,r]=[0,1]$ and $u=v=0$, but straightforward modifications of the proof extend the result to our setup. We refer the reader to these two papers for the details of the argument.

\subsection{Stochastic monotonicity}\label{s.monotonicity statement}

We next state a crucial monotonicity result for area-tilted random walk bridges that we will use throughout. It will be relevant here to consider line ensembles with ceilings (in addition to the floors already discussed), so in this section only, we use the following modified notation. With notation as in Definition \ref{def:areatilt}, for a function $g : I\to\mathbb{R}$ satisfying $g(\ell) \geq u_1$, $g(r) \geq v_1$, and $g(j)\geq h(j)$ for all $j\in I$, we use $\mathbb{P}^{a,b;\u,\v}_{n,I;g,h}$ to denote the measure $\mathbb{P}^{a,b;\u,\v}_{n,I;h}$ further conditioned on the event $\{X_1(j) \leq g(j) \mbox{ for all } j\in I\}$. Thus $\mathbb{P}^{a,b;\u,\v}_{n,I;+\infty,h} = \mathbb{P}^{a,b;\u,\v}_{n,I;h}$. The proof of Lemma~\ref{l.monotonicity} is a standard Markov chain coupling argument and is deferred to Appendix~\ref{s.monotonicity}.

\begin{lemma}\label{l.monotonicity}
	Let $n, N\in\N$ and $I\subseteq \llbracket -N,N\rrbracket$ an interval. For $*\in\{\shortuparrow,\shortdownarrow\}$, let $a^*, b^*\in\R$, $\mathbf{u}^*,\mathbf{v}^* \in\R^n_{\geq}$ and $g^*, h^*:I\to\R$ be such that $u^{\shortuparrow}_i\geq u^{\shortdownarrow}_i$, $v^{\shortuparrow}_i\geq v^{\shortdownarrow}_i$ for $i\in\intint{1,n}$, $g^{\shortuparrow}(x) \geq g^{\shortdownarrow}(x)$, $h^{\shortuparrow}(x) \geq h^{\shortdownarrow}(x)$ for all $x\in I$, and $a^{\shortuparrow}\leq a^{\shortdownarrow}$, $b^{\shortuparrow} \leq b^{\shortdownarrow}$. 
Then there exists a coupling $Q$ of the random variables $\mathbf{X}^* = ((X_i^*)_{i=1}^n) \sim \P^{a^*,b^*; \mathbf{u}^*, \mathbf{v}^*}_{n,I; g^*, h^*}$ for $*\in\{\shortuparrow,\shortdownarrow\}$ such that, if $\smash{(\mathbf{X}^{\shortuparrow}, \mathbf{X}^{\shortdownarrow})} \sim Q$, then $Q$-a.s.,	
	\begin{align*}
		X^{\shortuparrow}_i(x) \geq X^{\shortdownarrow}_i(x) \quad \mbox{for all} \quad i\in\intint{1,n}, \, x\in I.
	\end{align*}
\end{lemma}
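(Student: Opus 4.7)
The plan is to exhibit the coupling $Q$ via a grand coupling of two heat-bath (Glauber) Markov chains, one for each law $\mathbb{P}^{a^*, b^*; \mathbf{u}^*, \mathbf{v}^*}_{n, I; g^*, h^*}$, $*\in\{\shortuparrow,\shortdownarrow\}$, using matching randomness in the site choice and in the uniform random variable for inverse-CDF resampling. The ingredients are (i) ergodicity of each chain, (ii) a single-site monotonicity statement, and (iii) starting from an ordered pair of initial configurations.

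For each $*$, I define a Markov chain on the support of $\mathbb{P}^{a^*, b^*; \mathbf{u}^*, \mathbf{v}^*}_{n, I; g^*, h^*}$ that at each step picks a uniformly random interior site $(i, x) \in \llbracket 1, n \rrbracket \times \llbracket \ell+1, r-1 \rrbracket$ and resamples $X_i^*(x)$ from its conditional law given the rest. By the Gibbs structure this density is proportional to
\begin{equation*}
	f^*(y) \propto \exp\bigl(-H_{\mathrm{RW}}(y - X_i^*(x-1)) - H_{\mathrm{RW}}(X_i^*(x+1) - y) - \lambda_i^* y\bigr)\,\one_{[m_i^*,\, M_i^*]}(y),
\end{equation*}
where $\lambda_i^* := a^*(b^*)^{i-1}/N$, $m_i^* := \max(h^*(x), X_{i+1}^*(x))$, and $M_i^* := \min(g^*(x), X_{i-1}^*(x))$ (with the conventions $X_0^* \equiv g^*$ and $X_{n+1}^* \equiv h^*$). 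Each chain is reversible with respect to its target measure; ergodicity is immediate in the lattice case and follows from standard Harris-recurrence arguments in the nonlattice case.

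The central step is to show that if $\mathbf{X}^{\shortuparrow} \geq \mathbf{X}^{\shortdownarrow}$ pointwise and both chains resample the same site $(i,x)$ via inverse CDF with a shared $U\sim\mathrm{Unif}[0,1]$, then the pointwise ordering is preserved. Writing $w_\pm^* := X_i^*(x \pm 1)$, the current ordering and the parameter assumptions yield $w_\pm^{\shortuparrow} \geq w_\pm^{\shortdownarrow}$, $\lambda^{\shortuparrow} \leq \lambda^{\shortdownarrow}$, $m^{\shortuparrow} \geq m^{\shortdownarrow}$, and $M^{\shortuparrow} \geq M^{\shortdownarrow}$. On the overlap $[m^{\shortuparrow}, M^{\shortdownarrow}]$ of the two supports,
\begin{equation*}
	\log\frac{f^{\shortuparrow}(y)}{f^{\shortdownarrow}(y)} = \bigl[H_{\mathrm{RW}}(y - w_-^{\shortdownarrow}) - H_{\mathrm{RW}}(y - w_-^{\shortuparrow})\bigr] + \bigl[H_{\mathrm{RW}}(w_+^{\shortdownarrow} - y) - H_{\mathrm{RW}}(w_+^{\shortuparrow} - y)\bigr] + (\lambda^{\shortdownarrow} - \lambda^{\shortuparrow})\,y + C.
\end{equation*}
Assumption \ref{a.convex} makes $H_{\mathrm{RW}}'$ non-decreasing, so each bracketed term has non-negative $y$-derivative (the first is $H_{\mathrm{RW}}'(y - w_-^{\shortdownarrow}) - H_{\mathrm{RW}}'(y - w_-^{\shortuparrow}) \geq 0$ since $y - w_-^{\shortdownarrow} \geq y - w_-^{\shortuparrow}$; the second is analogous), and the drift term is non-decreasing in $y$ as well. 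Hence $f^{\shortuparrow}/f^{\shortdownarrow}$ is non-decreasing on the overlap, i.e., the monotone likelihood ratio property holds; together with $m^{\shortuparrow} \geq m^{\shortdownarrow}$ and $M^{\shortuparrow} \geq M^{\shortdownarrow}$, this implies $f^{\shortuparrow} \succeq_{\mathrm{st}} f^{\shortdownarrow}$, and inverse-CDF sampling via the shared $U$ produces ordered outputs.

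To finish, I start the two chains from a pair of ordered initial configurations, whose existence is a routine construction from the boundary data and the floor/ceiling (for instance, take any admissible $\mathbf{X}^{\shortdownarrow}(0)$ and any admissible $\mathbf{X}^{\shortuparrow}(0)$ lying above it, both obtainable by piecewise-linear interpolation with appropriate vertical adjustments). Iterating the grand coupling, the two chains remain pointwise ordered for all time, and by ergodicity each marginal converges in law to its stationary measure; tightness of the joint law on the closed set of pointwise-ordered pairs extracts a weak subsequential limit, which supplies the desired coupling $Q$. The main obstacle is the monotone-likelihood-ratio computation; it is exactly there that Assumption \ref{a.convex} enters in an essential way, consistent with the authors' remark that the convexity hypothesis could be replaced by the conclusion of the lemma itself.
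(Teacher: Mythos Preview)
Your proposal is correct and follows the same Markov-chain-coupling paradigm as the paper, with the convexity of $H_{\mathrm{RW}}$ entering at exactly the analogous point. The technical packaging differs: the paper first discretizes to a finite state space $\varepsilon\mathbb{Z}\cap[-\varepsilon^{-1},\varepsilon^{-1}]$, runs coupled Metropolis $\pm\varepsilon$ proposals (comparing acceptance ratios $R_t^{\shortuparrow}$ and $R_t^{\shortdownarrow}$ via the convexity inequality $-f(z+\varepsilon-a)+f(z-a)\geq -f(z+\varepsilon-b)+f(z-b)$ for $a\geq b$), and then sends $\varepsilon\to 0$; you instead run heat-bath resampling directly and verify a monotone-likelihood-ratio property of the one-site conditionals. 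The paper's discretization purchases automatic irreducibility and convergence to stationarity on a finite space, sidestepping the Harris-recurrence argument you allude to, at the price of an extra limiting step; your route is more streamlined but leans on that ergodicity claim and on producing an ordered initial pair (the paper simply starts both discretized chains from the \emph{same} state). One cosmetic point: you write $H_{\mathrm{RW}}'$, but convexity does not give differentiability; your MLR conclusion still holds via the equivalent statement that $y\mapsto H_{\mathrm{RW}}(y+s)-H_{\mathrm{RW}}(y)$ is non-decreasing for any $s\geq 0$, which is exactly how the paper phrases it.
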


When such a coupling exists, we say that $\P^{a^\downarrow,b^\downarrow; \mathbf{u}^\downarrow, \mathbf{v}^\downarrow}_{n,I; g^\downarrow, h^\downarrow}$ is stochastically dominated by $\P^{a^\uparrow,b^\uparrow; \mathbf{u}^\uparrow, \mathbf{v}^\uparrow}_{n,I; g^\uparrow, h^\uparrow}$.

\begin{remark}\label{rmk:remove-top}
We record here a simple but very useful consequence of Lemma \ref{l.monotonicity} which will underlie the argument in Section \ref{sec:recursion}. For $\mathbf{X} \sim \mathbb{P}^{a,b;\u,\v}_{n,I;h}$ and any $j\in\llbracket 1, n\rrbracket$, consider the conditional law of $X_j$ given all curves $X_i$ with $i < j$. By the Gibbs property (a straightforward extension of Definition \ref{def:gibbs}), this conditional law is the measure $\mathbb{P}^{\lambda_j, b; \u^{\geq j}, \v^{\geq j}}_{n-j+1,I; X_{j-1},0}$, where $\lambda_j := ab^{j-1}$, $\u^{\geq j} := (u_j,\dots,u_n)$, $\v^{\geq j} := (v_j,\dots,v_n)$ (and $X_0 := +\infty$). By Lemma \ref{l.monotonicity}, this (random) measure is stochastically dominated by the no-ceiling measure $\mathbb{P}^{\lambda_j, b; \u^{\geq j}, \v^{\geq j}}_{n-j+1,I; 0}$. Thus, in order to upper bound the probability of any increasing event for $X_j$, for instance the event in Theorem \ref{thm:max}, we can simply remove all curves above level $j$. Moreover, on the event that $X_{j+1}$ is dominated by a deterministic function $f$ on $I$, we can further condition on all curves $X_i$ with $i>j$, and then use monotonicity with respect to the floor to consider instead the simpler non-random measure $\mathbb{P}_{I;f}^{\lambda_j,u_j,v_j}$.
\end{remark}

\subsection{Subexponential tail bounds}\label{s.rw rail bounds}

In this section we establish subexponential tail bounds on the maximum of random walks and random walk bridges, under Assumption \ref{a.misc}. The first lemma is a standard estimate for the maximum of a random walk. Recall that $\mathbb{P}_{[0,N]}^u$ denotes the law of a random walk $X$ on $\llbracket 0,N \rrbracket$ started at $u$.

\begin{lemma}[Doob's maximal inequality]\label{l.doob maximal}
	Under Assumption \ref{a.misc}, there exists $c>0$ independent of $N$ such that for all $x>0$,
	\begin{align}\label{e.doob}
		\P_{[0,N]}^0 \left(\max_{0\leq j \leq N} |X(j)| > xN^{1/2}\right) \leq 2\exp\left(-c(x^2\wedge xN^{1/2})\right).
	\end{align}
	The same inequality without the prefactor of $2$ holds for $\max_{1\leq j\leq N} X(j)$.
\end{lemma}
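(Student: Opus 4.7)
The plan is to apply Doob's submartingale inequality to the exponential martingale $M_j := e^{tX(j)}$ for suitable $t > 0$, combined with a Chernoff-type optimization that gives the stated two-regime bound. Since $X$ is a mean-zero random walk (hence martingale) under $\P^0_{[0,N]}$, the process $(M_j)_{j\geq 0}$ is a nonnegative submartingale by Jensen's inequality, provided $t$ lies in the region where $\phi(t) := \E[e^{tZ}] < \infty$ for a single increment $Z$. By Assumption~\ref{a.misc}, such $t$ exist in a neighborhood of $0$, and since $\phi(0) = 1$, $\phi'(0) = 0$, $\phi''(0) = \sigma^2 < \infty$, a second-order Taylor expansion yields constants $t_0 > 0$ and $C > 0$ such that
\[
\phi(t) \leq \exp(Ct^2) \qquad \text{for all } |t| \leq t_0.
\]

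Next, for any $y > 0$ and $t \in (0, t_0]$, Doob's maximal inequality applied to $M_j$ gives
\[
\P^0_{[0,N]}\Big(\max_{0 \leq j \leq N} X(j) \geq y\Big)
= \P^0_{[0,N]}\Big(\max_{0 \leq j \leq N} M_j \geq e^{ty}\Big)
\leq e^{-ty}\,\E[M_N]
= e^{-ty}\phi(t)^N
\leq \exp\bigl(-ty + CNt^2\bigr).
\]
I would then optimize over $t$ in two regimes. If $y \leq 2Ct_0 N$, choose $t = y/(2CN) \leq t_0$ to obtain the Gaussian bound $\exp(-y^2/(4CN))$. If $y > 2Ct_0 N$, the optimal unconstrained $t$ exceeds $t_0$, so I take $t = t_0$, yielding $\exp(-t_0 y + CNt_0^2) \leq \exp(-t_0 y / 2)$ in that regime (using $CNt_0^2 \leq t_0 y/2$). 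Substituting $y = xN^{1/2}$, the first case produces $\exp(-x^2/(4C))$ valid when $x \leq 2Ct_0 N^{1/2}$, and the second produces $\exp(-t_0 x N^{1/2}/2)$ otherwise. Combining, there exists $c > 0$ such that
\[
\P^0_{[0,N]}\Big(\max_{0 \leq j \leq N} X(j) > xN^{1/2}\Big) \leq \exp\bigl(-c(x^2 \wedge xN^{1/2})\bigr),
\]
which is the one-sided bound claimed (without the factor of $2$).

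The two-sided version $\max_{0\leq j\leq N}|X(j)|$ follows immediately by a union bound, since $-X$ is also a mean-zero random walk satisfying Assumption~\ref{a.misc} (the increment distribution is symmetric under negation only if so, but $-X$'s increments have the same mean $0$, same variance $\sigma^2$, and finite MGF $\phi(-t)$ near $0$, so the identical argument applies), and the factor of $2$ accounts for bounding both $\max X(j) > xN^{1/2}$ and $\min X(j) < -xN^{1/2}$. There is no real obstacle here; the only mild care needed is to ensure the Chernoff optimizer stays within the MGF's domain of finiteness, which forces the splitting into the subgaussian ($x \lesssim N^{1/2}$) and subexponential ($x \gtrsim N^{1/2}$) regimes encoded by the $x^2 \wedge xN^{1/2}$ exponent.
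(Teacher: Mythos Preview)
Your proof is correct and follows essentially the same approach as the paper: apply Doob's maximal inequality to the exponential (sub)martingale, use the quadratic bound $\phi(t)\leq e^{Ct^2}$ on the MGF near zero, and optimize over $t$ with the choice $t = t_0 \wedge \frac{xN^{-1/2}}{2C}$, then union bound with $-X$ for the two-sided estimate. The paper's version is terser (it simply states the optimal $\theta$ and says ``adjusting the constants''), while you spell out the two regimes explicitly, but the content is the same.
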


\begin{proof}
	By Doob's maximal inequality applied to the exponential martingale $e^{\theta X(N)}/\E_{[0,N]}^0[e^{\theta X(N)}]$ (see, e.g., \cite[Theorem 12.2.5]{lawler2010random}),
	\begin{align*}
		\P_{[0,N]}^0 \left(\max_{1\leq j\leq N} X(j) > xN^{1/2}\right) \leq \exp(-\theta xN^{1/2})\E_{[0,N]}^0 [\exp(\theta X(N))].
	\end{align*}
	Note $\E_{[0,N]}^0 [\exp(\theta X(N))] = \E[\exp(\theta Y)]^N$, where $Y$ has the law of the increments of $\P_{[0,N]}^0$. Under the finite moment generating function assumption, there exist $\theta_0,K>0$ such that for all $|\theta|<\theta_0$, $\E[\exp(\theta Y)] \leq \exp(K\theta^2)$ (see, e.g., \cite[Theorem 2.13]{wainwright}). Taking $\theta = \theta_0\wedge(2K)^{-1}xN^{-1/2}$ and adjusting the constants gives the inequality for $\max X$. For $\max |X|$, we do the same argument for $-X$ and apply a union bound, giving the prefactor of 2 on the right-hand side of \eqref{e.doob}. 
\end{proof}

The next lemma is an analogous estimate for random walk bridges. Recall that $\mathbb{P}_{[0,N]}^{u,v}$ denotes the law of a random walk bridge on $\llbracket 0,N\rrbracket $ from $u$ to $v$.

\begin{lemma}\label{l.bridge maximal}
	Under Assumptions \ref{a.convex} and \ref{a.misc}, there exist $c>0$ and $N_0\in\mathbb{N}$ such that for all $u>0$ and $N\geq N_0$,
	\begin{equation*}
		\mathbb{P}_{[0,N]}^{u,u}\left(\max_{1\leq j \leq N-1} X(j) > 2u\right) \leq 4\exp\bigg(-c\bigg(\frac{u^2}{N} \wedge u\bigg)\bigg).
	\end{equation*}
\end{lemma}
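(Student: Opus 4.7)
The plan is to reduce, via the shift $Y(j) := X(j) - u$ (which is a bridge from $0$ to $0$ on $\intint{0,N}$), to bounding $\P^{0,0}_{[0,N]}(\max_{1\leq j\leq N-1} Y(j) > u)$. First, for $u \leq C_0\sqrt{N}$ with $C_0$ depending on the final constant $c$, the quantity $u^2/N \wedge u$ is bounded by $C_0^2$, so $4\exp(-c(u^2/N\wedge u)) \geq 1$ once $c$ is chosen small enough, and the bound is trivial. Thus the remaining task is to establish the bound in the regime $u \geq C_0\sqrt{N}$.

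In the lattice case, I would apply the bridge-to-walk identity
\[
\P^{0,0}_{[0,N]}(\max Y > u) = \frac{\P^{0}_{[0,N]}(\max Y > u,\, Y(N) = 0)}{\P^{0}_{[0,N]}(Y(N) = 0)},
\]
with denominator bounded below by $c N^{-1/2}$ via the local CLT (valid under Assumption~\ref{a.misc}). For the numerator, let $\tau := \inf\{j\geq 1 : Y(j) > u\}$ and decompose by the strong Markov property:
\[
\P^{0}(\max Y > u,\, Y(N) = 0) = \sum_{k=1}^{N-1}\sum_{w>u} \P^{0}(\tau=k,\, Y(k)=w)\, \P^{w}(Y(N-k)=0).
\]
The finite MGF provides a Gaussian local CLT together with Cram\'er-type large-deviation tails, giving $\P^{w}(Y(N-k)=0)\leq C(N-k)^{-1/2}\exp(-c(w^2/(N-k)\wedge w))$, which for $w>u$ is at most $C(N-k)^{-1/2}\exp(-c(u^2/(N-k)\wedge u))$.

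Splitting the sum at $k=N/2$: for $k \leq N/2$, the inequalities $(N-k)^{-1/2}\lesssim N^{-1/2}$ and $\exp(-cu^2/(N-k))\leq \exp(-cu^2/N)$ pull out of the sum, leaving $\P^{0}(\tau\leq N/2)$, which is at most $2\exp(-c(u^2/N\wedge u))$ by applying Lemma~\ref{l.doob maximal} to the first half of the walk. For $k > N/2$, the change of variables $m:=N-k$ yields a sum with summand $m^{-1/2}\exp(-cu^2/m)$; this function's maximum over $m$ is of order $u^{-1}$ (attained near $m\sim u^2$), so combining with $\P^{0}(\tau\leq N)\leq 2\exp(-c(u^2/N\wedge u))$ from Lemma~\ref{l.doob maximal} bounds this piece by $Cu^{-1}\exp(-c(u^2/N\wedge u))$. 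Summing the two contributions and dividing by the denominator gives
\[
\P^{0,0}_{[0,N]}(\max Y > u) \leq C(1 + \sqrt{N}/u)\exp(-c(u^2/N\wedge u)),
\]
which in the regime $u \geq C_0\sqrt{N}$ absorbs into $4\exp(-c'(u^2/N\wedge u))$ for a slightly smaller $c' < c$. The nonlattice case follows analogously with transition densities in place of mass functions.

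The main obstacle is the $\sqrt{N}$ factor introduced by the bridge-to-walk conversion. A crude approach that drops the constraint $\{Y(N)=0\}$ and applies Lemma~\ref{l.doob maximal} gives only $C\sqrt{N}\exp(-cu^2/N)$, which fails in the intermediate regime $\sqrt{N}\lesssim u\lesssim \sqrt{N\log N}$. The decomposition via $\tau$ is essential to extract the additional $u^{-1}$ factor that cancels the $\sqrt{N}$ once $u \geq \sqrt{N}$; this in turn relies on the strong local CLT with subexponential return-probability tails, afforded by the finite MGF in Assumption~\ref{a.misc}.
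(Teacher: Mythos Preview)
Your argument is correct, but the paper takes a much shorter route. The key difference is that the paper exploits stochastic monotonicity (Lemma~\ref{l.monotonicity}, available under Assumption~\ref{a.convex}): since $\{\max_j X(j) > 2u\}$ is increasing, the bridge law conditioned on $X(N)=u$ is dominated by the walk conditioned on $X(N)\geq u$, so after centering
\[
\P^{u,u}_{[0,N]}\Bigl(\max_{j} X(j) > 2u\Bigr) \;\leq\; \P^{0}_{[0,N]}\Bigl(\max_j X(j) > u \Bigm| X(N)\geq 0\Bigr) \;\leq\; \frac{\P^{0}_{[0,N]}(\max_j X(j) > u)}{\P^{0}_{[0,N]}(X(N)\geq 0)}.
\]
The denominator tends to $1/2$ by the central limit theorem (hence is at least $1/4$ for $N\geq N_0$), and the numerator is handled directly by Lemma~\ref{l.doob maximal}. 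No $\sqrt N$ factor ever appears, so the trivial/non-trivial split, the first-passage decomposition via $\tau$, and the large-deviation local limit theorem are all unnecessary.

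What your approach buys is that it never invokes monotonicity, so it establishes the lemma under Assumption~\ref{a.misc} alone; the paper's proof genuinely uses Assumption~\ref{a.convex} through Lemma~\ref{l.monotonicity}. The price is the heavier analytic input (a sharp local CLT with Cram\'er-type tails for $\P^{w}(Y(m)=0)$, which you invoke but would still need to justify uniformly in $m$ and $w$) and the constant-juggling to absorb the prefactor $C(1+\sqrt N/u)$ into $4$. The paper trades all of that for the single monotonicity step.
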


\begin{proof}
	Note that the event that $\max X > 2u$ is increasing with respect to $X$. 
	Then, by stochastic monotonicity with respect to the boundary conditions (Lemma \ref{l.monotonicity}),
	\begin{align*}
		\mathbb{P}_{[0,N]}^{u,u}\left(\max_{0\leq j \leq N} X(j) > 2u\right)  &\leq \mathbb{P}_{[0,N]}^u \left(\max_{0\leq j\leq N} X(j) > 2u \ \bigg|\  X(N) \geq u\right) \\
		&= \mathbb{P}_{[0,N]}^0\left(\max_{0\leq j\leq N} X(j) > u \ \bigg|\ X(N) \geq 0\right) \leq \frac{\mathbb{P}_{[0,N]}^0(\max_{0\leq j\leq N} X(j) > u)}{\mathbb{P}_{[0,N]}^0 (X(N)\geq 0)}.
	\end{align*}
	By Lemma \ref{l.doob maximal}, the numerator in the last expression is bounded by $\exp(-c(u^2 N^{-1} \wedge u))$. By the central limit theorem, $\mathbb{P}_{[0,N]}^0 (X(N)\geq 0) \to 1/2$ as $N\to\infty$, so in particular we can choose $N_0\in\mathbb{N}$ so that the denominator in the last expression is at least $1/4$ for all $N\geq N_0$. Combining these two estimates proves the desired bound.
\end{proof}

\subsection{Ballot theorems}\label{s.ballot theorems}

In this section we state a ballot theorem for the class of random walk bridges under consideration in this article, namely those satisfying Assumptions \ref{a.convex} and \ref{a.misc}. The lower and upper bounds will be proven separately in Sections~\ref{s.ballot theorem upper bound} and \ref{s.ballot theorem lower bound}, respectively.

\begin{theorem}[Ballot theorem]\label{t.ballot theorem}
There exist $c,C>0$ such that, for all $x,y>0$ and $N\in\N$,
\begin{align*}
		c\cdot\min\left(1, \frac{xy}{N}\right) \leq \P^{x,y}_{[0,N]}\bigl(X(k) \geq 0,  k=1, \ldots, N-1\bigr) \leq C\cdot\min\left(1, \frac{xy}{N}\right).
	\end{align*}
\end{theorem}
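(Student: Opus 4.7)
My plan is to establish matching bounds of order $\min(1, xy/N)$ on
\begin{align*}
q(x, y, N) := \P^{x,y}_{[0,N]}\bigl(X(k) \geq 0,\, k=1,\ldots,N-1\bigr)
\end{align*}
by combining the invariance principle (Lemma~\ref{l.invar}), stochastic monotonicity (Lemma~\ref{l.monotonicity}), and a Markov decomposition at the midpoint. By time reversal I may assume $x \leq y$. In the diffusive regime $\max(x,y) \leq C\sigma\sqrt{N}$, I would cite the classical Addario--Berry--Reed ballot theorem, whose proof reduces via Lemma~\ref{l.invar} to the Brownian bridge (for which the explicit survival probability $1 - \exp(-2xy/\sigma^2 N) \asymp \min(1, xy/N)$ is well known), supplemented by a local CLT to make the convergence quantitative.

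For the supra-diffusive regime $y > C\sigma\sqrt{N}$, I would Markov-decompose at $T = N/2$. Conditioning on $z := X(T)$, whose density $f$ under $\P^{x,y}_{[0,N]}$ is by a local CLT approximately Gaussian with mean $(x+y)/2$ and variance $\Theta(N)$, gives
\begin{align*}
q(x, y, N) = \int q(x, z, N/2)\, q(z, y, N/2)\, f(z)\,\mathrm{d}z.
\end{align*}
For the \emph{lower bound}, I restrict to $z \in [y/4, y]$, a set of $f$-mass bounded below by a positive constant since $y \gg \sqrt{N}$. On this range both endpoints of the right-half bridge are $\Theta(y)$ and thus at least of order $\sqrt{N/2}$, so rescaling via Lemma~\ref{l.invar} gives $q(z, y, N/2) \gtrsim 1$; the left half has $x \leq \sqrt{N}/C$ (diffusive on $[0, N/2]$) and $xz/(N/2) \asymp xy/N$, which is bounded above by a constant in the nontrivial range $xy \leq N$, so the diffusive ballot yields $q(x, z, N/2) \gtrsim xz/N \gtrsim xy/N$. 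Multiplying yields $q(x, y, N) \gtrsim \min(1, xy/N)$; the case $xy > N$ follows similarly with both factors bounded below by constants.

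For the \emph{upper bound}, I would use $q(z, y, N/2) \leq 1$ and the diffusive ballot bound $q(x, z, N/2) \leq C\min(1, xz/N)$ when $z$ is diffusive on $[0, N/2]$, together with stochastic monotonicity to handle larger $z$. Integrating against the Gaussian density $f(z)$ concentrated near $(x+y)/2$ then yields $q(x, y, N) \lesssim \min(1, xy/N)$. The main obstacle is bounding $q(x, z, N/2)$ when $z$ is itself supra-diffusive on $[0, N/2]$, where the midpoint decomposition does not close on itself. I expect to handle this by iterating the decomposition on the left half, or equivalently by a direct ``subdiffusive scale'' analysis near the high endpoint: since by time reversal the bridge near $y$ behaves like a walk started at $y$ conditioned to hit $x$ at time $N$, the event that it ever touches $0$ can be controlled by hitting-time estimates on a shorter interval whose endpoints lie at a more moderate scale. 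Stochastic monotonicity (Lemma~\ref{l.monotonicity}) is essential throughout to transfer bounds uniformly across boundary-condition regimes without loss of constants.
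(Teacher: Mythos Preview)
Your midpoint decomposition at $T=N/2$ is the wrong scale and the argument does not close. In the supra-diffusive regime $y \gg \sqrt{N}$, the midpoint value $z = X(N/2)$ concentrates near $(x+y)/2 \asymp y$, so the left-half problem $q(x,z,N/2)$ still has its high endpoint supra-diffusive --- indeed \emph{more} so, since the interval halved. For the lower bound you then invoke ``the diffusive ballot'' to get $q(x,z,N/2)\gtrsim xz/N$, but this is exactly the bound you are trying to prove: the Addario--Berry--Reed result you cite requires \emph{both} endpoints $O(\sqrt{N})$. Monotonicity only gives $q(x,z,N/2)\geq q(x,\sqrt{N/2},N/2)\gtrsim x/\sqrt{N}$, which is weaker than the required $xy/N$ when $y\gg\sqrt{N}$. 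For the upper bound you acknowledge the issue and propose iterating, but $k\asymp\log_2(y^2/N)$ halvings are needed before $y$ becomes diffusive on the shrunken interval, and each step costs a constant factor, giving a polynomial loss in $y^2/N$.

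The paper instead decomposes at the scale $z=N^2/y^2$. This is precisely where the bridge value becomes diffusive: $\mathbb{E}[X(z)]\approx x + (y/N)z = x + N/y \leq 2N/y = 2\sqrt{z}$ (using $xy\leq N$), with fluctuations $\sqrt{z}$. On $[0,z]$ both endpoints are then $O(\sqrt{z})$ and the random-walk ballot (Lemma~\ref{l.ballot theorem for random walk}) gives the factor $x/\sqrt{z}=xy/N$ directly. On $[z,N]$ the endpoints have product $\asymp N$; this ``extreme case'' is handled by exponentially tilting the increment law so the walk has drift $y/N$ (Corollary~\ref{c.tilted random walk bridge}), after which the conditioning $X(N)=y$ is an order-one event and a dyadic analysis near the low endpoint finishes. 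Your closing remark about a ``subdiffusive scale analysis near the high endpoint'' is pointing at the right idea, but it requires identifying $N^2/y^2$ as the critical scale and the tilting device to make the bridge tractable there.
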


The proofs of both inequalities of the ballot theorem will use the following simple change of measure lemma.

\begin{lemma}\label{l.change of measure}
	Let $N\in\N$, $x>0$, and $f:\R\to[0,\infty)$ be continuous with $\E^{x}_{[0,N]}[f(X(N))]<\infty$. Define $\tilde \P^{ x}_{[0,N]}$ and $ \tilde \P^{x,y}_{[0,N]} $ for $y\in\mathbb{R}$ by 
	\begin{equation}\label{e.tilted RN}
		\frac{\mathrm d\tilde \P^{ x}_{[0,N]}}{\mathrm d \P^{ x}_{[0,N]}}(X(1), \ldots, X(N)) \propto f(X(N)), \qquad  \tilde \P^{x,y}_{[0,N]} := \tilde \P^{ x}_{[0,N]}(\cdot\mid X(N)=y).
	\end{equation}
	Then for any $y\in\R$ lying in the support of the law of $X(N)$ under $\P^{x}_{[0,N]}$, it holds that $\P^{x,y}_{[0,N]} = \tilde \P^{x,y}_{[0,N]}$.
\end{lemma}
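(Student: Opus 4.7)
The plan is to observe that the Radon--Nikodym derivative $f(X(N))$ in \eqref{e.tilted RN} is measurable with respect to the terminal value $X(N)$ alone, so conditioning on $\{X(N)=y\}$ turns the tilt into the deterministic constant $f(y)$, which is then absorbed into the normalization. This is a Bayes-type disintegration argument, and the main thing to verify is just a careful bookkeeping of densities (resp.\ masses) in the nonlattice (resp.\ lattice) case.

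Concretely, I would first treat the lattice case. Let $p(x_1,\ldots,x_N)$ denote the joint probability mass function of $(X(1),\ldots,X(N))$ under $\P^{x}_{[0,N]}$. By \eqref{e.tilted RN}, the joint mass under $\tilde\P^{x}_{[0,N]}$ is $Z^{-1} p(x_1,\ldots,x_N) f(x_N)$, where $Z=\E^{x}_{[0,N]}[f(X(N))]$. For $y$ in the support of $X(N)$ under $\P^{x}_{[0,N]}$, the marginal mass of $\tilde\P^{x}_{[0,N]}$ at $X(N)=y$ is $Z^{-1} f(y)\, \P^{x}_{[0,N]}(X(N)=y)$, which is strictly positive since $f\geq 0$ is continuous and positive in a neighborhood of $y$ (or, more simply, we may assume $f(y)>0$; the claim holds vacuously otherwise). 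Computing the conditional mass:
\begin{align*}
\tilde\P^{x,y}_{[0,N]}\bigl(X(k)=x_k,\, k=1,\ldots,N-1\bigr)
&=\frac{Z^{-1} p(x_1,\ldots,x_{N-1},y)\, f(y)}{Z^{-1} f(y)\,\P^{x}_{[0,N]}(X(N)=y)}\\
&=\frac{p(x_1,\ldots,x_{N-1},y)}{\P^{x}_{[0,N]}(X(N)=y)}\\
&=\P^{x,y}_{[0,N]}\bigl(X(k)=x_k,\, k=1,\ldots,N-1\bigr),
\end{align*}
which gives the equality of measures on cylinder sets and hence on the full path $\sigma$-algebra.

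The nonlattice case is identical with probability mass functions replaced by Lebesgue densities on $\R^{N-1}$, using the disintegration afforded by the density of $X(N)$ under $\P^{x}_{[0,N]}$; since the increment distribution has a density (nonlattice case of \cref{def:areatilt}/Assumption \ref{a.misc}), the joint law of $(X(1),\ldots,X(N))$ admits a density, and the same cancellation of the factor $f(y)$ occurs. There is no significant obstacle here: the entire content of the lemma is that a $\sigma(X(N))$-measurable tilt preserves regular conditional distributions given $X(N)$, and the short computation above makes this precise.
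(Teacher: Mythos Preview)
Your proof is correct. The paper takes a slightly different route: rather than computing conditional densities directly, it realizes $\tilde\P^{x,y}_{[0,N]}$ as the weak limit $\lim_{\varepsilon\downarrow 0}\tilde\P^{x}_{[0,N]}(\,\cdot\mid X(N)\in[y-\varepsilon,y+\varepsilon])$ and then invokes continuity of $f$ so that the tilt $f(X(N))$ is effectively the constant $f(y)$ on the conditioning event in the limit, after which it cancels. Your disintegration argument is more explicit and in fact does not need continuity of $f$ at all (only $f(y)>0$ and measurability), so it is marginally more general; the paper's version is terser but leans on the continuity hypothesis to avoid writing down densities. Both are standard and equally acceptable here.
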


\begin{proof}
	We may write $\tilde \P^{x,y}_{[0,N]}(\cdot) = \lim_{\varepsilon\downarrow 0}\tilde \P^{x}_{[0,N]}(\cdot\mid X(N)\in [y-\varepsilon,y+\varepsilon])$ as a weak limit. Then the conclusion follows from the definition of $\tilde \P^{x}_{[0,N]}$ and the assumed continuity of $f$.
\end{proof}

\begin{corollary}\label{c.tilted random walk bridge}
	Let $y\in\R$ lie in the support of the law of $X(N)$ under $\P^{x}_{[0,N]}$. Then $\tilde \P^{x, y}_{[0,N]} = \P^{x, y}_{[0,N]}$, where $\tilde \P^{x,y}_{[0,N]}$ is defined as in \eqref{e.tilted RN}with $f(x) = \exp(\theta^*_yx)$, where $\theta^*_y = \theta^*_y(N)$ is the unique real number such that $\E^{0}_{[0,N]}[X(1)e^{\theta^*_yX(1)}]/\E^0_{[0,N]}[e^{\theta^*_yX(1)}] = y/N$. 
\end{corollary}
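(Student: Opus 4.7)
The plan is to obtain the corollary as a direct specialization of Lemma~\ref{l.change of measure} with the choice $f(z) = e^{\theta^*_y z}$. Once that identification is made, only two things remain: the existence and uniqueness of $\theta^*_y$, and the verification of the integrability hypothesis $\E^{x}_{[0,N]}[f(X(N))] < \infty$ appearing in the lemma.

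For existence and uniqueness of $\theta^*_y$, I would introduce the cumulant generating function $\Lambda(\theta) := \log \E^0_{[0,N]}[e^{\theta X(1)}]$ of a single random walk increment. By Assumption~\ref{a.misc}, $\Lambda$ is finite, and hence real-analytic, on an open neighborhood $D$ of $0$. By Remark~\ref{rmk:bridgebc}, the support of the increment distribution is a nondegenerate contiguous interval containing $0$, so for every $\theta \in D$ the variance of the increment under the $\theta$-tilted law is strictly positive. Thus $\Lambda'' > 0$ on $D$, so $\Lambda'$ is strictly increasing on $D$, and the defining equation $\Lambda'(\theta^*_y) = y/N$ has at most one solution in $D$. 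The assumption that $y$ lies in the support of $X(N)$ under $\P^{x}_{[0,N]}$ forces $y/N$ into $\Lambda'(D)$ (the open interval of attainable mean drifts per step), so existence of $\theta^*_y$ follows.

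Once $\theta^*_y \in D$ is in hand, the integrability check is immediate: by independence of increments and the product form of the MGF, $\E^x_{[0,N]}[e^{\theta^*_y X(N)}] = \exp(\theta^*_y x + N\Lambda(\theta^*_y)) < \infty$. Applying Lemma~\ref{l.change of measure} to this particular $f$ then gives $\tilde \P^{x,y}_{[0,N]} = \P^{x,y}_{[0,N]}$ at once.

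There is no substantive obstacle here, since all of the work has been done in Lemma~\ref{l.change of measure}; the corollary is essentially a bookkeeping statement recording the optimal exponential tilt. The only mildly delicate point is justifying existence of $\theta^*_y$ when $y/N$ approaches the boundary of $\Lambda'(D)$ (for instance at the edge of the large deviations regime, where the MGF may become infinite), but this corresponds to a degenerate case excluded by interpreting ``$y$ in the support of $X(N)$'' appropriately.
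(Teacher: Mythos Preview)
Your proposal is correct and takes essentially the same approach as the paper, which simply invokes Lemma~\ref{l.change of measure} and notes that existence and uniqueness of $\theta^*_y$ is standard (citing Durrett). You supply more detail on the latter point than the paper does, but the structure is identical.
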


\begin{proof}
	Given Lemma~\ref{l.change of measure}, this follows immediately from the existence and uniqueness of $\theta^*_y$; the latter is standard and can be found, for instance, in \cite[Section 2.7]{durrett2019probability}.
\end{proof}

In other words, Corollary~\ref{c.tilted random walk bridge} says that the random walk bridge of length $N$ associated to the random walk tilted to have drift $y/N$ (and thus mean $y$ after $N$ steps) is the same as the original random walk bridge. This is useful because, for instance, under the tilted random walk measure, the probability of lying above $y$ after $N$ steps is of constant order even if $y\gg N^{1/2}$. 

\begin{remark}
	Observe that the tilted random walk measure has increment distribution which still satisfies Assumption \ref{a.convex} if the original increment distribution did. This is simply because the density at $z\in\R$ of the tilted increment is the same as that of the original increment at $z$ with an additional factor of $\exp(\theta^*_yz)$, and a convex function plus a linear function remains convex.
	In particular, the tilted random walk bridge measure also satisfies monotonicity by Lemma~\ref{l.monotonicity}.
\end{remark}

The argument for Theorem~\ref{t.ballot theorem} will reduce the desired estimates to ballot theorems for random walks, i.e., without the endpoint fixed. We record below some of the estimates needed for the latter from \cite{pemantle1995critical}:

\begin{lemma}[Ballot theorem for random walk, {\cite[Lemma 3.3]{pemantle1995critical}}]\label{l.ballot theorem for random walk}
	Let the increment distribution of the random walk under $\P^{x}_{[0,N]}$ have mean zero and finite variance. Fix $K>0$. Then there exist constants $C,c>0$ such that, for all $N\in\N$,
	\begin{enumerate}
		\item $\P^{x}_{[0,N]}(X(k)\geq 0, k=1, \ldots, N) \leq CxN^{-1/2}$ for all $x>0$;
		
		\item $\P^{x}_{[0,N]}(X(k)\geq 0, k=1, \ldots, N) \geq cxN^{-1/2}$ for all $x\in[0,N^{1/2}]$; and
		
		\item $\E^{x}_{[0,N]}[X(N)^2\mid X(k)\geq 0, k=1, \ldots, N] \leq CN$ for all $x\in[0,N^{1/2}]$.
	\end{enumerate}
	
\end{lemma}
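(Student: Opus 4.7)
My approach combines the renewal theory of ladder variables with the local central limit theorem. Set $\tau := \inf\{k \geq 1 : X(k) < 0\}$ under $\P^x_{[0,N]}$, so the quantity of interest is $\P^x(\tau > N)$. Under $\P^0$, let $\tau_k$ be the iterated strict descending ladder epochs, which are a.s.\ finite by Chung--Fuchs recurrence for mean-zero finite-variance walks. Finite variance yields bounded expected overshoot $\E^0[-X(\tau)]<\infty$, so the descending-ladder renewal function $V(x) := \sum_{k \geq 0} \P^0(-X(\tau_k) \leq x)$ is subadditive with $V(0)=1$ and $V(x)/x \to 1/\E^0[-X(\tau)]$ as $x\to\infty$; in particular $cx\leq V(x)\leq C(1+x)$ for $x\geq 1$.

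For parts (1) and (2), the plan is to invoke the Wiener--Hopf / Sparre Andersen identity combined with time-reversal to express
\[
\P^x(\tau > N,\, X(N) \in \mathrm{d}y) = V(x)\cdot U(y)\cdot q_N(x,y)\, \mathrm{d}y,
\]
where $U$ is the ascending-ladder renewal function of the reversed walk and $q_N$ is a Gaussian-like kernel at scale $\sqrt{N}$. Summing in $y$ and invoking the local CLT together with $U(y)\asymp y$ for $y\asymp\sqrt{N}$ would give $\P^x(\tau > N) \asymp V(x)/\sqrt{N}$, from which (1) follows via $V(x)\leq C(1+x)$ and (2) via $V(x)\geq cx$ on $[1,\sqrt{N}]$, with the regime $x\in(0,1]$ in (2) handled by a one-step argument.

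A more hands-on alternative, avoiding the full factorization, is an optional-stopping-plus-CLT bootstrap. Optional stopping at $\tau\wedge N$ combined with bounded expected overshoot gives $x\leq \E^x[X(N);\, \tau>N]\leq x+C$. Combining this with the CLT-based observation that, conditional on $\tau>N$, $X(N)$ lies in $[c\sqrt{N}, C\sqrt{N}]$ with probability bounded above and below by positive constants yields both the upper and lower bounds in (1) and (2). For part (3), I would use the invariance principle for random walks conditioned to stay nonnegative: for $x\in[0,\sqrt{N}]$, the rescaled process $(X(\lfloor tN \rfloor)/(\sigma\sqrt{N}))_{t\in[0,1]}$ under $\P^x(\cdot \mid \tau > N)$ converges to a Brownian meander, whose endpoint is Rayleigh-distributed with finite moments. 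A subGaussian conditional tail bound $\P^x(X(N) > t\sqrt{N} \mid \tau > N) \leq C e^{-ct^2}$ for $t \geq 1$ can be extracted by combining Doob's maximal inequality (Lemma \ref{l.doob maximal}) with the lower bound $\P^x(\tau > N) \geq cx/\sqrt{N}$ from (2); integrating yields $\E^x[X(N)^2 \mid \tau > N] \leq CN$.

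The main obstacle is the renewal-theoretic identity $\P^x(\tau > N) \asymp V(x)/\sqrt{N}$ with matching constants, which rests on the Spitzer-type asymptotic $\P^0(\tau > N) \asymp N^{-1/2}$ and on precise control of the overshoot at $\tau$ --- precisely where the finite-variance hypothesis enters essentially. The CLT-based second step of the alternative bootstrap is not entirely cheap either: proving that, conditional on $\tau>N$, $X(N)$ is of order $\sqrt{N}$ with constant probability in both directions already requires a Caravenna--Chaumont-type convergence to the Brownian meander, so the ``cheap" and ``expensive" routes meet in the end.
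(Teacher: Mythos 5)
The first thing to note is that the paper does not prove this lemma at all: it is imported verbatim from \cite[Lemma 3.3]{pemantle1995critical}, with the remark that part (3), proved there for $x=0$, extends to $x>0$ as observed in \cite{addarioberryreed}. So any self-contained argument is automatically a ``different route'' from the paper's; what you have written is the standard fluctuation-theory proof (descending-ladder renewal function $V$, Doob $h$-transform/Wiener--Hopf structure, meander limit), which is indeed how results of this type are proved in the literature. Two caveats on the first route: the displayed formula $\P^x(\tau>N,\,X(N)\in\mathrm{d}y)=V(x)U(y)q_N(x,y)\,\mathrm{d}y$ is not an exact identity but only an asymptotic local limit statement (Vatutin--Wachtel/Caravenna-type), and its uniformity in $x\in[0,\sqrt{N}]$ is precisely the hard content you are trying to establish, so as written this step assumes what is to be proved. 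Also, with $V(x)\le C(1+x)$ you obtain $C(1+x)N^{-1/2}$, which matches the stated bound $CxN^{-1/2}$ only for $x$ bounded away from $0$; this is how the bound is actually used in the proof of \cref{t.ballot theorem} (after the reduction to $x,y\ge 1$), but it is worth being aware that the ``for all $x>0$'' form does not follow from $V(x)\le C(1+x)$ alone.

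The concrete gap is in your argument for part (3). Bounding the conditional tail by the unconditional one divided by $\P^x(\tau>N)\ge cx N^{-1/2}$ gives $\P^x(X(N)>t\sqrt{N}\mid \tau>N)\le \min\bigl(1, C(\sqrt{N}/x)e^{-ct^2}\bigr)$, and integrating this in $t$ yields $\E^x[X(N)^2\mid\tau>N]\le CN\log N$ for small $x$ (the prefactor $\sqrt{N}/x$ is not harmless), not the claimed $CN$; likewise, weak convergence to the meander gives no moment bound without uniform integrability, which is exactly what is missing. The standard fix is a \emph{conditioned} tail estimate obtained by splitting $[0,N]$ at $N/2$ and applying the ballot upper bound (1) to the time-reversed second half, which produces the extra factor $\asymp (1+\lambda)N^{-1/2}$ at endpoint height $\lambda$ and recovers $\E^x[X(N)^2;\tau>N]\le C(1+x)N^{1/2}$, hence (3); this is essentially the argument in the cited references. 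Finally, note that invoking \cref{l.doob maximal} uses finite exponential moments (Assumption \ref{a.misc}), which is fine for the paper's application but exceeds the finite-variance hypothesis under which the lemma is stated; under finite variance alone one should instead use a Fuk--Nagaev or truncation-type bound, or simply the second-moment tail, at the cost of reworking the integration.
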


The third estimate is proven in \cite{pemantle1995critical} only for $x=0$, but the proof easily extends to $x>0$, as also noted in \cite[Lemma 3]{addarioberryreed}.

\subsection{Upper bound}\label{s.ballot theorem upper bound}

The basic idea to establish the upper bound of the ballot theorem is to look at a scale $z=N^2/y^2$ (where we assume without loss of generality that $y\gg x$), which is the one where we expect the value of the random walk to be $x+O(z^{1/2})$; one can check that $z$ is chosen such that the movement of the random walk due to the drift induced by the endpoint values (i.e., $(y-x)/N$) is of the same order as the diffusive fluctuations. 

For the upper bound, one can ignore the probability contribution coming from the event that the random walk bridge stays above zero after $z$ (which we expect to be of constant order), and consider only the event that it stays above $0$ on $[0,z]$. On the latter interval the endpoint values are $x$ and of order $z^{1/2}$. Since both values are at most of order the diffusive scale on the interval (which is of size $z$), one can use the ballot theorem estimates for random walks from Lemma~\ref{l.ballot theorem for random walk} to obtain a bound of order $xz^{-1/2} = xz^{1/2}/z = xy/N$.

\begin{proof}[Proof of Theorem~\ref{t.ballot theorem}, upper bound]
	We may assume that $xy\leq N$ as otherwise the statement is trivial. We assume without loss of generality that $x\leq y$, so that $x\leq N^{1/2}$. Let $z = \min(\frac{1}{2}N, N^2/y^2)$. Note that then $x\leq 2z^{1/2}$. We observe that, by Bayes' theorem,
	\begin{align}\label{e.ballot theorem upper bound first step}
		\P^{x,y}_{[0,N]}(X(k) \geq 0, k=1, \ldots, N-1) \leq \frac{\P^{x,y}_{[0,N]}(X(k) \geq 0, k=1, \ldots, N-1  \mid X(z) < Mz^{1/2})}{\P^{x,y}_{[0,N]}(X(z) < Mz^{1/2} \mid X(k) \geq 0, k=1, \ldots, N-1)}.
	\end{align}
	We need to lower bound the denominator by a constant order quantity. Equivalently, we want to show that $\P^{x,y}_{[0,N]}(X(z) > Mz^{1/2} \mid X(k) \geq 0, k=1, \ldots, N-1)$ is bounded away from $1$ uniformly in $N$. Recall from Corollary~\ref{c.tilted random walk bridge} that $\P^{x,y}_{[0,N]} = \tilde\P^{x,y}_{[0,N]}$, where the latter corresponds to the underlying random walk measure being tilted by $\exp(\theta^*_yX(N))$, where $\theta^*_y = \theta^*_y(N)$ is chosen such that $X(N)$ has mean $y$ (when started at zero). Let $Y(k) = X(k)-ky/N$ for all $k$, which is a mean $x$ random walk under $\tilde \P^{x}_{[0,N]}$.  Note also that $z^{1/2} \geq zy/N$, so $Mz^{1/2} -zy/N \geq \frac{1}{2}Mz^{1/2}$ if $M\geq 2$. We then bound as follows:
	\begin{align*}
		\MoveEqLeft[6]
		\P^{x,y}_{[0,N]}(X(z) > Mz^{1/2} \mid X(k) \geq 0, k=1, \ldots, N-1)\\
		&= \tilde\P^{x}_{[0,N]}(X(z) > Mz^{1/2} \mid X(k) \geq 0, k=1, \ldots, N-1, X(N)=y)\\
		&\leq \tilde\P^{x}_{[0,N]}\left(Y(z) > \tfrac{1}{2}Mz^{1/2} \mid Y(k) > -ky/N, k=1, \ldots, N-1, Y(N) = 0\right)\\
		&\leq \tilde\P^{x}_{[0,N]}\left(Y(z) > \tfrac{1}{2}Mz^{1/2} \mid Y(k) \geq 0, k=1, \ldots, N-1\right),
	\end{align*}
	the final inequality using monotonicity (Lemma~\ref{l.monotonicity}).  Then, using the Gibbs property and monotonicity again, the previous line can written as
	\begin{align}
		\MoveEqLeft[1]
		\sum_{t>\frac{1}{2}Mz^{1/2}}\tilde\P^{x}_{[0,N]}\left(Y(z) \in[t, t+1) \mid Y(k)\geq 0, k=1, \ldots, N-1\right)\nonumber\\
		&=\sum_{t>\frac{1}{2}Mz^{1/2}}\frac{\tilde\P^{x}_{[0,N]}\left(Y(z) \in[t, t+1), Y(k) \geq 0, k=1, \ldots, N-1\right)}{\tilde\P^{x}_{[0,N]}\left(Y(k)\geq 0, k=1, \ldots, N-1\right)}\label{e.rw meander tail bound decomposition}\\
		&\leq\sum_{t>\frac{1}{2}Mz^{1/2}}\frac{\tilde\P^{x}_{[0,z]}\left(Y(z) \in[t, t+1), Y(k) \geq 0, k=1, \ldots, z\right)\cdot \tilde\P^{t+1}_{[z, N]}(Y(k) \geq 0, k= z+1\ldots, N-1)}{\tilde\P^{x}_{[0,N]}\left(Y(k)\geq 0, k=1, \ldots, N-1\right)}.\nonumber
	\end{align}
	Consider the summand with index $t$ in the last line. By Lemma~\ref{l.ballot theorem for random walk} (1), the second factor in the numerator is upper bounded by $C(t+1)(N-z)^{-1/2}\leq CtN^{-1/2}$ since $z\leq N/2$. Since $x\leq N^{1/2}$, by Lemma~\ref{l.ballot theorem for random walk} (2), the denominator is lower bounded by $cxN^{-1/2}$. The first factor in the numerator can be written as
	\begin{align*}
		\tilde\P^{x}_{[0,z]}\left(Y(z) \in[t, t+1)\mid Y(k) \geq 0, k=1, \ldots, z\right)\cdot \tilde\P^{x}_{[0,z]}\left(Y(k) \geq 0, k=1, \ldots, z\right).
	\end{align*}
	The second factor here is upper bounded by $Cxz^{-1/2}$ again by Lemma~\ref{l.ballot theorem for random walk} (1). Thus we can bound the last line of \eqref{e.rw meander tail bound decomposition} above by
	\begin{align*}
		\MoveEqLeft[6]
		C\sum_{t>\frac{1}{2}Mz^{1/2}}\frac{t}{z^{1/2}}\tilde\P^{x}_{[0,z]}\left(Y(z) \in[t, t+1)\mid Y(k) \geq 0, k=1, \ldots, z\right)\\
		&\leq C\cdot\tilde\E^{x}_{[0,z]}\left[\frac{Y(z)}{z^{1/2}}\one_{Y(z)>\frac{1}{2}Mz^{1/2}}\mid Y(k) \geq 0, k=1, \ldots, z\right].
	\end{align*}
	Using that $\E^{x}_{[0,z]}[Y(z)^2\mid Y(k)\geq 0, k=1, \ldots, z] \leq Cz$ by Lemma~\ref{l.ballot theorem for random walk} (3), it follows that there exists $M$ large enough, independent of $x,y,z$ and $N$, such that the previous display is at most $1/2$. Thus the denominator of \eqref{e.ballot theorem upper bound first step} is lower bounded by $1/2$, so the right-hand side of \eqref{e.ballot theorem upper bound first step} is at most
	\begin{align*}
		2\cdot \P^{x,y}_{[0,N]}(X(k) \geq 0, k=1, \ldots, N-1  \mid X(z) < Mz^{1/2}).
	\end{align*}
	By the Gibbs property and monotonicity, this is upper bounded by
	\begin{align*}
		\MoveEqLeft[24]
		2\cdot \E^{x,y}_{[0,N]}\left[\P^{x, X(z)}_{[0,z]}(X(k) \geq 0, k=1, \ldots, z)\cdot\P^{X(z), y}_{[z,N]}(X(k) \geq 0, k=z, \ldots, N-1) \midd X(z) < Mz^{1/2}\right]\\
		&\leq 2\cdot\P^{x, Mz^{1/2}}_{[0,z]}(X(k) \geq 0, k=1, \ldots, z).
	\end{align*}
	Again by monotonicity, this is upper bounded by
	\begin{align*}
		2\cdot \P^{x}_{[0,z]}(X(k) \geq 0, k=1, \ldots, z  \mid X(z) > Mz^{1/2}) \leq 2\cdot\frac{\P^{x}_{[0,z]}(X(k) \geq 0, k=1, \ldots, z)}{\P^{x}_{[0,z]}(X(z) > Mz^{1/2})}.
	\end{align*}
	The denominator is lower bounded by $c=c(M)$ uniformly in $z$ by the central limit theorem and the Portmanteau theorem, and the numerator is upper bounded by $Cxz^{-1/2} = Cxy/N$ by Lemma~\ref{l.ballot theorem for random walk} (1). This completes the proof.
\end{proof}

\subsection{Lower bound}\label{s.ballot theorem lower bound}

Here we prove the lower bound half of Theorem~\ref{t.ballot theorem}. The basic idea underlying the argument is again to consider the scale $z = N^2/y^2$. However, unlike in the argument for the upper bound, here we must  lower bound the probability of the random walk bridge staying positive on $[0,z]$ as well as $[z,N]$, instead of just the former. We expect the random walk bridge to be at height of order $z^{1/2}$ at $z$, which means the endpoint values of $Cz^{1/2} = CN/y$ and $y$ on $[z,N]$ satisfy the property that their product is equal to the interval length up to a constant. The ballot theorem lower bound in this case is handled in Lemma~\ref{l.random walk bridge persistence extreme case}, while the case that the endpoint values are at most diffusive in the interval length (which is needed to control the probability on $[0,z]$) is addressed in Lemma~\ref{l.ballot theorem lower bound diffusive case}. Their proofs will be given after that of the lower bound of Theorem~\ref{t.ballot theorem}.

\begin{lemma}\label{l.random walk bridge persistence extreme case}
	Fix $\delta > 0$. There exists $\varepsilon>0$ such that for all $N\in\N$ and $x,y >0$ such that $xy = \delta N$,
	\begin{align}\label{e.persistence prob extreme case lower bound}
		\P^{ x,y}_{[0,N]}\left(X(k) \geq 0, \, k=1, \ldots, N-1\right) \geq \varepsilon.
	\end{align}
	
\end{lemma}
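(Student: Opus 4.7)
My plan is to reduce the lemma to a Brownian-bridge persistence calculation via Donsker's invariance principle, using the tilting of Corollary~\ref{c.tilted random walk bridge} to bridge the discrete and continuous settings. By the time-reversal symmetry of the bridge law, I may assume WLOG $x\le y$, so that $x\le\sqrt{\delta N}\le y$. I split into two regimes.

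In the \emph{diffusive regime} $y\le M\sqrt N$ for a constant $M=M(\delta)$ to be chosen, both $x/(\sigma\sqrt N)$ and $y/(\sigma\sqrt N)$ lie in the compact subset $[\sqrt\delta/(M\sigma),M/\sigma]$ of $(0,\infty)$. Donsker's invariance principle (Lemma~\ref{l.invar}) yields weak convergence of the rescaled bridge $X(\lfloor N\cdot\rfloor)/(\sigma\sqrt N)$ to a Brownian bridge from $x/(\sigma\sqrt N)$ to $y/(\sigma\sqrt N)$. The Brownian bridge from $a>0$ to $b>0$ has the explicit persistence $1-\exp(-2ab)\ge 1-\exp(-2\delta/(M\sigma)^2)$ by the classical reflection formula. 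A Portmanteau argument applied to the closed events $\{\inf X\ge -\eta\sigma\sqrt N\}$, passing $\eta\downarrow 0$ monotonically, together with a subsequential compactness argument for uniformity over the normalized endpoints, extracts a discrete lower bound $\varepsilon_1=\varepsilon_1(\delta,M)>0$ for all sufficiently large $N$; finitely many small $N$ are absorbed into the final constant.

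In the \emph{unbalanced regime} $y>M\sqrt N$ (hence $x<\sqrt{\delta}\sqrt N/M$), I invoke Corollary~\ref{c.tilted random walk bridge} to write $\P^{x,y}_{[0,N]}=\tilde\P^{x,y}_{[0,N]}$, where under $\tilde\P^x_{[0,N]}$ the underlying walk has drift $y/N$. The Bayesian identity
\[
\P^{x,y}_{[0,N]}(X\ge 0)=\frac{\tilde\P^x_{[0,N]}(X\ge 0,\,X(N)\in I_N)}{\tilde\P^x_{[0,N]}(X(N)\in I_N)},
\]
for a suitable window $I_N$ around $y$, reduces the persistence to a ratio of walk-level quantities. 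The denominator is $\asymp |I_N|/\sqrt N$ by the local CLT for the tilted walk, whose tilted variance $\tilde\sigma^2$ is bounded above and below uniformly since the MGF hypothesis in Assumption~\ref{a.misc} restricts $y/N$ to a compact range (note that $x$ lies only $O(\sqrt N)$ below the tilted mean $x+y$, making the Gaussian factor bounded). For the numerator, I translate to the centered walk $\bar X(k):=X(k)-ky/N$, a mean-zero walk started at $x$ under $\tilde\P^x_{[0,N]}$, and combine the walk-level ballot estimates of Lemma~\ref{l.ballot theorem for random walk}(2),(3) with a discrete reflection/Girsanov-type local CLT to obtain a matching lower bound $c|I_N|/\sqrt N\cdot(1-\exp(-2xy/(\tilde\sigma^2 N)))$. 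Taking the ratio yields a uniform constant $\varepsilon_2(\delta)>0$.

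\emph{Main obstacle.} The central technical difficulty is executing the reflection-type argument in the unbalanced regime for general walks satisfying only Assumptions~\ref{a.convex} and~\ref{a.misc}: for Brownian motion, the Girsanov--reflection identity yields the explicit persistence $1-e^{-2xy/(\sigma^2 N)}$, but transferring this quantitatively to random walks (without invoking a KMT-type coupling, which the paper explicitly avoids to allow heavy-tailed increments like $\exp(-\beta|x|)$) requires careful combination of the local CLT and the walk-level ballot theorem, with uniformity over the tilt $\theta^*_y$ in a compact parameter range. If this reflection-LCLT strategy proves too delicate, a backup approach is to use monotonicity (Lemma~\ref{l.monotonicity}) and an iterated Markov splitting at intermediate times $k^*\asymp N^{3/2}/y$, coupled with the bridge maximal inequality (Lemma~\ref{l.bridge maximal}) on the high-drift segment, to reduce the unbalanced case to the diffusive case in $O(\log\log(y/\sqrt N))$ steps.
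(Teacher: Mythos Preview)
Your diffusive-regime argument is essentially sound (though you should apply Portmanteau to the \emph{open} event $\{\inf x^N>0\}$ rather than to closed events; since the Brownian bridge minimum has a density, $\P(\inf B>0)=\P(\inf B\ge0)$ and the compactness-in-endpoints argument goes through).

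The unbalanced regime, however, has a genuine gap. Your target estimate
\[
\tilde\P^x_{[0,N]}\bigl(X(k)\ge0\ \forall k,\ X(N)\in I_N\bigr)\ \gtrsim\ \frac{|I_N|}{\sqrt N}\bigl(1-e^{-2xy/(\tilde\sigma^2 N)}\bigr)
\]
is the Brownian formula, and there is no ``discrete reflection/Girsanov-type local CLT'' that produces it for walks satisfying only Assumptions~\ref{a.convex}--\ref{a.misc}. Reflection requires symmetric increments; the local CLT gives pointwise density control, not joint control of $\{\min X\ge0\}\cap\{X(N)\in I_N\}$; and Lemma~\ref{l.ballot theorem for random walk}(2),(3) concerns the floor at $0$, not the sloped floor $-ky/N$ that the centered walk $\bar X$ must clear. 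You correctly identify this as the main obstacle, but neither your primary strategy nor your backup (iterated Markov splitting) is carried far enough to close it.

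The paper's argument bypasses this entirely and treats all $x,y$ with $xy=\delta N$ in one stroke. After tilting (your Corollary~\ref{c.tilted random walk bridge} step) and centering $Y(k)=X(k)-ky/N$, the key move you miss is to use monotonicity (Lemma~\ref{l.monotonicity}) to replace the bridge conditioning $\{X(N)=y\}$ by $\{Y(N)\le0\}$, and then simply drop the conditioning, lower-bounding by the unconditional $\tilde\P^x$-probability of $\{Y(k)>-ky/N\ \forall k\}\cap\{Y(N)\le0\}$. This event is then forced by controlling the oscillations of $Y$ on dyadic intervals $[(2^j-1)x^2,(2^{j+1}-1)x^2]$, $j=0,\dots,J\approx\log_2(N^{1/2}/x)$: on the $j$th interval one asks $\bar Y_{k,\ell}<\rho\delta 2^{3j/4}x$, and on the remaining interval $[xN^{1/2},N]$ one asks $Y$ to end below $-4N^{1/2}$ while staying below $8N^{1/2}$. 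A telescoping check shows this event implies $Y(k)>-ky/N$; independence of the increment blocks plus Doob's maximal inequality (Lemma~\ref{l.doob maximal}) gives each factor a uniformly positive lower bound, and the infinite product converges. No case split, no local CLT, no reflection.
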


\begin{lemma}\label{l.ballot theorem lower bound diffusive case}
	Let $M>0$. There exists $c>0$ such that for all $N\in\N$ and $x,y\in [0, MN^{1/2}]$,
	\begin{align*}
		\P^{x,y}_{[0,N]}(X(k) \geq 0, k=1, \ldots, N-1) \geq c\cdot \min\left(1, \frac{xy}{N}\right).
	\end{align*}
	
\end{lemma}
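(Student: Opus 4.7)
The plan is to split into two cases depending on the size of $xy$ relative to $N$, using monotonicity (Lemma~\ref{l.monotonicity}) and the extreme-case ballot theorem (Lemma~\ref{l.random walk bridge persistence extreme case}) for the easy case $xy \geq N$, and a Markov decomposition with the random walk ballot theorem (Lemma~\ref{l.ballot theorem for random walk}) and a Paley--Zygmund argument for the main case $xy < N$. Assume without loss of generality $x \leq y$ and abbreviate $Z_N(x,y) := \P^{x,y}_{[0,N]}(X(k) \geq 0,\, k = 1, \ldots, N-1)$.

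For the easy case $xy \geq N$, set $y' := N/x \in (0, y]$. By Lemma~\ref{l.monotonicity}, $Z_N(x, y) \geq Z_N(x, y')$; since $xy' = N$, Lemma~\ref{l.random walk bridge persistence extreme case} with $\delta = 1$ gives $Z_N(x, y') \geq \varepsilon(1) > 0$, matching $\min(1, xy/N) = 1$.

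For the main case $xy < N$, fix a small constant $c_\ast > 0$. If $y \geq c_\ast N^{1/2}$ (the case $x \geq c_\ast N^{1/2}$ is symmetric via time-reversal, which preserves Assumptions~\ref{a.convex}--\ref{a.misc}), apply the Markov property at $\lfloor N/2 \rfloor$:
\[
Z_N(x, y) \;=\; \frac{1}{p_N(x, y)}\, \E^x_{[0, N/2]}\!\left[\one_{\mathsf M}\, q_{N/2}(X(N/2), y)\right],
\]
where $p_N(a, b) := \P^a_{[0, N]}(X(N) = b)$, $q_N(a, b) := \P^a_{[0, N]}(X(N) = b,\, X(k) \geq 0\, \forall k)$, and $\mathsf M := \{X(k) \geq 0\, \forall k \leq N/2\}$. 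The standard local CLT (using Assumption~\ref{a.misc}) gives $p_N(x, y) \leq C/N^{1/2}$. For the expectation, Lemma~\ref{l.ballot theorem for random walk}(2) gives $\P^x(\mathsf M) \geq cx/N^{1/2}$; combining the second moment estimate Lemma~\ref{l.ballot theorem for random walk}(3) with Paley--Zygmund, one deduces that conditional on $\mathsf M$ the midpoint $X(N/2)$ lies in $[c_2 N^{1/2}, C_2 N^{1/2}]$ with probability at least a constant $c_3 > 0$. On that event, $X(N/2)\cdot y \geq c_2 c_\ast N$, so Lemma~\ref{l.random walk bridge persistence extreme case} applied to the right half gives $Z_{N/2}(X(N/2), y) \geq \varepsilon_0$; combined with the local CLT bound $p_{N/2}(X(N/2), y) \geq c/N^{1/2}$, this yields $Z_N(x, y) \gtrsim x/N^{1/2} \gtrsim xy/N$ (using $y \leq MN^{1/2}$).

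Finally, when $x, y < c_\ast N^{1/2}$, apply the Markov property at both $\lfloor N/4\rfloor$ and $\lfloor 3N/4\rfloor$ and restrict the resulting double sum to $z_1, z_2 \in S' := [c_2 N^{1/2}, C_2 N^{1/2}]$; on $S'$ one has $z_1 z_2 \geq c_2^2 N$, so by Lemma~\ref{l.random walk bridge persistence extreme case} and the local CLT, $q_{N/2}(z_1, z_2) \gtrsim 1/N^{1/2}$ uniformly. The uniformity allows the sum to factorize, and the same Paley--Zygmund estimate then yields $\sum_{z_1 \in S'} q_{N/4}(x, z_1) \gtrsim x/N^{1/2}$ (and analogously with $z_2, y$), producing $Z_N(x, y) \gtrsim xy/N$. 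The principal technical step is the Paley--Zygmund lower tail conditional on $\mathsf M$: one uses $\E^x[X(N/2)\one_{\mathsf M}] \geq x$ (from optional stopping applied to the mean-zero martingale $X$ at the time $T \wedge N/2$, with $T$ the first dip below $0$) together with the upper bound $\P^x(\mathsf M) \leq Cx/N^{1/2}$ from Lemma~\ref{l.ballot theorem for random walk}(1) to deduce $\E^x[X(N/2) \mid \mathsf M] \gtrsim N^{1/2}$, uniformly in $x > 0$; then Lemma~\ref{l.ballot theorem for random walk}(3) bounds the conditional second moment, and Paley--Zygmund delivers the desired positive probability. Pinning down these constants carefully and verifying that the resulting two-sided window gives a positive-probability event uniformly in $x \in [0, MN^{1/2}]$ is the main obstacle.
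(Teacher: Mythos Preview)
Your argument is correct and gives the desired bound, but it follows a genuinely different route from the paper's proof. The paper stays entirely in the bridge framework: it conditions on $X(N/2)$, uses monotonicity to pin the midpoint at $KN^{1/2}$ (after showing $\P^{x,y}_{[0,N]}(X(N/2)\geq KN^{1/2})$ is bounded below via the invariance principle), and then bounds $\P^{x,KN^{1/2}}_{[0,N/2]}(X\geq 0)$ by again invoking monotonicity to replace the fixed endpoint $KN^{1/2}$ by the conditioning $X(N/2)\leq KN^{1/2}$, followed by Bayes and Lemma~\ref{l.ballot theorem for random walk}(2),(3). In particular the paper never uses the local CLT, never uses Lemma~\ref{l.random walk bridge persistence extreme case}, and needs only the upper tail of $X(N/2)$ conditional on survival (a one-line Chebyshev from Lemma~\ref{l.ballot theorem for random walk}(3)).

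Your approach instead passes to the free random walk via the Markov decomposition and the local CLT, and controls the midpoint distribution with a two-sided window obtained by Paley--Zygmund. The optional-stopping argument for the conditional first moment is a nice self-contained substitute for monotonicity in the main estimate, and invoking Lemma~\ref{l.random walk bridge persistence extreme case} on the middle segment is a legitimate shortcut (its proof is independent of the present lemma, so there is no circularity). The tradeoff is that you need more machinery (local CLT for densities in the nonlattice case, time-reversal, the factorization at two mesh points), whereas the paper's monotonicity-based argument is shorter and uses only the weak invariance principle. Both routes ultimately rest on Lemma~\ref{l.ballot theorem for random walk}.
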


\begin{proof}[Proof of Theorem~\ref{t.ballot theorem}, lower bound]
	It suffices to assume that $N\geq N_0$ for any fixed $N_0$. Indeed, for $1\leq N\leq N_0$, the probability on the left-hand side is lower bounded by the case of $x=y=0$ (again by monotonicity) and the minimum over $1\leq N\leq N_0$ of the same probability, and, further, this probability is positive by demanding each increment to be positive. Thus the estimate is proven in this range of $N$ by adjusting $c$ appropriately. We will take $N_0$ to be a large constant whose value will be specified later.
	
	We may further assume that $x,y\geq 1$. If not, there exist constants $N'$ and $\delta>0$ (both depending on the law of the underlying random walk's increments) such that, with probability at least $\delta>0$, $X(1), \ldots, X(N')$ and $X(N-N'), \ldots, X(N)$ all remain positive and $X(N'),X(N-N')\geq 1$ as long as say $N_0>3N'$ (which we may assume as observed above). By the Gibbs property we can then apply the below argument on the interval $[N',N-N']$ which now has boundary conditions lower bounded by $1$.
	
	By monotonicity (Lemma~\ref{l.monotonicity}), we may assume $y\leq N$.
	Next, let $z = N^2/y^2$; note that since $y\leq N$, $z\geq 1$. It holds by the Gibbs property that, for any $\delta>0$,
	\begin{align*}
		\MoveEqLeft
		\P^{x,y}_{[0,N]}\left(X(k) \geq 0, k=1, \ldots, N-1\right)\\%
		&\geq \E^{x,y}_{[0,N]}\left[\P^{x,X(z)}_{[0,z]}\left(X(k) \geq 0, k=1, \ldots, z\right)\cdot \P^{X(z),y}_{[z,N]}\left(X(k) \geq 0, k=z+1, \ldots, N-1\right)\cdot \one_{X(z) > \delta z^{1/2}}\right].
	\end{align*}
	By monotonicity (Lemma~\ref{l.monotonicity}), this is lower bounded by
	\begin{align}\label{e.ballot theorem lower bound general case breakup}
		\P^{x,y}_{[0,N]}\left(X(z) > \delta z^{1/2}\right)\cdot\P^{x,\delta z^{1/2}}_{[0,z]}\left(X(k) \geq 0, k=1, \ldots, z\right)\cdot \P^{\delta z^{1/2},y}_{[z,N]}\left(X(k) \geq 0, k=z, \ldots, N\right).
	\end{align}
	Now, again by monotonicity the first term is lower bounded by the same with $x=y=0$, i.e., $\P^{0,0}_{[0,N]}(X(z) > \delta z^{1/2} )$. We wish to show that this is bounded away from zero uniformly. We upper bound the complementary probability as follows, using monotonicity in the first inequality:
	\begin{align*}
		\P^{0,0}_{[0,N]}\left(X(z) \leq \delta z^{1/2}\right)
		&\leq \P^{0}_{[0,N]}\left(X(z) \leq \delta z^{1/2}\mid X(N)\leq 0\right)
		\leq \frac{\P^{0}_{[0,N]}\left(X(z) \leq \delta z^{1/2}\right)}{\P^{0}_{[0,N]}\left(X(N)\leq 0\right)}.
	\end{align*}
	By the central limit theorem, the denominator is lower bounded by $1/4$ for all large enough $N$. Also by the central limit theorem, there exist $\delta>0$ and $z_0$ such that the numerator is at most $1/8$ (recalling that $z\geq z_0$). With this choice of $\delta$, $z_0$, and $N_0$, it holds that the first factor in \eqref{e.ballot theorem lower bound general case breakup} is lower bounded by $1/2$ as long as $N\geq N_0$.
	
	In the second factor of \eqref{e.ballot theorem lower bound general case breakup}, we may replace $x$ by $\min(x,z^{1/2})$ by monotonicity. Then by Lemma~\ref{l.ballot theorem lower bound diffusive case}, the second factor of \eqref{e.ballot theorem lower bound general case breakup} is lower bounded by $c\min(x,z^{1/2})z^{-1/2}$. By monotonicity and since $z^{1/2}= N/y$, the third factor of \eqref{e.ballot theorem lower bound general case breakup} is lower bounded by the same with $\delta z^{1/2}$ replaced by $\delta N/y$. Thus, by Lemma~\ref{l.random walk bridge persistence extreme case} that factor is lower bounded by some constant $c>0$ since $\delta N/y\cdot y  = N \geq N-z$. Overall, \eqref{e.ballot theorem lower bound general case breakup} is lower bounded by $c\min(1, xz^{-1/2}) = c\min(1, xy/N)$, using the definition of $z$. This completes the proof.
\end{proof}

Next we give the proof of Lemma~\ref{l.random walk bridge persistence extreme case} before turning to that of Lemma~\ref{l.ballot theorem lower bound diffusive case}.

\begin{proof}[Proof of Lemma~\ref{l.random walk bridge persistence extreme case}]
	We assume without loss of generality that $x\leq y$. By monotonicity (Lemma~\ref{l.monotonicity}), it also suffices to assume that $\delta\in(0,2]$ (where recall $xy/N=\delta$).
	
	Recall from Corollary~\ref{c.tilted random walk bridge} that $\P^{x,y}_{[0,N]} = \tilde\P^{x,y}_{[0,N]}$, where the latter corresponds to the underlying random walk measure being tilted by $\exp(\theta^*_yX(N))$, where $\theta^*_y = \theta^*_y(N)$ is chosen such that $X(N)$ has mean $y$.
	In particular, under $\tilde\P^{x}_{[0,N]}$, $(X(1), \ldots, X(N))$ is a random walk with drift $y/N$, so $(X(i)-iy/N)_{i=1}^N$ is a \iid random walk with zero drift. Further, $X(i)-X(i-1)$ still has a finite exponential moment under $\tilde\P^{x}_{[0,N]}$.
	
	Now, first by definition and then invoking Corollary~\ref{c.tilted random walk bridge}, the probability from \eqref{e.persistence prob extreme case lower bound} that we must lower bound equals
	\begin{align*}
		\P^{x}_{[0,N]}\left(X(k) \geq 0,  \, k=1, \ldots, N-1 \mid X(N)=y\right)
		&= \tilde\P^{x}_{[0,N]}\left(X(k) \geq 0,  \, k=1, \ldots, N-1 \mid X(N)=y\right).
	\end{align*}
	By rearranging some terms inside the probability and invoking monotonicity (Lemma~\ref{l.monotonicity}), this is lower bounded by
	\begin{align}
		\MoveEqLeft[6]
		\tilde\P^{x}_{[0,N]}\left(X(k) -ky/N >-ky/N,  \, k=1, \ldots, N-1 \mid X(N) -y\leq 0\right)\nonumber\\
		&\geq \tilde\P^{x}_{[0,N]}\left(X(k) -ky/N >-ky/N,  \, k=1, \ldots, N-1, X(N) -y\leq 0\right).\label{e.ballot theorem extreme case first lower bound}
	\end{align}
	Let us adopt the notation $Y(k) = X(k)-ky/N$, so that, under $\tilde\P^{x}_{[0,N]},$ $k\mapsto Y(k)$ is a drift zero random walk started at $x$, and let $Y_{k,\ell} = Y(\ell) -Y(k)$ denote its increment and $\overline Y_{k,\ell} := \max_{k\leq j\leq \ell} |Y_{k, j}|$ denote its maximum increment between $k$ and $\ell$. Let $J = \log_2(N^{1/2}x^{-1} + 1)-1$. Then the previous display equals $\tilde\P^{x}_{[0,N]}(Y(k) >-ky/N,  \, k=1, \ldots, N-1, Y(N)\leq 0)$, which we claim is lower bounded by
	\begin{align}\label{e.ballot theorem lower bound extreme case event}
		\tilde\P^{x}_{[0,N]}\left(\parbox{3.75in}{\centering $\overline Y_{(2^j-1)x^2, (2^{j+1}-1)x^2} < \rho\delta 2^{3j/4} x, \, j=0, \ldots, J$  and\\  $Y_{xN^{1/2}, n} < -4N^{1/2} \text{ and } \max_{xN^{1/2}\leq j\leq N} Y_{xN^{1/2}, j} \leq 8N^{1/2}$}\right),
	\end{align}
	where $\rho = \frac{1}{2}(2^{3/4}-1)$. 
	To see that this lower bound holds, observe that on the above event, for any $j=1, \ldots, J$ and $k\in[(2^j-1)x^2, (2^{j+1}-1)x^2]$,
	\begin{align*}
		Y(k) \geq x-\sum_{i=0}^{j} \overline Y_{(2^i-1)x^2, (2^{i+1}-1)x^2} \geq x- \rho\delta\sum_{i=0}^{j} 2^{3i/4} x \geq (-2^{3j/4}\delta+\tfrac{1}{2}\delta+1)x,
	\end{align*}
	while $ky/N \geq (2^j-1)x^2y/N = (2^j-1)\delta\cdot x$, i.e., $-ky/N\leq (-2^j\delta+\delta)x$. Similarly $Y_{xN^{1/2}} \leq x+\tfrac{1}{2}\delta(2^{J+1}-1)x = x+\frac{1}{2}\delta N^{1/2} \leq (\delta^{1/2}+\frac{1}{2}\delta)N^{1/2}$ (the last inequality using that $xy=\delta N$ and $x\leq y$ implies $x\leq \delta^{1/2}N^{1/2}$). Since $\delta\leq 2$ and $2^{1/2}+1\leq 4$ it follows that the event in \eqref{e.ballot theorem lower bound extreme case event} implies that in \eqref{e.ballot theorem extreme case first lower bound}.
	
	Now, $\overline Y_{(2^j-1)x^2, (2^{j+1}-1)x^2}$ are independent across $j$ under $\tilde\P^{x}_{[0,N]}$. By Lemma~\ref{l.doob maximal} (Doob's maximal inequality), for all $j$,
	\begin{align}\label{e.doob for Y_k}
		\P\left(\overline Y_{(2^j-1)x^2, (2^{j+1}-1)x^2} > \rho\delta 2^{3j/4} x\right) \leq 2\exp(-c\delta^2 2^{j/2}).
	\end{align}
	There is some absolute constant $j_0$ below which the right-hand side in the previous display is larger than $1$. However, it follows from Donsker's invariance principle that $\inf_{N\geq 1}\P(\max_{k\leq N}|Y(k)| < \rho N^{1/2}) > 0$: this uses the fact that since $Y(k)-Y(k-1)$ are mean zero and have contiguous support by Remark \ref{rmk:bridgebc}, they each have positive probability of lying in a small neighborhood around zero. Combining this fact with \eqref{e.doob for Y_k} yields that \eqref{e.ballot theorem lower bound extreme case event} and thus \eqref{e.ballot theorem extreme case first lower bound} is lower bounded by
	\begin{align*}
		c\cdot \prod_{j=0}^{J}\left(1-\min(c,\exp(-c\delta^2 2^{j/2})\right),
	\end{align*}
	where $c>0$. This expression is lower bounded by an absolute constant, completing the proof.
\end{proof}

\begin{proof}[Proof of Lemma~\ref{l.ballot theorem lower bound diffusive case}]
		
	As in the proof of Proposition~\ref{p.partition function lower bound}, we may assume $N\geq N_0$ for a constant $N_0$ that will be set later and that $x,y\geq 1$. We may also assume $x,y\leq N^{1/2}$ using monotonicity (Lemma~\ref{l.monotonicity}) to reduce them if they are higher and adjusting the constant $c$ at the end.
	
	Next, let $K>0$ be a large constant whose value will be set later, and assume for notational convenience that $N$ is even so that $N/2$ is an integer. We observe from the Gibbs property (in the first line) and Lemma~\ref{l.monotonicity} (monotonicity, in the second line) that
	\begin{align*}
		\MoveEqLeft[0]
		\P^{ x,y}_{[0,N]}\left(X(k) \geq 0, \, k=1, \ldots, N-1\right)\\
		&= \E^{ x,y}_{[0,N]}\left[\P^{ x,X(N/2)}_{[0,N/2]}\left(X(k) \geq 0, \, k=1, \ldots, N/2\right)\cdot \P^{ X(N/2), y}_{[0,N/2]}\left(X(k) \geq 0, \, k=1, \ldots, N/2\right)\right]\\
		&\geq \E^{ x,y}_{[0,N]}\left[\P^{ x, KN^{1/2}}_{[0,N/2]}\left(X(k) \geq 0,\, k=1, \ldots, N/2\right)\cdot \P^{ KN^{1/2}, y}_{[0,N/2]}\left(X(k) \geq 0,\, k=1, \ldots, N/2\right)\one_{X(N/2) \geq KN^{1/2}}\right]\\
		&= \P^{ x, KN^{1/2}}_{[0,N/2]}\left(X(k) \geq 0, \,k=1, \ldots, N/2\right)\cdot \P^{ KN^{1/2}, y}_{[0,N/2]}\left(X(k) \geq 0,\, k=1, \ldots, N/2\right)\\
		&\qquad\times\P^{ x,y}_{[0,N]}\left(X(N/2) \geq KN^{1/2}\right).
	\end{align*}
	By monotonicity, the last line is lower bounded by the same with $x$ and $y$ set to zero, and, by the invariance principle Lemma \ref{l.invar}, the latter is lower bounded by some $\eta = \eta(K)>0$ for all $N>N_0$, for some $N_0= N_0(K)$. We will show that for an appropriate choice of $K$ (which will be done independent of $x$ and $y$), the first factor in the second to last line of the last display is lower bounded by $cxN^{-1/2}$ for some absolute constant $c>0$; the same argument will show that the second factor is lower bounded by $cyN^{-1/2}$. This will complete the proof.
	
	By monotonicity again,
	\begin{align*}
		\MoveEqLeft[2]
		\P^{ x, KN^{1/2}}_{[0,N/2]}\left(X(k) \geq 0, \, k=1, \ldots, N/2\right)\\
		&\geq \P^{ x}_{[0,N/2]}\left(X(k) \geq 0, \, k=1, \ldots, N/2 \mid X(N/2) \leq KN^{1/2}\right)\\
		&= \frac{\P^{ x}_{[0,N/2]}\left(X(N/2) \leq KN^{1/2} \mid X(k) \geq 0, \, k=1, \ldots, N/2\right)\cdot\P^{ x}_{[0,N/2]}\left(X(k) \geq 0, \, k=1, \ldots, N/2\right)}{\P^{ x}_{[0,N/2]}(X(N/2) \leq KN^{1/2})}.
	\end{align*}
	The denominator of the last line is trivially upper bounded by $1$. By Lemma~\ref{l.ballot theorem for random walk} (2), the second factor in the numerator is lower bounded by $cxN^{-1/2}$. By considering the complement and applying the Markov inequality using the bound on the second moment from Lemma~\ref{l.ballot theorem for random walk} (3), the first factor in the numerator is lower bounded by $1/2$ for large enough $K$ (chosen independently of $x$, since $C$ in Lemma~\ref{l.ballot theorem for random walk} is independent of $x$). Thus the previous display is lower bounded by $cxN^{-1/2}$.
	\end{proof}


\section{Partition function lower bound and dropping lemma}\label{sec:Zlbd}

In this section, we prove a lower bound on the partition function for a single random walk bridge above a wall with area tilt. This will be used throughout the paper to upper bound various probabilities by removing the area tilt. Recall that we use $\lambda$ to denote the area tilt coefficient. Define the scale parameter
\begin{equation}\label{eqn:H}
H_\lambda := \lambda^{-1/3}N^{1/3}.
\end{equation}
The role of this parameter is that a $\lambda$-area-tilted walk $X$ on an interval with size of order $H_\lambda^2$ has typical height of order $H_\lambda$, i.e., behaves diffusively. This can be seen heuristically by noting that the area tilt $\frac{\lambda}{N}\mathcal{A}(X)$ becomes order 1 on this scale, as $H_\lambda \cdot H_\lambda^2 = \lambda^{-1}N$.

Recall the notation laid out in Remark \ref{rmk:notation}. In particular, $\mathbb{P}_{I;0}^{u,v}$ denotes the untilted (i.e., $\lambda=0$) random walk bridge on an interval $I$ from $u$ to $v$ with floor at 0. When the floor is removed we omit the 0 subscript, and when the right boundary condition is removed (i.e., a random walk) we omit the second superscript.

\begin{proposition}\label{p.partition function lower bound}
	There exist $c^*>0$ and $N_0\in\mathbb{N}$ such that the following holds for all $\lambda\in(0,N)$ and $N\geq N_0$. For any $A,B>0$ such that $\max(A,B) \leq H_\lambda^2$ and $|I| \geq 2\max(A^{1/2}, B^{1/2})H_\lambda^2$,
	\begin{align*}
		Z_{I;0}^{\lambda;AH_\lambda,BH_\lambda} := \E^{AH_\lambda, BH_\lambda}_{I;0} \big[e^{-\frac{\lambda}{N}\cA(X)} \big] \geq (c^*)^{-1}\exp\bigg(-c^* \bigg(A^{3/2}+B^{3/2}+ \frac{|I|}{H_\lambda^{2}} \bigg)\bigg).
	\end{align*}
\end{proposition}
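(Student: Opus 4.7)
The plan is to bound $Z_{I;0}^{\lambda;AH_\lambda, BH_\lambda}$ below by restricting the expectation to an explicit favorable event $\mathsf{E}$ on which the bridge traces a roughly parabolic descent from each endpoint down to diffusive height $\asymp H_\lambda$, followed by a tube of height $\asymp H_\lambda$ in between. On $\mathsf{E}$, the area $\mathcal{A}(X)$ will be controlled by $C(A^{3/2}+B^{3/2}+|I|/H_\lambda^{2})H_\lambda^{3}$, so since $\lambda/N = H_\lambda^{-3}$, the area-tilt factor will be bounded below by $\exp(-C(A^{3/2}+B^{3/2}+|I|/H_\lambda^{2}))$. It then suffices to lower bound $\mathbb{P}_{I;0}^{AH_\lambda,BH_\lambda}(\mathsf{E})$ by a similar expression. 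I decompose $I$ into a left descent region $I_L$ of length $A^{1/2}H_\lambda^{2}$, a right descent region $I_R$ of length $B^{1/2}H_\lambda^{2}$, and a bulk $I_B$ in between; the hypothesis $|I|\geq 2\max(A^{1/2},B^{1/2})H_\lambda^{2}$ ensures these are disjoint. I then condition on the bridge passing through prescribed diffusive windows around height $H_\lambda$ at the two junctions $\partial I_B$, which decouples the three regions.

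For $I_L$, I force parabolic descent via a dyadic scheme. Set $t_k := (1-2^{-k})A^{1/2}H_\lambda^{2}$, target heights $h_k := 2^{-2k}AH_\lambda$, and spacings $\Delta_k := t_{k+1}-t_k \asymp 2^{-k}A^{1/2}H_\lambda^{2}$, for $k=0,\ldots,K$ with $K := \lceil\tfrac{1}{2}\log_2 A\rceil$, so that $h_K \asymp H_\lambda$. I demand $X(t_k) \in [h_k, h_k+\sqrt{\Delta_k}]$ for each $k$ and the bridge stays $\geq 0$ on each $[t_k,t_{k+1}]$. The ballot parameter is $h_k h_{k+1}/\Delta_k \asymp A^{3/2}2^{-3k}$, which remains $\gtrsim 1$ on the whole range $k \leq K$, so by the ballot theorem (Theorem~\ref{t.ballot theorem}) each non-crossing event has probability uniformly bounded below. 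The probability of landing in each diffusive window follows from the invariance principle (Lemma~\ref{l.invar}) and is also uniformly bounded below. Multiplying the $K$ factors yields $\mathbb{P}_{I_L}(\mathsf{E}_L) \geq c^K \geq A^{-C}$. On $\mathsf{E}_L$, using monotonicity (Lemma~\ref{l.monotonicity}) and the bridge maximal inequality (Lemma~\ref{l.bridge maximal}) on each sub-interval, we have $X(t)\leq Ch_k$ for $t\in[t_k,t_{k+1}]$ with probability $\asymp 1$, and hence
\begin{align*}
	\mathcal{A}(X|_{I_L}) \lesssim \sum_{k=0}^{K} h_k\Delta_k \lesssim A^{3/2} H_\lambda^{3}.
\end{align*}
The resulting area-tilt loss $\exp(-CA^{3/2})$ dominates the polynomial loss $A^{-C}$. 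The case $A \lesssim 1$ is trivial (a single sub-interval suffices), and $I_R$ is treated identically after time-reversal.

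For $I_B$, I tile into $\asymp |I_B|/H_\lambda^{2}$ consecutive segments of length $H_\lambda^{2}$, force each junction value into $[H_\lambda, 2H_\lambda]$, and require each piece to stay in $[0,CH_\lambda]$. On each segment, the junction probability is $\asymp 1$ by Lemma~\ref{l.invar}, the non-crossing probability is $\asymp 1$ by Theorem~\ref{t.ballot theorem} (with ballot parameter $H_\lambda \cdot H_\lambda / H_\lambda^{2} = 1$), and the tube-confinement probability is $\asymp 1$ by Lemma~\ref{l.bridge maximal} and monotonicity. Taking products over segments gives $\mathbb{P}_{I_B}(\mathsf{E}_B)\geq \exp(-C|I_B|/H_\lambda^{2})$, while the area satisfies $\mathcal{A}(X|_{I_B}) \leq CH_\lambda|I_B|$, contributing area-tilt loss $\exp(-C|I_B|/H_\lambda^{2})$. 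Combining the bounds from $I_L$, $I_B$, $I_R$ (and absorbing $A^{1/2}+B^{1/2}$ into $A^{3/2}+B^{3/2}+ O(1)$) yields the claimed estimate.

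The main obstacle is the high-boundary-condition regime, where $A$ and $B$ can be as large as $H_\lambda^{2}$, so that $AH_\lambda$ far exceeds the diffusive scale $|I_L|^{1/2} \asymp A^{1/4}H_\lambda$. Classical ballot theorems break down here because they require diffusive boundary conditions, and this is precisely the reason Theorem~\ref{t.ballot theorem} had to be extended to arbitrarily high boundary values. A secondary difficulty is uniformity-in-scale of the local CLT at each dyadic level; this is handled by using Lemma~\ref{l.invar} together with monotonicity (Lemma~\ref{l.monotonicity}) to reduce all per-scale probability estimates to diffusive ones with $O(1)$ constants. A minor bookkeeping issue is the treatment of $A,B\lesssim 1$, handled trivially by reducing to the bulk analysis alone.
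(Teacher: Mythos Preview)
Your overall architecture matches the paper: split $I$ into a left descent piece of length $A^{1/2}H_\lambda^2$, a right descent piece of length $B^{1/2}H_\lambda^2$, and a bulk in between; control the area on a favorable event $\mathsf{E}$; and lower bound $\mathbb{P}(\mathsf{E})$ by conditioning at junction points and handling each piece separately via the ballot theorem, monotonicity, and an invariance principle. Your bulk analysis is essentially the same as the paper's.

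The gap is in your descent analysis on $I_L$. Your claim that ``the probability of landing in each diffusive window follows from the invariance principle and is uniformly bounded below'' is false for your dyadic scheme. Conditioning on $X(t_k)\approx h_k$ and the right endpoint $\approx H_\lambda$, the conditional mean of $X(t_{k+1})$ is the linear interpolant $\approx h_k/2 = 2^{-2k-1}AH_\lambda$, whereas your target is $h_{k+1}=2^{-2k-2}AH_\lambda$. The gap is $\asymp 2^{-2k}AH_\lambda$, while the conditional standard deviation is $\asymp\sqrt{\Delta_k}\asymp 2^{-k/2}A^{1/4}H_\lambda$; the required deviation is therefore $\asymp 2^{-3k/2}A^{3/4}$ standard deviations, which for $k=0$ is $A^{3/4}$, not $O(1)$. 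Lemma~\ref{l.invar} gives nothing here; the step-$k$ landing probability is $\exp(-c\,2^{-3k}A^{3/2})$, not a constant, and your product is $\exp(-CA^{3/2})$, not $c^K\geq A^{-C}$. (That is still good enough for the proposition, but it contradicts your stated reasoning and you would need a moderate-deviations estimate, not the invariance principle, to justify it.)

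The paper avoids this by using a \emph{linear} rather than parabolic descent: it takes $A^{3/2}$ equal subintervals of length $A^{-1}H_\lambda^2$, on each of which the walk drops by $\asymp A^{-1/2}H_\lambda$. This drop is exactly diffusive ($\sqrt{A^{-1}H_\lambda^2}=A^{-1/2}H_\lambda$), so now the invariance principle genuinely gives a per-step constant, and the $\exp(-CA^{3/2})$ comes honestly from multiplying $A^{3/2}$ many constants. The cost is that a more delicate local limit argument (Lemma~\ref{l.random walk bridge local gaussianity}) is needed to make ``per-step constant'' uniform over $A^{3/2}$ scales, since the endpoint of $I_L$ sits far from the subdiffusive window; this is the step your sketch glosses over with ``invariance principle'' and is the main technical content of the paper's proof.
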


This is a generalization of a similar lower bound obtained in \cite[(3.3)]{ISV14}, which only considered a walk with boundary condition fixed on the $H_\lambda$ scale. It is essential here that we may take $A$ and $B$ in principle growing with $N$, as we will apply this bound on intervals with random boundary conditions that may be much larger than $H_\lambda^2$. In particular, when $I = \llbracket -N,N\rrbracket$ and $\lambda = O(1)$, we may take boundary conditions as high as $O(N)$, much larger than the typical $N^{1/3}$ scale. The ballot theorems in Section \ref{s.ballot} are needed to prove the proposition in this full range of parameters.

Before giving the proof of \cref{p.partition function lower bound}, we state the ``dropping lemma,'' which follows as an immediate consequence and is an essential tool in the proof of \cref{thm:max}. The lemma states that an area tilt of coefficient $\lambda$ quickly pulls the random walk down to a typical height $H_{\lambda}$.
\begin{lemma}[Dropping lemma]
	\label{lem:dropping-lemma}
	Recall $c^*>0$ and $N_0\in\mathbb{N}$ from \cref{p.partition function lower bound}.
	For any $\ep>0$, $\lambda \in (0,N)$, $N\geq N_0$, $A >0$, $u,v\geq 0$, and interval $I$ satisfying $\max(u,v)\leq H_\lambda^3$ and $|I| \geq 2\max(u^{1/2},v^{1/2})H_\lambda^{3/2}$, and for any $I_{\ep} \subset I$ such that $|I_{\ep}| \geq \ep |I|$, 
	\begin{align}\label{eqn:dropping-lemma} 
		\P^{\lambda;u,v}_{I;0}\big(\exists x\in I_{\ep} : X(x) \leq A H_\lambda \big) > 1- (c^*)^{-1} \exp\bigg(-(A\ep-c^*)\frac{|I|}{H_\lambda^{2}} 
		+ c^*\frac{u^{3/2} + v^{3/2}}{H_{\lambda}^{3/2}} \bigg)\,.
	\end{align}
\end{lemma}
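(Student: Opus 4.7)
The plan is a short deduction from Proposition~\ref{p.partition function lower bound} via the Radon--Nikodym derivative defining $\P^{\lambda; u, v}_{I;0}$. Let $\mathsf{E}$ denote the complementary event in \eqref{eqn:dropping-lemma}, i.e., $\mathsf{E} = \{X(x) > AH_\lambda \text{ for all } x \in I_\ep\}$. Since $\P^{\lambda;u,v}_{I;0}$ is obtained from $\P^{u,v}_{I;0}$ by reweighting by $e^{-\frac{\lambda}{N}\mathcal{A}(X)}$ (normalized by $Z^{\lambda;u,v}_{I;0} = \E^{u,v}_{I;0}[e^{-\frac{\lambda}{N}\mathcal{A}(X)}]$), we have
\[
\P^{\lambda;u,v}_{I;0}(\mathsf{E}) = \frac{\E^{u,v}_{I;0}[e^{-\frac{\lambda}{N}\mathcal{A}(X)}\one_{\mathsf{E}}]}{Z^{\lambda;u,v}_{I;0}}\,,
\]
so it suffices to upper bound the numerator and lower bound the denominator.

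For the numerator, I will use that on $\mathsf{E}$, the trajectory satisfies $X(x) > AH_\lambda$ on $I_\ep$ and $X(x) \geq 0$ elsewhere (by the floor under $\P^{u,v}_{I;0}$), hence $\mathcal{A}(X) \geq A H_\lambda |I_\ep| \geq A\ep H_\lambda |I|$. Using the crucial identity $\lambda H_\lambda/N = H_\lambda^{-2}$, which follows directly from \eqref{eqn:H}, the area-tilt weight is bounded on $\mathsf{E}$ by $\exp(-A\ep |I|/H_\lambda^2)$, giving
\[
\E^{u,v}_{I;0}\bigl[e^{-\frac{\lambda}{N}\mathcal{A}(X)}\one_{\mathsf{E}}\bigr] \leq \exp\bigl(-A\ep |I|/H_\lambda^2\bigr)\,.
\]

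For the denominator, I will invoke Proposition~\ref{p.partition function lower bound} with $A' := u/H_\lambda$, $B' := v/H_\lambda$. The hypothesis $\max(u,v) \leq H_\lambda^3$ is exactly $\max(A', B') \leq H_\lambda^2$, and the interval length condition $|I| \geq 2\max(u^{1/2},v^{1/2})H_\lambda^{3/2}$ matches $|I| \geq 2\max(A'^{1/2},B'^{1/2})H_\lambda^2$. This yields
\[
Z^{\lambda;u,v}_{I;0} \geq (c^*)^{-1} \exp\bigg(-c^*\Big(\frac{u^{3/2} + v^{3/2}}{H_\lambda^{3/2}} + \frac{|I|}{H_\lambda^2}\Big)\bigg)\,.
\]

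Dividing the numerator bound by the denominator bound and combining the two $|I|/H_\lambda^2$ contributions into $-(A\ep - c^*)|I|/H_\lambda^2$ gives exactly the right-hand side of \eqref{eqn:dropping-lemma} (after possibly enlarging $c^*$ to absorb the prefactor). There is no real obstacle: all the analytical work has been done in Proposition~\ref{p.partition function lower bound}, and the dropping lemma is essentially a clean bookkeeping computation converting a height lower bound into an area lower bound via the linear area functional, with the scaling $\lambda H_\lambda / N = H_\lambda^{-2}$ providing the dimensional match between the $A\ep|I|/H_\lambda^2$ gained from the event and the $c^*|I|/H_\lambda^2$ lost from the partition function lower bound.
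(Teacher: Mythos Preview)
Your proof is correct and follows essentially the same approach as the paper: both bound the complementary event by observing that it forces $\mathcal{A}(X) \geq A\ep|I|H_\lambda$, upper bound the tilted expectation by $\exp(-A\ep|I|/H_\lambda^2)$ using $\lambda/N = H_\lambda^{-3}$, and divide by the partition function lower bound from Proposition~\ref{p.partition function lower bound}. The only cosmetic difference is that the paper phrases the intermediate step as a bound on $\P^{\lambda;u,v}_{I;0}(\mathcal{A}(X) > A\ep|I|H_\lambda)$ rather than on $\P^{\lambda;u,v}_{I;0}(\mathsf{E})$ directly; your parenthetical about ``possibly enlarging $c^*$'' is unnecessary since the constants already match the stated bound.
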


	\begin{proof}
		If $X(x) > AH_\lambda$ for all $x\in I_\ep$, then the area $\mathcal{A}(X)$ exceeds $A\ep|I|H_\lambda$. By \cref{p.partition function lower bound},
		\begin{align*}
			\P^{\lambda;u,v}_{I;0}\Bigl(\cA(X) >A\ep |I|H_\lambda \Bigr) &\leq (c^*)^{-1}\exp\bigg(c^*\frac{|I|}{H_\lambda^{2}}+c^*\frac{u^{3/2} + v^{3/2}}{H_{\lambda}^{3/2}}\bigg) \E^{u,v}_{I;0} \left[e^{-\frac{\lambda}{N} \cA(X)} \one_{\cA(X) >A\ep  |I| H_\lambda} \right]  \\
			&\leq (c^*)^{-1} \exp\bigg(-(A\ep-c^*)\frac{|I|}{H_\lambda^2} + c^*\frac{u^{3/2} + v^{3/2}}{H_{\lambda}^{3/2}}\bigg).
		\end{align*}
		Here we used $\lambda/N = H_\lambda^{-3}$.
	\end{proof}

\begin{proof}[Proof of \cref{p.partition function lower bound}]
	We may assume without loss of generality that the interval $I$ is of the form $[ -J, J ]$ for some $J>0$. We can also assume $A=B$, by raising the smaller one to equal the other and using stochastic monotonicity, Lemma \ref{l.monotonicity}. By increasing $c^*$ appropriately and using monotonicity again, we may assume $A\geq 10$. Let $J_A = A^{1/2}H_\lambda^2$, and define the events
	\begin{align*}
		\mathsf{Left} &:= \left\{\max_{i\in[-J,-J+J_A]} X(i) \leq 2AH_\lambda, \, X(-J+J_{A})\in \left[0, H_\lambda\right]\right\}\\
		\mathsf{Mid} &:= \left\{\max_{i\in [-J + J_A, J - J_A]} X(i) \leq 2H_\lambda\right\}\\
		\mathsf{Right} &:= \left\{\max_{i \in [J - J_A, J]} X(i) \leq 2AH_\lambda, \, X(J-J_A)\in \left[0, H_\lambda\right]\right\}.
	\end{align*}
	Finally let $\mathsf{Drop} = \mathsf{Left} \cap \mathsf{Mid} \cap \mathsf{Right}$. It is immediate that on the event $\mathsf{Drop}$, 
	$$\cA(X) \leq \left(4A^{3/2} + (JH_\lambda^{-2}-2A^{1/2})\right)H_\lambda^3.$$
	Recall $H_\lambda^3 = \lambda^{-1}N$. With this we see that
	\begin{align*}
		\E^{AH_\lambda, AH_\lambda}_{I;0}\big[e^{-\frac{\lambda}{N}\cA(X)}\big] \geq \exp\left(-\left[4A^{3/2} + (JH_\lambda^{-2}-2A^{1/2})\right]\right)\cdot \P^{AH_\lambda, AH_\lambda}_{I;0}(\mathsf{Drop}).
	\end{align*}
	So it remains to lower bound $\P^{AH_\lambda, AH_\lambda}_{I;0}(\mathsf{Drop})$, in particular to show that there exists $c>0$ such that $\P^{AH_\lambda, AH_\lambda}_{I;0}(\mathsf{Drop}) \geq \exp(-c(A^{3/2}+ JH_\lambda^{-2}))$. First we note that $\mathsf{Left}$, $\mathsf{Mid}$, and $\mathsf{Right}$ are decreasing events since, $X(i) \geq 0$ due to the floor. Let $\cF_A$ be the $\sigma$-algebra generated by $X(-J+J_A), X(J-J_A)$. Then by the Gibbs property along with monotonicity,
	\begin{align}
		\P^{AH_\lambda, AH_\lambda}_{I;0}(&\mathsf{Drop}) = \E^{AH_\lambda, AH_\lambda}_{I;0}\left[\P^{AH_\lambda, AH_\lambda}_{I;0}\Bigl(\mathsf{Left}\cap \mathsf{Mid} \cap \mathsf{Right} \mid \cF_A \Bigr)\right] \notag \\
		&= \E^{AH_\lambda, AH_\lambda}_{I;0}\left[\P^{AH_\lambda, AH_\lambda}_{I;0}\Bigl(\mathsf{Left}\mid \cF_A \Bigr)\cdot \P^{AH_\lambda, AH_\lambda}_{I;0} \Bigl(\mathsf{Mid}\mid \cF_A \Bigr) \cdot \P^{AH_\lambda, AH_\lambda}_{I;0}\Bigl( \mathsf{Right}\mid \cF_A\Bigr)\right] \notag \\
		&= \E^{AH_\lambda, AH_\lambda}_{I;0}\biggl[\P^{AH_\lambda,X(-J+J_A)}_{[-J,-J+J_A];0}\left(\max_{i\in[-J,-J+J_A]} X(i)\leq 2AH_\lambda\right)\cdot \P^{X(-J+J_A),X(J-J_A)}_{[-J+J_A,J-J_A];0}(\mathsf{Mid}) \notag \\
		&\qquad\qquad\qquad \times \P^{X(J-J_A),AH_\lambda}_{[J-J_A,J];0}\left(\max_{i\in[J-J_A,J]} X(i)\leq 2AH_\lambda\right) \one_{X(-J+J_A), X(J-J_A)\leq H_\lambda}\biggr] \notag \\
		&\geq \E^{AH_\lambda, AH_\lambda}_{I;0}\biggl[\P^{AH_\lambda,H_\lambda}_{[-J,-J+J_A];0}\left(\max_{i\in[-J,-J+J_A]} X(i)\leq 2AH_\lambda\right)\cdot \P^{H_\lambda, H_\lambda}_{[-J+J_A,J-J_A];0}(\mathsf{Mid}) \notag \\
		&\qquad\qquad\qquad \times \P^{H_\lambda,AH_\lambda}_{[J-J_A, J];0}\left(\max_{i\in[J-J_A,J]} X(i)\leq 2AH_\lambda\right) \one_{X(-J+J_A), X(J-J_A)\leq H_\lambda}\biggr], \label{DGibbs}
	\end{align}
	Now, observe that the three probabilities inside the expectation are deterministic. Taking them out of the expectation shows that the remaining term is $\P^{AH_\lambda, AH_\lambda}_{I;0}(X(-J+J_A) \leq H_\lambda, X(J-J_A)\leq H_\lambda)$, which, by the Gibbs property, monotonicity, and the Harris inequality (which together can be interpreted as a special case of the FKG inequality, though we do not establish this in full), is lower bounded by the product of $\P^{AH_\lambda, AH_\lambda}_{I;0}(X(-J+J_A) \leq H_\lambda)$ and  $\P^{AH_\lambda, AH_\lambda}_{I;0}(X(J-J_A)\leq H_\lambda)$:
	\begin{align*}
		\MoveEqLeft[6]
		\P^{AH_\lambda, AH_\lambda}_{I;0}(X(-J+J_A) \leq H_\lambda, X(J-J_A)\leq H_\lambda)\\
		&= \E^{AH_\lambda, AH_\lambda}_{I;0}\left[\P^{AH_\lambda, AH_\lambda}_{I;0}\left(X(J-J_A)\leq H_\lambda \mid X(-J+J_A)\right)\one_{X(-J+J_A) \leq H_\lambda}\right]\\
		&= \E^{AH_\lambda, AH_\lambda}_{I;0}\left[\P^{X(-J+J_A), AH_\lambda}_{[-J+J_A,J];0}\left(X(J-J_A)\leq H_\lambda\right)\one_{X(-J+J_A) \leq H_\lambda}\right]\\
		&\geq \E^{AH_\lambda, AH_\lambda}_{I;0}\left[\P^{X(-J+J_A), AH_\lambda}_{[-J+J_A,J];0}\left(X(J-J_A)\leq H_\lambda\right)\right] \P^{AH_\lambda, AH_\lambda}_{I;0}\left(X(-J+J_A) \leq H_\lambda\right)\\
		&= \P^{AH_\lambda, AH_\lambda}_{I;0}(X(-J+J_A) \leq H_\lambda)\cdot \P^{AH_\lambda, AH_\lambda}_{I;0}(X(J-J_A)\leq H_\lambda),
	\end{align*}
	where the last line follows again from the Gibbs property.
	Thus we have so far shown that
	\begin{align}
			\P^{AH_\lambda, AH_\lambda}_{I;0}(\mathsf{Drop}) &\geq \P^{AH_\lambda,H_\lambda}_{[-J,-J+J_A];0}\left(\max_{i\in[-J,-J+J_A]} X(i)\leq 2AH_\lambda\right)\cdot \P^{H_\lambda, H_\lambda}_{[-J+J_A,J-J_A];0}(\mathsf{Mid}) \nonumber \\
			&\qquad\times  \P^{H_\lambda,AH_\lambda}_{[J-J_A,J];0}\left(\max_{i\in[J-J_A, J]} X(i)\leq 2AH_\lambda\right)\cdot \P^{AH_\lambda, AH_\lambda}_{I;0}\bigl(X(-J+J_A) \leq H_\lambda\bigr) \nonumber \\
			&\qquad\times \P^{AH_\lambda, AH_\lambda}_{I;0}\bigl(X(J-J_A)\leq H_\lambda\bigr) \label{e.partition function prob lower bound}
	\end{align}
	The first and third probabilities on the right-hand side of \eqref{e.partition function prob lower bound} are lower bounded by some absolute constant $\delta>0$ by monotonicity and an invariance principle for random walk bridges conditioned to stay positive \cite{caravenna2013invariance} (which assumes no more than what we have assumed already on our random walk bridge distributions). In more detail, the first probability is lower bounded by the same with the second endpoint raised to $AH_\lambda$ and the floor raised to $AH_\lambda - \smash{J_A^{1/2}} = (A-A^{1/4})H_\lambda$, which is in turn lower bounded by a constant using the mentioned invariance principle (the floor was raised so that the endpoints were above the floor by an amount on the diffusive scale). An analogous argument applies to the third probability in \eqref{e.partition function prob lower bound}. 
	
	\bigskip

	\noindent\emph{Lower bounding $\P^{H_\lambda, H_\lambda}_{[-J+J_A,J-J_A];0}(\mathsf{Mid})$}: The second probability in \eqref{e.partition function prob lower bound} is lower bounded by $\exp(-cJH_\lambda^{-2})$. To see this,
	first let $\mc F_A'$ be the $\sigma$-algebra generated by $\{X(-J+J_A+jH_\lambda^2):1 \leq j \leq 2H_\lambda^{-2}(J-J_A)-1\}$. By the Gibbs property and monotonicity, 
	\begin{align}
	\MoveEqLeft[0.5]
		\P^{H_\lambda, H_\lambda}_{[-J+J_A,J-J_A];0}(\mathsf{Mid})\nonumber\\
		&\geq \E^{H_\lambda, H_\lambda}_{[-J+J_A,J-J_A];0}\left[\P^{H_\lambda, H_\lambda}_{[-J+J_A,J-J_A];0}(\mathsf{Mid} \mid \mc F_u') \cdot \one \left\{\max_{1 \leq j \leq 2H_\lambda^{-2}(J-J_A)-1} X(-J+J_A+jH_\lambda^2) \leq H_\lambda\right\}\right]\nonumber\\
		&\geq \P^{ H_\lambda, H_\lambda}_{[0,H_\lambda^2]; 0}\left(\max_{1\leq i \leq H_\lambda^2} X(i) \leq 2AH_\lambda\right)^{2H_\lambda^{-2}(J-J_A)}\nonumber\\
		&\qquad\times \P^{H_\lambda, H_\lambda}_{[-J+J_A,J-J_A];0}\left(\max_{1 \leq j \leq 2H_\lambda^{-2}(J-J_A)-1} X(-J+J_A+jH_\lambda^2) \leq H_\lambda\right). \label{e.mid lwoer bound first break up}
	\end{align}
	The first factor is lower bounded by $\exp(-cJH_\lambda^{-2})$ using an invariance principle for random walk bridges conditioned to remain positive \cite{caravenna2013invariance} to obtain that $\P^{ H_\lambda, H_\lambda}_{H_\lambda^2; 0}(\max_{1 \leq i \leq H_\lambda^2} X(i) \leq 2AH_\lambda)$ is lower bounded by an absolute constant (here we use that $H_\lambda\geq 1$ since $\lambda<N$ and $H_\lambda^3 = \lambda^{-1}N$). Next we turn to the second factor. Using the Gibbs property and monotonicity again, we see that
	\begin{align}
	\MoveEqLeft[6]
	\P^{H_\lambda, H_\lambda}_{[-J+J_A,J-J_A];0}\left(\max_{1 \leq j \leq 2H_\lambda^{-2}(J-J_A)-1} X(-J+J_A+jH_\lambda^2) \leq H_\lambda\right)\nonumber\\
	&\geq \prod_{j=1}^{2H_\lambda^{-2}(J-J_A)-1}\P^{H_\lambda, H_\lambda}_{[-J+J_A+(j-1)H_\lambda^2,J-J_A];0}\left(X(-J+J_A+jH_\lambda^2) \leq H_\lambda\right). \label{e.mid lower bound max into product}
	\end{align}
	For notational simplicity we write $I_j = [-J+J_A+(j-1)H_\lambda^2,J-J_A]$. Now, again by monotonicity followed by the definition of conditional probability,
	\begin{align}
	\P^{H_\lambda, H_\lambda}_{I_j;0}\left(X(-J+J_A+jH_\lambda^2) \leq H_\lambda\right)\nonumber
	&\geq \P^{H_\lambda}_{I_j;0}\left(X(-J+J_A+jH_\lambda^2) \leq H_\lambda\midd X(J-J_A) \geq H_\lambda\right)\nonumber\\
	&\geq \P^{H_\lambda}_{I_j;0}\left(X(J-J_A) \geq H_\lambda\midd X(-J+J_A+jH_\lambda^2) \leq H_\lambda\right) \label{e.break up for mid bound}\\
	&\qquad\quad \times \P^{H_\lambda}_{I_j;0}\left(X(-J+J_A+jH_\lambda^2) \leq H_\lambda\right).\nonumber
	\end{align}
	Using monotonicity, we can lower bound the first factor of \eqref{e.break up for mid bound} by the same without a floor at 0, i.e., $X$ becomes a standard random walk. Then the Markov property and the central limit theorem guarantee that the first factor of \eqref{e.break up for mid bound} is lower bounded by an absolute constant. Now we turn to the second factor of \eqref{e.break up for mid bound}. This is lower bounded by
	\begin{align}
	\MoveEqLeft[6]
	\P^{H_\lambda}_{I_j;0}\left(X(-J+J_A+jH_\lambda^2) \in [\tfrac{1}{2}H_\lambda, H_\lambda]\right)\nonumber\\
	&= \frac{\P^{H_\lambda}_{I_j}\left(X(-J+J_A+jH_\lambda^2) \in [\tfrac{1}{2}H_\lambda,H_\lambda], X(i) \geq 0\ \forall i\in I_j \right)}{\P^{H_\lambda}_{I_j}\left(X(i) \geq 0\ \forall i\in I_j \right)}. \label{e.break up for mid bound second}
	\end{align}
	Using Lemma~\ref{l.ballot theorem for random walk}, the denominator is upper bounded by $CH_\lambda/|I_j|^{1/2}$ Conditioning on $X(-J+J_A+jH_\lambda^2)$ and using the Gibbs property yields that the numerator equals
	\begin{align*}
	\MoveEqLeft[3]
	\E^{H_\lambda}_{I_j}\biggl[\P^{H_\lambda, X(-J+J_A+jH_\lambda^2)}_{[-J+J_A+(j-1)H_\lambda^2, -J+J_A+jH_\lambda^2]}\left(X(i) \geq 0\ \forall i\in [-J+J_A+(j-1)H_\lambda^2, -J+J_A+jH_\lambda^2]\right)\\
	&\times \P^{X(-J+J_A+jH_\lambda^2)}_{I_{j+1}}(X_i\geq 0\  \forall i\in I_{j+1}) \one_{\frac{1}{2}H_\lambda \leq X(-J+J_A+jH_\lambda^2) \leq H_\lambda}\biggr].
	\end{align*}
	Using Lemma~\ref{l.ballot theorem for random walk} again, on the event that $X(-J+J_A+jH_\lambda^2) \geq \frac{1}{2}H_\lambda$, the first factor is lower bounded by an absolute constant and the second factor by $cH_\lambda/|I_{j+1}|^{1/2}$. The probability that $\frac{1}{2}H_\lambda \leq X(-J+J_A+jH_\lambda^2) \leq H_\lambda$ is also lower bounded by an absolute constant using the central limit theorem along with the portmanteau theorem. Overall, since $|I_{j+1}|/|I_j|$ is bounded away from zero, we obtain that the left-hand side of \eqref{e.break up for mid bound second}, and in turn the left-hand side of \eqref{e.break up for mid bound}, is lower bounded by an absolute constant. 

	Returning to \eqref{e.mid lower bound max into product} and \eqref{e.mid lwoer bound first break up} yields that
	\begin{align*}
	\P^{H_\lambda,H_\lambda}_{[-J+J_A, J-J_A]; 0}\left(\mathsf{Mid}\right) \geq \exp(-cJH_\lambda^{-2})
	\end{align*}
	for some absolute constant $c>0$.
	
	\bigskip

	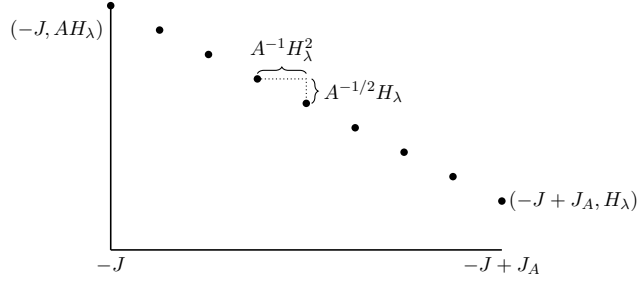
\begin{figure}
	\begin{tikzpicture}[scale=1.3]
	\draw[semithick] (0,0) -- ++(4,0);
	\draw[semithick] (0,0) -- ++(0,2.5);

	\foreach \i in {0,...,8}
		\node[circle, fill, black, inner sep = 1pt] at (0.5*\i, 2.5-0.25*\i) {};

	\node[anchor=east, scale=0.7] at (0,2.25) {$(-J, AH_\lambda)$};
	\node[anchor=west, scale=0.7] at (4,0.5) {$(-J+J_A, H_\lambda)$};
	\node[anchor=north, scale=0.7] at (0,0) {$-J$};
	\node[anchor=north, scale=0.7] at (4,0) {$-J+J_A$};

	\draw[densely dotted] (1.5, 1.75) -- ++(0.5,0) -- ++(0,-0.25);

	\draw[decoration={brace,raise=2pt},decorate] (1.5,1.75) -- node[above=4pt, scale=0.7] {$A^{-1}H_\lambda^2$} ++(0.5,0);
	\draw[decoration={brace,raise=2pt},decorate] (2,1.75) -- node[right=4pt, scale=0.7] {$A^{-1/2}H_\lambda$} ++(0,-0.25);
	\end{tikzpicture}
	\vspace{-0.1in}
	\caption{To lower bound $\P^{AH_\lambda, AH_\lambda}_{I;0}(X(J-J_A)\leq H_\lambda)$, we force the path to fall linearly as depicted above. Note that across the interval of size $J_A = A^{1/2}H_\lambda^2$, the path then falls by an amount $AH_\lambda$, which is not diffusive with respect to the interval length. However, the path does fall by an on-scale amount on the smaller scale subintervals of length $A^{-1/2}H_\lambda$, which allows us to make use of invariance principles. We expect that in the true behavior, the path would fall according to a parabola with curvature $\lambda$, but we do not care about getting the sharp coefficient in the exponent in the lower bound for the overall partition function $Z_{I;0}^{\lambda;AH_\lambda,BH_\lambda}$, and the above prescription is simpler to analyze and achieves the correct order.}\label{fig:Zlbd}
	\end{figure}

	\noindent\emph{Lower bounding $\P^{AH_\lambda, AH_\lambda}_{I;0}(X(J-J_A)\leq H_\lambda)$}: To establish the desired lower bound on $\P^{AH_\lambda,AH_\lambda}_{I;0}(\mathsf{Drop})$, it remains to lower bound the last two probabilities in \eqref{e.partition function prob lower bound}. We will show that the fourth probability is lower bounded by $\exp(-cA^{3/2})$; a symmetric argument applied to the fifth factor will then yield the same lower bound and complete the proof.
	
	To prove the lower bound on $\P^{AH_\lambda, AH_\lambda}_{I;0}\bigl(X(-J+J_A) \leq H_\lambda\bigr)$, we will consider the event that on each subinterval of $[-J,-J+J_A]$ of length $A^{-1}H_\lambda^2$, $X$ falls by an amount of order $A^{-1/2}H_\lambda$. Note that $A^{-1/2}H_\lambda$ is the fluctuation scale of a random walk bridge on an interval of size $A^{-1}H_\lambda^2$, so this event can be expected to have uniformly positive probability. Recall also that by our assumptions on $A$, $A^{-1}H_\lambda^2 \geq 9$. Since there are $A^{3/2}$ many subintervals of size $A^{-1}H_\lambda^{2}$ within the interval $[-J,-J+J_A]$ (which has length $J_A = A^{1/2}H_\lambda^2$), and since the total amount fallen will be $AH_\lambda$ (after adjusting the last fall appropriately), this will complete the proof. 
	
	Now we proceed to give the details. Let $\sigma_A = A^{-3/2}(A-1)$, so that $A^{3/2}\sigma_A = A-1$, and let $A_j = A- (j-1) \sigma_A$ and $x_j = -J+(j-1)A^{-1}H_\lambda^2$. Then we have that
	\begin{equation}\label{u32Cap}
		\P^{AH_\lambda, AH_\lambda}_{I;0}\bigl(X(-J+J_A) \leq H_\lambda\bigr) \geq \P^{AH_\lambda, AH_\lambda}_{I;0}\left(\bigcap_{j=1}^{A^{3/2}} \left\{X(x_{j+1}) \leq A_{j+1}H_\lambda\right\}\right).
	\end{equation}
	%
	Now by a similar argument using the Gibbs property and monotonicity as in \eqref{DGibbs}, the probability on the right side of \eqref{u32Cap} is bounded below by
	\begin{equation}\label{u32prod}
		\prod_{j=1}^{A^{3/2}} \P^{A_jH_\lambda, AH_\lambda}_{[x_j, J];0}\left(X(x_{j+1}) \leq A_{j+1}H_\lambda\right).
	\end{equation}
	The $j$\textsuperscript{th} factor in \eqref{u32prod} is lower bounded by
	\begin{align}
		\MoveEqLeft[2]
		\P^{A_jH_\lambda, AH_\lambda}_{[x_j, J]}\left(X(x_{j+1}) \in [A_{j+3/2}H_\lambda, A_{j+1}H_\lambda] \midd \min_{i\in[x_j, J]} X(i) \geq 0\right)\nonumber\\
		&= \P^{A_jH_\lambda, AH_\lambda}_{[x_j, J]}\left(X(x_{j+1}) \in [A_{j+3/2}H_\lambda, A_{j+1}H_\lambda]\right)\nonumber\\
		&\qquad\qquad\times\frac{ \P^{A_jH_\lambda, AH_\lambda}_{[x_j, J]}\left(\min_{i\in[x_j, J]} X(i) \geq 0 \midd X(x_{j+1})\in [A_{j+3/2}H_\lambda, A_{j+1}H_\lambda]\right)}{\P^{A_jH_\lambda, AH_\lambda}_{[x_j, J]}\left(\min_{i\in[x_j, J]} X(i) \geq 0\right)}\label{e.single factor breakup}
	\end{align}
	We first lower bound the last line of \eqref{e.single factor breakup} by an absolute positive constant first before returning to the second line. First we observe that by the Gibbs property and monotonicity, the numerator of the last line is lower bounded by
	\begin{align}\label{e.stay above zero ratio numerator}
		\MoveEqLeft[4]
		\P^{A_jH_\lambda, A_{j+3/2}H_\lambda}_{[x_j, x_{j+1}]}\left(\min_{i\in[x_j, x_{j+1}]} X(i) \geq 0\right)\cdot \P^{A_{j+3/2}H_\lambda, AH_\lambda}_{[x_{j+1}, J]}\left(\min_{i\in[x_{j+1}, J]} X(i) \geq 0\right)
	\end{align}
	By the invariance principle Lemma \ref{l.invar}, the first factor of \eqref{e.stay above zero ratio numerator} is lower bounded by an absolute positive constant independent of $j$. By the lower bound of the ballot theorem (Theorem~\ref{t.ballot theorem}), the second factor in \eqref{e.stay above zero ratio numerator} is lower bounded by
	\begin{align*}
		c\min\left(1, \frac{A_{j+3/2}AH_\lambda^2}{J-x_{j+1}}\right).
	\end{align*}
	By Theorem~\ref{t.ballot theorem}, the denominator of the last line of \eqref{e.single factor breakup} is bounded above by
	\begin{align*}
		C\min\left(1, \frac{A_{j}AH_\lambda^2}{J-x_{j}}\right).
	\end{align*}
	Under the assumptions on $I$ that $2J=|I|\geq 2(A^{1/2}+B^{1/2})H_\lambda^2$ and since $x_{j}\leq A^{1/2}H_\lambda^2$  and $A_j\geq A_{j+3/2}$ for every $j$, we see that the last line of \eqref{e.single factor breakup} is lower bounded by an absolute constant. 
	
	Now we turn to the second line of \eqref{e.single factor breakup}. This requires a more detailed calculation which we isolate as Lemma~\ref{l.random walk bridge local gaussianity} ahead. Assuming that lemma, the proof is complete.
\end{proof}

\begin{lemma}\label{l.random walk bridge local gaussianity}
	There exist $\delta, \rho>0$ such that, for all $u$ and $I = [-J,J]$ such that $0\leq AH_\lambda\leq \rho(J- A^{1/2}H_\lambda^2)$, $A\leq \frac{1}{9}H_\lambda^2$, and $|I|\geq 2A^{1/2}H_\lambda^2$, it holds for all $1\leq j\leq A^{3/2}$ that
	$$\P^{A_jH_\lambda, AH_\lambda}_{[x_j, J]}\left(X(x_{j+1}) \in [A_{j+3/2}H_\lambda, A_{j+1}H_\lambda]\right) > \delta.$$
\end{lemma}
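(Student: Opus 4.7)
The plan is to tilt the bridge measure to a centered one and then invoke a local central limit theorem, with the key observation that the target interval, after centering, has both width and displacement from the bridge mean of order $\sqrt{m}$, where $m := x_{j+1} - x_j = A^{-1}H_\lambda^2$.

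First, by Corollary~\ref{c.tilted random walk bridge}, one may replace the random walk under $\P^{A_j H_\lambda, AH_\lambda}_{[x_j, J]}$ with the walk tilted to have drift $d := (A - A_j)H_\lambda/T$, where $T := J - x_j$; the bridge measure is unchanged. The hypothesis $AH_\lambda \leq \rho(J - A^{1/2}H_\lambda^2) \leq \rho T$ forces $d \leq \rho$, so for $\rho$ sufficiently small relative to the radius of convergence of the MGF in Assumption~\ref{a.misc}, the tilt parameter $\theta^*$ lies in the exponential moment regime and the tilted walk has variance $\sigma_*^2$ bounded above and below by absolute constants; log-concavity (Assumption~\ref{a.convex}) is preserved since one adds a linear function to a convex Hamiltonian. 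Setting $Z(k) := X(k) - A_j H_\lambda - (k - x_j)d$ produces a random walk bridge from $(x_j, 0)$ to $(J, 0)$ under the tilted measure, and the target event becomes $\{Z(x_{j+1}) \in \mathcal{J}\}$, where $\mathcal{J}$ is an interval of width $\tfrac{1}{2}\sigma_A H_\lambda$ centered at $-\tfrac{5}{4}\sigma_A H_\lambda - md$.

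Next, one collects the key quantitative estimates. Using $\sigma_A = (A-1)/A^{3/2}$, the width satisfies $\tfrac{1}{2}\sigma_A H_\lambda = \tfrac{1}{2}(1 - 1/A)\sqrt{m}$, of order $\sqrt{m}$ for $A \geq 2$. The displacement satisfies $md \leq m(A-A_j)H_\lambda/T \leq H_\lambda^3/T \leq \sqrt{m}$, using only $T \geq A^{1/2}H_\lambda^2$ (which follows from $|I| \geq 2A^{1/2}H_\lambda^2$ and the constraint $j \leq A^{3/2}$). The variance of $Z(x_{j+1})$ under the bridge is $\sigma_*^2 m(T-m)/T \in [\tfrac{1}{2}\sigma_*^2 m, \sigma_*^2 m]$, using $T \geq 2m$ which follows from $A^{3/2} \geq 2$. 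Thus $\mathcal{J}$ is an interval of width of order $\sqrt{m}$ centered within $O(\sqrt{m})$ of the bridge mean $0$, while the standard deviation is also of order $\sqrt{m}$. A local central limit theorem for the tilted random walk bridge---obtained by writing the density (or PMF in the lattice case) of $Z(x_{j+1})$ at $w$ as $p_m(w)\, p_{T-m}(-w)/p_T(0)$ via independence of increments under the tilted walk, and applying standard LCLT estimates to each factor (the tilted walk has i.i.d.\ increments with finite exponential moments and positive variance, so Stone's/Gnedenko's LCLT applies)---gives that this density/PMF is bounded below by $c/\sqrt{m}$ uniformly for $w \in \mathcal{J}$. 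Integrating or summing over $\mathcal{J}$ yields the required positive lower bound $\delta > 0$.

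The main technical hurdle will be controlling the LCLT error terms uniformly across the wide parameter range, particularly in the lattice case where arithmetic effects must be handled; the assumption $A \leq \tfrac{1}{9}H_\lambda^2$ guarantees $\sqrt{m} \geq 3$, placing one beyond a minimum bridge length, which should suffice for the classical uniform bounds. A cleaner alternative route, avoiding any explicit LCLT calculation, is a compactness and contradiction argument: if the conclusion failed, there would exist a sequence of parameters along which the probability tends to zero; extracting a subsequence along which $T/m$ converges in $[1,\infty]$, $md/\sqrt{m}$ converges in $[0,1]$, and $\sigma_A H_\lambda/\sqrt{m}$ converges in $(0,1]$, and then rescaling $Z$ by $\sqrt{m}$ in space and $m$ in time, Lemma~\ref{l.invar} (or a mild extension handling $T/m \to \infty$ via the tilted walk's invariance principle) would show that $Z(x_{j+1})/\sqrt{m}$ converges to a centered Gaussian of positive variance. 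The limit of the target event then has positive probability, producing the required contradiction.
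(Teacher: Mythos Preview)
Your approach is correct and takes a genuinely different route from the paper. The paper never tilts: it decomposes
\[
\P^{A_jH_\lambda, AH_\lambda}_{[x_j, J]}\bigl(X(x_{j+1})\in[A_{j+3/2}H_\lambda,A_{j+1}H_\lambda]\bigr)
= \P^{A_jH_\lambda}_{[x_j, J]}\bigl(X(x_{j+1})\in\cdot\bigr)\cdot
\frac{\P^{A_jH_\lambda}_{[x_j, J]}\bigl(X(J)=AH_\lambda\mid X(x_{j+1})\in\cdot\bigr)}{\P^{A_jH_\lambda}_{[x_j, J]}\bigl(X(J)=AH_\lambda\bigr)},
\]
bounds the first factor by Donsker, and controls the density ratio via Richter's sharp local limit theorem with Cram\'er series, since $(A-A_j)H_\lambda$ can be as large as $A^{3/4}\sqrt{T}$, i.e.\ genuinely in the moderate-deviation regime. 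Your tilt by $\theta^*_y$ moves the target into the \emph{typical} fluctuation window of the centered bridge, so only a standard LCLT or invariance principle is needed; this sidesteps the Cram\'er-series bookkeeping entirely and is conceptually cleaner. The price is that your increment law now depends on $d\in[0,\rho]$, so you must argue uniformity over this compact family of tilts (variance bounded away from $0$ and $\infty$, LCLT constants uniform, or in the compactness route that the limiting Gaussian has variance $\sigma_{d_\infty}^2$). One further point to make explicit: your invariance-principle route needs $m\to\infty$ along the offending subsequence, but $m=A^{-1}H_\lambda^2$ may stay bounded (the hypothesis only gives $m\geq 9$); in that case extract a further subsequence with $m$ constant and $d\to d_\infty$, and observe the target interval has width $\tfrac12(1-1/A)\sqrt m\geq 1$ so the probability is positive by direct continuity in $d$. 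With these two points spelled out, your argument is complete and arguably more transparent than the paper's.
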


\begin{proof}
	By definition,
	\begin{align*}
		\MoveEqLeft[1]
		\P^{A_jH_\lambda, AH_\lambda}_{[x_j, J]}\left(X(x_{j+1}) \in [A_{j+3/2}H_\lambda, A_{j+1}H_\lambda]\right)\\
		&= \frac{\P^{A_jH_\lambda}_{[x_j, J]}\left(X(x_{j+1}) \in [A_{j+3/2}H_\lambda, A_{j+1}H_\lambda], X(J)=AH_\lambda\right)}{\P^{A_jH_\lambda}_{[x_j, J]}\left(X(J) = AH_\lambda\right)}\\
		&= \P^{A_jH_\lambda}_{[x_j, J]}\left(X(x_{j+1}) \in [A_{j+3/2}H_\lambda, A_{j+1}H_\lambda]\right)\cdot \frac{\P^{A_jH_\lambda}_{[x_j, J]}\left(X(J)=AH_\lambda\mid X(x_{j+1}) \in [A_{j+3/2}H_\lambda, A_{j+1}H_\lambda]\right)}{\P^{A_jH_\lambda}_{[x_j, J]}\left(X(J) = AH_\lambda\right)}.
	\end{align*}
	By Donsker's invariance principle, i.e., weak convergence to Brownian motion, there exists $\delta'>0$ such that the first factor in the last line is lower bounded by $\delta'$ as long as $A^{-1/2}H\geq 3$ (as we have assumed). So we must show that the second factor is also lower bounded by an absolute constant. First, by the Gibbs property we can lower bound it by
	\begin{align*}
		\inf_{w\in [A_{j+3/2}, A_{j+1}]}\frac{\P^{wH_\lambda}_{[x_{j+1}, J]}\left(X(J)=AH_\lambda\right)}{\P^{A_jH_\lambda}_{[x_j, J]}\left(X(J) = AH_\lambda\right)} = \inf_{w\in [A_{j+3/2}, A_{j+1}]}\frac{\P^{0}_{[x_{j+1}, J]}\left(X(J)=(A-w)H_\lambda\right)}{\P^{0}_{[x_j, J]}\left(X(J) = (A-A_j)H_\lambda\right)}.
	\end{align*}
	To lower bound the expression on the right, we make use of a sharp local limit theorem for random walks from \cite{richter1957local}. Let $\varphi(x) = (2\pi)^{-1/2}\exp(-x^2/2)$ be the Gaussian density (recall we are assuming for simplicity that the walk jumps have unit variance), and $w\in[A_{j+3/2}, A_{j+1}]$.  Invoking \cite[Theorems 2 and 3]{richter1957local} respectively for continuous and discrete increment distributions (we are able to do so because of the finite MGF condition in Assumption \ref{a.misc}, along with Assumption \ref{a.convex} which forces a bounded increment density in the nonlattice case) tells us that there exists $\rho>0$ such that the following holds. Under the conditions $0\leq (A-w)H_\lambda \leq \frac{1}{2}\rho (J-x_{j+1})$ and $0 \leq (A-A_j)H_\lambda \leq \frac{1}{2}\rho (J-x_{j})$,\footnote{Applying \cite[Theorems 2 and 3]{richter1957local} as stated would require assuming a lower bound of $(J-x_{j+1})^{1/2}$ and $(J-x_{j})^{1/2}$ respectively on $(A-w)H_\lambda$ and $(A-A_j)H_\lambda$, but an inspection of the proof shows that this can be dropped at the cost of modifying the last factor in \eqref{e.local limit ratio}.} there exist $R_1, R_2\in[-\frac{1}{2}\rho^{-1}, \frac{1}{2}\rho^{-1}]$ (which may depend on $(A-w)H_\lambda$ and $(A-A_j)H_\lambda$) such that
	\begin{align}
			\MoveEqLeft[1]
			\frac{\P^{0}_{[x_{j+1}, J]}\left(X(J)=(A-w)H_\lambda\right)}{\P^{0}_{[x_j, J]}\left(X(J) = (A-A_j)H_\lambda\right)} \label{e.local limit ratio} \\
			&= \frac{(J-x_j)^{1/2}}{(J-x_{j+1})^{1/2}}\cdot\frac{\varphi\left(\frac{(A-w)H_\lambda}{(J-x_{j+1})^{1/2}}\right)}{\varphi\left(\frac{(A-A_j)H_\lambda}{(J-x_{j})^{1/2}}\right)}\cdot \frac{\exp\left(\frac{(A-w)^3H_\lambda^3}{(J-x_{j+1})^2}\cdot\Lambda\left(\frac{(A-w)H_\lambda}{J-x_{j+1}}\right)\right)}{\exp\left(\frac{(A-A_j)^3H_\lambda^3}{(J-x_{j})^2}\cdot\Lambda\left(\frac{(A-A_j)H_\lambda}{J-x_{j}}\right)\right)}\cdot \frac{1+R_1\cdot\frac{(A-w)H_\lambda}{J-x_{j+1}}}{1+R_2\cdot\frac{(A-A_j)H_\lambda}{J-x_{j}}}, \nonumber
	\end{align}
	where $\Lambda$ is a power series with radius of convergence $\rho$. Since $I\geq 4A^{1/2}H_\lambda^2$, $x_j \leq A^{1/2}H_\lambda^2$ for all $j$, and $x_j < x_{j+1}$, the first ratio in the last line is bounded below by 1. Similarly by the assumptions adopted before the previous display, the final factor is lower bounded by a positive constant.
	
	Next we focus on the second ratio in the second line of \eqref{e.local limit ratio}. Write $w = A_j - \tilde w$, so that $\tilde w\in[A^{-1/2}, \frac{3}{2}A^{-1/2}]$. Then, also using that $(1-x)^{-1} = 1+O(x)$, the second factor equals
	\begin{align*}
		\MoveEqLeft
		\exp\left(-\frac{(A-A_j)^2 + \tilde w^2 + 2\tilde w(A-A_j)}{2(J-x_{j} - A^{-1}H_\lambda^2)}H_\lambda^2 + \frac{(A-A_j)^2}{2(J-x_{j})}H_\lambda^2\right)\\
		&= \exp\left(-\frac{(A-A_j)^2 + \tilde w^2 + 2\tilde w(A-A_j)}{2(J-x_{j})}H_\lambda^2\left(1+O\left(\frac{A^{-1}H_\lambda^2}{J-x_j}\right)\right) + \frac{(A-A_j)^2}{2(J-x_{j})}H_\lambda^2\right)\\
		&= \exp\left(-O\left(\frac{(A-A_j)^2A^{-1}}{(J-x_j)^2}H_\lambda^4\right) - O\left(\frac{\tilde w(A-A_j)}{J-x_{j}}H_\lambda^2\right)\right).
	\end{align*}
	Since $J-x_j\geq A^{1/2}H_\lambda^2$ for all $j$, and using that $\tilde w$ is of order $A^{-1/2}$, each term in the exponent of the last line is bounded above by an absolute constant. 
	
	It remains to lower bound the third ratio in the second line of \eqref{e.local limit ratio}. Define $M:=\sup_{|x|\leq \frac{1}{2}\rho}|\Lambda'(x)| < \infty$. We observe by Taylor's theorem, and a calculation as in the previous display that, for some $C<\infty$,
	\begin{align}
		\Lambda\left(\frac{(A-w)H_\lambda}{J-x_{j+1}}\right)
		&\geq \Lambda\left(\frac{(A-A_j)H_\lambda}{J-x_{j}}\right) - CM\left(\frac{A^{-1/2}H_\lambda}{J-x_j} + \frac{A^{-1}(A-A_j)H_\lambda^3}{(J-x_j)^2}\right)\nonumber\\
		&\geq \Lambda\left(\frac{(A-A_j)H_\lambda}{J-x_{j}}\right) - CM A^{-1}H_\lambda^{-1}.\label{e.lambda lower bound}
	\end{align}
	It is also easy to check that
	\begin{align*}
		\frac{(A-w)^3H_\lambda^3}{(J-x_{j+1})^2} \geq \frac{(A-A_j)^3H_\lambda^3}{(J-x_{j})^2} - CA^{1/2}H_\lambda^{-1},
	\end{align*}
	and the last term is lower bounded by a constant since $A\leq H_\lambda^2$. Let us raise $M$ if needed so that also $\sup_{|x|\leq \frac{1}{2}\rho} |\Lambda(x)| \leq M$. Then we see from the previous display and \eqref{e.lambda lower bound} that
	\begin{align*}
		\frac{(A-w)^3H_\lambda^3}{(J-x_{j+1})^2}\cdot \Lambda\left(\frac{(A-w)H_\lambda}{J-x_{j+1}}\right) 
		&\geq \frac{(A-A_j)^3H_\lambda^3}{(J-x_{j})^2}\cdot \Lambda\left(\frac{(A-A_j)H_\lambda}{J-x_{j}}\right)\\
		&\qquad - CM A^{1/2}H_\lambda^{-1} - CMA^{-1}H_\lambda^{-1}\frac{(A-A_j)^3H_\lambda^3}{(J-x_{j})^2}
	\end{align*}
	Since $A\leq H_\lambda^2$, $A^{1/2}H_\lambda^{-1}\leq 1$. Since $J-x_j \geq A^{1/2}H_\lambda^2$ for all $j$, $A^{-1}H_\lambda^{-1}\frac{(A-A_j)^3H_\lambda^3}{(J-x_{j})^2} \leq A^{-2}\cdot A^3 \cdot H_\lambda^{-2} = AH_\lambda^{-2}\leq 1$. Substituting this into the third factor in \eqref{e.local limit ratio} yields that the latter is lower bounded by an absolute constant as well. This completes the proof after relabeling $\rho$.
\end{proof}


\section{Upper tail bounds for a single curve}\label{sec:tail}

The goal of this section is to prove upper tail bounds for a single random walk above a wall with area tilt. We begin with the following one-point bound with characteristic Tracy--Widom-type exponent. Recall the parameter $H_\lambda$ from \eqref{eqn:H}.

\begin{theorem}\label{oneptbd}
	There exist universal constants $c,C,N_0>0$ such that for any $N\geq N_0$, $ \lambda \in (0, N)$, $R \in [0, H_\lambda^2]$, interval $I \subseteq \llbracket -N,N\rrbracket$ with $|I|\geq H_\lambda^2$, $u,v\leq AH_\lambda$ with $A>0$, and $t\in I$, we have
	\[
	\mathbb{P}_{I;0}^{\lambda;u,v}(X(t) > (R+A)H_\lambda) \leq Ce^{-cR^{3/2}}.
	\]
\end{theorem}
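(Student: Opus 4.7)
The strategy, adapted from \cite{HV04}, is to express the probability via the Radon--Nikodym definition~\ref{def:areatilt} of $\mathbb{P}^{\lambda;u,v}_{I;0}$, lower-bound the partition function in the denominator via Proposition~\ref{p.partition function lower bound}, and extract the Tracy--Widom exponent $e^{-cR^{3/2}}$ from the area-tilt factor $e^{-\frac{\lambda}{N}\mathcal{A}(X)}$ in the numerator, using that $\{X(t) > (R+A)H_\lambda\}$ forces the walk to accumulate area of order $R^{3/2}H_\lambda^3$.

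By Lemma~\ref{l.monotonicity}, the event is increasing, so we may raise the endpoints to $u = v = AH_\lambda$. The case $R \leq R_0$ for an absolute constant $R_0$ is trivial. Set $L := c_0 R^{1/2}H_\lambda^2$ for a small absolute $c_0 > 0$: this is the natural scale on which the bridge entropy cost $(RH_\lambda)^2/L$ and the area cost $RH_\lambda \cdot L \cdot \lambda/N$ of a climb to height $RH_\lambda$ at $t$ both equal constants times $R^{3/2}$. Let $J := [t-L, t+L]$, assumed to lie in $I$ (the boundary case uses $u$ or $v$ in place of the Gibbs-resampled endpoint). By the Gibbs property (Definition~\ref{def:gibbs}),
\[
\mathbb{P}^{\lambda;AH_\lambda, AH_\lambda}_{I;0}(X(t) > M) \;=\; \mathbb{E}^{\lambda;AH_\lambda, AH_\lambda}_{I;0}\!\left[\mathbb{P}^{\lambda;Y_1,Y_2}_{J;0}(X(t) > M)\right],
\]
with $M := (R+A)H_\lambda$, $Y_i := X(t \pm L)$. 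Restrict to the good event $\mathsf{G} := \{\max(Y_1, Y_2) \leq (R/10 + A)H_\lambda\}$; the contribution of $\neg\mathsf{G}$ is absorbed by a dyadic decomposition of $\max(Y_1,Y_2) - AH_\lambda$ combined with an induction on $R$.

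On $\mathsf{G}$, apply the Radon--Nikodym expansion on $J$:
\[
\mathbb{P}^{\lambda;Y_1,Y_2}_{J;0}(X(t) > M) \;=\; \frac{\mathbb{E}^{Y_1,Y_2}_{J;0}\!\bigl[e^{-\frac{\lambda}{N}\mathcal{A}(X)}\mathbf{1}_{\{X(t)>M\}}\bigr]}{Z^{\lambda;Y_1,Y_2}_{J;0}}.
\]
Proposition~\ref{p.partition function lower bound} applied to $J$ (whose length $2L$ and boundary heights $Y_i/H_\lambda \leq R/10+A$ verify its hypotheses once $c_0$ is bounded below) gives $Z^{\lambda;Y_1,Y_2}_{J;0} \geq c^{-1}\exp(-c(R/10 + A)^{3/2} - cc_0 R^{1/2})$. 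For the numerator, decompose $\{X(t) > M\}$ into two cases: (i) $X \geq (R/2 + A)H_\lambda$ on a sub-interval of $J$ of length $\geq L/4$ containing $t$, contributing area $\mathcal{A}(X|_J) \geq cR^{3/2}H_\lambda^3$ and hence an area-tilt factor $\leq e^{-cc_0 R^{3/2}}$ (using $H_\lambda^3 = N/\lambda$); or (ii) the walk exhibits a fluctuation of size $\geq (R/4)H_\lambda$ over a sub-interval of length $\leq L/4$, which by Lemma~\ref{l.bridge maximal} (applied after conditioning on the midpoint of that sub-interval) has probability $\leq Ce^{-cR^{3/2}/c_0}$ under $\mathbb{P}^{Y_1,Y_2}_{J;0}$. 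Combining, the numerator is $\leq Ce^{-cR^{3/2}}$, and the ratio on $\mathsf{G}$ is $\leq C\exp(-cR^{3/2} + c(R/5)^{3/2} + cc_0R^{1/2})$, which simplifies to $Ce^{-c'R^{3/2}}$ provided $A \leq R/10$ and $c_0$ is chosen sufficiently small.

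\textbf{Main obstacle.} The regime $A \gtrsim R$ is the principal difficulty: Proposition~\ref{p.partition function lower bound} then contributes an $e^{cA^{3/2}}$ factor in the denominator that can swamp the area-tilt gain. The resolution is a preliminary reduction via the dropping lemma (Lemma~\ref{lem:dropping-lemma}) applied on a sub-interval straddling $t$ of length $\gtrsim A^{1/2}H_\lambda^2$: with probability $\geq 1 - e^{-cR^{3/2}}$ it produces deterministic points $\tau_L < t < \tau_R$ at which $X \leq CH_\lambda$, and Gibbs resampling on $[\tau_L, \tau_R]$ puts us in the above framework with effective $A' = O(1)$. A related technical point is that the induction used to control $\neg\mathsf{G}$ must close quantitatively: contributions from $Y_i$ at dyadic scales yield a product of one-point bounds whose total exponent, when minimized over the dyadic level, must still dominate $cR^{3/2}$; this forces careful choice of the cutoff and of $c_0$, and it is where the constants interact most delicately.
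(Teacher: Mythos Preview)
Your core intuition—the competition between the area-tilt cost and the bridge fluctuation cost at scale $L\asymp R^{1/2}H_\lambda^2$—matches the paper's. But the implementation diverges in two ways, and one of them leaves a real gap.

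\textbf{The reduction you miss.} The paper's first step is to reduce to $u=v=0$ (hence $A=0$): by monotonicity raise the boundary to $AH_\lambda$ and then the floor from $0$ to $AH_\lambda$, and shift everything down by $AH_\lambda$ (Remark~\ref{rmk:shift}). This one line eliminates your entire ``main obstacle'' paragraph: the $e^{cA^{3/2}}$ contamination from Proposition~\ref{p.partition function lower bound} never arises, and the dropping-lemma workaround is unnecessary. (Also, Lemma~\ref{lem:dropping-lemma} yields a \emph{random} drop point inside a prescribed subinterval, not a deterministic one.)

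\textbf{Random interval instead of induction.} With $A=0$, the paper replaces your deterministic window $[t-L,t+L]$ and good-event analysis by the random interval $J=[\sigma,\tau]$, where $\sigma,\tau$ are the nearest times to $t$ at which $X$ drops below $\tfrac12 RH_\lambda$. By construction the resampled boundary values satisfy $X(\sigma),X(\tau)\leq\tfrac12 RH_\lambda$ automatically, so there is no $\neg\mathsf G$ to handle and no induction on $R$. The argument then splits cleanly: either $|J|>K\sqrt R\,H_\lambda^2$, which is killed by the area tilt plus Proposition~\ref{p.partition function lower bound} and the ballot theorem (summing over all possible $(\sigma,\tau)$), or $|J|\leq K\sqrt R\,H_\lambda^2$, which is a pure bridge fluctuation estimate via Lemma~\ref{l.bridge maximal}.

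\textbf{Why your induction is a genuine gap.} The dyadic-induction sketch for $\neg\mathsf G$ does not close as stated. If $Y_1>(\theta R)H_\lambda$, the inductive one-point bound at $t-L$ gives only $Ce^{-c(\theta R)^{3/2}}$ with $\theta^{3/2}<1$; multiplying by the conditional fluctuation cost $e^{-c((1-\theta)R)^{3/2}/c_0}$ does not recover the full exponent uniformly over $\theta$ (in particular as $\theta\uparrow 1$ the conditional factor degenerates to $1$ and you are left with a strictly smaller exponent than $cR^{3/2}$). Any naive recursion therefore degrades the constant in the exponent at each step. This is exactly the delicacy you flag in your last sentence, but it is not a matter of choosing constants carefully—it is structural, and the paper sidesteps it entirely with the stopping-domain trick.
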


A result of this type originally appeared in \cite[Theorem 1.2]{HV04}, but with less explicit restrictions on the ranges of the parameters $\lambda$ and $t$. We will require the result for essentially the full range of parameters in the statement of the theorem, and the stochastic monotonicity statement Lemma \ref{l.monotonicity} allows us to give a somewhat shorter argument. Moreover, it is noted in \cite{ISV14} that there is an error in the proof of \cite[Theorem 1.2]{HV04}, and although \cite[Lemma 2]{ISV14} for a related ground-state chain is intended as a substitute, it is not immediately suited to our setup. We therefore give a self-contained proof of this result, following the same overall strategy as in \cite{HV04}. A matching lower tail bound is also given in \cite{HV04}, and a straightforward argument yields it here as well (see Remark~\ref{rmk:tail-lbd}), though we do not need it and so we do not give the details here. We also mention here the work \cite{GG}, which proves analogous tail bounds for the 2D Ising interface.

As a corollary, by including an extra logarithmic factor, we obtain an upper tail bound on the maximum of a single curve that crucially decays with the length of the interval beyond scale $R^{1/2}H_\lambda^2$.

\begin{corollary}\label{maxbd}
	Under the same assumptions as Theorem \ref{oneptbd}, if in addition $|I| \geq R^{1/2}H_\lambda^2$, $R \geq (2/c)^{2/3}$, and $R^{1/2} (\log\frac{|I|}{R^{1/2}H_\lambda^2})^{2/3}\leq H_\lambda$, then
	\[
	\mathbb{P}_{I;0}^{\lambda;u,v} \left(\max_{t\in I} X(t) > \bigg(R\bigg(\log\frac{|I|}{R^{1/2}H_\lambda^2}\bigg)^{2/3} + A\bigg)H_\lambda\right) \leq C\exp\left(-cR^{3/2}\log\frac{|I|}{R^{1/2}H_\lambda^2}\right).
	\]
\end{corollary}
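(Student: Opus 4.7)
The plan is to prove Corollary~\ref{maxbd} by a chaining argument: control the walk at a mesh of points using Theorem~\ref{oneptbd}, then control within-mesh oscillations via stochastic monotonicity (Lemma~\ref{l.monotonicity}), Doob's maximal inequality (Lemma~\ref{l.doob maximal}), and the ballot theorem (Theorem~\ref{t.ballot theorem}). Let $M := |I|/(R^{1/2}H_\lambda^2)$ (which is at least $1$ by hypothesis), $\ell := R^{1/2}H_\lambda^2$, and $R'' := (R/2)(\log M)^{2/3}$, so the target threshold equals $(2R''+A)H_\lambda$. Partition $I$ into $\lceil M\rceil$ subintervals $I_1,\ldots,I_{\lceil M\rceil}$ of length at most $\ell$, with mesh points $t_0 < t_1 < \cdots < t_{\lceil M\rceil}$.

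First, the hypotheses $R \leq H_\lambda^2$ and $R^{1/2}(\log M)^{2/3}\leq H_\lambda$ imply $R''\leq H_\lambda^2$, so Theorem~\ref{oneptbd} at each mesh point with parameter $R''$ gives
\[
\mathbb{P}\bigl(X(t_i) > (R''+A)H_\lambda\bigr) \leq C\exp\bigl(-c(R'')^{3/2}\bigr) = C\exp\bigl(-c\, 2^{-3/2}R^{3/2}\log M\bigr).
\]
Union-bounding over the $\lceil M\rceil + 1$ mesh points and using $R \geq (2/c)^{2/3}$ (so $cR^{3/2}\geq 2$) to absorb the factor $M$ bounds the event that some mesh value exceeds $(R''+A)H_\lambda$ by $C'\exp(-c'R^{3/2}\log M)$ for suitable $c',C'$.

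Second, on the complementary event $\mathsf E := \{X(t_i)\leq (R''+A)H_\lambda \text{ for all } i\}$, I control the maximum within each $I_i$. The strong Gibbs property together with Lemma~\ref{l.monotonicity}---first removing the area tilt, then raising both endpoints to their maximum allowed value---stochastically dominates $X|_{I_i}$ by $\mathbb{P}^{(R''+A)H_\lambda,\,(R''+A)H_\lambda}_{I_i;0}$. A vertical shift by $-(R''+A)H_\lambda$ (Remark~\ref{rmk:shift}) then reduces the problem to bounding, under $\mathbb{P}^{0,0}_{I_i;-(R''+A)H_\lambda}$, the probability that $\max X > R''H_\lambda$. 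The standard floor-conditioning ratio gives
\[
\mathbb{P}^{0,0}_{I_i;-(R''+A)H_\lambda}\bigl(\max X > R''H_\lambda\bigr) \leq \frac{\mathbb{P}^{0,0}_{I_i}\bigl(\max X > R''H_\lambda\bigr)}{\mathbb{P}^{0,0}_{I_i}\bigl(\min X \geq -(R''+A)H_\lambda\bigr)}.
\]
The numerator is at most $C\exp\bigl(-c((R''H_\lambda)^2/|I_i|\wedge R''H_\lambda)\bigr)$: by monotonicity in the right endpoint it is $\leq \mathbb{P}^0_{I_i}(\max X > R''H_\lambda)/\mathbb{P}^0_{I_i}(X(|I_i|) \geq 0)$, and the denominator tends to $1/2$ by the CLT while Lemma~\ref{l.doob maximal} yields the claimed bound on the walk numerator. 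Using $|I_i| \leq \ell$ and $H_\lambda\geq R^{1/2}(\log M)^{2/3}$, both $(R''H_\lambda)^2/|I_i|$ and $R''H_\lambda$ are of order at least $R^{3/2}\log M$. For the denominator of the ratio, a further vertical shift identifies the probability as the persistence probability of a bridge from $(R''+A)H_\lambda$ to $(R''+A)H_\lambda$, which Theorem~\ref{t.ballot theorem} lower bounds by $c\min\bigl(1,((R''+A)H_\lambda)^2/|I_i|\bigr)$; this exceeds a positive absolute constant whenever $\log M \geq c_0$ for some absolute $c_0$ (again using $R\geq (2/c)^{2/3}$). Summing the resulting per-subinterval bound $C\exp(-cR^{3/2}\log M)$ over the $\lceil M\rceil$ subintervals and absorbing the $M$ factor completes the proof in the range $\log M \geq c_0$; the range $\log M < c_0$ is vacuous after enlarging the outer constant $C$ so that the claimed bound exceeds one.

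The main obstacle is handling the floor at $0$: since raising the floor only stochastically increases the walk, monotonicity alone cannot remove it when upper bounding the maximum. The floor-conditioning ratio circumvents this, but succeeds only because Theorem~\ref{t.ballot theorem} supplies a lower bound on the persistence probability that remains valid when the boundary values $(R''+A)H_\lambda$ are potentially far above the diffusive scale $|I_i|^{1/2}$---exactly the regime we are forced into by the high boundary conditions permitted in Theorem~\ref{oneptbd}.
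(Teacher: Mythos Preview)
Your proof is correct and follows essentially the same approach as the paper: partition $I$ into subintervals of length $R^{1/2}H_\lambda^2$, apply Theorem~\ref{oneptbd} at the mesh points with parameter $R'' = (R/2)(\log M)^{2/3}$, and control within-mesh fluctuations via the Gibbs property, monotonicity (to remove the tilt and raise endpoints), the bridge maximal inequality (your Doob-plus-CLT argument is exactly the proof of Lemma~\ref{l.bridge maximal}), and the ballot theorem for the floor-conditioning denominator. One small caveat: your final clause---that the range $\log M < c_0$ becomes vacuous by enlarging $C$---is not quite right, since $cR^{3/2}\log M$ can still be large there (take $R$ large), so the claimed bound need not exceed $1$; the paper's proof is equally silent on this regime, and in every application of the corollary in the paper $\log M$ is bounded below by an absolute constant, so the issue is harmless.
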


\begin{remark}\label{rmk:tail-lbd}
	A simpler argument than the one below would prove matching lower bounds on the right-tail probabilities in Theorem \ref{oneptbd} (with $t$ far enough from the boundary, say $t=0$) and Corollary \ref{maxbd}. The basic observation is that by monotonicity, one can obtain a lower bound by pinning the curve to zero on the boundary of the interval $\mathcal{I} := [-R^{1/2}H_\lambda^2, R^{1/2}H_\lambda^2] \cap I$, and it then suffices to lower bound $\mathbb{P}^{\lambda;0,0}_{R^{1/2}H_\lambda;0}(X(0) > RH_\lambda)$. For this, one can first ignore the partition function (since it is at most 1), and then restrict to configurations with $\max_{t\in \mathcal{I}} X(t) < 2RH_\lambda$, so that the area tilt is bounded below by $e^{-4R^{3/2}}$. By monotonicity one can also remove the floor at zero. It then remains to lower bound the probability of $\{X(0) > RH_\lambda, \, \max_{t\in \mathcal{I}} X(t) < 2RH_\lambda\}$ by $\exp(-cR^{3/2})$, for an untilted random walk bridge $X$ on $\mathcal{I}$. This can be done by the methods of Section 4; see Figure \ref{fig:Zlbd}, where one can instead consider a subdiffusive mesh of points ascending from 0 at the boundary to $RH_\lambda$ at time $0$. To obtain the lower bound in Corollary \ref{maxbd}, one can divide the interval $I$ into segments of length $R^{1/2}H_\lambda^2$, pin the curve to zero at the boundary points, and apply the previous lower bound with independence.
\end{remark}

The idea to prove Theorem \ref{oneptbd} is to consider the largest interval $J$ around the point $t$ on which the curve $X$ remains uniformly above height $\frac{1}{2}RH_\lambda$. Either this interval $J$ is large, which will incur a large penalty from the area tilt, or $J$ is small, in which case the curve has to fluctuate by at least $\frac{1}{2}RH_\lambda$ over a short distance. The critical scale at which these two effects balance one another leads to the $R^{3/2}$ exponent in the bound.

In order to make this argument precise, we need the following generalization of the Gibbs property for conditioning on a random interval. For $X \sim \mathbb{P}_{I;0}^{\lambda;0,0}$ and $R>0$, define the two random times
\begin{equation}\label{sigmatau}
	\begin{split}
		\sigma &:= \max\{s\in I \cap \mathbb{Z}, \, s\leq t : X(s) < \tfrac{1}{2}RH_\lambda \},\\
		\tau &:= \min\{s\in I \cap \mathbb{Z}, \, s\geq t : X(s) < \tfrac{1}{2}RH_\lambda\}.
	\end{split}
\end{equation}
The zero boundary conditions ensure that $\sigma,\tau$ are well-defined. In the following we let $J$ denote the random interval $\llbracket\sigma,\tau\rrbracket$. Let $\mathcal{F}_{\sigma,\tau}$ denote the $\sigma$-algebra generated by events of the form $\mathsf{A}\cap\{\sigma=\ell\}\cap\{\tau=r\}$, where $\ell,r\in I$, $\ell<t<r$, and $\mathsf{A}$ lies in the $\sigma$-algebra $\mathcal{F}_{\ell,r}$ generated by $X(j)$ for $j\notin(\ell,r)$. Lastly, define the domain $\Upsilon := \{(f,\ell,r) : \ell,r\in I, \, \ell\leq t\leq r, \, f : \llbracket \ell, r\rrbracket \to \mathbb{R}\}$. 

\begin{lemma}\label{Jgibbs}
	For any bounded measurable functional $F : \Upsilon \to\mathbb{R}$,
	\[
	\mathbb{E}_{I;0}^{\lambda;0,0} \left[F(X|_J,\sigma,\tau) \mid \mathcal{F}_{\sigma,\tau}\right]  = \mathbb{E}_{J;\frac{1}{2}RH_\lambda}^{\lambda;X(\sigma),X(\tau)}[F(X,\sigma,\tau)].
	\]
\end{lemma}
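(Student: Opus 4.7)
My plan is to establish this as the single-curve strong Gibbs property, following the standard strong-Gibbs-from-Gibbs argument by partitioning on the value of $(\sigma,\tau)$. Since $\mathcal{F}_{\sigma,\tau}$ is generated by sets of the form $\mathsf{A}\cap\{\sigma=\ell,\tau=r\}$ with $\mathsf{A}\in\mathcal{F}_{\ell,r}$, it suffices to verify the defining identity for conditional expectation against such test events for each fixed admissible $\ell\leq t\leq r$ and sum. The crucial decomposition is
\begin{equation*}
\{\sigma=\ell,\tau=r\} = \mathsf{B}_{\ell,r}\cap \mathsf{C}_{\ell,r},
\end{equation*}
where $\mathsf{B}_{\ell,r}:=\{X(\ell)<\tfrac{1}{2}RH_\lambda,\,X(r)<\tfrac{1}{2}RH_\lambda\}\in\mathcal{F}_{\ell,r}$ (modified, or vacuous, at $\partial I$, where the zero boundary data forces $X(\ell)=0$), while $\mathsf{C}_{\ell,r}:=\{X(s)\geq\tfrac{1}{2}RH_\lambda\text{ for all }s\in\llbracket\ell+1,r-1\rrbracket\}$ depends only on the interior values of $X$.

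Next, I would apply the ordinary Gibbs property (Definition~\ref{def:gibbs} with $k=1$) on the deterministic interval $\llbracket\ell,r\rrbracket$: given $\mathcal{F}_{\ell,r}$, the conditional law of $X|_{\llbracket\ell,r\rrbracket}$ is $\P_{\llbracket\ell,r\rrbracket;0}^{\lambda;X(\ell),X(r)}$. Because $F(X|_{\llbracket\ell,r\rrbracket},\ell,r)\one_{\mathsf{C}_{\ell,r}}$ is a function of $X|_{\llbracket\ell,r\rrbracket}$ and $\mathsf{A}\cap \mathsf{B}_{\ell,r}\in\mathcal{F}_{\ell,r}$, this yields
\begin{equation*}
\E_{I;0}^{\lambda;0,0}\!\left[F(X|_J,\sigma,\tau)\,\one_{\mathsf{A}\cap\{\sigma=\ell,\tau=r\}}\right] = \E_{I;0}^{\lambda;0,0}\!\left[\E_{\llbracket\ell,r\rrbracket;0}^{\lambda;X(\ell),X(r)}\!\bigl[F(X,\ell,r)\one_{\mathsf{C}_{\ell,r}}\bigr]\,\one_{\mathsf{A}\cap \mathsf{B}_{\ell,r}}\right].
\end{equation*}

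To close, I would invoke the Bayes-type identity $\P^{\lambda;u,v}_{\llbracket\ell,r\rrbracket;0}(\cdot\mid\mathsf{C}_{\ell,r})=\P^{\lambda;u,v}_{\llbracket\ell,r\rrbracket;\frac{1}{2}RH_\lambda}$, extending Definition~\ref{def:areatilt} in the natural way to accommodate $u,v<\tfrac{1}{2}RH_\lambda$ (the floor is interpreted as a constraint at strict-interior points only; the Radon--Nikodym formula remains unambiguous). This factors the inner expectation as $\E^{\lambda;X(\ell),X(r)}_{\llbracket\ell,r\rrbracket;\frac{1}{2}RH_\lambda}[F(X,\ell,r)]\cdot\P^{\lambda;X(\ell),X(r)}_{\llbracket\ell,r\rrbracket;0}(\mathsf{C}_{\ell,r})$, and the latter factor is precisely $\E[\one_{\mathsf{C}_{\ell,r}}\mid\mathcal{F}_{\ell,r}]$ by a second application of the ordinary Gibbs property. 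Pulling this probability back inside the outer expectation via the tower property restores $\one_{\mathsf{C}_{\ell,r}}$ and reassembles $\one_{\mathsf{A}\cap\{\sigma=\ell,\tau=r\}}$, yielding the right-hand side of the asserted identity. The principal care-points are purely bookkeeping: the harmless notational extension of the tilted measure to floor-violating boundary data, and the degenerate cases $\sigma\in\partial I$ or $\sigma=t$ (and likewise for $\tau$), each of which is handled by the same computation with the corresponding conditions in the decomposition of $\{\sigma=\ell\}$ trivialized.
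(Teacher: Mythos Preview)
Your proposal is correct and follows essentially the same argument as the paper's proof. Your events $\mathsf{B}_{\ell,r}$ and $\mathsf{C}_{\ell,r}$ are exactly the paper's $\mathsf{Bd}(\ell,r)$ and $\mathsf{Int}(\ell,r)$, and the sequence of steps---reduce to test events $\mathsf{A}\cap\{\sigma=\ell,\tau=r\}$, decompose into boundary and interior parts, apply the ordinary Gibbs property on $\llbracket\ell,r\rrbracket$, identify conditioning on $\mathsf{C}_{\ell,r}$ with raising the floor, and reassemble via the tower property---matches the paper line by line, including the remark on the notational extension of the floor measure and the handling of degenerate $\ell=t$ or $r=t$.
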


Here we are abusing notation slightly on the right-hand side, as the boundary conditions $X(\sigma), X(\tau)$ in general lie below the floor at $\frac{1}{2}RH_\lambda$, i.e., the floor constraint is applied only in the interior. It is easy to see that the monotonicity properties of Lemma \ref{l.monotonicity} still hold in this case.

\begin{proof}[Proof of Lemma~\ref{Jgibbs}]
	Let us abbreviate $\mathbb{P}_{I;0}^{\lambda;0,0}$ by $\mathbb{P}_I$ and likewise for expectations. By definition of $\mathcal{F}_{\sigma,\tau}$, we must show that for any $\ell,r\in I$ with $\ell\leq t\leq r$ and any event $\mathsf{A} \in \mathcal{F}_{\ell,r}$,
	\begin{equation}\label{Jcondexp}
		\mathbb{E}_I \left[F(X|_J,\sigma,\tau)\one_{\mathsf{A}\cap\{\sigma=\ell\}\cap\{\tau=r\}}\right] = \mathbb{E}_I\left[\mathbb{E}_{J;\frac{1}{2}RH_\lambda}^{\lambda;X(\sigma),X(\tau)} \left[F(X,\sigma,\tau)\right] \one_{\mathsf{A}\cap\{\sigma=\ell\}\cap\{\tau=r\}}\right].
	\end{equation}
	First note that if either $\ell=t$ or $r=t$, then $X(t) < \frac{1}{2}RH_\lambda$ and in fact $\ell=r=t$. In this case, both sides of \eqref{Jcondexp} are simply equal to $\mathbb{E}_I[F(X(t),t,t)\one_{\mathsf{A}\cap\{X(t) < \frac{1}{2}RH_\lambda\}}]$. It therefore suffices to prove \eqref{Jcondexp} for $\ell<t<r$. Define the events
	\begin{equation}\label{IntBd}
		\mathsf{Int}(\ell,r) := \left\{\min_{s\in\llbracket\ell+1,r-1\rrbracket} X(s) \geq \tfrac{1}{2}RH_\lambda\right\}, \qquad
		\mathsf{Bd}(\ell,r) := \left\{X(\ell),X(r) < \tfrac{1}{2}RH_\lambda\right\}.
	\end{equation}
	Then $\{\sigma=\ell\}\cap\{\tau=r\}
	=\mathsf{Int}(\ell,r)\cap\mathsf{Bd}(\ell,r)$. So in order to prove \eqref{Jcondexp} it suffices to show that for each $\ell,r\in I$ with $\ell<t<r$ and $\mathsf{A}\in\mathcal{F}_{\ell,r}$,
	\begin{equation}\label{Jcondexp2}
		\mathbb{E}_I \left[F(X|_{[\ell,r]},\ell,r)\one_{\mathsf{A}} \one_{\mathsf{Int}(\ell,r)}\one_{\mathsf{Bd}(\ell,r)}\right] = \mathbb{E}_I\left[\mathbb{E}_{[\ell,r];\frac{1}{2}RH_\lambda}^{\lambda;X(\ell),X(r)} [F(X,\ell,r)] \one_{\mathsf{A}} \one_{\mathsf{Int}(\ell,r)}\one_{\mathsf{Bd}(\ell,r)}\right].
	\end{equation}
	Note that $\mathsf{Bd}(\ell,r)\in\mathcal{F}_{\ell,r}$. We condition on $\mathcal{F}_{\ell,r}$ and apply the tower property, followed by the Gibbs property, followed by the tower property in reverse, to see that 
	\begin{align*}
		&\mathbb{E}_I \left[F(X|_{[\ell,r]},\ell,r)\one_{\mathsf{A}} \one_{\mathsf{Int}(\ell,r)}\one_{\mathsf{Bd}(\ell,r)}\right] \\
		&\qquad\qquad =\mathbb{E}_I \left[ \mathbb{E}_I \left[F(X|_{[\ell,r]},\ell,r)\one_{\mathsf{A}} \one_{\mathsf{Int}(\ell,r)} \mid \mathcal{F}_{\ell,r}\right]\one_{\mathsf{Bd}(\ell,r)} \right]\\ 
		&\qquad\qquad = \mathbb{E}_I \left[ \mathbb{E}_{[\ell,r];0}^{\lambda;X(\ell),X(r)}\left[F(X,\ell,r) \one_{\mathsf{Int}(\ell,r)}\right] \one_{\mathsf{A}}\one_{\mathsf{Bd}(\ell,r)}\right]\\
		&\qquad\qquad = \mathbb{E}_I \left[\mathbb{E}_{[\ell,r];0}^{\lambda;X(\ell),X(r)}\left[F(X,\ell,r) \mid \mathsf{Int}(\ell,r)\right] \mathbb{P}_I\left(\mathsf{Int}(\ell,r)\mid\mathcal{F}_{\ell,r}\right) \one_{\mathsf{A}}\one_{\mathsf{Bd}(\ell,r)}\right]\\
		&\qquad\qquad = \mathbb{E}_I\left[\mathbb{E}_{[\ell,r];\frac{1}{2}RH_\lambda}^{\lambda;X(\ell),X(r)}[F(X,\ell,r)] \one_{\mathsf{A}}\one_{\mathsf{Int}(\ell,r)}\one_{\mathsf{Bd}(\ell,r)}\right].
	\end{align*}
	This proves \eqref{Jcondexp2}.
\end{proof}

With this lemma, we can prove the one point bound using the strategy outlined above.

\begin{proof}[Proof of Theorem \ref{oneptbd}]
	First observe that it suffices to prove the theorem and corollary when $u=v=0$, since if $u,v\leq AH_\lambda$ then $X\sim\mathbb{P}_I^{\lambda;u,v}$ is stochastically dominated by $Y + AH_\lambda$ where $Y\sim\mathbb{P}_I^{\lambda;0,0}$. In this case we can take $A=0$ in the desired bound. We can also of course assume $R\geq 1$ by taking $C\geq e$ and $c\leq 1$. Using Lemma \ref{Jgibbs} we write
	\begin{align*}
		\mathbb{P}_{I;0}^{\lambda;0,0}(X(t)>RH_\lambda) &= \mathbb{E}_{I;0}^{\lambda;0,0} \left[\mathbb{E}_{I;0}^{\lambda;0,0} \left[\one_{X(t)>RH_\lambda} \mid \mathcal{F}_{\sigma,\tau}\right] \right] = \mathbb{E}_{I;0}^{\lambda;0,0} \left[\mathbb{P}_{J;\frac{1}{2}RH_\lambda}^{\lambda;X(\sigma),X(\tau)}(X(t)>RH_\lambda)\right].
	\end{align*}
	We now split into cases depending on the length of the interval $J$. Define $\Delta := K\sqrt{R}\,H_\lambda^2$, where $K>0$ is a sufficiently large constant. The reason for this choice of $\Delta$ will become apparent in the argument. We then use the above along with stochastic monotonicity to bound
	\begin{equation}\label{oneptsplit}
		\begin{split}
			\mathbb{P}_{I;0}^{\lambda;0,0}(X(t)>RH_\lambda) &= \mathbb{E}_{I;0}^{\lambda;0,0} \left[\one_{|J| \leq \Delta} \,\mathbb{P}_{J;\frac{1}{2}RH_\lambda}^{\lambda;X(\sigma),X(\tau)}(X(t)>RH_\lambda)\right]\\
			&\qquad + \mathbb{E}_{I;0}^{\lambda;0,0} \left[ \one_{|J| > \Delta}\, \mathbb{P}_{J;\frac{1}{2}RH_\lambda}^{\lambda;X(\sigma),X(\tau)}(X(t)>RH_\lambda)\right]\\
			&\leq \mathbb{E}_{I;0}^{\lambda;0,0} \left[\one_{|J| \leq \Delta} \,\mathbb{P}_{J;\frac{1}{2}RH_\lambda}^{\lambda;\frac{3}{4}RH_\lambda, \frac{3}{4}RH_\lambda}(X(t)>RH_\lambda)\right] + \mathbb{P}_{I;0}^{\lambda;0,0}(|J| > \Delta).
		\end{split}
	\end{equation}
	In the last line we used the fact that $X(\sigma), X(\tau) \leq \frac{1}{2}RH_\lambda \leq \frac{3}{4}RH_\lambda$ by definition of $\sigma$ and $\tau$.
	
	In the remainder of the proof, we will show that there are constants $c,C>0$ and $N_0\in\mathbb{N}$ depending on $K$ so that for all $N\geq N_0$, $0<\lambda<N$, and $0\leq R\leq 2H_\lambda^2$, it holds that
	\begin{equation}\label{Jlengthbd}
		\mathbb{P}_{I;0}^{\lambda;0,0}(|J|>\Delta) \leq Ce^{-cR^{3/2}},
	\end{equation}
	and
	\begin{equation}\label{smallJbd}
		\one_{|J| \leq \Delta} \, \mathbb{P}_{J;\frac{1}{2}RH_\lambda}^{\lambda;\frac{3}{4}RH_\lambda, \frac{3}{4}RH_\lambda}(X(t)>RH_\lambda) \leq Ce^{-cR^{3/2}}.
	\end{equation}
	Applying these bounds in \eqref{oneptsplit} completes the proof. We will prove these estimates in two separate steps.
	
	\subsection*{Step 1: Bounding the size of $J$} We first prove \eqref{Jlengthbd}. We will decompose the probability as follows. Define
	\[
	A_\Delta := \{(\ell, r) \in I\times I : \ell < t < r, \, r-\ell > \Delta\}.
	\]
	Recall the events $\mathsf{Int}(\ell,r)$ and $\mathsf{Bd}(\ell,r)$ from \eqref{IntBd}, and note that $\{|J|>\Delta\}$ is the disjoint union over $(\ell,r)\in A_\Delta$ of $\mathsf{Int}(\ell,r)\cap\mathsf{Bd}(\ell,r)$. Therefore, conditioning on $\mathcal{F}_{\ell,r}$ and applying the Gibbs property followed by monotonicity,
	\begin{equation}\label{Jdecomp}
		\begin{split}
			\mathbb{P}_{I;0}^{\lambda;0,0}(|J| > \Delta) &= \sum_{(\ell,r)\in A_\Delta} \mathbb{E}_{I;0}^{\lambda;0,0}\left[\mathbb{P}_{I;0}^{\lambda;0,0} \left( \mathsf{Int}(\ell,r)\cap\mathsf{Bd}(\ell,r) \mid\mathcal{F}_{\ell,r}\right)\right]\\
			&= \sum_{(\ell,r)\in A_\Delta} \mathbb{E}_{I;0}^{\lambda;0,0} \left[ \one_{\mathsf{Bd}(\ell,r)} \mathbb{P}_{[\ell,r];0}^{\lambda;X(\ell),X(r)}(\mathsf{Int}(\ell,r))\right]\\
			&\leq \sum_{(\ell,r)\in A_\Delta} \mathbb{P}_{[\ell, r];0}^{\lambda;\frac{1}{2}RH_\lambda, \frac{1}{2}RH_\lambda}(\mathsf{Int}(\ell, r)).
		\end{split}
	\end{equation}
	We will now bound the summands in the last line of \eqref{Jdecomp}. Note that on the event $\mathsf{Int}(\ell,r)$, the area tilt is bounded above by $\exp(-\frac{\lambda}{N}\cdot\frac{1}{2}RH_\lambda(r-\ell)) = \exp(-\frac{1}{2}R(r-\ell)H_\lambda^{-2})$. Therefore, bounding above the indicator for the walk remaining nonnegative by 1,
	\begin{equation}\label{HighLRred}
		\begin{split}
			\mathbb{P}_{[\ell, r];0}^{\lambda;\frac{1}{2}RH_\lambda, \frac{1}{2}RH_\lambda}(\mathsf{Int}(\ell, r)) &\leq (Z_{[\ell,r];0}^{\lambda;\frac{1}{2}RH_\lambda, \frac{1}{2}RH_\lambda})^{-1}\\
			&\qquad\qquad \times \exp\left(-\tfrac{1}{2}R(r-\ell)H_\lambda^{-2}\right) \mathbb{P}_{[\ell,r]}^{\frac{1}{2}RH_\lambda,\frac{1}{2}RH_\lambda}(\mathsf{Int}(\ell,r)).
		\end{split}
	\end{equation}
	Here we recall that $\mathbb{P}_{[\ell,r]}^{u,v}$ denotes the law of a random walk bridge with no floor or area tilt. By monotonicity and the ballot theorem (Theorem~\ref{t.ballot theorem}), there is a universal constant $C$ so that 
	\begin{equation}\label{ballotbd}
		\mathbb{P}_{[\ell,r]}^{\frac{1}{2}RH_\lambda,\frac{1}{2}RH_\lambda}(\mathsf{Int}(\ell,r)) \leq \mathbb{P}_{[\ell,r]}^{0,0} \bigg(\min_{j\in(\ell,r)} X(j) > 0\bigg) \leq \frac{C}{r-\ell},
	\end{equation} 
	uniformly in $r-\ell > \Delta$. By Proposition \ref{p.partition function lower bound},
	\begin{equation}\label{oneptZlbd}
		Z_{[\ell,r];0}^{\lambda;\frac{1}{2}RH_\lambda, \frac{1}{2}RH_\lambda} \geq c\exp\left(-C(R^{3/2} + (r-\ell)H_\lambda^{-2})\right).
	\end{equation}
	Indeed, the proposition applies as long as $K$ is large enough since since $r-\ell > \Delta = K\sqrt{R}\,H_\lambda^2$, and $\frac{1}{2}RH_\lambda \leq H_\lambda^2$ since we have assumed $R\leq \frac{1}{2}H_\lambda$. Altogether, choosing $K$ large enough depending on $c,C$, using $r-\ell > K\sqrt{R}\,H_\lambda^2$, and applying \eqref{ballotbd} and \eqref{oneptZlbd} in \eqref{HighLRred} implies that
	\begin{equation*}
		\mathbb{P}_{[\ell, r];0}^{\lambda;\frac{1}{2}RH_\lambda, \frac{1}{2}RH_\lambda}(\mathsf{Int}(\ell, r)) \leq \frac{C}{r-\ell}\, \exp\left(-\tfrac{1}{4}R(r-\ell)H_\lambda^{-2}\right).
	\end{equation*} 
	Now returning to \eqref{Jdecomp}, we change variables via $r = \ell + k$ where $k>\Delta$, to obtain
	\begin{align*}
		\mathbb{P}_{I;0}^{\lambda;0,0}(|J|>\Delta) &\leq \sum_{(\ell,r)\in A_\Delta} \frac{C}{r-\ell}\, \exp\left(-\tfrac{1}{4}R(r-\ell)H_\lambda^{-2}\right) = \sum_{k > \Delta} \sum_{\ell < t < \ell + k} \frac{C}{k}\exp\left(-\tfrac{1}{4}RkH_\lambda^{-2}\right)\\
		&\leq \sum_{k>\Delta} C \exp\left(-\tfrac{1}{4}RkH_\lambda^{-2}\right) \leq C\exp\left(-c R\Delta H_\lambda^{-2}\right) \leq C\exp(-cR^{3/2}).
	\end{align*}
	For the first inequality in the second line, we noted that the number of $\ell$ with $\ell < t < \ell + k$ is at most $k$. For the subsequent inequality we used a straightforward geometric sum estimate, and for the final inequality we recalled $\Delta = K\sqrt{R}\,H_{\lambda}^2$. This proves~\eqref{Jlengthbd}.

	\subsection*{Step 2: Upper tail bound for small $J$} We now prove \eqref{smallJbd}. This is a straightforward application of random walk bridge estimates from Section \ref{s.RWest}. By stochastic monotonicity, we can remove the area tilt and shift vertically down by $\frac{1}{2}RH_\lambda$ to estimate
	\begin{equation}\label{smallJfrac}
		\begin{split}
			\mathbb{P}_{J;\frac{1}{2}RH_\lambda}^{\lambda;\frac{3}{4}RH_\lambda, \frac{3}{4}RH_\lambda}(X(t)>RH_\lambda)  &\leq  \mathbb{P}_{J;0}^{\frac{1}{4}RH_\lambda,\frac{1}{4}RH_\lambda}(X(t)>\tfrac{1}{2}RH_\lambda)\\ 
			&\leq  \frac{\mathbb{P}_J^{\frac{1}{4}RH_\lambda,\frac{1}{4}RH_\lambda}(X(t)>\tfrac{1}{2}RH_\lambda)}{\mathbb{P}_J^{\frac{1}{4}RH_\lambda,\frac{1}{4}RH_\lambda}(\min_{s\in J} X(s) > 0)}.
		\end{split}
	\end{equation}
	For the numerator, Lemma \ref{l.bridge maximal} implies for $|J| \leq \Delta = K\sqrt{R}\,H_\lambda^2$ that
	\begin{equation}\label{subexp}
		\mathbb{P}_J^{\frac{1}{4}RH_\lambda,\frac{1}{4}RH_\lambda}(X(t)>\tfrac{1}{2}RH_\lambda) \leq 4\exp\bigg(-c\bigg(\frac{R^2H_\lambda^2}{t-\sigma} \wedge RH_\lambda\bigg)\bigg) \leq 4e^{-cR^{3/2}}.
	\end{equation}
	In the last inequality, we used $t-\sigma\leq K\sqrt{R}\,H_\lambda^2$ to bound $R^2H_{\lambda}^2/(t-\sigma)\geq K^{-1}R^{3/2}$, as well as the hypothesis $R\leq H_\lambda^2$ to bound $RH_\lambda \geq K^{-1}R^{3/2}$. On the other hand, the ballot theorem (Theorem~\ref{t.ballot theorem}) implies that the denominator in \eqref{smallJfrac} is bounded below by the minimum of $c>0$ and $c(\frac{1}{4}RH_\lambda)^2/|J| \geq \frac{1}{8}cK^{-1}R^{3/2} \geq \frac{1}{8}cK^{-1}>0$. In combination with \eqref{smallJfrac} and \eqref{subexp}, this proves~\eqref{smallJbd}.
\end{proof}

We now give the proof of the maximum bound.

\begin{proof}[Proof of Corollary \ref{maxbd}]
	This is a direct consequence of Theorem \ref{oneptbd} and a union bound. As before we can assume $u=v=0$ and $A=0$. If $I = \llbracket a,b\rrbracket$, let $L = \lfloor \frac{|I|}{R^{1/2} H_\lambda^2} \rfloor $, $t_i = \lfloor a + iR^{1/2}H_\lambda^2\rfloor$ for $0\leq i < L$, and $t_L = b$. Write $M = R(\log\frac{|I|}{R^{1/2}H_\lambda^2})^{2/3} H_\lambda$. Then by a union bound, applying the Gibbs property and monotonicity,
	\begin{equation}\label{maxbdsum}
		\mathbb{P}_{I;0}^{\lambda;0,0} \left(\max_{t\in I} X(t) > M\right) \leq \sum_{i=1}^{L-1} \bigg[ \mathbb{P}_{I;0}^{\lambda;0,0}\left(X(t_i) > \frac{M}{2}\right)  + \mathbb{P}_{[t_i,t_{i+1}];0}^{\lambda;\frac{M}{2}, \frac{M}{2}} \bigg(\max_{t\in[t_i,t_{i+1}]} X(t) > M \bigg) \bigg].
	\end{equation}
	By Theorem \ref{oneptbd}, for each $i$ we have for the first term in the summand
	\begin{equation}\label{maxbd1st}
		\mathbb{P}_{I;0}^{\lambda;0,0}\left(X(t_i) > \frac{M}{2}\right)  \leq C\exp\bigg(-cR^{3/2} \log \frac{|I|}{R^{1/2}H_\lambda^2}\bigg).
	\end{equation} 
	For the second term in the $i^{\mathrm{th}}$ summand in \eqref{maxbdsum}, we use monotonicity to remove the area tilt, followed by Lemma \ref{l.bridge maximal} and the ballot theorem (Theorem~\ref{t.ballot theorem}) with $t_{i+1}-t_i\leq R^{1/2}H_\lambda^2$ to estimate
	\begin{align*}
		&\mathbb{P}_{[t_i,t_{i+1}];0}^{\lambda;\frac{M}{2}, \frac{M}{2}} \bigg(\max_{t\in[t_i,t_{i+1}]} X(t) > M \bigg) \leq \frac{\mathbb{P}_{[t_i,t_{i+1}]}^{\frac{M}{2},\frac{M}{2}}\left(\max_{t\in[t_i,t_{i+1}]} X(t) > M\right)}{\mathbb{P}_{[t_i,t_{i+1}]}^{\frac{M}{2},\frac{M}{2}}\left(\min_{t\in[t_i,t_{i+1}]} X(t) > 0\right)}\\ 
		&\qquad \qquad\qquad\qquad\quad \leq C\exp\bigg(-c\bigg(\frac{M^2}{R^{1/2}H_\lambda^2} \wedge M\bigg)\bigg) =  C\exp\left(-cR^{3/2}\bigg(\log\frac{|I|}{R^{1/2}H_\lambda^2}\bigg)^{4/3}\right).
	\end{align*}
	In the last line we used the assumption $R^{1/2}(\log\frac{|I|}{R^{1/2}H_\lambda^2})^{2/3} \leq H_\lambda$, and the condition $|I| \geq R^{1/2}H_\lambda^2$ implies the last expression is no larger than the right-hand side of \eqref{maxbd1st} (after possibly enlarging $C$). Therefore each summand in \eqref{maxbdsum}  is bounded by the same factor on the right of \eqref{maxbd1st}. The number of terms in the sum is $L-1 \leq |I|/R^{1/2}H_\lambda^2$, and since $R^{3/2} \geq 2/c$, this factor can be absorbed into the exponent by replacing $c$ with $c/2$. This proves the desired bound.
\end{proof}

\section{Global ceilings for the line ensemble}\label{sec:recursion}

This section is dedicated to the proof of \cref{thm:max}, which follows from \cref{thm:max-general} below. We begin by setting up and stating \cref{thm:max-general}, followed by the proof of \cref{thm:max}. We give an outline of the proof of \cref{thm:max-general} in \cref{sub:outline_of_the_proof_of_cref_thm_max_general}, followed by the proof in \cref{subsec:proof_of_cref_thm_max_general}.

We start with some notation, used throughout this section and \cref{sec:recursion-proofs}. 
Fix $a_0>0$ and $b_0>1$. 
All results in the next two sections are stated for $a\geq a_0$ and $b\geq b_0$, where $(a,b)$ are the area-tilt parameters from \cref{def:areatilt}.
Define
\begin{equation}\label{e.H_j defn}
\lambda_j := ab^{j-1}, \qquad H_j := \lambda_j^{-1/3}N^{1/3}, \qquad j\in\mathbb{N}\,.
\end{equation}
 That is, compared to our notation for single curves in \eqref{eqn:H}, we write $H_j$ instead of $H_{\lambda_j}$ for brevity. This will provide  the fluctuation scale for the $j^{\mathrm{th}}$ curve in the line ensemble.  For real numbers $p < q$, $r \in \R$, and $s>0$, we will sometimes write $r+s[p,q]$ or $r+[p,q]s$ to denote the interval $[r+ps, r+qs]$ for brevity. We sometimes write $x\vee y := \max(x,y)$.

Next, choose a constant $\cC= \cC(b_0)>1$ large enough so that
\begin{align}\label{def:cC}
	\frac{(j+1+\cC)^2 H_{j+1}}{(j+\cC)^2H_j} \leq b^{-1/6}\quad \mbox{for all} \quad j\geq 1\, \text{ and } b\geq b_0.
\end{align}
Note the above is implied by $(2+\cC)^2/(1+\cC)^2 \leq b_0^{1/6}$. 
Define 
\begin{align}\label{def:epj}
	\ep_j := (j+\cC)^{-2}\,.
\end{align}

Recall the choices of parameters from \eqref{def:boundary-conditions-thm:max}--\eqref{def:BL} in \cref{thm:max}. There, the parameter $L$ relating to the interval size was fixed, and restrictions on the maximum size of the boundary conditions $\max(u_1,v_1)$ and the tail-depth parameter $K$ were given in terms of $L$. While this parametrization was natural for the theorem statement, it will be convenient for the proof to reparametrize by imposing restrictions on $L$ in terms of the largest allowed size $B$ of $N^{-1/3}\max(u_1,v_1)$.

We consider $n$ curves on an interval $\mathcal{I}$ defined by 
\begin{equation}\label{def:rec-length-of-interval2}
	\begin{split}
	& n = \lfloor N^\delta \rfloor, \qquad \mathcal{I}= [-LN^{2/3}, LN^{2/3}] \,.
	\end{split}
\end{equation}
Fix $r\in (0,2/3)$ and $\eta >0$. For a constant $K_0>0$  to be chosen sufficiently large, we consider parameters $K,B,L$ satisfying
\begin{equation}\label{def:BLK}
	K \in [K_0, N^r(\log N)^{-2/3}] \,, \quad 
	B \in [K(\log N)^{2/3},N^r]\,, \quad
	L \in [B^{1/2+\eta}, N^{1/3}]\,,  
\end{equation}
and boundary conditions  $\u = (u_j)_{j=1}^n$ and $\v = (v_j)_{j=1}^n$ in the Weyl chamber $W_0^n$ satisfying
\begin{multline}\label{eqn:boundary-conditions-recursion2}
	\max(u_1, v_1) + K \ep_1^{-1} H_{1} \bigg(\log \frac{|\mathcal{I}|}{K^{1/2}H_{1}^2}\bigg)^{2/3} \leq BN^{1/3} \quad \text{and}  \\
	 \min(u_{j-1}, v_{j-1}) \geq \max(u_j, v_j) + K \ep_j^{-1} H_{j} \bigg(\log \frac{|\mathcal{I}|}{K^{1/2}H_{j}^2}\bigg)^{2/3} \quad \text{for all $j\in \llbracket 2, m+1\rrbracket$,}
\end{multline}
where we define the threshold
\begin{align}\label{def:m}
	m= \lfloor (2/3- r)\log_b N - \log_b a\rfloor ,\quad \text{so that} \quad H_{m+1}  \in N^{1/9 + r/3}[1, b^{1/3}]\,.
\end{align} 
The choice of the above parameters, as well as the objects appearing in \eqref{def:Ij}--\eqref{def:ceilingj2} below, are motivated in the proof outline of \cref{thm:max-general} in \cref{sub:outline_of_the_proof_of_cref_thm_max_general}.
Define the intervals 
\begin{align}\label{def:Ij}
	\mathcal{I}_j = [x_j^{\mathrm{L}}, x_j^{\mathrm{R}}] := \Big[-\frac{1}{2}\prod_{i=j}^{m}(1-\ep_i) LN^{2/3} , \, \frac{1}{2} \prod_{i=j}^{m}(1-\ep_i) LN^{2/3}\Big]  
	\qquad
	\text{and}
	\qquad
	\mathcal{I}_{m+1} := \frac{1}{2} \mathcal{I}\,.
\end{align}
Note that $\mathcal{I}_j = (1-\ep_j) \mathcal{I}_{j+1}$ for $j\in\intint{1,m}$. Since
\begin{align}\label{eqn:epj-is-good}
	\prod_{i=1}^m (1-\ep_i) = \prod_{i=1}^{m} \frac{(i+\cC-1)(i+\cC+1)}{(i+\cC)^2} = \frac{\cC}{\cC+1}\left(1+\frac{1}{\cC+m}\right) > 1/2\,,
\end{align}
we have for all $j\in \llbracket1,m+1\rrbracket$ and $C>0$,
\begin{align}\label{eqn:Ij-lengthbound}
	\mathcal{I}_j \supset \tfrac14 \mathcal{I} \,, \qquad \text{ so } \quad |\mathcal{I}_j| \geq \tfrac14 B^{1/2+\eta}  N^{2/3} \geq C K^{1/2}\ep_j^{-1/2}H_j^2, 
\end{align}
where the last inequality holds for all $N$ large enough depending on $a_0$, $b_0$, and $C$. Let $\mathcal T>0$ denote a large constant depending only on $a_0$ and $b_0$, to be specified in the course of the argument. For $j \in \llbracket 1, m+1 \rrbracket$, define the $j^{\mathrm{th}}$ ``ceiling function'' $\Cl_j(x)= \Cl_j(x;K, a,b, \u, \v)$ by 
\begin{align}\label{def:ceilingj2}
	\Cl_j(x) := 
	\begin{cases} 
		K\ep_j^{-1}H_{j} \Big(\log(2\mathcal{T} \ep_j^{-1/2})\Big)^{2/3},  & |x| \leq 2\mathcal TK^{1/2} \ep_j^{-1/2}H_j^2, \\
		K \ep_j^{-1}H_{j} \Big(\log\frac{|x|}{K^{1/2}H_{j}^2}\Big)^{2/3}, &x \in \mathcal{I}_j, \, |x| > 2 \mathcal T K^{1/2}\ep_j^{-1/2}H_j^2, \\
		\max(u_{j},v_{j}) + K\ep_j^{-1}H_{j} \Big(\log \frac{|\mathcal{I}|}{K^{1/2}H_{j}^2}\Big)^{2/3}, & x\in \mathcal{I}\setminus\mathcal{I}_j \,.
	\end{cases}
\end{align} 
Note $\Cl_j$ is continuous on $\cI_j$ and, by \eqref{eqn:boundary-conditions-recursion2}, non-decreasing in $|x|$. \cref{eqn:boundary-conditions-recursion2} also gives
\begin{align} \label{eq:boundaryconditions-above-floor}
	\min(u_{j}, v_{j}) \geq \max_{x \in \mathcal{I}} \Cl_{j+1}(x) = \max(u_{j+1}, v_{j+1}) + K \ep_j^{-1} H_j \bigg(\log \frac{|\mathcal{I}|}{K^{1/2}H_{j}^2}\bigg)^{2/3} \quad \forall j\in \llbracket 1,m\rrbracket\,.
\end{align}
The above is convenient because it will allow us to use monotonicity in the floor (\cref{l.monotonicity}) to stochastically dominate a walk with boundary conditions $u_j$ and $v_j$ and floor at $\Cl_{j+1}$ by a walk with the same boundary conditions but floor at $\max_{x\in \cI} \Cl_{j+1}(x)$ without having to worry about issues such as this raised floor being higher than the boundary conditions.

 We  derive \cref{thm:max} from \cref{thm:max-general} below, whose proof is in Section \ref{subsec:proof_of_cref_thm_max_general}. 

\begin{theorem}\label{thm:max-general}
Fix $a_0>0$, $b_0>1$, $r \in (0,2/3)$, and $\eta>0$. There exist positive constants $K_0(a_0,b_0,\eta)$, $\delta = \delta(r)$, $c=c(a_0,b_0)$, and $C = C(a_0,b_0,\eta)$ such that the following holds. There exists $N_0= N_0(a_0,b_0,r,\eta)\in\mathbb{N}$ so that for all $N\geq N_0$, $a \in [a_0, N^{1/3-r/2}]$, $b\in[b_0, \exp((\frac23-r)(\log N)^{1/3})]$, $n$ and $\mathcal{I}$ as in \eqref{def:rec-length-of-interval2}, $(K,B,L)$ as in \eqref{def:BLK}, and $\u,\v \in W_0^n$ satisfying \eqref{eqn:boundary-conditions-recursion2}, 
\begin{equation}\label{eqn:max-general}
	\P_{n,\mathcal{I};0}^{a,b;\u,\v}\Big(\exists x \in \mathcal{I} : X_1(x) >2\Cl_1(x) \Big) \leq Ce^{-cK^{3/2}}.
\end{equation}
\end{theorem}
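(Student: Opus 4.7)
The plan is to prove \eqref{eqn:max-general} by downward induction on the curve index $j$. For $j\in\llbracket 1, m+2\rrbracket$, define $\mathsf{E}_j := \{X_j(x)\leq 2\Cl_j(x) \text{ for all } x\in\mathcal{I}\}$, with the convention $\Cl_{m+2}\equiv 0$ and $\mathsf{E}_{m+2} = \Omega$. The goal is the recursive bound
\[
\P_{n,\mathcal{I};0}^{a,b;\u,\v}(\neg\mathsf{E}_j \cap \mathsf{E}_{j+1}) \leq C(j+\cC)^{-2}e^{-cK^{3/2}} \qquad \forall j\in\llbracket 1, m+1\rrbracket;
\]
summing over $j$ and using $\sum_j (j+\cC)^{-2} < \infty$ then gives the theorem. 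To prove this recursive bound, apply the Gibbs property (Remark \ref{rmk:remove-top}) to remove the top $j-1$ curves, and monotonicity (Lemma \ref{l.monotonicity}) to raise the random floor $X_{j+1}$ (which lies below $2\Cl_{j+1}$ on $\mathsf{E}_{j+1}$) up to the deterministic ceiling $2\Cl_{j+1}$. The problem thus reduces to the single-curve estimate
\[
\P_{\mathcal{I};2\Cl_{j+1}}^{\lambda_j;u_j,v_j}\Big(\exists x\in\mathcal{I}:X(x)>2\Cl_j(x)\Big) \leq C(j+\cC)^{-2}e^{-cK^{3/2}}.
\]

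For this single-curve bound, the strategy is to construct a sparse deterministic mesh $\{x(k)\}\subset\mathcal{I}$ on which $X(x(k))\leq \Cl_j(x(k))$ holds with high probability, and then fill in the gaps between mesh points using the maximum tail bound (Corollary \ref{maxbd}). The shape of $\Cl_j$ dictates a partition of $\mathcal{I}$ into three regions: the \emph{bulk} $|x|\leq 2\mathcal{T}K^{1/2}\ep_j^{-1/2}H_j^2$ (where $\Cl_j$ is flat on scale $H_j$), the \emph{transition region} inside $\mathcal{I}_j$ but outside the bulk (where $\Cl_j$ follows the logarithmic profile), and the \emph{boundary region} $\mathcal{I}\setminus\mathcal{I}_j$ (where $\Cl_j$ is determined by the high boundary conditions $u_j, v_j$). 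In the bulk and transition regions, the mesh is spaced at the subdiffusive scale $\sim K^{1/2}H_j^2$, matching the scale at which Corollary \ref{maxbd} produces an $e^{-cK^{3/2}}$ tail between adjacent mesh points. The drop at each deterministic mesh point is obtained by applying the dropping lemma (Lemma \ref{lem:dropping-lemma}) on a surrounding sub-interval to produce a random drop point, and then transferring this control to the deterministic mesh point via the Gibbs property and monotonicity in the boundary conditions. The choice \eqref{def:epj} of $\ep_j = (j+\cC)^{-2}$ together with \eqref{def:cC} is engineered so that the jump in $\Cl_j$ between neighboring mesh regions is at least on the natural fluctuation scale $H_j$, which prevents the mesh errors from compounding, while the prefactor $\ep_j = (j+\cC)^{-2}$ in the probability bound ensures summability in $j$.

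The main obstacle will be implementing the mesh in the boundary region $\mathcal{I}\setminus\mathcal{I}_j$, where the walk must descend from heights of order $\max(u_j,v_j)\sim BN^{1/3}$ down to scale $H_j$, with the ratio potentially as large as $B\cdot H_j^{-1}N^{-1/3}$. The plan is to iterate the dropping lemma on a telescoping sequence of sub-intervals with geometrically decreasing target heights, so that after $O(\log\,\cdot)$ many applications the walk has descended from the boundary scale to the natural scale $H_j$. At each iteration the hypotheses $\max(u,v)\leq H_j^3$ and $|I_\ep|\geq 2\max(u^{1/2},v^{1/2})H_j^{3/2}$ of Lemma \ref{lem:dropping-lemma} must be verified; via the choice \eqref{def:m} of $m$ and the parameter ranges \eqref{def:BLK}, these translate to the conditions $\lambda_j\cdot B \leq N^{2/3}$ and $L\gtrsim B^{1/2+\eta}$, both of which hold uniformly for $j\leq m+1$. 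Crucially, the partition function lower bound (Proposition \ref{p.partition function lower bound}) is what allows the dropping lemma to tolerate boundary values all the way up to $H_j^3$, far beyond the diffusive scale that classical arguments would handle. The base case $j=m+1$ is subsumed by the same recursive step with $\Cl_{m+2}\equiv 0$ playing the role of the floor, which is consistent with the physical floor at $0$ already present in the model; no separate argument is needed. A final union bound over the $O(\mathrm{poly}(N))$ many mesh points preserves the $e^{-cK^{3/2}}$ scale at the cost of an absolute constant, completing the single-curve estimate.
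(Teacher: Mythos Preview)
Your recursive strategy for the top $m$ curves is essentially the paper's approach (Proposition~\ref{prop:recursive-bound} plus the iteration in \eqref{eqn:ceil-sum}), and your description of the single-curve estimate via mesh points, the dropping lemma, and Corollary~\ref{maxbd} matches Section~\ref{sec:recursion-proofs} closely.

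However, there is a genuine gap at the base case $j=m+1$. You write that ``no separate argument is needed'' because $\Cl_{m+2}\equiv 0$ agrees with the physical floor. But when you condition on $\mathcal{F}_{m+1}$ and remove the top $m$ curves, the floor for $X_{m+1}$ is the random curve $X_{m+2}$, not $0$. Your convention $\mathsf{E}_{m+2}=\Omega$ gives you no control over $X_{m+2}$, so you cannot raise the floor to anything deterministic. Moreover, you cannot simply extend your induction to $j>m+1$: there are $n-m-1\asymp N^\delta$ curves below $X_{m+1}$, and for large $j$ the tilt $\lambda_j=ab^{j-1}$ can vastly exceed $N$, so $H_{\lambda_j}<1$ and the hypotheses of Lemma~\ref{lem:dropping-lemma}, Theorem~\ref{oneptbd}, and Corollary~\ref{maxbd} all fail. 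The paper addresses this with a separate argument (Proposition~\ref{prop:max-general-bottom-curves}): by monotonicity one decreases all bottom tilts from $(\lambda_{m+1},b)$ to $(\lambda_{m+1},1)$, so every curve $X_{m+1},\dots,X_n$ now has the manageable tilt $\lambda_{m+1}$, and a cruder recursion on this common-tilt ensemble produces the ceiling $\mathsf{F}$ on $X_{m+1}$ that initializes the main recursion.

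A secondary technical point: your multiplicative floor $2\Cl_{j+1}$ can exceed the boundary values $u_j,v_j$ at the endpoints of $\mathcal{I}$ (since $\Cl_{j+1}$ there contains $\max(u_{j+1},v_{j+1})$, which may be comparable to $u_j$), making the single-curve law ill-defined. The paper instead uses an additive shift by the small quantity $\zeta=2N^{2\delta}KH_{m+1}\log N$, i.e.\ works with $\{X_j\leq \Cl_j+\zeta\}$; after raising both the floor and boundary conditions by $\zeta$ and shifting back down, one lands on the clean law $\P_{\mathcal{I};\Cl_{j+1}}^{\lambda_j;u_j,v_j}$ with no factor of $2$, and finally checks $\zeta\leq \Cl_1(x)$ so that $\Cl_1+\zeta\leq 2\Cl_1$.
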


Although \cref{thm:max-general} is only stated for the top curve $X_1$, in the proof of \cref{thm:max} below, we show that allowing $a$ to be as large as a power of $N$ allows us to get bounds on the $j\th$ curve $X_j$ of the ensemble. In \cref{rk:thm6.1-j}, we explain that the same idea gives a global ceiling for $X_j$ analogous to \eqref{eqn:max-general} for $X_1$. 

\begin{proof}[Proof of \cref{thm:max}]
We apply \cref{thm:max-general} to prove \cref{thm:max} for $t=0$, as our ceiling functions $\Cl_j$ are defined to be non-decreasing in $|x|$. In light of \eqref{eqn:Ij-lengthbound}, which implies $\cap_j \cI_j \supset \frac14 \mathcal{I}$, we could instead use the ceilings $\Cl_j(\cdot-t)$, which are non-decreasing away from $t$, once $N$ is large enough so that $T + |t| \in \frac14 \mathcal{I}$. The same argument below would yield \cref{thm:max-general} with these  ceilings (in particular, $C$ and $c$ do not depend on $t$), \cref{thm:max} with that $t$ follows in exactly the same way as below. We give the proof in two steps, for $j=1$ and for $j \in \llbracket 2, \frac{\kappa}{20}\log_b N\rrbracket$.

\medskip
\noindent 
\textbf{Step 1: $j=1$.} 
Starting from the setup of \cref{thm:max}, we will use monotonicity (Lemma~\ref{l.monotonicity}) to raise the boundary conditions so that \cref{thm:max-general} can be applied. By similar reasoning as in \cref{rmk:remove-top}, monotonicity in the floor allows us to increase $n$ in \cref{thm:max} to $n= \lfloor N^{\delta} \rfloor$, with, say, zero boundary conditions for the additional curves. Next, for $\u, \v \in W_0^n$ satisfying \eqref{def:BL}, we raise each $u_i$ and $v_i$ by the smallest amount such that the second condition in \eqref{eqn:boundary-conditions-recursion2} is satisfied. Call these new boundary conditions $\u^*:= (u_1^*, \dots, u_n^*), \v^*:= (v_1^*,\dots, v_n^*)$. Then
\begin{multline} \label{eq:raising-bc}
	\max(u_1^*, v_1^*) \leq \max(u_1, v_1) + \sum_{j=1}^{m} K \ep_j^{-1} H_j \bigg(\log \frac{|\mathcal I|}{K^{1/2}H_j^2}\bigg)^{2/3} \\
	\leq \bigg(L^{2-\kappa} +  K (\log N)^{2/3}\sum_{j=1}^{\infty} (j+\cC)^2 \lambda_j^{-1/3} \bigg) N^{1/3} \leq CL^{2-\kappa} N^{1/3}\,,
\end{multline}
where in the first inequality, we used \eqref{def:BL}, $H_j\geq 1$ for $j\in\llbracket 1,m\rrbracket$ from \eqref{def:m}, and  $|\mathcal{I}| \leq 2N$ from \eqref{def:boundary-conditions-thm:max}; in the second inequality, we used the upper bound on $K$ in the statement of \cref{thm:max}. Note $C$ above depends only on $a_0$ and $b_0$.
Take $B := CL^{2-\kappa}$ and $r:= \frac23-\frac{\kappa}{6}$ in \cref{thm:max-general}. 
Recalling the bounds on $L$ from \eqref{def:boundary-conditions-thm:max}, we see that $B$  satisfies \eqref{def:BLK} for $N$ large. The assumed upper bound on $K$ in the statement of \cref{thm:max} implies $K$ satisfies \eqref{def:BLK} as well. Moreover, $\eta$ can be chosen sufficiently small with respect to $\kappa$ so that for  all $N$ sufficiently large, 
$B^{1/2+\eta} \leq L$.
In particular, $L$ satisfies \eqref{def:BLK} as well.

Thus, we see that the law $\P_{n,\mathcal{I};0}^{a,b; \u,\v}$ is stochastically dominated by $\P_{n,\mathcal{I};0}^{a,b; \u^*,\v^*}$, and  the latter law satisfies the conditions of \cref{thm:max-general}. 
Fixing $T>0$ and assuming 
$a\leq N^{1/3-r/2} = N^{\kappa/12}$ and $b \leq \exp((\frac23-r)(\log N)^{1/3}) =\exp(\frac{\kappa}{6}(\log N)^{1/3})$, we may apply \cref{thm:max-general} (recalling the definition of $\Cl_1$ from \eqref{def:ceilingj2} and $\ep_1$ from \eqref{def:epj}) to obtain 
\begin{equation}\label{eqn:Thm6.1toThm2.14}
	\P_{n,\mathcal I;0}^{a,b;\u,\v}\Big( \exists x \in [-T\sigma^{-2/3}N^{2/3}, T\sigma^{-2/3}N^{2/3}] : X_1(x) > 2K(1+\cC)^2H_1^2\Upsilon(x)^{2/3}  \Big) \leq Ce^{-cK^{3/2}},
\end{equation}
where\begin{align*}
	\Upsilon(x) := \log \bigg(  \frac{a^{2/3}|x|}{K^{1/2}N^{2/3}} \vee 2\mathcal{T}(1+\cC)\bigg)
	&\leq C\log \bigg(2 + \frac{a^{2/3}|x|}{K^{1/2}N^{2/3}} \bigg) \,,
\end{align*}
with $C$ depending only on $\mathcal{T}$, $\cC$ (which in turn depend only on $a_0$, $b_0$). 
Substituting the above into \eqref{eqn:Thm6.1toThm2.14}, recalling \eqref{def:x_i}, and setting $x = s \sigma^{-2/3}N^{2/3}$ and $\tilde C = \tilde{C}(a_0,b_0) = 2(1+\cC)^2C^{2/3}\sigma^{-2/3}$ gives
\begin{equation*}\label{eqn:Thm6.1toThm2.14-2}
	\P_{n,\mathcal I;0}^{a,b;\u,\v}\bigg( \exists s \in [-T, T] : x^N_1(s) > \tilde CKa^{-1/3}N^{1/3}  \bigg[\log \bigg(2 + \frac{a^{2/3}\sigma^{-2/3}|s|}{K^{1/2}}\bigg) \bigg]^{2/3} \bigg)
	\leq Ce^{-cK^{3/2}}.
\end{equation*}
Take $K_0 = K_0(a_0,b_0)$ large so that $C$ above can be absorbed into $c$. Replacing $K$ with $K/\tilde{C}$, and then modifying $c = c(a_0,b_0)$ so as to replace the $K^{1/2}$ in the denominator with $cK^{1/2}$, we are done.
\medskip
\noindent
\textbf{Step 2: $j \in \llbracket 2, \frac{\kappa}{20}\log_b N \rrbracket$.} 
Using monotonicity to remove the top $j-1$ curves as in Remark \ref{rmk:remove-top}, we stochastically dominate $X_j$ by the top curve of the line ensemble with law $\mathbb{P}^{\lambda_j,b;\u^{\geq j},\v^{\geq j}}_{n-j+1,I;0}$ (recalling $\u^{\geq j}$, $\v^{\geq j}$ from \cref{rmk:remove-top}). 
It therefore suffices to establish the bound
\begin{equation}\label{eq:jth-curve-max}
\mathbb{P}^{\lambda_j,b;\u^{\geq j},\v^{\geq j}}_{n-j+1,I;0} \bigg( \exists s\in[-T,T] : x_1^N(s+t) > K \lambda_j^{-1/3} \bigg[\log \bigg(2 + \frac{\lambda_j^{2/3}|s|}{cK^{1/2}} \bigg) \bigg]^{2/3} \bigg)
		\leq e^{-cK^{3/2}}.
\end{equation}
\cref{eq:jth-curve-max} will follow from applying the result we just proved in Step 1 (\cref{thm:max} for $x_1^N$) to the above line ensemble. Observe that, compared to the line ensemble considered in Step 1, the number of curves has been reduced from $n$ to $n-j+1$, the highest boundary conditions have been lowered from $(u_1, v_1)$ to $(u_j, v_j)$, and the area-tilt parameter $a$ has been replaced by $\lambda_j$. The first two differences affect nothing; therefore, we just need to verify that the Step 1 result still applies upon replacing $a$ with $\lambda_j$. 
The only hypothesis on $a$ used in Step 1 was the condition $a \leq N^{1/3-r/2} =N^{\kappa/12}$
from Theorem \ref{thm:max-general}, so for any $j$ such that $\lambda_j = ab^{j-1} \leq N^{\kappa/12},$  
the Step 1 result applies. Since we assumed in Theorem~\ref{thm:max} that $a\leq N^{\kappa/30}$ and $j\leq \frac{\kappa}{20}\log_b N$, this condition on $\lambda_j$ is satisfied, and taking  $a= \lambda_j$ in the Step 1 result (\cref{thm:max} for $x_1^N$) yields \eqref{eq:jth-curve-max}.
\end{proof}

\begin{remark}\label{rk:thm6.1-j}
In the above proof, the $j=1$ version of \cref{thm:max} led to the full result for  $N = N(a,b,\kappa)$ large  by considering the top curve of a line ensemble with area-tilt parameters $(\lambda_j,b)$. In exactly the same way, for all $N= N(a,b,r)$ large and for all $j\leq 1 + (\frac13-\frac{r}2)\log_b N - \log_b a$ (so $\lambda_j \leq N^{1/3-r/2}$), \cref{thm:max-general} can be applied to the top curve of $\mathbb{P}^{\lambda_j,b;\u^{\geq j},\v^{\geq j}}_{n-j+1,I;0}$, yielding
\begin{equation}\label{eqn:max-general-j}
	\P_{n,\mathcal{I};0}^{a,b;\u,\v}\Big(\exists x \in \mathcal{I} : X_j(x) >2\Cl_1\big(x;K, \lambda_j, b, \u^{\geq j}, \v^{\geq j}\big) \Big) \leq Ce^{-cK^{3/2}}\,,
\end{equation}
where, to clarify, $\Cl_1(\cdot ; K,\lambda_j,b, \u^{\geq j}, \v^{\geq j})$ is the right-hand side of \eqref{def:ceilingj2} with $j=1$,  $(u_1,v_1)= (u_j,v_j)$, and $H_1 =H_j$ (since $\lambda_j^{-1/3} N^{1/3} = H_j$). Thus, we have a ceiling on the $j\th$ curve $X_j$.
\end{remark}

\subsection{Outline of the proof of \cref{thm:max-general}} \label{sub:outline_of_the_proof_of_cref_thm_max_general}

We now outline the main ideas underlying the proof of \cref{thm:max-general}. This expands on the discussion in \cref{ssub:reduction_to_single_curve_estimates_via_stochastic_monotonicity,ssub:high_boundary_conditions_and_dropping_estimates}, where it was explained that a recursive argument reduces the problem to showing that, w.h.p.\ summable in $j$, a single curve $X$ with area-tilt coefficient $\lambda_j$ and floor at $\Cl_{j+1}$ lies uniformly below $\Cl_j$. For curves of index $j\leq m$, this is \cref{prop:recursive-bound}, depicted in Figure~\ref{f.recursion first figure}.

For curves of index $j> m$, the area tilt is too large for the single-curve tail bounds or dropping lemma to be applied, as these require $H_{\lambda}$ to not be too small (see, e.g., the hypotheses of \cref{lem:dropping-lemma} and \cref{oneptbd}, as well as \cref{rk:dropping-lemma-condition-1} below). On the other hand, the strong area tilt has the effect of pushing these curves very low, and a crude strategy will show that they are negligible. By monotonicity, we can bound these curves by decreasing all of their area-tilt  parameters to $\lambda_{m+1}$ and applying a similar recursive strategy. This allows us to show, in \cref{prop:max-general-bottom-curves}, that $X_{m+1}$ is w.h.p. dominated by a piecewise constant ceiling $\mathsf{F}$, which itself is dominated by $\Cl_1$.

We prove \cref{prop:recursive-bound} in \cref{sec:recursion-proofs} by creating a ceiling on the random walk starting from the boundary points of $\cI$ and working our way inwards. 
First, as depicted in Figure~\ref{f.recursion first figure}, note that $\Cl_j$ features a high, flat portion on $\cI\setminus\cI_j$; inside of $\cI_j$, it resembles $H_j (\log |x|/H_j^2)^{2/3}$, which is the equilbrium behavior of the $\lambda_j$-tilted walk. This structure can be understood as follows. Because we allow our boundary values $(u_j, v_j)$ to be as large as $BN^{1/3}$ (see \eqref{eqn:boundary-conditions-recursion2}, \eqref{def:BLK}), the walk fluctuates around this height for some time. Moreover, since the walk must stay above $\Cl_{j+1}$, which features an anomalously high portion on $\cI\setminus\cI_{j+1}$, the walk cannot drop down to its equilbrium height until after it moves inside of $\cI_{j+1}$. Hence, we have the nesting of the intervals $\cI_{j} \subset \cI_{j+1}$ as in \eqref{def:Ij}.

This equilibration is proved in \cref{lem:drop-high-bc}, where we construct random ``drop points'' within distance $\ep_j |\mathcal{I}_{j+1}|$ of $\partial\cI_{j+1}$ (in particular, inside of $\cI_{j+1}\setminus \cI_j$) where the walk has fallen to $\approx \ep_j^{-1} H_j + \Cl_{j+1}$; that is, the walk has dropped to approximately equilbrium height above the floor. This is depicted in Figure~\ref{f.recursion second figure}. 
Recall from \eqref{def:epj} that $\ep_j := (j+\cC)^{-2}$. 

The choice of $\ep_j$ comes from the proof of \cref{lem:drop-high-bc}, which uses an iterative scheme driven by the dropping lemma (\cref{lem:dropping-lemma}). For a $\lambda$-tilted random walk and for any $\ep\in(0,1)$, the dropping lemma constructs, inside any $\ep$-fraction of the interval, a random point where that the walk lies below height $\ep^{-1} H_\lambda$. 
Essentially any sequence $\ep_j$ summable in $j$ would suffice; the relevant point is that $\prod_j (1-\ep_j) > 0$ (as noted in \eqref{eqn:epj-is-good}), so that, iterating down to $j=1$, all curves drop to their equilibrium height 
within a constant fraction of the interval $\mathcal{I}$; in particular, the length of $\cI_1$ is a constant fraction of $|\mathcal I| = 2LN^{2/3}$ (see \eqref{eqn:Ij-lengthbound}). Thus, when $L = B^{1/2+\eta}$,  the top curve equilibriates from its boundary values at $BN^{1/3}$ in $B^{1/2+\eta}N^{2/3}$ time, establishing the nearly parabolic rate of descent mentioned in \cref{rk:parabolic-decay}. 
The $B^{\eta}$ factor in the length of the interval is needed for the aforementioned iterative scheme in the proof of \cref{lem:drop-high-bc}: see \cref{claim:drop-high-bc-input} and Figure~\ref{f.recursion second figure}.
The constant $\cC$ in $\ep_j$ serves to maintain the ordering $\Cl_{j+1}(x) \leq \Cl_j(x)$ for all $x\in \cI$; see \eqref{def:cC}.

Once we have shown that the walk drops from its (potentially very high) boundary values, the remainder of the proof of \cref{prop:recursive-bound} proceeds in two steps. In Step 1, the dropping lemma and the one-point bound Theorem \ref{oneptbd} are used successively in \cref{claim:meshpoints} to argue that the walk drops to height $\ep_j^{-1}H_j$ above $\Cl_{j+1}$ at a deterministic mesh of points approaching the origin on either side. The mesh stops upon crossing $\smash{\pm2\mathcal{T} K^{1/2}\ep_j^{-1/2}H_j^2}$, which are the boundary points of the interval around $0$ on which $\Cl_j$ is flat \eqref{def:ceilingj2}. The role of $\mathcal{T}$ is to serve as a large constant so that the negative $|I|/H_\lambda^2$ term in the exponential in \eqref{eqn:dropping-lemma} dominates the positive terms there.
In Step 2, we ``fill in'' the ceiling on the random walk between mesh points using the max bound \cref{maxbd}.

\subsection{Proof of \cref{thm:max-general}}
\label{subsec:proof_of_cref_thm_max_general}
The proof of \cref{thm:max-general} has two main inputs: \cref{prop:max-general-bottom-curves,prop:recursive-bound}.
Before stating \cref{prop:max-general-bottom-curves}, we define   (recalling $n$ from \eqref{def:rec-length-of-interval2} and $m$ from \eqref{def:m}) 
\begin{align*}
	 \mathsf{F}(x) &= \mathsf{F}(x;n,m,K) := 2n^2KH_{m+1} \log \frac{|\mathcal{I}|}{K^{1/2}H_{m+1}^2} + \max(u_{m+1},v_{m+1})\one_{\mathcal{I}\setminus\mathcal{I}_{m+1}}(x), \quad x\in\mathcal{I}.
\end{align*}
\cref{prop:max-general-bottom-curves} below shows that $\mathsf{F}$ serves as the ceiling function for all curves below level $m$. 

\begin{proposition}\label{prop:max-general-bottom-curves}
Under the same hypotheses as \cref{thm:max-general},
\begin{equation}\label{eq:max-general}
	\P_{n-m,\mathcal{I};0}^{\lambda_{m+1},b;\u^{>m},\v^{>m}}\Big(\exists x \in \cI : X_{1}(x) > \mathsf{F}(x) \Big) \leq Cn e^{-cm^{3/2} K^{3/2}},
\end{equation}
where $\mathbf{u}^{>m} := (u_{m+1},\dots,u_n)$ and $\mathbf{v}^{>m} := (v_{m+1},\dots,v_n)$. 
\end{proposition}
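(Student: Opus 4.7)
The plan is to first apply monotonicity in the base area-tilt parameter $b$ to reduce to a common-tilt ensemble in which every curve has area tilt $\lambda_{m+1}$, and then carry out a recursive ceiling construction from bottom to top that mirrors the one underlying Proposition~\ref{prop:recursive-bound}. The key observation is that in the common-tilt ensemble, $H_{m+1} \geq c N^{1/9+r/3}$ by~\eqref{def:m} is large enough that all the single-walk tools developed in Sections~\ref{sec:Zlbd} and~\ref{sec:tail} --- the partition function lower bound, the dropping lemma, and the one-point and max tail bounds --- apply at parameter $\lambda_{m+1}$.

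For the reduction, apply Lemma~\ref{l.monotonicity} with $a^{\uparrow} = a^{\downarrow} = \lambda_{m+1}$, $b^{\uparrow} = 1$, and $b^{\downarrow} = b$. Since $1 \leq b$, this yields that $\P^{\lambda_{m+1},b;\u^{>m},\v^{>m}}_{n-m,\mathcal{I};0}$ is stochastically dominated by $\P^{\lambda_{m+1},1;\u^{>m},\v^{>m}}_{n-m,\mathcal{I};0}$, in which all $n-m$ curves share the common area tilt $\lambda_{m+1}$. Since the event in~\eqref{eq:max-general} is increasing in $X_1$, it suffices to bound its probability under the common-tilt measure. Under this measure, I would recursively construct ceiling functions $\tilde{\Cl}_{n-m+1} \equiv 0, \tilde{\Cl}_{n-m}, \ldots, \tilde{\Cl}_1$ from bottom to top. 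At step $i \in \llbracket 1, n-m\rrbracket$, assuming $X_{i+1}(x) \leq \tilde{\Cl}_{i+1}(x)$ for all $x \in \mathcal{I}$, the Gibbs property combined with Lemma~\ref{l.monotonicity} (as in Remark~\ref{rmk:remove-top}) implies that the conditional law of $X_i$ is stochastically dominated by that of a single $\lambda_{m+1}$-tilted random walk bridge on $\mathcal{I}$ from $u_{m+i}$ to $v_{m+i}$ with a constant floor at $\max_{\mathcal{I}} \tilde{\Cl}_{i+1}$. Apply the same dropping-lemma-based mesh construction outlined in Section~\ref{sub:outline_of_the_proof_of_cref_thm_max_general} --- using the dropping lemma to equilibrate from the high boundary values down to $O(m K H_{m+1})$ above the floor at drop points near $\partial \mathcal{I}_{m+1}$, then the one-point tail bound (Theorem~\ref{oneptbd}) along a deterministic mesh approaching the center of $\mathcal{I}_{m+1}$, and the max bound (Corollary~\ref{maxbd}) with tail parameter $R \asymp m K$ to fill in between mesh points --- to produce a ceiling $\tilde{\Cl}_i$ satisfying
\[
\tilde{\Cl}_i(x) \leq \max_{\mathcal{I}} \tilde{\Cl}_{i+1} + C m K H_{m+1} \log \frac{|\mathcal{I}|}{K^{1/2} H_{m+1}^2}
\]
on $\mathcal{I}_{m+1}$, together with a boundary contribution of at most $\max(u_{m+i}, v_{m+i}) \leq \max(u_{m+1}, v_{m+1})$ on $\mathcal{I} \setminus \mathcal{I}_{m+1}$, and single-step failure probability at most $C e^{-c m^{3/2} K^{3/2}}$ (the choice $R \asymp mK$ is exactly what makes $Ce^{-cR^{3/2}\log}$ fit this threshold). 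Iterating over the $n-m \leq n$ steps and using $n m \leq n^2$, the top ceiling $\tilde{\Cl}_1$ is pointwise dominated by $\mathsf{F}$, and a union bound over the steps yields total failure probability at most $C n e^{-c m^{3/2} K^{3/2}}$.

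The main technical obstacle is to verify that the single-walk tools apply uniformly across all $n-m$ recursive steps, in spite of the large boundary values $u_{m+i}, v_{m+i}$ and the accumulating floor height $\max_{\mathcal{I}} \tilde{\Cl}_{i+1}$. Specifically, the shifted boundary values and accumulated floor must remain below $H_{m+1}^3 = N/\lambda_{m+1} \geq c N^{1/3+r}$ for the partition function lower bound (Proposition~\ref{p.partition function lower bound}) and dropping lemma (Lemma~\ref{lem:dropping-lemma}) to apply, which is guaranteed by $B \leq N^r$ combined with the accumulation bounds we obtain on $\mathsf{F}$. The hypothesis $R^{1/2}(\log|\mathcal{I}|)^{2/3} \leq H_{m+1}$ required by the max bound with $R \asymp mK$ follows from $m = O(\log N)$, $K \leq N^r (\log N)^{-2/3}$, and $r < 2/3$. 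Finally, the $2 n^2$ prefactor in the definition of $\mathsf{F}$ provides ample slack to absorb all implicit constants accumulated across the iteration, and choosing $\delta = \delta(r)$ sufficiently small ensures all the above thresholds are met for $N$ large.
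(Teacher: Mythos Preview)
Your skeleton is the paper's: reduce to a common-tilt ensemble via Lemma~\ref{l.monotonicity} with $b^{\uparrow}=1$, then build ceilings recursively from bottom to top using single-curve estimates. The paper's execution is simpler, though: because each recursive floor here is piecewise \emph{constant} (flat on the bulk interval), there is no decaying floor to track, so the full mesh construction of Section~\ref{sub:outline_of_the_proof_of_cref_thm_max_general} is overkill. The paper just applies the max bound once on the outer region, then Lemma~\ref{lem:drop-high-bc} once, then the max bound once more on the inner region (Claim~\ref{claim:bottom-curves-recursion}); the exponent $m^{3/2}K^{3/2}$ comes out because the ceiling gap is $KH_{m+1}\log(\cdot)$ rather than $KH_{m+1}(\log)^{2/3}$, and $\log(|\mathcal I|/K^{1/2}H_{m+1}^2)\gtrsim m$.

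There is a genuine gap in your recursion step. You dominate $X_i$ by a single walk from $(u_{m+i},v_{m+i})$ over a constant floor at $\max_{\mathcal I}\tilde{\Cl}_{i+1}$, but that maximum is attained on $\mathcal I\setminus\mathcal I_{m+1}$ and involves $\max(u_{m+i+1},v_{m+i+1})$, which can exceed $\min(u_{m+i},v_{m+i})$ (the Weyl-chamber ordering holds separately for the left and right endpoints, not across them). The resulting single-walk law may then be ill-defined, and monotonicity cannot be invoked. The paper sidesteps this at the outset by first raising \emph{all} boundary conditions to $w_j=\max(u_{m+1},v_{m+1})+z_j$ with $z_j=2n(n-m-j)KH_{m+1}\log(\cdot)$, and by using a nested family of intervals $\widetilde{\mathcal I}_j=(1-\tfrac{1}{2n})^{n-m-j+1}\mathcal I$ that shrink by a $\tfrac{1}{2n}$-fraction at each step; this forces $\ep'=1/n$ in the dropping lemma, which is where the factor of $n$ in each ceiling increment (hence the $n^2$ in $\mathsf F$) actually originates, not from choosing $R\asymp mK$. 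With that preliminary raise your recursion does go through.
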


\begin{figure}
	\includegraphics[scale=1.1]{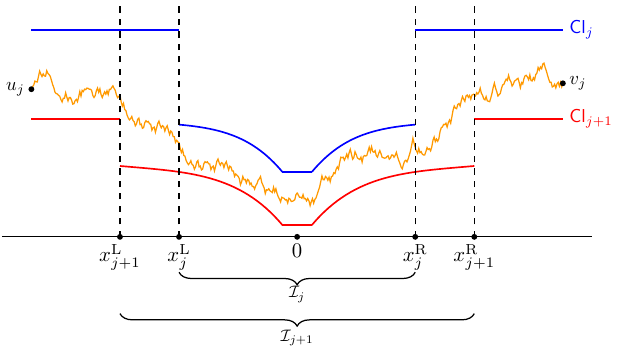}
	\vspace{-0.1in}
	\caption{For $j\leq m$, $\mathsf{Cl}_{j+1}$ (red) features a very high portion outside $\mathcal I_{j+1}$, coming from a global max bound, and a portion that grows like $\smash{\ep_j^{-2}H_{j+1}(\log\frac{|x|}{K^{1/2}H_{j+1}^2})^{2/3}}$ inside $\mathcal I_{j+1}$. The random walk bridge (orange) with area tilt $\lambda_j$, boundary conditions $u_j, v_j \leq BN^{1/3}$, and floor at $\mathsf{Cl}_{j+1}$ is shown in Proposition \ref{prop:recursive-bound} to stay below $\mathsf{Cl}_j$ (blue) with high probability.}
	\label{f.recursion first figure}
\end{figure}

For curves above level $m$, we have the following single-curve estimate.
\begin{proposition} \label{prop:recursive-bound}
Under the same hypotheses as \cref{thm:max-general}, for $j\in \llbracket1,m\rrbracket$,
	\begin{align}
		\P_{\mathcal{I};\Cl_{j+1}}^{\lambda_{j};u_{j}, v_{j}}\Big(\exists x \in \mathcal{I} \ : \ X(x) > \Cl_{j}(x)\Big) \leq  Ce^{-cK^{3/2}\log(j+1)}\,.
	\end{align}
\end{proposition}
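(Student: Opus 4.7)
The plan is to build a high-probability event on which $X(x) \leq \Cl_j(x)$ for all $x \in \mathcal{I}$ by combining the dropping lemma (\cref{lem:dropping-lemma}), the one-point tail bound (\cref{oneptbd}), and the max bound (\cref{maxbd}). Monotonicity (\cref{l.monotonicity}) will be used freely to replace the non-constant floor $\Cl_{j+1}$ by a constant one and to simplify boundary values. The three stages of the argument track the three pieces of $\Cl_j$ from \eqref{def:ceilingj2}: the flat shelf on $\mathcal{I}\setminus\mathcal{I}_j$, the logarithmic piece on $\mathcal{I}_j \cap \{|x| > 2\mathcal{T}K^{1/2}\ep_j^{-1/2}H_j^2\}$, and the central flat region near the origin.

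In the first stage, I handle the boundary region and produce random drop points. On $\mathcal{I}\setminus\mathcal{I}_j$, where $\Cl_j$ sits above $\max(u_j,v_j)$ by $K\ep_j^{-1}H_j(\log|\mathcal{I}|/(K^{1/2}H_j^2))^{2/3}$, I monotonically raise the floor to the constant $\max_x\Cl_{j+1}(x) \leq \max(u_j,v_j)$ (using \eqref{eq:boundaryconditions-above-floor}), shift vertically to a floor at zero, drop the area tilt (which only lowers the walk), and apply \cref{maxbd} with $R \asymp K\ep_j^{-1}$. Next, I build drop points $\sigma_{\mathrm{L}} \in [x_{j+1}^{\mathrm{L}}, x_j^{\mathrm{L}}]$ and $\sigma_{\mathrm{R}} \in [x_j^{\mathrm{R}}, x_{j+1}^{\mathrm{R}}]$ at which $X$ has fallen to $\lesssim \ep_j^{-1}H_j$ above $\Cl_{j+1}$. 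Since $\Cl_{j+1}$ is constant on $\mathcal{I}\setminus\mathcal{I}_{j+1}$, monotonicity reduces this to a zero-floor walk on $\mathcal{I}$ with boundary values as large as $BN^{1/3}$; an iterated application of \cref{lem:dropping-lemma} along a geometric cascade of target heights produces the drop points, where the condition $L \geq B^{1/2+\eta}$ provides the interval length required at each step by the hypothesis of the dropping lemma.

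In the second stage, I apply the strong Gibbs property at $(\sigma_{\mathrm{L}}, \sigma_{\mathrm{R}})$ to restrict to a walk on $[\sigma_{\mathrm{L}}, \sigma_{\mathrm{R}}] \supseteq \mathcal{I}_j$ with boundary values of order $\ep_j^{-1}H_j + \Cl_{j+1}$. I then place a dyadic mesh $y_k \asymp 2^{-k} x_j^{\mathrm{R}}$ and its reflection marching from $\partial \mathcal{I}_j$ inward, stopping once $|y_k|$ reaches the threshold $2\mathcal{T}K^{1/2}\ep_j^{-1/2}H_j^2$. At each $y_k$, I apply one more round of \cref{lem:dropping-lemma} to the restricted walk and then apply \cref{oneptbd} at $y_k$ (after monotonically raising the floor to the constant $\Cl_{j+1}(y_k) \geq \Cl_{j+1}$ on the relevant sub-interval and subtracting it off) to obtain
\[
X(y_k) \leq \tfrac{1}{2}\Cl_j(y_k) \quad\text{with failure probability}\quad \lesssim \exp\Bigl(-cK^{3/2}\log(|y_k|/(K^{1/2}H_j^2))\Bigr).
\]
Summing over the $O(\log|\mathcal{I}|)$ mesh points, the dominant contribution comes from the innermost one at scale $K^{1/2}\ep_j^{-1/2}H_j^2$, producing a total failure probability of order $\exp(-cK^{3/2}\log\ep_j^{-1/2}) \asymp \exp(-cK^{3/2}\log(j+1))$, which is exactly the target rate.

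In the third stage, I fill in the gaps. On each sub-interval $[y_{k+1}, y_k]$, the Gibbs property and monotonicity (raising the floor to the constant $\Cl_{j+1}(y_k)$, which dominates $\Cl_{j+1}$ there) reduce to a single walk with boundary values at most $\tfrac12\Cl_j(y_k)$. Then \cref{maxbd} with the appropriate $R \asymp K\ep_j^{-1}$ shows the maximum stays below $\Cl_j(y_k) \leq \Cl_j(x)$ for $x \in [y_{k+1}, y_k]$, with per-gap failure probability of the same order as in Stage 2, so the union bound preserves the $\exp(-cK^{3/2}\log(j+1))$ total. The main obstacle is Stage 1: a single pass of \cref{lem:dropping-lemma} lowers the walk only by a constant multiplicative factor, so cascading from $BN^{1/3}$ down to $\ep_j^{-1}H_j$ requires carefully tracking intermediate target heights, their spatial subintervals (which must nest inside $\mathcal{I}_{j+1} \setminus \mathcal{I}_j$ while leaving room for the next iterate's length hypothesis), and the accumulating failure probabilities; the choice $\mathcal{I}_j = (1-\ep_j)\mathcal{I}_{j+1}$ together with $\prod_i (1-\ep_i) > 1/2$ from \eqref{eqn:epj-is-good} is what makes this iteration close.
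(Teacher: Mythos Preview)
Your proposal is correct and follows essentially the same approach as the paper. The paper packages your Stage~1 into two lemmas---\cref{lem:rec-global-max-bound} for the boundary shelf via \cref{maxbd}, and \cref{lem:drop-high-bc} for the iterated dropping cascade (with the explicit height sequence $A_i = D/\ep + (B^{1-f_i\nu}\vee 1)$, $f_i = 2[(3/2)^i-1]$)---and packages your Stages~2--3 into \cref{lem:recursive-bluepart}, whose proof uses a geometric mesh $x(k) = (1-2\ep)^k x_j^{\mathrm{R}}$ with $\ep\in(0,1/8]$, dropping plus one-point bounds at the mesh (\cref{claim:meshpoints}), and \cref{maxbd} between mesh points; the fraction at mesh points is $b_0^{-2/15}$ rather than your $\tfrac12$, but the mechanism is identical.
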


\cref{prop:max-general-bottom-curves,prop:recursive-bound} are proved in \cref{sec:recursion-proofs}.

\begin{proof}[Proof of \cref{thm:max-general}]
First observe that for all $x\in\mathcal{I}$, we have the crude bound
\begin{equation}\label{eqn:m+1-ceiling-bound}
	\mathsf{F}(x) \leq \Cl_{m+1}(x) + 2n^2KH_{m+1} \log \frac{|\mathcal{I}|}{K^{1/2}H_{m+1}^2} \leq \Cl_{m+1}(x) + \zeta\,,
\end{equation}
where $\zeta := 2N^{2\delta} KH_{m+1} \log N$. Define the events
\begin{align}\label{def:ceil}
\mathsf{Ceil}_j := \left\{\forall x\in\mathcal{I} : X_j(x) \leq \mathsf{Cl}_j(x) + \zeta \right\}, \qquad j\in\llbracket 1,m+1\rrbracket \,.
\end{align}
Let $\mathcal{F}_j$ denote the $\sigma$-algebra generated by $X_i$ for $i\neq j$. Applying the Gibbs property and monotonicity as in Remark \ref{rmk:remove-top} to remove the top curves, for $j\in\llbracket 1,m\rrbracket$ we can write
\begin{align*}
\mathbb{P}_{n,\mathcal{I};0}^{a,b;\u,\v} ( \neg \mathsf{Ceil}_j) &= \mathbb{E}_{n,\mathcal{I};0}^{a,b;\u,\v} \left[ \mathbb{P}_{n,\mathcal{I};0}^{a,b;\u,\v} \left( \neg\mathsf{Ceil}_j \mid \mathcal{F}_j\right) \right] \leq \mathbb{E}_{n,\mathcal{I};0}^{a,b;\u,\v} \left[ \mathbb{P}_{\mathcal{I};X_{j+1}}^{\lambda_j;u_j,v_j}(\exists x\in\mathcal{I} : X(x) > \mathsf{Cl}_j(x) + \zeta) \right] \\
&\leq \mathbb{P}_{n,\mathcal{I};0}^{a,b;\u,\v} (\neg\mathsf{Ceil}_{j+1}) + \mathbb{P}_{\mathcal{I};\mathsf{Cl}_{j+1} + \zeta}^{\lambda_j;u_j+\zeta,v_j+\zeta}(\exists x\in\mathcal{I} : X(x) > \mathsf{Cl}_j(x) + \zeta)\\
&= \mathbb{P}_{n,\mathcal{I};0}^{a,b;\u,\v} (\neg\mathsf{Ceil}_{j+1}) + \mathbb{P}_{\mathcal{I};\mathsf{Cl}_{j+1}}^{\lambda_j;u_j,v_j}(\exists x\in\mathcal{I} : X(x) > \mathsf{Cl}_j(x))\,.
\end{align*}
In the second line, we used monotonicity first to raise the boundary conditions from $(u_j,v_j)$ to $(u_j+\zeta, v_j+\zeta)$, and then, on the event $\mathsf{Ceil}_{j+1}$, to raise the floor from $X_{j+1}$ to $\mathsf{Cl}_{j+1}+\zeta$. In the last line we used vertical shift invariance, see Remark \ref{rmk:shift}. Iterating over $j\in\llbracket 1,m\rrbracket$, we obtain
\begin{equation}\label{eqn:ceil-sum}
	\mathbb{P}_{n,\mathcal{I};0}^{a,b;\u,\v} ( \neg \mathsf{Ceil}_1) \leq \mathbb{P}^{a,b;\u,\v}_{n,\mathcal{I};0}(\neg\mathsf{Ceil}_{m+1}) + \sum_{j=1}^{m} \mathbb{P}_{\mathcal{I};\mathsf{Cl}_{j+1}}^{\lambda_j;u_j,v_j}(\exists x\in\mathcal{I} : X(x) > \mathsf{Cl}_j(x)).
\end{equation}
For the first term, monotonicity to remove the top $m$ curves, Proposition \ref{prop:max-general-bottom-curves}, and \eqref{eqn:m+1-ceiling-bound} yield
\begin{equation}\label{eqn:ceil-m+1}
\mathbb{P}^{a,b;\u,\v}_{n,\mathcal{I};0}(\neg\mathsf{Ceil}_{m+1}) \leq \mathbb{P}_{n-m,\mathcal{I};0}^{\lambda_{m+1},b;\u^{>m},\v^{>m}} \bigg(\exists x\in\mathcal{I} : X_1(x) > \mathsf{Cl}_{m+1}(x) + \zeta \bigg) \leq Cne^{-c m^{3/2} K^{3/2}}.
\end{equation}
Since $n= N^{\delta}$,  we can absorb the $n$ pre-factor on the right-hand side of \eqref{eqn:ceil-m+1} into $c$ as follows. Note from \eqref{def:m} that the conditions $a \leq N^{1/3-r/2}$ and $b \leq \exp((\frac23-r)(\log N)^{1/3})$ imply $m^{3/2} \geq [\frac{1}{2}(\frac{2}{3}-r)\log_b N - 1]^{3/2} \geq (\frac{2}{3}-r) \log N$ for $N = N(r)$ large enough. Taking $K_0 \geq c^{-2/3}$ and $\delta < 1/3-r/2$ then implies $\log n \leq (c/2)m^{3/2}K^{3/2}$, and replacing $c$ with $c/2$ gives an upper bound in \eqref{eqn:m+1-ceiling-bound} of $Ce^{-cmK^{3/2}}$.
Then applying Proposition \ref{prop:recursive-bound} to \eqref{eqn:ceil-sum} and recalling \eqref{def:ceil} yields
\begin{equation*}
\mathbb{P}_{n,\mathcal{I};0}^{a,b;\u,\v} \Big( \exists x\in\mathcal{I} : X_1(x) > \mathsf{Cl}_1(x) + \zeta \Big) \leq Ce^{-cmK^{3/2}} + \sum_{j=1}^m Ce^{-cK^{3/2}\log(j+1)} \leq Ce^{-cK^{3/2}}.
\end{equation*}
Finally, note that as long as $\delta<\frac{1}{2}(\frac{1}{9}-\frac{r}{6})$, for all $N$ large depending only on $r$, and all $x\in \mathbb{R}$ (using $r < \frac{2}{3}$, \eqref{def:m}, and our assumed upper bounds on $a$ and $b$), we have
\[
\zeta \leq 2K N^{2\delta} \log N \cdot b^{-m/3}H_1 \leq 2K N^{2\delta} \log N \cdot b^{1/3} N^{-(1/9 -r/6)}  H_1 \leq KH_1 \leq \Cl_1(x)\,.
\]
The desired bound \eqref{eqn:max-general} now follows from the last two displays.
\end{proof}


\section{Near-parabolic dropping and construction of ceilings}
\label{sec:recursion-proofs}

This section is dedicated to the proofs of \cref{prop:max-general-bottom-curves,prop:recursive-bound}. Throughout this section, we use the setup and  notation laid out at the start of \cref{sec:recursion}.
In particular, we fix $a_0>0$, $b_0>1$
 and recall from the hypotheses of \cref{thm:max-general} (recall \cref{prop:max-general-bottom-curves,prop:recursive-bound} share these hypotheses) the constants 
$c= c(a_0,b_0)>0$ and $C=C(a_0,b_0,\eta)>0$. Below, we allow them to change line-to-line while maintaining the same dependencies.
Lastly, fix $a\geq a_0$, $b\geq b_0$ as in the hypotheses of \cref{thm:max-general} (the upper bounds on $a$ and $b$ there do not play a role in this section).

As in previous sections, we will repeatedly invoke the (strong) Gibbs property (\cref{def:gibbs}), monotonicity (\cref{l.monotonicity}, \cref{rmk:remove-top}), and invariance under constant vertical shifts of the floor and walk (\cref{rmk:shift}).

The proof of \cref{prop:recursive-bound} is given in the next three subsections. The proof of \cref{prop:max-general-bottom-curves} is given in the last subsection, Section~\ref{subsec:pf-bottom-curves}, as it is similar but much simpler.

\begin{remark}\label{rk:dropping-lemma-condition-1}
In this section, we will only deal with area tilts $\lambda_j = ab^{j-1}$ for $j\leq m+1$. From \eqref{def:BLK}--\eqref{def:m}, our boundary conditions lie below $BN^{1/3}\leq N^{1/3+r}\leq H_j^3$. This ensures that in every application of \cref{lem:dropping-lemma} that follows, the condition $\max(u,v)\leq H_{\lambda}^3$ will be satisfied. 
\end{remark}

\subsection{Proof of \cref{prop:recursive-bound}}
We prove \cref{prop:recursive-bound} using three inputs: Lemmas~\ref{lem:rec-global-max-bound}, \ref{lem:drop-high-bc}, and \ref{lem:recursive-bluepart}. 
\cref{lem:rec-global-max-bound} is a bound on the maximum height of the walk on all of $\mathcal{I}$. This immediately gives the $x\not\in \mathcal{I}_{j}$ portion of $\Cl_{j}$ in \cref{prop:recursive-bound}. 
\cref{lem:drop-high-bc} allows us to get away from our potentially very high boundary conditions. \cref{lem:recursive-bluepart} considers a random walk with area tilt $\lambda_{j}$ and ``more reasonable'' boundary conditions (in our proof of \cref{prop:recursive-bound}, such boundary conditions will be given to us by \cref{lem:drop-high-bc}, see Figure \ref{f.recursion second figure}) and states that such a random walk, with floor at $\Cl_{j+1}$, is bounded by $\Cl_{j}$ everywhere with high probability.

\begin{lemma} \label{lem:rec-global-max-bound}
Under the hypotheses of \cref{prop:recursive-bound}, there exists $K_0>0$ such that for any $j \in \llbracket 1,m\rrbracket$, 
	\begin{align} \label{eqn:rec-global-max-bound}
		\P_{\mathcal{I};\Cl_{j+1}}^{\lambda_{j};u_{j}, v_{j}}\Bigg( \max_{x \in \mathcal{I}} X(x) > \max(u_{j}, v_{j}) +  KH_{j}\bigg(\log \frac{|\mathcal{I}|}{K^{1/2}H_{j}^2}\bigg)^{2/3}\Bigg) \leq Ce^{-cjK^{3/2}}\,.
	\end{align}
\end{lemma}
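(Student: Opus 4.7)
The plan is to reduce the problem to the single-curve maximum bound \cref{maxbd} in three short steps. First, I would use stochastic monotonicity in the floor (\cref{l.monotonicity}) to replace $\Cl_{j+1}$ with its constant global maximum $M := \max_{x\in\mathcal{I}}\Cl_{j+1}(x)$. Since $\{\max X > T\}$ is an increasing event, this substitution only enlarges the probability, and it is legal because $M \leq \min(u_j,v_j)$ by \eqref{eq:boundaryconditions-above-floor}, so the raised flat floor remains compatible with the boundary conditions.

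Next, using vertical shift invariance (\cref{rmk:shift}), I would shift everything down by $M$, turning the floor into $0$ and producing new nonnegative boundary values $u_j' := u_j - M$, $v_j' := v_j - M$. Setting $A := H_j^{-1}\max(u_j',v_j')$, the event to be bounded becomes
\[
\biggl\{ \max_{x\in\mathcal{I}} X(x) > A H_j + K H_j \Bigl(\log \tfrac{|\mathcal{I}|}{K^{1/2}H_j^2}\Bigr)^{2/3} \biggr\},
\]
with $u_j', v_j' \leq A H_j$. This is precisely the setup of \cref{maxbd} with area tilt $\lambda_j$ and $R = K$, and it will yield a bound of the form $C\exp\bigl(-c K^{3/2} \log(|\mathcal{I}|/(K^{1/2}H_j^2))\bigr)$.

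The final step is to extract the factor of $j$ from the logarithm. Using $H_j^2 = a^{-2/3}b^{-2(j-1)/3}N^{2/3}$ (from \eqref{e.H_j defn}) and $|\mathcal{I}| = 2LN^{2/3}$,
\[
\log \frac{|\mathcal{I}|}{K^{1/2}H_j^2} = \log \frac{2L a^{2/3}}{K^{1/2}} + \tfrac{2(j-1)}{3}\log b \geq c j
\]
for some $c = c(b_0)>0$, using $b \geq b_0 > 1$ and the fact that $L \geq K^{1/2}$ (which holds for $K_0$ large by \eqref{def:BLK}, since $L \geq B^{1/2+\eta} \geq K^{1/2+\eta}(\log N)^{(1/3+\eta)}$). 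This produces the claimed bound $Ce^{-cjK^{3/2}}$.

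The only nontrivial bookkeeping will be verifying the four hypotheses of \cref{maxbd}, namely $R \leq H_j^2$, $|\mathcal{I}| \geq R^{1/2}H_j^2$, $R \geq (2/c)^{2/3}$, and $R^{1/2}\bigl(\log|\mathcal{I}|/(R^{1/2}H_j^2)\bigr)^{2/3} \leq H_j$ with $R = K$. The first follows from $K \leq N^r(\log N)^{-2/3}$ and $H_j \geq N^{1/9+r/3}$ via \eqref{def:m}; the second from \eqref{eqn:Ij-lengthbound}; the third by choosing $K_0$ large; and the fourth from the same lower bound on $H_j$ combined with $|\mathcal{I}| \leq 2N$ and the restriction $r < 2/3$. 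So I expect no genuine obstacle here: the lemma is essentially a direct consequence of the single-curve tail bound once monotonicity eliminates the non-flat floor.
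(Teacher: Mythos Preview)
Your proposal is correct and follows essentially the same route as the paper: monotonicity to flatten the floor, a vertical shift to reduce to floor $0$, then \cref{maxbd} with $R=K$, and finally the observation that $\log(|\mathcal{I}|/(K^{1/2}H_j^2))\geq cj$ via $H_j^2 = a^{-2/3}b^{-2(j-1)/3}N^{2/3}$. The only cosmetic difference is that the paper also raises the boundary conditions to $\max(u_j,v_j)$ before shifting, so that after the shift both boundary values are $0$ and \cref{maxbd} is invoked with $A=0$ rather than your $A = H_j^{-1}(\max(u_j,v_j)-M)$; this changes nothing in the argument.
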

The proof of \cref{lem:rec-global-max-bound} is short, so we give it before proceeding.
\begin{proof}[Proof of \cref{lem:rec-global-max-bound}]
		\cref{eqn:boundary-conditions-recursion2} implies $\max(u_j,v_j) \geq \max_{x\in \mathcal{I}} \Cl_{j+1}$.  Monotonicity (\cref{l.monotonicity}) allows us to increase the boundary conditions to $\max(u_{j},v_{j})$ and then raise the floor from $\Cl_{j+1}$ to $\max(u_{j},v_{j})$. Lowering the floor and the walk  by $\max(u_{j},v_{j})$ via shift invariance (\cref{rmk:shift}) then yields that the left-hand side of \eqref{eqn:rec-global-max-bound} is bounded by 
		\[
		\P_{\mathcal{I};0}^{\lambda_{j};0,0}\Bigg( \max_{x \in \mathcal{I}} X(x) >   KH_{j}\bigg(\log \frac{|\mathcal{I}|}{K^{1/2}H_{j}^2}\bigg)^{2/3}\Bigg) \leq C\exp\bigg(-cK^{3/2}\log\frac{|\mc I|}{K^{1/2}H_j^2}\bigg)\,,
		\]
		where the final inequality follows from taking $R= K$ and $A=0$ in \cref{maxbd} and $K_0$ large; the conditions of \cref{maxbd} are satisfied because \eqref{eqn:Ij-lengthbound} implies $|\mathcal I| \geq K^{1/2}H_j^2$, and \eqref{def:BLK},  \eqref{def:m}, and $r<2/3$ imply $K< N^{r} \leq H_{m+1}^2\leq H_j^2$ for $N= N(r)$ large. The bound \eqref{eqn:rec-global-max-bound} follows because \eqref{e.H_j defn} and \eqref{eqn:Ij-lengthbound} give $\log (|\mc I|/(K^{1/2}H_{j}^2)) \geq \log (2b_0^{2(j-1)/3}) \geq c j$ for $c>0$ depending only on $b_0$. 
	\end{proof}

The next two lemmas are proved in Sections~\ref{s.proof of high bc dropping} and \ref{s.proof of recursive blue part}, respectively. 
For Lemma~\ref{lem:drop-high-bc}, recall $\eta$ and $B$ from~\eqref{def:BLK}.

\begin{lemma}\label{lem:drop-high-bc}
Fix $c_0 \in (0,1)$. Under the hypotheses of \cref{prop:recursive-bound}, there exists $D_0>0$ such that
for all $j\in \llbracket 1, m+1\rrbracket$, $D\in [D_0,B]$, $\ep' \geq c_0\lambda_j^{-1/6}$, and all  $y^{\mathrm{L}}\leq y^{\mathrm{R}}$ such that $y^{\mathrm{R}}-y^{\mathrm{L}} \geq B^{1/2+\eta}N^{2/3}$,
\begin{multline}\label{eqn:drop-high-bc-zerofloor}
	\P_{[y^{\mathrm{L}},y^{\mathrm{R}}];0}^{\lambda_j; BN^{1/3}, BN^{1/3}}\bigg(\exists x \in [y^{\mathrm{L}}, y^{\mathrm{L}}+\ep'(y^{\mathrm{R}}-y^{\mathrm{L}})] ,\, x' \in [y^{\mathrm{R}}-\ep'(y^{\mathrm{R}}-y^{\mathrm{L}}), y^{\mathrm{R}}] : \\
	\max\big(X(x) , X(x')\big)\leq \frac{D}{\ep'}H_j \bigg)
	\geq 1- C\exp\Big(-c D^{3/2} \lambda_j^{2/3}\Big)\,.
\end{multline}
\end{lemma}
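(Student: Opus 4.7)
By the reflection symmetry $x\mapsto y^{\mathrm L}+y^{\mathrm R}-x$ (recall the law $\P^{\lambda_j;BN^{1/3},BN^{1/3}}_{[y^{\mathrm L},y^{\mathrm R}];0}$ is symmetric), it suffices to produce, with the claimed probability, a point $x\in[y^{\mathrm L},y^{\mathrm L}+\ep'L_I]$ with $X(x)\leq (D/\ep')H_j$; the analogous drop point $x'$ near $y^{\mathrm R}$ follows from the same argument applied to the reflected walk, and a union bound combines the two. Below I set $L_I:=y^{\mathrm R}-y^{\mathrm L}$ and focus only on the left drop point.

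The plan is to build, iteratively, a (random) sequence of drop points $y^{\mathrm L}\leq z_0<z_1<\cdots<z_K\leq y^{\mathrm L}+\ep'L_I$ and a decreasing sequence of heights $A_0>A_1>\cdots>A_K$, with $A_0:=B\lambda_j^{1/3}$ (matching the boundary height in $H_j$-units) and $A_K:=D/\ep'$, such that $X(z_k)\leq A_kH_j$ at each step. Take $z_0=y^{\mathrm L}$ (trivially $X(z_0)=BN^{1/3}=A_0H_j$) and $A_k=A_{k-1}/2$, so $K=O(\log(B\lambda_j^{1/3}\ep'/D))$. Given $z_{k-1}$ with $X(z_{k-1})\leq A_{k-1}H_j$, I apply the strong Gibbs property on $[z_{k-1},y^{\mathrm R}]$ and use monotonicity (\cref{l.monotonicity}) to raise the left boundary of the resulting walk to $A_{k-1}H_j$. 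I then apply the dropping lemma (\cref{lem:dropping-lemma}) on the sub-interval $[z_{k-1},z_{k-1}+c_kH_j^2]$, taking $c_k$ a sufficiently large constant multiple of $A_{k-1}^{1/2}$, to produce $z_k$ in a constant fraction of this sub-interval with success probability at least $1-C\exp(-cA_{k-1}^{3/2})$.

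The main budget check is that the total horizontal extent consumed fits inside $[y^{\mathrm L},y^{\mathrm L}+\ep'L_I]$. With $c_k\leq CA_{k-1}^{1/2}=CA_0^{1/2}2^{-(k-1)/2}$, the geometric sum gives $\sum_kc_kH_j^2\leq CA_0^{1/2}H_j^2=CB^{1/2}\lambda_j^{-1/2}N^{2/3}$. This must fit inside $\ep'L_I=\ep'B^{1/2+\eta}N^{2/3}$, which reduces to $\ep'B^\eta\lambda_j^{1/2}\geq C$. Using the hypothesis $\ep'\geq c_0\lambda_j^{-1/6}$, this becomes $c_0\lambda_j^{1/3}B^\eta\geq C$, which holds for $B\geq D_0(a_0,c_0,\eta)$ since $\lambda_j\geq a_0$; this is where the extra $B^\eta$ slack in the interval length is used. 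For the probability, a union bound over the $K=O(\log N)$ steps yields a sum of terms $\exp(-cA_{k-1}^{3/2})$. All early steps (with $A_{k-1}\gtrsim 1$) are dominated by the largest term $\exp(-cA_0^{3/2})=\exp(-cB^{3/2}\lambda_j^{1/2})$, which is absorbed into the target $C\exp(-cD^{3/2}\lambda_j^{2/3})$ using $B\geq D\lambda_j^{1/9}$ (a consequence of the global hypotheses $B\geq K(\log N)^{2/3}$ and $j\leq m+1$). For the final step, where $A_K=D/\ep'$ can be small, I replace the length $c_KH_j^2\asymp A_K^{1/2}H_j^2$ by a longer one, taking the sub-interval length $\asymp\ep'D^{1/2}\lambda_j^{2/3}H_j^2$ so that the $(A\epsilon-c^*)|I|/H_\lambda^2$ term in \cref{lem:dropping-lemma} dominates and directly produces the exponent $cD^{3/2}\lambda_j^{2/3}$.

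The principal obstacle is the calibration of the final step: the naive per-step bound $\exp(-c A_K^{3/2})=\exp(-c(D/\ep')^{3/2})$ is strictly weaker than the claimed $\exp(-cD^{3/2}\lambda_j^{2/3})$ whenever $\ep'>\lambda_j^{-4/9}$, which is compatible with $\ep'\geq c_0\lambda_j^{-1/6}$. Avoiding this gap requires spending additional horizontal budget at the last step, which is affordable precisely because the $B^\eta$ slack was not exhausted by the geometric iteration. I expect this calibration to be the content of the auxiliary \cref{claim:drop-high-bc-input} referenced in the outline, which packages the ``start from $A_{k-1}H_j$, drop to $A_kH_j$ at a fresh point within $CA_{k-1}^{1/2}H_j^2$'' estimate in a form directly usable inside the induction, together with a sharpened variant for the final step.
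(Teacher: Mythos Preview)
Your iteration has a structural gap. After applying Gibbs on $[z_{k-1},y^{\mathrm R}]$ and raising the left boundary to $A_{k-1}H_j$, the \emph{right} boundary remains at $BN^{1/3}=A_0H_j$. However you then invoke \cref{lem:dropping-lemma} --- either with $I=[z_{k-1},y^{\mathrm R}]$ and a short $I_\ep$, or by first restricting via Gibbs to $[z_{k-1},z_{k-1}+c_kH_j^2]$ (which would require an upper bound on $X(z_{k-1}+c_kH_j^2)$, and none below $A_0H_j$ is available) --- the positive term $c^*(u^{3/2}+v^{3/2})/H_j^{3/2}$ in the exponent is at least $c^*A_0^{3/2}=c^*B^{3/2}\lambda_j^{1/2}$, while the negative term you have arranged is only of order $A_{k-1}^{3/2}$. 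For $k\ge 2$ the latter is strictly smaller and the per-step bound $1-C\exp(-cA_{k-1}^{3/2})$ cannot be extracted; the dropping lemma gives nothing. Your symmetry-plus-union-bound reduction to a single side is fine for the \emph{final} statement, but it does not rescue the iteration: each one-sided pass always sees the far boundary stuck at $A_0H_j$.

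The paper avoids this by dropping on both ends simultaneously. At step $i$ it has $X(\sigma_i)\vee X(\tau_i)\le A_iH_j$, applies Gibbs on the \emph{full} interval $[\sigma_i,\tau_i]$ (length $\ge(1-\ep')B^{1/2+\eta}N^{2/3}$), and invokes \cref{lem:dropping-lemma} with both boundary values $\le A_iH_j$, so the boundary term is $2A_i^{3/2}$ and shrinks along the iteration. Because the full interval is used, $|I|/H_j^2\gtrsim B^{1/2+\nu}\lambda_j^{2/3}$ enters the negative term. With the sequence $A_i=D/\ep+(B^{1-f_i\nu}\vee1)$ where $f_i=2((3/2)^i-1)$, the identity $f_{i+1}-\tfrac32 f_i=1$ makes $A_{i+1}\ep\,|I|/H_j^2$ dominate $A_i^{3/2}$ at every step, and the iteration terminates in $r_\nu=O_\eta(1)$ steps --- a constant, not $O(\log N)$ --- each already contributing error $\le C\exp(-cD^{3/2}\lambda_j^{2/3})$. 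This is the content of \cref{claim:drop-high-bc-input}; it is not a calibration of your scheme but a genuinely different (two-sided, full-interval, super-geometric) iteration.

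A secondary issue: your claim that the sum $\sum_k\exp(-cA_{k-1}^{3/2})$ is ``dominated by the largest term $\exp(-cA_0^{3/2})$'' is backwards. With $A_k$ decreasing, the largest summand is the last one, $\exp(-cA_{K-1}^{3/2})\approx\exp(-c(D/\ep')^{3/2})$, which, as you yourself observe, is weaker than the target whenever $\ep'>\lambda_j^{-4/9}$.
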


The value of the exponent $1/6$ in the lower bound on $\ep'$ is insignificant; it arises from \cref{lem:dropping-lemma}.

\begin{lemma}\label{lem:recursive-bluepart}
Recall $\mathcal{I}_j=[x_j^{\mathrm{L}},x_j^{\mathrm{R}}]$ from \eqref{def:Ij}, and consider the hypotheses of \cref{prop:recursive-bound}.
There exists a constant $C_1>0$ large depending only on $a_0$ and $b_0$ such that, letting
\begin{align}\label{def:wj*}
	w_j^* := \tfrac{K}{C_1}\ep_j^{-1}H_j+\Cl_{j+1}(x_{j+1}^{\mathrm{L}}) = \tfrac{K}{C_1} \ep_j^{-1}H_j+\Cl_{j+1}(x_{j+1}^{\mathrm{R}})\,,
\end{align}
the following holds for all $K$ satisfying \eqref{def:BLK} with $K_0>0$ large enough and for all $j\in \llbracket 1, m\rrbracket$:
\begin{align}
	\P_{\mathcal{I}_{j};\Cl_{j+1}}^{\lambda_{j}; w_j^*, w_j^*}\Big(\exists x \in \mathcal{I}_{j} : X(x) > \Cl_{j}(x) \Big) \leq C\exp\big(-cK^{3/2}\log(j+1)\big) \,.
\end{align}
\end{lemma}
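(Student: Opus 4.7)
The plan is to: (i) use the even symmetry of the setup (boundary values, floor $\Cl_{j+1}$, and target ceiling $\Cl_j$ are all even in $x$) to reduce to the half-interval $[x_j^L,0]$; (ii) introduce a deterministic geometric mesh of points and inductively establish an upper bound on $X$ at each mesh point via iterated application of \cref{lem:dropping-lemma}; and (iii) fill in the estimate between mesh points using \cref{maxbd}. A crucial preliminary observation is that, by the geometric decay condition \eqref{def:cC} (possibly enlarging $\cC$ in a manner depending only on $b_0$), one verifies $\Cl_{j+1}(x)\leq \tfrac14 \Cl_j(x)$ uniformly on $\mathcal{I}_j$: this provides a definite slack between the floor and the target ceiling that all subsequent quantitative estimates rely on.

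Define mesh points $y_0 := x_j^L$, $y_{k+1} := y_k/2$, stopping at the largest index $M$ with $|y_M|\geq 4\mathcal{T}K^{1/2}\ep_j^{-1/2}H_j^2$; append $y_{M+1}:=0$. Then $M = O(\log N)$, and at every $y_k$ with $k\leq M$ the bulk formula in \eqref{def:ceilingj2} applies. Set the target heights $h_k := \tfrac12 \Cl_j(y_k)$ and prove by induction on $k$:
\[
\mathbb{P}\bigl(X(y_k)>h_k\bigr)\leq Cke^{-cK^{3/2}\log(j+1)}.
\]
The base case $k=0$ follows from $w_j^* = (K/C_1)\ep_j^{-1}H_j+\Cl_{j+1}(x_{j+1}^L)\leq h_0$ for $C_1$ large. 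For the inductive step, apply the strong Gibbs property (\cref{def:gibbs}) to condition on $X$ outside $(y_k,0)$ restricted to the event $\{X(y_k)\leq h_k\}$; by monotonicity (\cref{l.monotonicity}) the conditional law is dominated by a single $\lambda_j$-tilted walk on $[y_k,0]$ with floor raised to the local max $\Cl_{j+1}(y_k)=\max_{[y_k,0]}\Cl_{j+1}$ (valid since $h_k\geq \Cl_{j+1}(y_k)$) and boundary $h_k$ at $y_k$. Shifting down by $\Cl_{j+1}(y_k)$ (\cref{rmk:shift}), invoke \cref{lem:dropping-lemma} with $I_\ep$ a constant-fraction neighborhood of $y_{k+1}$ inside $[y_k,0]$ and drop threshold $AH_j := h_{k+1}-\Cl_{j+1}(y_k)\geq \tfrac14\Cl_j(y_{k+1})$; a follow-up application of \cref{maxbd} on $I_\ep$ transfers the random drop point to the deterministic point $y_{k+1}$. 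The exponent in \eqref{eqn:dropping-lemma} is $-(A\ep-c^*)|I|/H_j^2+c^*(h_k/H_j)^{3/2}$; the mesh stopping rule $|y_k|/H_j^2\geq cK^{1/2}\ep_j^{-1/2}$ together with $A\asymp K\ep_j^{-1}(\log|y_{k+1}|/H_j^2)^{2/3}$ and $h_k\lesssim K\ep_j^{-1}H_j(\log|y_k|/H_j^2)^{2/3}$ makes this exponent at most $-cK^{3/2}\log(j+1)$, yielding the required rate.

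On the event $\bigcap_{k=0}^{M+1}\{X(y_k)\leq h_k\}$, condition on $X$ at every $y_k$ via the strong Gibbs property. On each segment $[y_{k+1},y_k]$ the walk is a $\lambda_j$-tilted bridge with boundary values $\leq h_k$; raising the floor locally to $\Cl_{j+1}(y_k)$ and applying \cref{maxbd} with $R\asymp K(\log(j+1))^{2/3}$ and $A$ equal to the (shifted) boundary value bounds the max below $\Cl_j$ on that segment except with probability $Ce^{-cK^{3/2}\log(j+1)}$; an analogous estimate on the central segment handles the flat region of $\Cl_j$. Union-bounding over the $M=O(\log N)$ segments is absorbed into $c$ by taking $K\geq K_0$ large. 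The main obstacle is the joint calibration of the mesh thresholds $h_k$ and the parameters of \cref{lem:dropping-lemma,maxbd}: between $y_k$ and $y_{k+1}$ the ceiling $\Cl_j$ decreases only by a logarithmic factor $\sim(\log 2/L_k)\Cl_j(y_k)$, while the walk must descend by an amount of order $\Cl_j(y_k)$ itself. This is made possible precisely by \eqref{def:cC}, which keeps $\Cl_{j+1}$ a bounded fraction below $\Cl_j$ so that the post-shift room is $\asymp \Cl_j$, and by the mesh stopping rule, which guarantees $|y_k|\gtrsim K^{1/2}\ep_j^{-1/2}H_j^2$ — the parabolic space the area tilt needs to force the drop and to dominate the error term $c^*(h_k/H_j)^{3/2}$ in \eqref{eqn:dropping-lemma}.
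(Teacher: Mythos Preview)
Your overall two-step strategy (geometric mesh, control at mesh points via the dropping lemma, fill in via \cref{maxbd}) matches the paper's proof. However, the ``reduction to the half-interval $[x_j^{\mathrm L},0]$'' in step~(i) contains a genuine gap. In your inductive step you condition on $X$ outside $(y_k,0)$; the Gibbs property then yields a $\lambda_j$-tilted bridge on $[y_k,0]$ with boundary values $X(y_k)$ at $y_k$ \emph{and} $X(0)$ at $0$. On $\{X(y_k)\le h_k\}$ you may raise the left endpoint to $h_k$ by monotonicity, but you have no control whatsoever on $X(0)$, so the hypotheses of \cref{lem:dropping-lemma} are not met (and if you try to raise $X(0)$ to a global max of order $\Cl_j(x_j^{\mathrm L})$, the $v^{3/2}$ error term in \eqref{eqn:dropping-lemma} swamps the main term when $|y_k|$ is near the stopping scale). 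Symmetry of the law only gives $\P(\exists x\in[x_j^{\mathrm L},0]:X>\Cl_j)=\P(\exists x\in[0,x_j^{\mathrm R}]:X>\Cl_j)$, which is fine for the \emph{final} union bound but does not let you treat $0$ as a controlled endpoint during the induction. The paper avoids this by working on the symmetric intervals $I(k)=[-x(k),x(k)]$ and tracking $X(-x(k))\vee X(x(k))$ jointly (Claim~\ref{claim:meshpoints}); both endpoints are then controlled by the inductive hypothesis, and the dropping lemma is applied on $I(k)$ with $u=v=\mathfrak B_j(k)$.

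A secondary problem: the claim $\Cl_{j+1}\le\tfrac14\Cl_j$ on $\mathcal I_j$ is false in general. From \eqref{def:cC} one has $\ep_{j+1}^{-1}H_{j+1}\le b^{-1/6}\ep_j^{-1}H_j$, and enlarging $\cC$ only drives the prefactor ratio toward $b^{-1/3}$; for $b_0$ close to~$1$ this is nowhere near~$\tfrac14$. Then your threshold $h_k=\tfrac12\Cl_j(y_k)$ can lie \emph{below} the floor $\Cl_{j+1}(y_k)$, making $\{X(y_k)\le h_k\}$ a null event. The paper's analogue is the more delicate inequality $\mathfrak B_j(k)\le b_0^{-2/15}\Cl_j(x(k))$ in \eqref{eqn:frakB-bound-barClj}, where the margin $1-b_0^{-2/15}>0$ (small but positive) is exactly what drives both the recursion at mesh points and the Step~2 application of \cref{maxbd}.
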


Now we may prove \cref{prop:recursive-bound}.

\begin{figure}
	\includegraphics[scale=1.1]{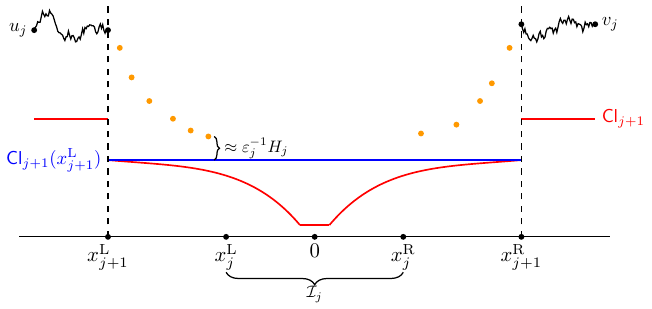}
	\vspace{-0.1in}
	\caption{An illustration of the application of \cref{lem:drop-high-bc} to Proposition \ref{prop:recursive-bound} and the proof of \cref{lem:drop-high-bc}. The floor at $\Cl_{j+1}$ features a potentially large gap at $x_{j+1}^{\mathrm{L}}$ and $x_{j+1}^{\mathrm{R}}$. The first step is to show that, within $[x_{j+1}^{\mathrm{L}},x_{j}^{\mathrm{L}}]$  and $[x_j^{\mathrm{R}} , x_{j+1}^{\mathrm{R}}]$, the random walk drops to $\approx \ep_j^{-1}H_j$ above $\Cl_{j+1}(x_{j+1}^{\mathrm{R}}) = \Cl_{j+1}(x_{j+1}^{\mathrm{L}}) = \max_{x \in \mathcal{I}_{j+1}} \Cl_{j+1}(x)$ (blue).
	The max bound (Lemma~\ref{lem:rec-global-max-bound}) is used to show $X(x_{j+1}^{\mathrm{L}})\vee X(x_{j+1}^{\mathrm{R}})\leq BN^{1/3}$, see \eqref{eqn:max of X for figure}. Lemma \ref{lem:drop-high-bc} then gives random times $x^{\mathrm{L}}, x^{\mathrm{R}} \in \mc{I}_{j+1}\setminus \mc{I}_j$ where $X$ lies roughly $\ep_j^{-1}H_j$ above the floor (the two innermost orange points in the figure). For the proof of \cref{lem:drop-high-bc}, note the boundary conditions  at $BN^{1/3}$ may be as large as $N^{1-\varepsilon}$ for any $\ep>0$, 
	 much higher than the blue line at $\Cl_{j+1}(x_{j+1}^R)$; in particular, the dropping lemma estimate \eqref{eqn:dropping-lemma} is not sufficient to immediately produce $x^{\mathrm{L}}$ and $x^{\mathrm{R}}$.
	Thus, we apply the dropping lemma in an iterative manner to the random walk on $\mathcal I_{j+1}$ with floor at $\mathsf{Cl}_{j+1}(x^{\mathrm{L}}_{j+1})$, yielding a sequence of random drop points at decreasing heights $(A_i H_j)_{i\geq 1}$ (orange, see \cref{claim:drop-high-bc-input}) until $x^{\mathrm{L}}$ and $x^{\mathrm{R}}$ are produced.}
	\label{f.recursion second figure}
\end{figure}

\begin{proof}[Proof of \cref{prop:recursive-bound}]
First, \cref{lem:rec-global-max-bound} with $K$ replaced by $K \ep_j^{-1}$ immediately yields the desired bound on the probability that $X(x) > \Cl_{j}(x)$ for some $x\not\in \mathcal{I}_{j}$.
It therefore suffices to show
\begin{align}\label{eqn:recursive-bound2-goal}
	\P_{\mathcal{I};\Cl_{j+1}}^{\lambda_{j};u_{j}, v_{j}}\Big(\exists x\in \mathcal{I}_{j} : X(x) > \Cl_{j}(x) \Big) \leq Ce^{-cK^{3/2}\log(j+1)}\,.
\end{align}
Note that \cref{lem:rec-global-max-bound} yields
\begin{equation}\label{eqn:max of X for figure}
	\P_{\mathcal{I};\Cl_{j+1}}^{\lambda_{j};u_{j}, v_{j}}\Big( \max\big(X(x_{j+1}^{\mathrm{L}}), X(x_{j+1}^{\mathrm{R}})\big) \leq BN^{1/3} \Big) \geq 1- Ce^{-cjK^{3/2}}\,,
\end{equation}
where we have used also \eqref{eqn:boundary-conditions-recursion2} to upper bound $\max(u_{j},v_{j})+K\ep_j^{-1}H_{j}(\log |\mathcal{I}|/(K^{1/2}H_{j}^2) )^{2/3}$ by $BN^{1/3}$. The Gibbs property (\cref{def:gibbs}) and monotonicity then yield that, with probability exceeding $1- C\exp(-cjK^{3/2})$, we can dominate the restriction to $\mathcal{I}_{j+1}$ of the walk under $\P_{\mathcal{I};\Cl_{j+1}}^{\lambda_{j};u_{j}, v_{j}}$ 
by  the walk on $\mathcal{I}_{j+1}$ under $\smash{\P_{\mathcal{I}_{j+1};\Cl_{j+1}}^{\lambda_{j};BN^{1/3}, BN^{1/3}}}$.
In particular, defining the event
\begin{align*}
	\mathsf{Pt}_j &:= \big\{ \exists x^{\mathrm{L}} \in [x_{j+1}^{\mathrm{L}}, x_{j}^{\mathrm{L}}] ,\, x^{\mathrm{R}}\in [x_{j}^{\mathrm{R}}, x_{j+1}^{\mathrm{R}}] \ : \ 
	X(x^{\mathrm{L}}) \leq w_{j}^{**} \,,\, X(x^{\mathrm{R}}) \leq w_{j}^{**} \big \} \,,
\end{align*}
where $w_{j}^{**} := w_j^* - \tfrac{K}{2C_1}\ep_j^{-1}H_{j}$ (recall $w_j^*$ and $C_1$ from \cref{lem:recursive-bluepart}), we find
\[
	\P_{\mathcal{I};\Cl_{j+1}}^{\lambda_{j};u_{j}, v_{j}}(\neg \mathsf{Pt}_j) \leq \P_{\mathcal{I}_{j+1};\Cl_{j+1}}^{\lambda_{j};BN^{1/3}, BN^{1/3}}(\neg\mathsf{Pt}_j) + Ce^{-cjK^{3/2}}\,.
\]
Now, raising the floor to $\Cl_{j+1}(x_{j+1}^{\mathrm{L}})$ (which equals $\Cl_{j+1}(x_{j+1}^{\mathrm{R}})$ by symmetry of $\Cl_{j+1}$), reducing both the floor and the walk by $\Cl_{j+1}(x_{j+1}^{\mathrm{L}})$ (so that the floor is now at $0$), and then applying \cref{lem:drop-high-bc} with $\ep' := \ep_j = (j+\cC)^{-2}$ (which is $\geq c_0\lambda_j^{-1/6}$ for $c_0$ small enough depending only on $a_0$ and $b_0$), $D := K/2C_1$, 
$y^{\mathrm{L}} := x_{j+1}^{\mathrm{L}}$, and $y^{\mathrm{R}} := x_{j+1}^{\mathrm{R}}$, we find for $c$ depending only on $a_0$ and $b_0$,
\begin{align}\label{eqn:recursive-bound2-Ec}
	\P_{\mathcal{I};\Cl_{j+1}}^{\lambda_{j};u_{j}, v_{j}}(\neg\mathsf{Pt}_j) \leq Ce^{-cK^{3/2}\lambda_{j}^{2/3}} +Ce^{-cjK^{3/2}} \leq Ce^{-cjK^{3/2}}\,.
\end{align}
Now, on the event $\mathsf{Pt}_j$, let $x^{\mathrm{L}}_*$ denote the leftmost $x^{\mathrm{L}}$ and $x^{\mathrm{R}}_*$ the rightmost $x^{\mathrm{R}}$. Observe $x^{\mathrm L}_j,x^{\mathrm R}_j\in [x^{\mathrm{L}}_*, x^{\mathrm{R}}_*]$.
Applying the strong Gibbs property  to the stopping domain $[x_*^{\mathrm{L}}, x_*^{\mathrm{R}}]$, we find
\begin{equation}\label{eqn:recursivebound2-droppedbc}
	\begin{split}
	\P&_{\mathcal{I};\Cl_{j+1}}^{\lambda_{j};u_{j}, v_{j}}\bigg(\mathsf{Pt}_j \cap \Big\{\max\big(X(x_{j}^{\mathrm{L}}) , X(x_{j}^{\mathrm{R}}) \big)> w_{j}^* \Big\}\bigg)  \\
	&\leq \E_{\mathcal{I};\Cl_{j+1}}^{\lambda_{j};u_{j}, v_{j}}\bigg[\indset{\mathsf{Pt}_j} \ \P_{[x_*^{\mathrm{L}}, x_*^{\mathrm{R}}];\Cl_{j+1}(x_{j+1}^{\mathrm{L}})}^{\lambda_{j}; w_{j}^{**},w_{j}^{**}}\Big(\max\big\{X(x_{j}^{\mathrm{L}}) , X(x_{j}^{\mathrm{R}}) \big\} > w_{j}^*\Big) \bigg] \leq Ce^{-cj^3K^{3/2}}\,.
	\end{split}
\end{equation}
In the first inequality, we used
monotonicity to raise  $X(x^{\mathrm{L}}_*)$ and $X(x^{\mathrm{R}}_*)$ to $w_{j}^{**}$ as well as the floor to $\Cl_{j+1}(x_{j+1}^{\mathrm{L}})$ (since $x_{j+1}^{\mathrm{L}}< x^{\mathrm{L}}_*  < 0$ and $0 <x_*^{\mathrm{R}} \leq x_{j+1}^{\mathrm{R}}$, so $\Cl_{j+1}(x^{\mathrm{L}}_{j+1})$ is indeed an upper bound on $\Cl_{j+1}(\cdot)$ on $[x^{\mathrm{L}}_*,x^{\mathrm{R}}_*]$); in the second inequality, we lowered the floor and the walk by $\Cl_{j+1}(x_{j+1}^{\mathrm{L}})$ inside of the expectation, and then applied \cref{oneptbd} with $R =A = (K/2C_1) \ep_j^{-1} = w_j^*- w_j^{**}$.

 Combining \eqref{eqn:recursive-bound2-Ec}, \eqref{eqn:recursivebound2-droppedbc}, and the Gibbs property, and using monotonicity to raise the boundary conditions on $\mathcal{I}_{j}$ from $X(x_j^{\mathrm{L}})$ and $X(x_j^{\mathrm{R}})$ to $w_j^*$ yields
 \begin{align*}
 	\P&_{\mathcal{I};\Cl_{j+1}}^{\lambda_{j};u_{j}, v_{j}}\Big(\exists x\in \mathcal{I}_{j} : X(x) > \Cl_{j}(x) \Big) \\
	&\leq \E_{\mathcal{I};\Cl_{j+1}}^{\lambda_{j};u_{j}, v_{j}}\Big[\one_{X(x_{j}^{\mathrm{L}}) \vee X(x_{j}^{\mathrm{R}})\leq w_{j}^*} \P_{\mathcal{I}_{j};\Cl_{j+1}}^{\lambda_{j}; X(x_{j}^{\mathrm{L}}), X(x_{j}^{\mathrm{R}})}\Big(\exists x\in \mathcal{I}_{j} : X(x)>\Cl_{j}(x) \Big) \Big] +
 	 Ce^{-cjK^{3/2}} \\
 	&\leq \P_{\mathcal{I}_{j};\Cl_{j+1}}^{\lambda_{j}; w_{j}^*, w_{j}^*}\Big(\exists x\in \mathcal{I}_{j} : X(x)>\Cl_{j}(x) \Big) +Ce^{-cjK^{3/2}}\,.
 \end{align*}
 Applying \cref{lem:recursive-bluepart} to the probability in the last line above yields \eqref{eqn:recursive-bound2-goal}.
\end{proof}

\subsection{Proof of \cref{lem:drop-high-bc}}\label{s.proof of high bc dropping}
The proof of \cref{lem:drop-high-bc} follows by repeatedly applying the next claim. (The parameter $\ep$ below is not to be confused with $\ep$ in Theorems \ref{thm:simplifiedmain} and \ref{thm:simplifiedmain2}.)
\begin{claim}\label{claim:drop-high-bc-input}
Fix any $\nu, \mathfrak{c}>0$. There exists $C_0>0$ such that for all $D\in [C_0,B]$, $\ep \in [ \mathfrak{c} \hspace{0.08em} \lambda_j^{-1/6},1)$, and all $y^{\mathrm{L}}\leq y^{\mathrm{R}}$ such that $y^{\mathrm{R}}-y^{\mathrm{L}} \geq C_0B^{1/2+\nu}N^{2/3}$, the following holds. Define $A_0 := BN^{1/3}$, and for each $i\geq 1$ define $f_i := 2[(3/2)^i-1]$ and $A_i :=  D/\ep +(B^{1-f_{i}{\nu}} \vee 1)$. For all $N$ large, $i\geq 0$, and $j\in \llbracket 1,m+1\rrbracket$, 
\begin{multline}\label{eqn:drop-high-bc-2}
	\P_{[y^{\mathrm{L}},y^{\mathrm{R}}];0}^{\lambda_j; A_i H_j, A_i H_j}\Big(\exists x \in [y^{\mathrm{L}}, y^{\mathrm{L}}+\ep(y^{\mathrm{R}}-y^{\mathrm{L}})] \,,\, x' \in [y^{\mathrm{R}}-\ep(y^{\mathrm{R}}-y^{\mathrm{L}}), y^{\mathrm{R}}] : \\
	\max\big(X(x) , X(x')\big)\leq A_{i+1}H_j \Big) \geq 1- C\exp\Big(-cD^{3/2} \lambda_j^{2/3}\Big)\,.
\end{multline}

\begin{proof}
Recall the  constant $c^*$ from \cref{lem:dropping-lemma}. 
We assume, and use repeatedly, the following:
\begin{align}\label{eqn:interval-length-bigenough}
	|y^{\mathrm{R}}-y^{\mathrm{L}}| \geq  C_0 B^{1/2+\nu}N^{2/3} \quad \text{and} \quad D\geq C_0 \geq 2c^*\,.
\end{align}
We begin with the case $i=0$.
Take $A= A_1$, $I = [y^{\mathrm{L}},y^{\mathrm{R}}]$, and $u=v=BN^{1/3}$ in \cref{lem:dropping-lemma}. In light of \cref{rk:dropping-lemma-condition-1}, 
the assumptions of \cref{lem:dropping-lemma} are satisfied. From \eqref{eqn:interval-length-bigenough}, $\ep > \mathfrak{c}\hspace{0.08em}\lambda_j^{-1/6}$, and $N^{2/3}/H_j^2= \lambda_j^{2/3}$ (from \eqref{e.H_j defn}), we have for any $C_0>0$, 
\[
	A\ep\frac{|I|}{H_j^2} \geq D\frac{|I|}{H_j^2} + \mathfrak{c}\hspace{0.08em}C_0 B^{3/2}\lambda_j^{1/2} \qquad \text{and} \qquad \frac{u^{3/2} + v^{3/2}}{H_j^{3/2}} = 2\Big(\frac{BN^{1/3}}{H_j}\Big)^{3/2} = 2B^{3/2}\lambda_j^{1/2}\,.
\]
Taking $C_0 > 2/\mathfrak{c}$, a union bound, \cref{lem:dropping-lemma}, \cref{eqn:interval-length-bigenough}, and $|I| \geq C_0B^{1/2+\nu}N^{{2/3}} \geq D^{1/2}N^{2/3}$ then bounds the left-hand side of \eqref{eqn:drop-high-bc-2} for $i=0$  from below by
\[
	1- 2c^* \exp\bigg(-(D-c^*)\frac{|I|}{H_j^2}\bigg) \geq  1- 2c^* \exp\bigg(-cD^{3/2} \lambda_j^{3/2}\bigg)\,.
\]

The proof for $i\geq 1$ is almost identical. This time, take $A = A_{i+1}$ and $u=v= A_iH_j$ in \cref{lem:dropping-lemma}. Note that $f_{i+1} - \frac32f_i = 1$. This, \eqref{eqn:interval-length-bigenough}, $B\geq D$, and $\ep\geq \mathfrak{c}\hspace{0.08em}\lambda_j^{-1/6}$ as before yields
\begin{multline*}
	A \ep \frac{|I|}{H_j^2} \geq D^{3/2}B^{\nu} \lambda_j^{2/3}+ \mathfrak{c}\hspace{0.08em}C_0 B^{1/2+\nu}(B^{1-f_{i+1}\nu}\vee 1) \lambda_j^{1/2} \\
	\text{and} \qquad \frac{u^{3/2} + v^{3/2}}{H_j^{3/2}} \leq  C\left[\Big(\frac{D}{\ep}\Big)^{3/2} + \Big(B^{3/2+ (1-f_{i+1})\nu}\vee 1\Big)\right] \leq \frac12 A \ep \frac{|I|}{H_j^2}\,,
\end{multline*}
where the last  inequality comes from the first inequality in the display, the bound $\ep \geq \mathfrak{c}\hspace{0.08em}\lambda_j^{-1/6}$, taking $C_0 \geq C/\mathfrak{c}$, and taking $N$ large (depending on $\nu$, as we want to make $B^{\nu}$ large).
A union bound and \cref{lem:dropping-lemma} then bounds the probability in \eqref{eqn:drop-high-bc-2} for $i\geq 1$ by 
\[
1- 2 c^* \exp\Big( - c(A \ep-c^*) \frac{|I|}{H_j^2}\Big) \geq 1- C\exp \Big(- c D^{3/2} \lambda_j^{2/3} \Big)\,.
\]
The last inequality follows from \eqref{eqn:interval-length-bigenough} (so that $A\ep -c^*\geq D/2$) and from $|I|/H_j^2 \geq D^{1/2} \lambda_j^{2/3}$.
\end{proof}
\end{claim}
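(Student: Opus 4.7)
The plan is to apply \cref{lem:dropping-lemma} twice — once to the left $\ep$-fraction $I^{\mathrm{L}} := [y^{\mathrm{L}}, y^{\mathrm{L}} + \ep(y^{\mathrm{R}}-y^{\mathrm{L}})]$ and once to the right $\ep$-fraction $I^{\mathrm{R}}:= [y^{\mathrm{R}} - \ep(y^{\mathrm{R}}-y^{\mathrm{L}}), y^{\mathrm{R}}]$ — and then union bound so that both drop points exist simultaneously. In each application I will take the dropping-lemma target height equal to $A_{i+1}$ and the boundary values $u=v$ equal to $BN^{1/3}$ (when $i=0$) or $A_i H_j$ (when $i\geq 1$). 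The hypotheses of \cref{lem:dropping-lemma} hold by \cref{rk:dropping-lemma-condition-1} together with $|I|\geq C_0 B^{1/2+\nu}N^{2/3}$. This will produce a failure probability at most
\[
2(c^*)^{-1}\exp\biggl(-(A_{i+1}\ep - c^*)\frac{|I|}{H_j^2} + c^*\frac{u^{3/2}+v^{3/2}}{H_j^{3/2}}\biggr),
\]
and the remaining work is to verify that this exponent is bounded above by $-cD^{3/2}\lambda_j^{2/3}$.

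For the base case $i=0$, the boundary penalty is $2c^*B^{3/2}\lambda_j^{1/2}$ (using $N^{1/3}/H_j = \lambda_j^{1/3}$), and since $f_1 = 1$ one expands $(A_1\ep)|I|/H_j^2 = D|I|/H_j^2 + \ep(B^{1-\nu}\vee 1)|I|/H_j^2$. Using $\ep \geq \mathfrak{c}\lambda_j^{-1/6}$ and $|I|/H_j^2 \geq C_0 B^{1/2+\nu}\lambda_j^{2/3}$, I expect $\ep B^{1-\nu}|I|/H_j^2 \geq \mathfrak{c}C_0 B^{3/2}\lambda_j^{1/2}$, which absorbs the boundary penalty once $C_0$ is taken large compared with $c^*/\mathfrak{c}$. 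Then $D|I|/H_j^2$ absorbs the $c^*|I|/H_j^2$ correction (using $D\geq 2c^*$) and, since $|I|\geq D^{1/2} N^{2/3}$ gives $|I|/H_j^2 \geq D^{1/2}\lambda_j^{2/3}$, delivers the desired $-cD^{3/2}\lambda_j^{2/3}$ bound.

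The inductive step $i\geq 1$ is analogous but requires more care. The boundary penalty becomes at most $C[(D/\ep)^{3/2} + B^{3(1-f_i\nu)/2}\vee 1]$, and the main term now contains $\ep(B^{1-f_{i+1}\nu}\vee 1)|I|/H_j^2$. The hard part — and the reason for the specific sequence $f_i = 2[(3/2)^i - 1]$ — will be the algebraic identity $f_{i+1} = \tfrac{3}{2}f_i + 1$, which forces the matching $B^{1-f_{i+1}\nu}\cdot B^{1/2+\nu} = B^{3(1-f_i\nu)/2}$. Under this identity, $\ep B^{1-f_{i+1}\nu}|I|/H_j^2 \geq \mathfrak{c}C_0 B^{3(1-f_i\nu)/2}\lambda_j^{1/2}$ absorbs the $B$-power portion of the boundary penalty, while the $(D/\ep)^{3/2}\leq D^{3/2}\mathfrak{c}^{-3/2}\lambda_j^{1/2}$ portion is absorbed into $D|I|/H_j^2$ using $D\leq B$ together with $B^\nu \geq C_0^\nu$ (which is large by taking $C_0$ large, since $D\geq C_0$ forces $B\geq C_0$). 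The remainder of the exponent estimate then proceeds exactly as in the base case, yielding the stated $-cD^{3/2}\lambda_j^{2/3}$ bound and completing the argument.
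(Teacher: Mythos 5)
Your proposal is correct and takes essentially the same route as the paper: apply \cref{lem:dropping-lemma} on the full interval $[y^{\mathrm{L}},y^{\mathrm{R}}]$ with target height $A_{i+1}$ and boundary values $BN^{1/3}$ (resp.\ $A_iH_j$), union bound over the left and right $\ep$-fractions, and use the identity $f_{i+1}=\tfrac32 f_i+1$ together with $|I|/H_j^2\geq C_0B^{1/2+\nu}\lambda_j^{2/3}$ and $\ep\geq \mathfrak{c}\,\lambda_j^{-1/6}$ to absorb the boundary penalty, leaving $(D-c^*)|I|/H_j^2\geq cD^{3/2}\lambda_j^{2/3}$. The only slip is cosmetic: $(D/\ep)^{3/2}\leq D^{3/2}\mathfrak{c}^{-3/2}\lambda_j^{1/4}$ rather than $\lambda_j^{1/2}$, which changes nothing since the discrepancy is absorbed by making $C_0$ (hence $B^{\nu}$) large, where the paper instead takes $N$ large.
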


\begin{proof}[Proof of \cref{lem:drop-high-bc}]
Fix any $\nu \in (0,\eta)$, and recall  $A_i$ from \cref{claim:drop-high-bc-input}. 
Let $r_{\nu} := \min\{i\in\mathbb{N} : 1-f_i \nu \leq 0\}$, so that $A_{r_{\nu}+i} = D/\ep + 1$ for all $i\geq 0$. Let $\ep :=r_{\nu}^{-1}\ep'$, $\sigma_0 := y^{\mathrm{L}}$, $\tau_0 := y^{\mathrm{R}}$, and recursively define the random times
\begin{align*}
	\sigma_{i+1} &:= \inf\{x\in [\sigma_i, \sigma_i + \ep(\tau_i-\sigma_i)] : X(x) \leq A_{i+1} H_j\} \,,\\
	\tau_{i+1} &:= \sup\{x' \in [\tau_i-\ep(\tau_i-\sigma_i), \tau_i] : X(x') \leq A_{i+1} H_j\} \,.
\end{align*}
If $\sigma_{i+1}$ or $\tau_{i+1}$ do not exist, set $\sigma_{i+1} := \sigma_i + \ep(\tau_i-\sigma_i)$ or $\tau_{i+1} := \tau_i - \ep(\tau_i-\sigma_i)$. 
 Also define the events $\mathsf{Drop}_0 := \Omega$ and
\[
\mathsf{Drop}_{i+1} := \Bigl\{\exists x\in[\sigma_i,\sigma_i+\ep(\tau_i-\sigma_i)], x'\in[\tau_i-\ep(\tau_i-\sigma_i),\tau_i] : \max(X(x),X(x')) \leq A_{i+1}H_j\Bigr\}.
\]
Observe the probability on the left of \eqref{eqn:drop-high-bc-zerofloor} is bounded below by
\begin{align}\label{eqn:drop-prod}
	\mathbb{P}^{\lambda_j; A_0H_j,A_0H_j}_{[y^{\mathrm{L}},y^{\mathrm{R}}];0}(\mathsf{Drop}_{r_\nu}) 
& \geq  \mathbb{P}^{\lambda_j; A_0H_j,A_0H_j}_{[y^{\mathrm{L}},y^{\mathrm{R}}];0}(\mathsf{Drop}_{r_\nu} \given \mathsf{Drop}_{r_{\nu}-1} ) \cdot \mathbb{P}^{\lambda_j; A_0H_j,A_0H_j}_{[y^{\mathrm{L}},y^{\mathrm{R}}];0}(\mathsf{Drop}_{r_{\nu}-1}) 
 \nonumber \\
	& \geq  \prod_{i=0}^{r_\nu-1} \mathbb{P}^{\lambda_j; A_0H_j,A_0H_j}_{[y^{\mathrm{L}},y^{\mathrm{R}}];0}( \mathsf{Drop}_{i+1} \mid \mathsf{Drop}_i).
\end{align}
For the $i$\th factor in the product, we condition further on the $\sigma$-algebra $\mathcal{F}_i$ generated by $\sigma_i$, $\tau_i$, and $X(x)$ for $x\notin(\sigma_i,\tau_i)$. Note that $\mathsf{Drop}_i \in \mathcal{F}_i$, so the tower property, the strong Gibbs property, and monotonicity yield
\begin{align*}
	\mathbb{P}^{\lambda_j; A_0H_j,A_0H_j}_{[y^{\mathrm{L}},y^{\mathrm{R}}];0}( \mathsf{Drop}_{i+1} \cap \mathsf{Drop}_i) 
	&= \mathbb{E}^{\lambda_j; A_0H_j,A_0H_j}_{[y^{\mathrm{L}},y^{\mathrm{R}}];0} \left[ \one_{\mathsf{Drop}_i} \mathbb{P}^{\lambda_j; A_0H_j,A_0H_j}_{[y^{\mathrm{L}},y^{\mathrm{R}}];0} ( \mathsf{Drop}_{i+1} \mid \mathcal{F}_i) \right]\\
	&\geq \mathbb{E}^{\lambda_j; A_0H_j,A_0H_j}_{[y^{\mathrm{L}},y^{\mathrm{R}}];0} \left[ \one_{\mathsf{Drop}_i} \mathbb{P}^{\lambda_j; A_iH_j, A_iH_j}_{[\sigma_i,\tau_i];0} ( \mathsf{Drop}_{i+1}) \right].
\end{align*}
Now by \cref{claim:drop-high-bc-input}, the probability inside the expectation is bounded below by $1-Ce^{-cD^{3/2}\lambda_j^{2/3}}$, uniformly over all possible values of $\sigma_i, \tau_i$. Indeed, the claim applies for each $i$ with $\mathfrak{c} = c_0 r_\nu^{-1}$ (so that $\ep \in [\mathfrak{c}\hspace{0.08em}\lambda_j^{-1/6},1)$), since by assumption, $\tau_i-\sigma_i \geq (1-\ep')(y^{\mathrm{R}}-y^{\mathrm{L}}) \geq (1-\ep')B^{1/2+\eta}N^{2/3}$ for all $i\leq r_\nu$, and for $N$ large depending on $\eta-\nu$ we have $(1-\ep')B^{\eta-\nu} \geq C_0$. Rearranging the last display and applying Bayes' rule, we then see that all of the conditional probabilities on the right of \eqref{eqn:drop-prod} are bounded below by the same expression. Thus the left-hand side of \eqref{eqn:drop-prod} is at most
\[
\Big(1-Ce^{-cD^{3/2}\lambda_j^{2/3}}\Big)^{r_\nu} \geq 1-Cr_\nu \exp\left(-cD^{3/2}\lambda_j^{2/3}\right).
\]
Taking say $\nu = \eta/2$ and absorbing the $r_{\nu}$ factor into $C$, we are done.
\end{proof}

\subsection{Proof of \cref{lem:recursive-bluepart}} \label{s.proof of recursive blue part}
We fix $j\in \llbracket 1, m\rrbracket$ throughout. Fix also  a universal constant
\[
\ep \in (0,1/8]
\]
throughout (not to be confused with $\ep$ from \cref{thm:simplifiedmain,thm:simplifiedmain2}). 
The only properties of the interval $\mathcal{I}_{j}$ we  use below are that its length exceeds $4\mathcal TK^{1/2}\ep_j^{-1/2}H_j^2$ for $\mathcal{T}$ fixed but large depending only on $a_0>0$ and $b_0>1$, and that it is centered at $0$. Recall from \eqref{def:ceilingj2}, \eqref{eq:boundaryconditions-above-floor} that each $\Cl_j$ is symmetric about $0$, non-decreasing in $|x|$, and continuous on $\mathcal{I}_k$ for all $k\leq j$. 

The proof of the lemma proceeds in two steps, which are depicted in Figure~\ref{f.mesh point control}.
In Step 1, we iteratively construct a mesh of appropriately-spaced points at which the walk stays below a fraction of $\Cl_j$ (\cref{claim:meshpoints}).  The mechanism for accomplishing this is the dropping lemma (\cref{lem:dropping-lemma}), which can be used to construct random points $x$ where the walk $X(x)$ lies roughly $\ep_j^{-1}H_j$ above the floor $\Cl_{j+1}(x)$, w.h.p.\ (\cref{claim:meshpoints-dropping}). This way, $X(x)$ falls down as $x$ approaches $0$, just as $\Cl_{j+1}(x)$ does. Since the dropping lemma produces random points, we use the one-point bound and strong Gibbs property to show that at deterministic locations $x(k)$ in between these random points, the walk also stays below a fraction of $\Cl_j$, w.h.p. These $x(k)$ are the aforementioned mesh points.
In Step 2, we use \cref{maxbd} to ``fill in the mesh,'' i.e., upper bound the  walk in between mesh points. The resulting upper bound  is dominated by $\Cl_{j}$, concluding the proof of \cref{lem:recursive-bluepart}.

\subsection*{Step 1: A mesh of good points} The first step is to prove \cref{claim:meshpoints} below. For $C_1 \geq 1$, define 
\[
	D := K/C_1 \,.
\]
Recall $x_j^{\L/\rR}$ from \eqref{def:Ij} (we write $\L/\rR$ to indicate a statement holding for both $\L$ and $\rR$). Define 
\begin{align*}
	x(0) := x_j^\rR \quad \text{(so that $-x(0)  = x_j^{\L}$)} \quad\qquad \text{and} \quad\qquad I(0) :=  [-x(0), x(0)] =\mathcal{I}_j\,,
\end{align*}
and for $k\geq 1$, define
\begin{align}\label{def:xkIk}
	x(k) := x(k-1) - \ep |I(k-1)| \qquad \text{and} \qquad I(k) := [-x(k),x(k)]\,.
\end{align}
Observe for all $k\geq 0$,
\begin{align}\label{eqn:xkIk-size}
	x(k) = \tfrac12 |I(k)| \,,\, \ \ 
	x(k) = (1-2\varepsilon)x(k-1)= (1-2\ep)^k x(0) \,,\, \ \  \text{and} \  |I(k)| = (1-2\ep)^k |I(0)| \,.
\end{align}
Define
\begin{align}\label{def:bar-k}
	\bar{k} := \min \Big\{ k \in\N  :  |I(k)| \leq 4 \mathcal{T}K^{1/2}\ep_j^{-1/2}H_j^2 \Big\}\,.
\end{align}
Combining \eqref{eqn:xkIk-size}, \eqref{def:bar-k}, and $\ep <1/8$, we have 
$x(\bar{k}) = (1-2\ep)^{-1} x(\bar{k}-1) \geq \frac43 \mathcal{T} K^{1/2}{\ep_j^{-1/2}H_j^2}$, so 
\begin{align}\label{eqn:last-mesh-point}
	x(\bar{k}) \in [\mathcal{T}K^{1/2}{\ep_j^{-1/2}H_j^2}, 2\mathcal{T}K^{1/2}{\ep_j^{-1/2}H_j^2}] \qquad \text{and} \qquad |I(\bar{k})| \geq \mathcal{T}K^{1/2}\ep_j^{-1/2}H_j^2\,.
\end{align}
It is helpful to note (see \eqref{def:ceilingj2} for the definition of $\Cl_j$)
\begin{equation}\label{eqn:ceiling-relations}
\begin{split}
	\Cl_j(x) &\geq K \ep_j^{-1}H_j \bigg(\log \frac{|x|}{K^{1/2}H_j^2} \bigg)^{2/3} \quad \text{for all } x \in \mathcal{I}, \\
	\Cl_j\big(x(k)\big) &= K \ep_j^{-1}H_j \bigg(\log \frac{|x(k)|}{K^{1/2}H_j^2} \bigg)^{2/3} \quad \text{for all } k \in \llbracket 1, \bar{k}-1\rrbracket.
\end{split}
\end{equation}
Next, for all $k\geq 1$, define (recalling $D = K/C_1$) 
\begin{align}\label{def:frakB}
	\fB_j(k) &=  \fB_j(k, C_1) := D\ep_j^{-1}H_j + \Cl_{j+1}\big(x(k-1)\big) + D\ep_j^{-1}H_j \Big(\log \tfrac{x(k)}{D^{1/2}H_j^2} \Big)^{2/3}\,.
\end{align}	
\cref{claim:meshpoints} states that the walk with floor $\Cl_{j+1}$ is bounded by $\fB_j(k)$ at each $x(k)$. Going back to the proof outline given at the start of the subsection (c.f., Figure~\ref{f.mesh point control}), the first two terms in \eqref{def:frakB} will come from applying dropping lemma over the floor raised to $\Cl_{j+1}(x(k-1))$ (recall $x(k-1) > x(k)$ and $\Cl_{j+1}(x)$ is non-decreasing in $|x|$); the last term will follow from applying \cref{oneptbd} in between the dropped points. The following inequality shows that $\fB_j(k)$ is a fraction of $\Cl_j(x(k))$: 
for any $C_1 > 1$, we may take $\mathcal{T}$ large enough (depending on $a_0, b_0, C_1$) so that for all $k \in \llbracket 1, \bar{k}\rrbracket$, 
\begin{align}\label{eqn:frakB-bound-barClj}
	\fB_j(k) &= D\ep_j^{-1}H_j \bigg[1+ C_1 \frac{\ep_{j+1}^{-1}H_{j+1}}{\ep_j^{-1}H_j}\bigg(\log\frac{x(k-1)}{K^{1/2}H_{j+1}^2}\bigg)^{2/3} + \bigg(\log\frac{x(k)}{D^{1/2}H_{j}^2}\bigg)^{2/3}  \Bigg]  \nonumber \\
	&\leq  D\ep_j^{-1}H_j \bigg[1+ C_1 b^{-1/6}
	\bigg(\log\frac{x(k)}{K^{1/2}H_{j}^2}+ \log \frac{b^{2/3}}{1-2\ep} \bigg)^{2/3} + \bigg(\log\frac{x(k)}{K^{1/2}H_{j}^2} + \log C_1^{1/2}\bigg)^{2/3}  \Bigg] \nonumber \\
	&\leq b_0^{-2/15} K\ep_j^{-1} H_j \bigg(\log \frac{x(k)}{K^{1/2}H_j^2} \bigg)^{2/3}
	\leq b_0^{-2/15} \Cl_j\big( x(k)\big)\,.
\end{align}
In the equality, we used \eqref{eqn:ceiling-relations}.
In the first inequality, we used \eqref{def:cC}, \eqref{e.H_j defn} to replace $H_{j+1}^2$ in the denominator with $\smash{H_j^2}$, and \eqref{def:xkIk}. In the second inequality, we bounded the second bracketed term by using $\smash{x(k)/(K^{1/2}H_j^2) \geq \mathcal{T}a_0^{2/3}}$, so that taking $\mathcal{T}$ large enough with respect to $a_0$ and $b_0$ yields 
\[
b^{-1/4} \log\frac{x(k)}{K^{1/2}H_{j}^2}+ b^{-1/4}\log \frac{b^{2/3}}{1-2\ep} \leq b^{-2/9} \log\frac{x(k)}{K^{1/2}H_{j}^2} \left(1+\frac{\tilde C}{\log (\mathcal T a_0^{2/3})}\right) \leq b_0^{-1/5} \log\frac{x(k)}{K^{1/2}H_{j}^2} \,,
\]
where $\tilde C:= \max_{z>1} z^{-1/4+2/9}\log \frac{z^{2/3}}{1-2\varepsilon}$ is an absolute constant.
We bounded the third bracketed term similarly, taking $\mathcal{T}$ large with respect to $C_1$. In the third inequality, we used \eqref{eqn:ceiling-relations}.
We are now ready to state \cref{claim:meshpoints}, the main result of ``Step 1.''

\begin{figure}
\includegraphics[scale=1.3]{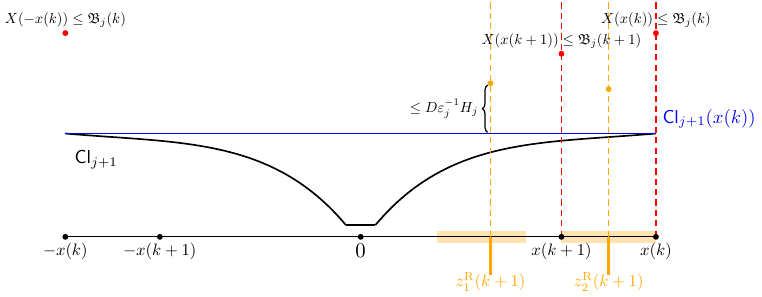}
\vspace{-0.1in}
\caption{Step 1 involves bounding the walk $X$ at a sequence of mesh points $x(0) > x(1) > \cdots$ and $-x(0) < -x(1) < \cdots$ by a quantity $\mathfrak{B}_j(k)$ at $\pm x(k)$.
Suppose we have already shown $X(-x(k)) \vee X(x(k)) \leq \mathfrak{B}_j(k)$, for some $k$ (outermost red dots).
Restrict the walk to $I(k) = [-x(k), x(k)]$ and raise the floor from $\Cl_{j+1}$ (black curve) to $\Cl_{j+1}(x(k))$ (blue).
Use the dropping lemma to produce points $z_1^{\mathrm{R}}(k+1)$ and $z_2^{\mathrm{R}}(k+1)$ to the left and right of $x(k+1)$, where the walk has dropped to $\leq D\ep_j^{-1}H_j$ above the blue floor (orange dots).
Restrict the walk to $[z_2^{\mathrm{R}}(k+1), z_1^{\mathrm{R}}(k+1)]$ (orange, dashed), raise the boundary conditions to $D\ep_j^{-1}H_j + \Cl_{j+1}(x(k))$, and use the one-point bound (\cref{oneptbd}) to show $X(x(k+1))\leq \mathfrak{B}_j(k+1)$ with high probability.
Step 2 of the proof involves restricting the walk to $[x(k+1), x(k)]$ (red, dashed), raising the boundary conditions to $\mathfrak{B}_j(k)$ (which exceeds $\mathfrak{B}_j(k+1)$, red dots), and then using the max bound \cref{maxbd} to show that, w.h.p., $\max_{x \in [x(k+1), x(k)]} X(x)$ is bounded by a quantity which is less than $\Cl_j(x(k+1))= \min_{x \in [x(k+1), x(k)]}\Cl_j(x)$.
By symmetry, the same argument applies on the left side of $0$.}\label{f.mesh point control}
\end{figure}

\begin{claim} \label{claim:meshpoints}
Under the assumptions of \cref{lem:recursive-bluepart},
the following holds for any $C_1 \geq 1$:
\[
	\P_{\mathcal{I}_{j};\Cl_{j+1}}^{\lambda_{j};w_{j}^*, w_{j}^*}\Big(\exists k \in \llbracket 1, \bar{k} \rrbracket : \max\big(X(-x(k)), X(x(k))\big) > \mathfrak{B}_j(k) \Big) \leq C \exp\Big(-cK^{3/2} \log (j+1) \Big) \,.
\]
\end{claim}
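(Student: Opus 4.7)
The plan is an induction on $k$ with the event $G_k := \{\max(X(-x(k)), X(x(k))) \leq \fB_j(k)\}$, setting $\fB_j(0) := w_j^*$ so that $G_0$ holds deterministically from the boundary conditions of \cref{lem:recursive-bluepart}. By a union bound, it suffices to bound $\P(G_{k-1} \cap \neg G_k)$ uniformly for $k \in \llbracket 1, \bar{k}\rrbracket$. On $G_{k-1}$, I would first apply the (strong) Gibbs property on $I(k-1)=[-x(k-1),x(k-1)]$ together with monotonicity (\cref{l.monotonicity}) to raise the boundary conditions to $\fB_j(k-1)$ and, since $\Cl_{j+1}$ is non-decreasing in $|x|$ and so attains its maximum on $I(k-1)$ at $\pm x(k-1)$, raise the floor from $\Cl_{j+1}$ to the constant $\Cl_{j+1}(x(k-1))$. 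A vertical shift by $-\Cl_{j+1}(x(k-1))$ (\cref{rmk:shift}) then puts the floor at $0$ and the boundary conditions at $u := \fB_j(k-1) - \Cl_{j+1}(x(k-1))$.

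The inductive step will then be executed in two substeps, as depicted in Figure~\ref{f.mesh point control}. First, I would apply the dropping lemma (\cref{lem:dropping-lemma}) separately on the two subintervals $[x(k),x(k-1)]$ and $[-x(k-1),-x(k)]$, each of length $\ep|I(k-1)|$, with target drop level $D\ep_j^{-1}H_j$, to obtain (with high probability) random points $z^{\mathrm{R}} \in [x(k),x(k-1)]$ and $z^{\mathrm{L}} \in [-x(k-1),-x(k)]$ at which $X$ lies at most $D\ep_j^{-1}H_j$ above the shifted floor. Second, I would invoke the strong Gibbs property on the (random) stopping domain $[z^{\mathrm{L}}, z^{\mathrm{R}}]$, which contains $\pm x(k)$; monotonicity then raises the boundary conditions there to $D\ep_j^{-1}H_j$, and the one-point bound (\cref{oneptbd}) applied at $\pm x(k)$ with tail parameter $R = D\ep_j^{-1}L_k^{2/3}$, where $L_k := \log\bigl(x(k)/(D^{1/2}H_j^2)\bigr)$, controls $X(\pm x(k))$ by $D\ep_j^{-1}H_j(1+L_k^{2/3})$. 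Shifting back up by $\Cl_{j+1}(x(k-1))$ recovers the desired $\max(X(\pm x(k))) \leq \fB_j(k)$.

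For the probability bounds, the one-point substep contributes $C\exp(-c K^{3/2}(j+\cC)^3 L_k/C_1^{3/2})$ coming from $R^{3/2} = (K/C_1)^{3/2}(j+\cC)^3 L_k$, and the dropping substep contributes a comparable (or better) bound provided the constant $\mathcal{T}$ is chosen large enough (depending only on $a_0, b_0$) that the $-A\ep |I(k-1)|/H_j^2$ term in \eqref{eqn:dropping-lemma} dominates the $2c^* u^{3/2}/H_j^{3/2}$ term. Because $L_k$ decreases by the constant $\gamma := \log(1-2\ep)^{-1}$ as $k$ increases by $1$, the per-$k$ bounds form an increasing geometric sequence in $k$, so the sum over $k \in \llbracket 1, \bar{k}\rrbracket$ is dominated by the final term $C\exp(-c K^{3/2}(j+\cC)^3 L_{\bar k})$; since $L_{\bar{k}} \geq c\log(j+\cC)$ by \eqref{eqn:last-mesh-point}, this matches the target $C\exp(-cK^{3/2}\log(j+1))$ after absorbing constants. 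The hard part will be ensuring the dropping lemma's exponent remains under control for small $k$, where $L_{k-1}$ is large and hence $u$ grows as large as $CD\ep_j^{-1}H_j L_{k-1}^{2/3}$; here one uses that $|I(k-1)|/H_j^2 \geq 2 D^{1/2}e^{L_{k-1}}$ grows exponentially in $L_{k-1}$, more than compensating the $u^{3/2}/H_j^{3/2} \lesssim (D\ep_j^{-1})^{3/2}L_{k-1}$ contribution, and also uses the slow variation $\Cl_{j+1}(x(k-2)) - \Cl_{j+1}(x(k-1)) \ll D\ep_j^{-1}H_j$ across successive mesh points to avoid further deterioration of the bound.
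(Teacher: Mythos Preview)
Your proposal is correct and follows the same strategy as the paper: induction on $k$ via the dropping lemma followed by the one-point bound, with the sum over $k$ controlled as a geometric series whose last term is governed by $L_{\bar k} \geq c\log(j+\cC)$ via \eqref{eqn:last-mesh-point}. There is one organizational difference worth noting: the paper produces \emph{two} drop points on each side (four total, see \eqref{def:Pt}), so that on the right the small stopping domain $[z_1^{\mathrm{R}}, z_2^{\mathrm{R}}]$ straddles $x(k+1)$ and the one-point bound is applied there (and symmetrically on the left), whereas you use a single drop point per side and apply the one-point bound on the large stopping domain $[z^{\mathrm L}, z^{\mathrm R}]$ containing both $\pm x(k)$. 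Both work because Theorem~\ref{oneptbd} is uniform in the interval length once $|I|\geq H_j^2$; your version is slightly leaner. Finally, your closing remark about needing the ``slow variation'' $\Cl_{j+1}(x(k-2)) - \Cl_{j+1}(x(k-1)) \ll D\ep_j^{-1}H_j$ is an unnecessary complication: the paper simply bounds $u = \fB_j(k-1) - \Cl_{j+1}(x(k-1)) \leq \fB_j(k-1) \leq b_0^{-2/15}\Cl_j(x(k-1))$ directly via \eqref{eqn:frakB-bound-barClj}, which already absorbs the $\Cl_{j+1}$ contribution through the factor $\ep_{j+1}^{-1}H_{j+1}/(\ep_j^{-1}H_j)\leq b^{-1/6}$.
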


\begin{proof} 
The proof proceeds via the following inductive scheme. For $k\geq 0$, suppose we have already shown $X(-x(k)) \vee X(x(k)) \leq \fB_j(k)$ with high probability. Recall Figure~\ref{f.mesh point control}, and define the event 
\begin{align}\label{def:Pt}
	\mathsf{Drop}(k+1) := \Big\{\exists &z_1^\L \in -x(k) + [0,\ep]|I(k)| \,,\,  z_2^\L \in -x(k) + [2\ep,3\ep]|I(k)| \,,\, \nonumber \\
	&z_1^\rR \in x(k) - [2\ep, 3\ep]|I(k)| \,,\, z_2^\rR \in x(k) - [0,\ep]|I(k)| : \nonumber \\
	&\max_{i \in \{1,2\}, \, \theta \in \{\L,\rR\}} X(z_i^\theta) \leq D\ep_j^{-1}H_j + \Cl_{j+1}\big(x(k)\big) \Big\}\,.
\end{align}

\begin{claim}\label{claim:meshpoints-dropping}
Recall  $w_j^*$ and $I(k)$ from \eqref{def:wj*} and \eqref{def:xkIk}. Define $\fB_j(0) := w_j^*$. For all $k\in\llbracket 0, \overline{k}-1\rrbracket$, 
\begin{align}
	\P_{I(k); \Cl_{j+1}}^{\lambda_j; \fB_j(k), \B_j(k)}\Big( \mathsf{Drop}(k+1) \Big) 
	&\geq 1- C \exp \bigg(-cKj^2 \frac{|I(k)|}{H_j^2} \bigg) \text{ for all }k \in \llbracket 1, \overline{k}-1\rrbracket. \label{eqn:meshpoints-dropping-k}
\end{align}
\begin{proof}[Proof of \cref{claim:meshpoints-dropping}]
Recalling  $(1-\ep_j)x_{j+1}^{\mathrm{R}} = x_j^{\rR}= x(0)$ from \eqref{def:Ij},
\begin{align*}
	w_j^* - \Cl_{j+1}\big(x(0)\big) &= D \ep_j^{-1} H_j + K\ep_{j+1}^{-1} H_{j+1}\bigg[\bigg(\log \frac{x(0)}{K^{1/2}H_{j+1}^2} + \log \frac{1}{1-\ep_j} \bigg)^{2/3} -\bigg(\log \frac{x(0)}{K^{1/2}H_{j+1}^2} \bigg)^{2/3} \bigg] \\
	&\leq 2K\ep_j^{-1}H_j =: \tilde{w}_j\,,
\end{align*}
where in the  inequality we used \eqref{def:cC}, as well as \eqref{eqn:last-mesh-point} to deduce $x(0)/K^{1/2}H_{j+1}^2 \geq \mathcal{T}$ and took $\mathcal{T}$ large compared to $1/(1-\ep_j) \leq 2$. Raising the floor  to $\Cl_{j+1}(x(0))$, lowering the walk and floor by $\Cl_{j+1}(x(0))$, a  union bound, and \cref{lem:dropping-lemma} with $A:= D\ep_j^{-1}$ and $u= v:= w_j^*$ give
		\begin{align}
			\P_{I(0);\Cl_{j+1}}^{\lambda_{j}; w_{j}^*;w_{j}^*}\big(\mathsf{Drop}(1)\big) 
			&\geq \P_{I(0);\Cl_{j+1}(x(0))}^{\lambda_{j}; w_{j}^*;w_{j}^*}\big(\mathsf{Drop}(1)\big) \nonumber  \\ 
			&\geq 1- 4c^*\exp\bigg( -\Big(D\ep_j^{-1}\ep -c^*\Big) \frac{|I(0)|}{H_{j}^2} + 2c^*\cdot 2^{3/2}K^{3/2}\ep_j^{-3/2} \bigg)\,. \label{eqn:mesh-drop1-last}
		\end{align}
		Take $K_0$ large so that $D\ep_j^{-1}\ep -c^* > cK\ep_j^{-1}$ for $K\geq K_0$ and some $c>0$. \cref{eqn:last-mesh-point} gives $|I(0)|/H_j^2 \geq \mathcal{T}K^{1/2}\ep_j^{-1/2}$, so the first term in the exponential dominates the second term for $\mathcal{T}$ large with respect to $c$ and $c^*$. This yields \eqref{eqn:meshpoints-dropping-k} for $k=0$.

		The proof for $k\in \llbracket 1, \bar{k}-1\rrbracket$ 
		is similar: we use monotonicity to raise the floor on $I(k)$ to $\Cl_{j+1}(x(k))$, and then lower the walk and the floor by $\Cl_{j+1}(x(k))$. A union bound and \cref{lem:dropping-lemma} again yield
		\begin{align*}
			\P_{I(k); \Cl_{j+1}}^{\lambda_j; \fB_j(k), \B_j(k)}&\Big( \mathsf{Drop}(k+1) \Big) \\
			\geq& \ 1- 4c^* \exp \bigg(-\Big(D\ep_j^{-1}\ep - c^*\Big) \frac{|I(k)|}{H_j^2} + 2c^*\bigg(\frac{\fB_j(k) - \Cl_{j+1}(x(k))}{H_j}\bigg)^{3/2} \bigg)\,.
		\end{align*}
		We use $\fB_j(k) - \Cl_{j+1}(x(k))\leq \fB_j(k) \leq \Cl_j(x(k))$ by \eqref{eqn:frakB-bound-barClj}, so that 
		the second term in the exponential is at most $C K^{3/2}\ep_j^{-3/2} \log (x(k)/(K^{1/2}H_j^2))$. Using $x(k) = |I(k)|/2$ and $\log (p/2)\leq p$ for $p\geq 1$, we see that the second term in the exponential is dominated by the first term for $\mathcal{T}$ large, just as below \eqref{eqn:mesh-drop1-last}.
		This yields \eqref{eqn:meshpoints-dropping-k}.
\end{proof}
\end{claim}

On $\mathsf{Drop}(k+1)$, let $z_1^{\L/\rR}(k+1)$ be the leftmost such $z_1^{\L/\rR}$; similarly, let $z_2^{\L/\rR}(k+1)$ be the rightmost such $z_2^{\L/\rR}$. Note  $[z_1^{\L/\rR}(k+1), z_2^{\L/\rR}(k+1)]$ each define stopping domains. 
Recall
$\fB_j(0) := w_j^*$. Below, we will have $X(\pm x(0)) = \fB_j(0)$ deterministically from the boundary conditions. 
For each $k\in \llbracket 0, \bar{k}-1 \rrbracket$, a union bound, symmetry of the random walk bridge, monotonicity in the boundary conditions, and the strong Gibbs property on $[z_1^{\rR}(i+1), z_2^{\rR}(i+1)]$ yield
\begin{align*}
	\P&_{\mathcal{I}_j ; \Cl_{j+1}}^{\lambda_j; w_j^*, w_j^*}\Big(X\big(\!-x(k+1)\big) \vee X\big(x(k+1)\big) > \fB_j(k+1) \Big) \\
	&\leq 
	\sum_{i=0}^k  \P_{\mathcal{I}_j ; \Cl_{j+1}}^{\lambda_j; w_j^*, w_j^*}\Big(X\big(\!-x(i+1)\big) \vee X\big(x(i+1)\big) > \fB_j(i+1) \,,\, \mathsf{Drop}(i+1) \Big) \\
	&\qquad\qquad+\P_{\mathcal{I}_j ; \Cl_{j+1}}^{\lambda_j; w_j^*, w_j^*}\Big( \neg \mathsf{Drop}(i+1) \,,\, X\big(\!-x(i)\big) \vee X\big(x(i)\big)\leq \fB_j(i)\Big)  \\
	&\leq \sum_{i=0}^k  2\cdot \E_{\mathcal{I}_j ; \Cl_{j+1}}^{\lambda_j; w_j^*, w_j^*}\left[\P_{[z_1^\rR(i+1), z_2^\rR(i+1)]; \Cl_{j+1}}^{\lambda_j; D\ep_j^{-1}H_j + \Cl_{j+1}(x(i)); D\ep_j^{-1}H_j + \Cl_{j+1}(x(i))}\Big(X\big(x(i+1)\big) > \fB_j(i+1) \Big)\right] \\
	&\qquad\qquad+ \P_{I(i); \Cl_{j+1}}^{\lambda_j; \fB_j(i), \fB_j(i)}\Big(\neg\mathsf{Drop}(i+1)\Big) \,.
\end{align*}
For each $i$, the inner probability in the first term in the $i\th$ summand above can be bounded by raising $\Cl_{j+1}$ to $\Cl_{j+1}(x(i))$ (since $x(i) \geq z_2^\rR(i+1)$), lowering both the floor and the walk by $\Cl_{j+1}(x(i))$, and then applying \cref{oneptbd} with  $A= D\ep_j^{-1}$ and $I:= [z_1^\rR(i+1), z_2^\rR(i+1)]$ (which satisfies $|I| \geq \ep|I(k)|$ by construction, so $|I| \geq H_j^2$ by \eqref{eqn:last-mesh-point}). Altogether, this shows 
\begin{multline*}
	2\cdot \E_{\mathcal{I}_j ; \Cl_{j+1}}^{\lambda_j; w_j^*, w_j^*}\left[\P_{[z_1^\rR(i+1), z_2^\rR(i+1)]; 0}^{\lambda_j; D\ep_j^{-1}H_j, D\ep_j^{-1}H_j}\bigg(X\big(x(i+1)\big) > D\ep_j^{-1}H_j+D\ep_j^{-1}H_j\bigg(\log \frac{x(i+1)}{D^{1/2}H_j^2} \bigg)^{2/3} \bigg)\right]\\ 
	\leq C \exp\bigg( - c K^{3/2} \log \frac{x(i+1)}{D^{1/2}H_j^2}\bigg)\,.
\end{multline*}
Combined with \cref{claim:meshpoints-dropping} and \eqref{eqn:xkIk-size}, the previous two displays yield
\begin{multline*}
	\P_{\mathcal{I}_j ; \Cl_{j+1}}^{\lambda_j; w_j^*, w_j^*}\Big(X\big(\!-x(k+1)\big) \vee X\big(x(k+1)\big) > \fB_j(k+1) \Big)\\
	\leq C \sum_{i=0}^k \exp\bigg( - c K^{3/2} \log \frac{x(i+1)}{D^{1/2}H_j^2}\bigg) \leq C\exp\bigg( - c K^{3/2} \log \frac{x(k+1)}{D^{1/2}H_j^2}\bigg).
\end{multline*}
A union bound over $k\in \llbracket 0,\bar{k}-1 \rrbracket$ and again recalling \eqref{eqn:xkIk-size} then yields
\[
	\P_{\mathcal{I}_{j};\Cl_{j+1}}^{\lambda_{j};w_{j}^*, w_{j}^*}\Big(\exists k \in \llbracket 1, \bar{k} \rrbracket : \max\big(X(-x(k)), X(x(k))\big) > \mathfrak{B}_j(k) \Big) \leq C \exp\bigg( - c K^{3/2} \log \frac{x(\bar{k})}{D^{1/2}H_j^2}\bigg) \,.
\]
\cref{claim:meshpoints} now follows from the last display and \eqref{eqn:last-mesh-point}.
\end{proof}

\subsection*{Step 2: Filling in the mesh}
After \cref{claim:meshpoints} and the upper bound \eqref{eqn:frakB-bound-barClj}, we have constructed a ceiling on the walk at a mesh of points $\{x(k)\}_{k =0}^{\bar{k}}$. 
The max bound \cref{maxbd} will allow us to ``fill in'' the ceiling between the mesh points, completing the proof of \cref{lem:recursive-bluepart}.

\begin{proof}[Proof of \cref{lem:recursive-bluepart}]
A union bound, the Gibbs property, and monotonicity in the boundary values yield the following:
\begin{equation}\label{eqn:filling-in-mesh-1}
\begin{split}
	&\P_{\mathcal{I}_{j};\Cl_{j+1}}^{\lambda_{j};w_{j}^*, w_{j}^*}\Big(\exists x \in \mathcal{I}_j : X(x) > \Cl_j(x) \,,\, \forall k \in \llbracket 1, \bar{k}\rrbracket : X\big(\!-x(k)\big) \vee X\big(x(k)\big) \leq \fB_j(k)  \Big) \\
	&\quad \leq \ \sum_{k=0}^{\bar{k}-1} 
	\bigg[ \P_{[x(k+1), x(k)]; \Cl_{j+1}}^{\lambda_j ; \fB_j(k+1) , \fB_j(k)}\Big( \exists x \in [x(k+1), x(k)] : X(x) > \Cl_j(x) \Big) \\
	&\qquad\quad + \P_{[-x(k), -x(k+1)] ; \Cl_{j+1}}^{\lambda_j; \fB_j(k) , \fB_j(k+1)} \Big(\exists x \in [-x(k), -x(k+1)] : X(x) > \Cl_j(x) \Big)\bigg] \\
	&\qquad\quad + \P_{I(\bar{k}) ; \Cl_{j+1}}^{\lambda_j; \fB_j(\bar{k}) , \fB_j(\bar{k})}\Big(\exists x \in [-x(\bar{k}), x(\bar{k})] : X(x) > \Cl_j(x) \Big)   =: \sum_{k=0}^{\bar{k}-1} \big[ (\mathrm{I})_k + (\mathrm{II})_k \big] + (\mathrm{III})_{\overline{k}}.
\end{split}
\end{equation}
Note by symmetry,
\begin{align}\label{eqn:I=II}
	(\mathrm{I})_k = (\mathrm{II})_k  \qquad \text{for each } k \in \llbracket0, \bar{k}-1\rrbracket \,.
\end{align}
We proceed by bounding $(\mathrm{I})_k$. 
Recall $\fB_j(k) \geq \fB_j(k+1)$ and $\Cl_{j+1}$ is increasing in $|x|$. By monotonicity, we raise both boundary conditions to $\fB_j(k)$ and raise the floor to $\Cl_{j+1}(x(k))$; decreasing both the floor and the walk by $\Cl_{j+1}(x(k))$ and using $\min_{x\in[x(k+1), x(k)]} \Cl_j(x) = \Cl_j(x(k+1))$ yields
\begin{align}\label{eqn:Term1-1}
	(\mathrm{I})_k &\leq \P_{[x(k+1), x(k)]; 0}^{\lambda_j; , \fB_j(k) - \Cl_{j+1}(x(k))}\Big(\exists x \in [x(k+1), x(k)] : X(x) > \Cl_j(x) - \Cl_{j+1}\big(x(k)\big) \Big) \\
	&\leq \P_{[x(k+1), x(k)]; 0}^{\lambda_j; \fB_j(k) - \Cl_{j+1}(x(k)), \fB_j(k) - \Cl_{j+1}(x(k))}\bigg(\max_{x \in [x(k+1), x(k)]} X(x) > \Cl_j\big(x(k+1)\big) - \Cl_{j+1}\big(x(k)\big) \bigg)\,.\nonumber
\end{align}
Recalling \eqref{eqn:ceiling-relations}, $k \leq \bar{k}-1$, $x(k+1) = (1-2\ep)x(k)$, and \eqref{eqn:last-mesh-point}, we can take $\mathcal{T}$ large so that
\begin{align*}
	\Cl_j\big(x(k+1)\big) &\geq K\ep_j^{-1} H_j \bigg(\log \frac{x(k)}{K^{1/2}H_j^2} +\log (1-2\ep) \bigg)^{2/3} \\
	&\geq b_0^{-1/8}K\ep_j^{-1} H_j \bigg(\log \frac{x(k)}{K^{1/2}H_j^2}\bigg)^{2/3} = b_0^{-1/8} \Cl_j\big(x(k)\big)\,.
\end{align*}
From \eqref{eqn:frakB-bound-barClj} and the above, we have the following for $\mathcal{T}$ and $C_1$ sufficiently large: 
\[
	\Cl_j\big(x(k+1)\big) - \fB_j(k) \geq (b_0^{-1/8} - b_0^{-2/15}) \Cl_j(x(k)) \geq D H_j \bigg(\log \frac{x(k)-x(k+1)}{K^{1/2}H_j^2}\bigg)^{2/3}\,.
\]
Substituting into \eqref{eqn:Term1-1} and applying \cref{maxbd} with $I = [x(k+1), x(k)]$ and $R= D$, we find
\begin{align}\label{eqn:Term1-final}
	(\mathrm{I})_k \leq C\exp\bigg(-cD^{3/2} \log \frac{x(k)-x(k+1)}{K^{1/2}H_j^2} \bigg) \leq C\exp\bigg(-cK^{3/2} \log \frac{|I(k)|}{K^{1/2}H_j^2} \bigg)\,.
\end{align}
The term $(\mathrm{III})_{\overline{k}}$ is bounded similarly. Following the steps leading to \eqref{eqn:Term1-1}, raise the floor to $\Cl_{j+1}(x(\bar{k}))$, then lower the walk and the floor by $\Cl_{j+1}(x(\bar{k}))$:
\begin{align*}
	(\mathrm{III})_{\overline{k}} \leq \P_{I(\bar{k});0}^{\lambda_j ; \fB_j(\bar{k}) - \Cl_{j+1}(x(\bar{k})), \fB_j(\bar{k}) - \Cl_{j+1}(x(\bar{k}))} \bigg(\max_{x\in I(\bar{k})} X(x) > \Cl_j\big(x(\bar{k})\big) - \Cl_{j+1}\big( x(\bar{k})\big) \bigg)\,.
\end{align*}
From \eqref{eqn:frakB-bound-barClj},  \eqref{eqn:ceiling-relations}, and $x(\bar{k}) = |I(\bar{k})|/2$, we find the following for $\mathcal{T}$ and $C_1$ large depending on $b_0$: 
\begin{multline*}
	\Cl_j\big(x(\bar{k})\big) - \fB_j(\bar{k}) 
	\geq (1- b_0^{-2/15}) \Cl_j\big(x(\bar{k})\big)
	\\
	\geq (1- b_0^{-2/15}) K \ep_j^{-1} H_j  \bigg(\log \frac{x(\bar{k})}{K^{1/2}H_j^2} \bigg)^{2/3} 
	\geq D H_j \bigg(\log \frac{|I(\bar{k})|}{D^{1/2}H_j^2} \bigg)^{2/3}\,.
\end{multline*}
The last two displays and \cref{maxbd} with $I = I(\bar{k})$ and $R= D$ yield
\begin{align}\label{eqn:Term3}
	(\mathrm{III})_{\overline{k}} \leq C \exp\bigg(-cK^{3/2} \log \frac{|I(\bar{k})|}{K^{1/2}H_j^2} \bigg)\,.
\end{align}
Substituting \eqref{eqn:I=II}, \eqref{eqn:Term1-final}, and \eqref{eqn:Term3} into \eqref{eqn:filling-in-mesh-1}, we find 
\begin{multline*}
	\P_{\mathcal{I}_{j};\Cl_{j+1}}^{\lambda_{j};w_{j}^*, w_{j}^*}\Big(\exists x \in \mathcal{I}_j : X(x) > \Cl_j(x) \,,\, \forall k \in \llbracket 1, \bar{k}\rrbracket : X\big(\!-x(k)\big) \vee X\big(x(k)\big) \leq \fB_j(k)  \Big) \\
	\leq C \sum_{k=0}^{\bar{k}} \exp\bigg(-cK^{3/2} \log \frac{|I(k)|}{K^{1/2}H_j^2} \bigg) 
	\leq C\exp\bigg(-cK^{3/2} \log \frac{|I(\bar{k})|}{K^{1/2}H_j^2} \bigg) \leq C \exp\big(-cK^{3/2} \log \ep_j^{-1/2} \big)\,,
\end{multline*}
where the last inequality follows from \eqref{eqn:last-mesh-point}.
\cref{lem:recursive-bluepart} follows from the above and \cref{claim:meshpoints}.
\end{proof}

\subsection{Proof of \cref{prop:max-general-bottom-curves}}
\label{subsec:pf-bottom-curves}
The proof follows a recursive strategy as in the proof of \cref{thm:max-general}: we start from the bottom curve, remove all curves above it (see \cref{rmk:remove-top}), produce a ceiling on this curve with high probability, and then iterate upwards using this ceiling as the floor for the next curve. However, these curves experience extremely large area tilts for which our results do not apply (e.g., \cref{maxbd,lem:dropping-lemma}, see \cref{rk:dropping-lemma-condition-1}). We begin by decreasing the tilt parameters from $(\lambda_{m+1},b)$ to $(\lambda_{m+1},1)$ so that all curves experience the same area tilt. 
Next, we raise both boundary conditions from $\u^{>m}$ and $\v^{>m}$ to $\mathbf{w} = (w_1, \dots, w_{n-m})$, where
\[
	w_j := \max(u_{m+1}, v_{m+1}) + z_j, \qquad z_j := 2n(n-m-j) KH_{m+1} \log \frac{|\mathcal{I}|}{K^{1/2}H_{m+1}^2} \,. 
\]
Altogether, we have
\begin{equation}\label{eq:bottom-curves-mono}
	\P_{n-m,\mathcal{I};0}^{\lambda_{m+1},b;\u^{>m},\v^{>m}}
	\Big(\exists x \in \cI : X_{1}(x) >\mathsf{F}(x) \Big)
	\leq 
	\P_{n-m,\mathcal{I};0}^{\lambda_{m+1},1;\mathbf{w}, \mathbf{w}}\Big(\exists x \in \cI : X_{1}(x) >\mathsf{F}(x) \Big).
\end{equation}
We define, for $j\in \llbracket 1, n-m\rrbracket$, the intervals
\[
 	\widetilde{\mathcal{I}}_j = [y_j^{\mathrm{L}}, y_j^{\mathrm{R}}] := \left(1-\frac{1}{2n}\right)^{n-m-j+1}\, \mathcal{I} 
\]
and the piecewise constant functions $\mathsf{F}_j$ on $\mathcal{I}$ given by
 \begin{equation}\label{eq:Fjdef}
 	\mathsf{F}_{j} := z_{j-1} + \max(u_{m+1},v_{m+1}) \one_{\mathcal{I}\setminus \widetilde{\mathcal{I}}_j}, \qquad j\in\llbracket 1,n-m\rrbracket,
 \end{equation}
and $\mathsf{F}_{n-m+1} := 0$. Note that each $\mathsf{F}_j$ is increasing in $|x|$, 
and we have trivially
  \begin{align}\label{eqn:lowcurves-floor-diff}
 	\mathsf{F}_j(x) - \mathsf{F}_{j+1}(x) \geq KH_{m+1}\log \frac{|\mathcal{I}|}{K^{1/2}H_{m+1}^2} \text{ for all } x\in\mathcal{I}.
 \end{align}
Observe in particular that $\mathsf{F}_1 \leq \mathsf{F}$ for large $N$. This is clear from the definitions inside of $\widetilde{\mathcal{I}}_1 \cap \mathcal{I}_{m+1}$ and outside of $\widetilde{\mathcal{I}}_1 \cup \mathcal{I}_{m+1}$. Since $(1-\frac{1}{2n})^{n-m} \geq (1-\frac{1}{2n})^n \to e^{-1/2} > 1/2$ as $N\to\infty$, we in fact have $\widetilde{\mathcal{I}}_1 \supset \frac{1}{2}\mathcal{I} = \mathcal{I}_{m+1}$ for large $N$ (depending only on $r$, as $n = \lfloor N^\delta \rfloor$ where $\delta = \delta(r)$), which verifies that $\mathsf{F}_1 \leq \mathsf{F}$ on all of $\mathcal{I}$. It follows from \eqref{eq:bottom-curves-mono} and an iterative Gibbs argument with monotonicity, similar to the one leading to \eqref{eqn:ceil-sum}, that
\begin{align}\label{eqn:bottom-curves-recursion}
	\P_{n-m,\mathcal{I};0}^{\lambda_{m+1},b;\u^{>m},\v^{>m}}
	\Big(\exists x \in \cI : X_{1}(x) >\mathsf{F}(x) \Big)
	\leq 
	\sum_{j=1}^{n-m} \P_{\mathcal{I}; \mathsf{F}_{j+1}}^{\lambda_{m+1}; w_j, w_j} \Big( \exists x \in \mathcal{I} : X(x) > \mathsf{F}_j(x)\Big).
\end{align}
Proposition \ref{prop:max-general-bottom-curves} follows from \eqref{eqn:bottom-curves-recursion} and \cref{claim:bottom-curves-recursion} below, which plays the role of \cref{prop:recursive-bound}. \qed

 \begin{claim}\label{claim:bottom-curves-recursion}
 There exist $K_0, N_0>0$ such that under the assumptions of \cref{prop:max-general-bottom-curves}, for $j\in\intint{1,n-m}$,
 \begin{align*}
 	\P_{\mathcal{I}; \mathsf{F}_{j+1}}^{\lambda_{m+1}; w_j, w_j}\Big( \exists x \in \mathcal{I} : X(x) > \mathsf{F}_{j}(x) \Big) \leq Ce^{-c(m+1)^{3/2}K^{3/2}}\,.
 \end{align*}
 
 \begin{proof}
 The argument follows the proof of \cref{prop:recursive-bound}; however, after applying \cref{lem:drop-high-bc}, all that needs to be done is a max bound. Indeed, as in the proof of \cref{lem:rec-global-max-bound}, \cref{maxbd} and \eqref{eqn:lowcurves-floor-diff} yield $X(x)\leq \mathsf{F}_j(x)$ for all $x \in\mathcal{I}\setminus \widetilde{\mathcal{I}}_j$ with probability exceeding 
 \[
 	1-Ce^{-cK^{3/2} (\log (|\mathcal{I}|/K^{1/2}H_{m+1}^2))^{3/2}} \geq 1- Ce^{-c(m+1)^{3/2}K^{3/2} }\,.
 \]
If the minimum in the definition of $\mathsf{F}_j$ on $\widetilde{\mathcal{I}}_j$ in \eqref{eq:Fjdef} is $w_{j-1}$, then we are done.

Otherwise, one may now apply Lemma \ref{lem:drop-high-bc} with $y^{\mathrm{L}} :=y_{j+1}^{\mathrm{L}}$, $y^{\mathrm{R}}:= y_{j+1}^{\mathrm{R}}$, $j:=m+1$, $\ep' := 1/n$ (which from \eqref{def:m} satisfies the condition $\ep'>c_0 \lambda_{m+1}^{-1/6}$ for $\delta$ and $c_0$ sufficiently small), $D=K$, and boundary conditions $w_{j-1}$,  
 to produce a leftmost random point
 $
 	x^{\mathrm{L}} \in (\widetilde{\mathcal{I}}_{j+1} \setminus \widetilde{\mathcal{I}}_j) \cap (-\infty,0)$ and a rightmost 
 	$x^{\mathrm{R}} \in (\widetilde{\mathcal{I}}_{j+1} \setminus \widetilde{\mathcal{I}}_j) \cap (0,\infty)
 $
 such that with probability exceeding $1- Ce^{-c\lambda_{m+1}^{2/3}K^{3/2}}$,  
 \begin{align*}
 	X(x^{\mathrm{L}/\mathrm{R}}) &\leq  nKH_{m+1} + n(n-m-j)KH_{m+1} \bigg(\log\frac{|\mathcal{I}|}{K^{1/2}H_{m+1}^2}\bigg)^{2/3} =: w_j'.
 \end{align*}
(Although \cref{lem:drop-high-bc} is for walks with floor at $0$, note $\mathsf{F}_{j+1}$ is constant on $\widetilde{\mathcal{I}}_j$, and we can shift the floor and the walk down by this value to apply the lemma.) For large $N = N(b_0,r)$, \eqref{def:m} gives  $\lambda_{m+1}^{2/3} > (m+1)^{3/2}$, so this error probability can be absorbed into the previous one. The proof is finished by applying the strong Gibbs property on  $[x^{\mathrm{L}}, x^{\mathrm{R}}]$, monotonicity, and \cref{maxbd} for the walk with law $\P_{[x^{\mathrm{L}}, x^{\mathrm{R}}]; 0}^{\lambda_{m+1};w_j', w_j'}$,  noting $w_j' + KH_{m+1}\log(|\mathcal{I}|/K^{1/2}H_{m+1}^2) \leq \mathsf{F}_j(x)$ for all $x\in\widetilde{\mathcal{I}}_j$.
 \end{proof}
 \end{claim}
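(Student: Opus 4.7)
The plan is to follow the overall architecture of the proof of Proposition~\ref{prop:recursive-bound}, taking advantage of the major simplification that here the ceiling $\mathsf{F}_j$ is only piecewise constant with a single jump (instead of exhibiting the logarithmic equilibrium profile $K H_j (\log |x|/H_j^2)^{2/3}$ in the bulk). Consequently, the elaborate iterative mesh construction of Claim~\ref{claim:meshpoints} will not be needed; essentially a single application of the dropping lemma~\ref{lem:drop-high-bc} combined with the maximum bound Corollary~\ref{maxbd} should suffice. Throughout, I would use the gap \eqref{eqn:lowcurves-floor-diff} (which gives $\mathsf{F}_j-\mathsf{F}_{j+1} \geq K H_{m+1}\log(|\mathcal{I}|/K^{1/2}H_{m+1}^2)$ at every point) as the key quantitative slack.

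First I would handle the exterior $\mathcal{I}\setminus\widetilde{\mathcal{I}}_j$, where the target ceiling $\mathsf{F}_j$ is the large value $z_{j-1} + \max(u_{m+1},v_{m+1})$. Using monotonicity (Lemma~\ref{l.monotonicity}) I would raise the floor from $\mathsf{F}_{j+1}$ to its spatial maximum $\max_{x\in\mathcal{I}}\mathsf{F}_{j+1}(x) = z_j + \max(u_{m+1},v_{m+1})$, then shift both the floor and the walk down by this constant (Remark~\ref{rmk:shift}). The boundary conditions become $w_j - z_j - \max(u_{m+1},v_{m+1}) = 0$, the floor becomes $0$, and the target value $\mathsf{F}_j - (z_j+\max(u_{m+1},v_{m+1}))$ everywhere in $\mathcal{I}$ is at least $z_{j-1}-z_j = 2nKH_{m+1}\log(|\mathcal{I}|/K^{1/2}H_{m+1}^2)$. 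Applying Corollary~\ref{maxbd} with $R=K$ then bounds the global maximum of the shifted walk by this quantity with exceptional probability at most $C\exp(-cK^{3/2}\log(|\mathcal{I}|/K^{1/2}H_{m+1}^2))$; recalling from \eqref{def:m} that $\log(|\mathcal{I}|/K^{1/2}H_{m+1}^2)\geq c(m+1)$, this gives the required bound $Ce^{-c(m+1)^{3/2}K^{3/2}}$.

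Next, if this global argument is not already sufficient to control the interior $\widetilde{\mathcal{I}}_j$ (where $\mathsf{F}_j = z_{j-1}$ is lower), I would fall back to the dropping-plus-max-bound strategy used in Proposition~\ref{prop:recursive-bound}. Namely, apply Lemma~\ref{lem:drop-high-bc} with $\varepsilon' = 1/n$, $y^{\mathrm{L}}/y^{\mathrm{R}} = $ endpoints of $\widetilde{\mathcal{I}}_{j+1}$ (where $\mathsf{F}_{j+1}\equiv z_j$ is constant, so after shifting down by $z_j$ the floor is zero and boundary values are $\max(u_{m+1},v_{m+1}) \leq BN^{1/3}$), and $D$ a sufficiently large constant. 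This produces random drop points $x^{\mathrm{L}}\in\widetilde{\mathcal{I}}_{j+1}\setminus\widetilde{\mathcal{I}}_j$ to the left of $\widetilde{\mathcal{I}}_j$ and $x^{\mathrm{R}}$ symmetrically on the right, at which the walk lies at height at most $DnH_{m+1}+z_j$. Using the strong Gibbs property on the stopping domain $[x^{\mathrm{L}},x^{\mathrm{R}}]\supset\widetilde{\mathcal{I}}_j$, monotonicity to raise the boundary values to this deterministic level, and Corollary~\ref{maxbd} one more time, the max of the walk on $[x^{\mathrm{L}},x^{\mathrm{R}}]$ is bounded by $DnH_{m+1} + z_j + KH_{m+1}\log(|\mathcal{I}|/K^{1/2}H_{m+1}^2) \leq z_{j-1} = \mathsf{F}_j|_{\widetilde{\mathcal{I}}_j}$ with the desired probability, using the factor of $2n$ in the definition of $z_{j-1}-z_j$ to absorb the $DnH_{m+1}$ term.

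The main obstacle I anticipate is verifying the hypotheses of Lemma~\ref{lem:drop-high-bc} given that $\lambda_{m+1}$ is very large and $n$ also diverges with $N$. The condition $\varepsilon'\geq c_0 \lambda_{m+1}^{-1/6}$ with $\varepsilon'=1/n$ requires $n\leq c_0^{-1}\lambda_{m+1}^{1/6}$; since $\lambda_{m+1}\asymp N^{2/3-r}$ by \eqref{def:m} and $n\leq N^\delta$, this forces $\delta\leq (2/3-r)/6$, which is an additional constraint on the parameter $\delta=\delta(r)$. The error probability from the dropping step is $C\exp(-cD^{3/2}\lambda_{m+1}^{2/3})$; since $\lambda_{m+1}^{2/3} \gg (m+1)^{3/2}$ for all $N$ large (the former is polynomial in $N$, the latter polynomial in $\log N$), this is much smaller than the target exceptional probability and can be absorbed. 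One also needs to check that the interval $\widetilde{\mathcal{I}}_{j+1}$ is long enough, namely $|\widetilde{\mathcal{I}}_{j+1}| \geq B^{1/2+\eta}N^{2/3}$, which holds because $|\widetilde{\mathcal{I}}_{j+1}|\geq \tfrac{1}{2}|\mathcal{I}| = LN^{2/3}\geq B^{1/2+\eta}N^{2/3}$ by~\eqref{def:BLK}.
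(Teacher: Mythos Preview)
Your approach is essentially identical to the paper's: handle $\mathcal{I}\setminus\widetilde{\mathcal{I}}_j$ via a global max bound, then for the interior use Lemma~\ref{lem:drop-high-bc} on $\widetilde{\mathcal{I}}_{j+1}$ with $\varepsilon'=1/n$ followed by the strong Gibbs property and Corollary~\ref{maxbd}. The paper proceeds in exactly this order and uses the same ingredients.

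There is one parameter choice you should correct. You take $D$ to be a ``sufficiently large constant'' and then argue that $\exp(-cD^{3/2}\lambda_{m+1}^{2/3})$ is dominated by the target $\exp(-c(m+1)^{3/2}K^{3/2})$ because $\lambda_{m+1}^{2/3}\gg(m+1)^{3/2}$. But this comparison drops the $K^{3/2}$ factor on the right: what you actually need is $D^{3/2}\lambda_{m+1}^{2/3}\gtrsim(m+1)^{3/2}K^{3/2}$. With $D$ fixed and $K$ ranging up to $N^{r}(\log N)^{-2/3}$, this fails once $r$ is larger than $8/39$, since $\lambda_{m+1}^{2/3}\asymp N^{4/9-2r/3}$ while $(m+1)^{3/2}K^{3/2}$ can be as large as $N^{3r/2}(\log N)^{1/2}$. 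The paper instead takes $D=K$, which turns the required inequality into exactly $\lambda_{m+1}^{2/3}\geq c(m+1)^{3/2}$, the comparison you already verified. With $D=K$ the drop level becomes $nKH_{m+1}+z_j$, and the final max-bound step still closes because $z_{j-1}-z_j=2nKH_{m+1}\log(|\mathcal{I}|/K^{1/2}H_{m+1}^2)$ absorbs both the $nKH_{m+1}$ term and the $KH_{m+1}\log(\cdot)$ overshoot from Corollary~\ref{maxbd}. A minor related point: after the exterior step, the boundary values of $X$ at $y_{j+1}^{\mathrm{L}/\mathrm{R}}$ are bounded (w.h.p.) by $\mathsf{F}_j(y_{j+1}^{\mathrm{L}/\mathrm{R}})=w_{j-1}$, not by $\max(u_{m+1},v_{m+1})$ as you write; this does not affect the argument since one only needs them to be $\leq BN^{1/3}$ after shifting, which still holds for $\delta$ small enough.
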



\section{Tightness and Gibbs property}\label{sec:tight}

In this section we prove Propositions \ref{prop:tight} and \ref{prop:gibbs}. The large majority of the work is required to prove tightness, which will occupy the first three subsections below. The proof strategy is similar to that of numerous previous works on Gibbsian line ensembles, in particular \cite{ach24}. In Section \ref{limitgibbs}, we briefly prove the Gibbs property for subsequential limits; this is the only point where the scaling factors of $\sigma^{-2/3}$ in \eqref{def:x_i} are relevant.

\subsection{Tightness} In this subsection and the two following, we assume that the walk increments have variance $\sigma^2 = 1$. This causes no loss in generality, as clearly rescaling by a constant does not affect tightness of the process. Thus we will use the notation $\mathbf{x}^N = (x_1^N,\dots,x_n^N)$,
\[
x_i^N(t) := N^{-1/3} X_i(tN^{2/3}), 
\]
where the law of the line ensemble $\mathbf{X} = (X_1,\dots,X_n)$ will be clear from context. (The domain of definition and number of curves may change throughout.) We will be working with the usual law $\mathbb{P}_{n,N;0}^{a,b;\mathbf{u},\mathbf{v}}$, with $n,\mathbf{u},\mathbf{v}$ as in the statement of Theorem \ref{thm:main}. As the parameters in this law are fixed throughout, for brevity we will just write 
\[
\mathbb{P}_N := \mathbb{P}_{n,N;0}^{a,b;\mathbf{u},\mathbf{v}},
\]
and $\mathbb{E}_N$ for the corresponding expectation.

The proof of tightness will proceed by a standard Gibbs argument using the following lower bound on the partition function.

\begin{proposition}\label{Zlbd}
	Let $X \sim \mathbb{P}_N$. Then for any $\varepsilon>0$, there exists $\eta>0$ such that
	\[
	\mathbb{P}_N \left( Z_{k,TN^{2/3}; X_{k+1}}^{a,b;\mathbf{z},\mathbf{w}} > \eta \right) > 1 - \varepsilon,
	\]
	where $\mathbf{z} = (X_1(-TN^{2/3}), \dots, X_k(-TN^{2/3}))$, $\mathbf{w} = (X_1(TN^{2/3}), \dots, X_k(TN^{2/3}))$, and in the subscript $X_{k+1}$ is implicitly restricted to $[-TN^{2/3},TN^{2/3}]$.
\end{proposition}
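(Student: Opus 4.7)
The plan is to combine the upper tail bounds from Theorem~\ref{thm:max} (to control the random boundary data $\mathbf{z}, \mathbf{w}$ and the floor $X_{k+1}$) with an explicit ``good'' event for the $k$ conditional bridges on which the area tilt is bounded and non-crossing above the floor is essentially guaranteed by placing the bridges in well-separated horizontal bands.

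First I would apply Theorem~\ref{thm:max} to each of the top $k+1$ curves, using the global ceiling version available through Theorem~\ref{thm:max-general} and Remark~\ref{rk:thm6.1-j}, choosing $K$ large enough in terms of $\varepsilon$ and $k$ to produce a deterministic constant $M = M(a, b, k, T, \varepsilon)$ such that the event
\[
\mathsf{E}_M := \Bigl\{ \sup_{t \in [-T, T]} x_i^N(t) \leq M \text{ for all } i = 1, \ldots, k+1 \Bigr\}
\]
has $\mathbb{P}_N$-probability at least $1 - \varepsilon/2$. On $\mathsf{E}_M$ we have $z_i, w_i \in [0, MN^{1/3}]$ for all $i \leq k$ and $0 \leq X_{k+1}(t) \leq MN^{1/3}$ throughout $[-TN^{2/3}, TN^{2/3}]$. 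It then suffices to establish an $N$-uniform lower bound $Z_{k, TN^{2/3}; h}^{a, b; \mathbf{z}', \mathbf{w}'} \geq \eta$ valid for every deterministic boundary data $\mathbf{z}', \mathbf{w}' \in W_0^k$ with coordinates in $[0, MN^{1/3}]$ and every continuous floor $h : [-TN^{2/3}, TN^{2/3}] \to [0, MN^{1/3}]$ compatible with that boundary data.

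Toward such a bound I would consider $k$ independent random walk bridges $Y_1, \ldots, Y_k$ with the prescribed endpoints and introduce target heights $c_i := 4(k - i + 2)M$ (in rescaled units) together with the event $\mathsf{G}$ that (i) on the bulk interval $[-(T-1)N^{2/3}, (T-1)N^{2/3}]$ each $Y_i$ lies in the horizontal band $c_i N^{1/3} + [-MN^{1/3}, MN^{1/3}]$, and (ii) on each of the two boundary intervals of length $N^{2/3}$ the walk $Y_i$ transitions from its endpoint up to this band while staying above $h$ and preserving the strict ordering $Y_1 > Y_2 > \cdots > Y_k > h$. On $\mathsf{G}$, each $Y_i$ has area at most $C(k, M, T) N$, so the area tilt contributes a factor at least $\kappa = \kappa(a, b, k, M, T) > 0$, yielding $Z \geq \kappa \cdot \mathbb{P}(\mathsf{G})$. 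By the invariance principle (Lemma~\ref{l.invar}) and independence across $i$, the probability that each $Y_i$ respects the bulk band condition converges to the positive Brownian bridge probability of staying in a strip of width $2M$ around the target height, bounded below uniformly in endpoints in a compact set.

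The main obstacle will be the two boundary passage intervals of length $N^{2/3}$, where the endpoints $z_i', w_i'$ may be tightly clustered in $[0, MN^{1/3}]$, so that maintaining the strict ordering and the floor condition is delicate. To handle this I plan to further subdivide each passage into an outer ``climb'' sub-interval followed by an inner ``match'' sub-interval, separated by an intermediate time at which the walks are required to lie in small windows of width $N^{1/3}$ centered at $c_i N^{1/3}$. On the inner sub-interval, non-crossing and the floor condition then become automatic from the separation of these windows together with standard invariance principle fluctuation estimates. On the outer climb sub-interval, monotonicity (Lemma~\ref{l.monotonicity}) reduces the problem to $k$ independent bridges above a constant floor at $MN^{1/3}$, after which the probability of each bridge reaching its intermediate window while staying above the floor can be lower bounded by a uniform positive constant via the ballot theorem (Theorem~\ref{t.ballot theorem}). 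Combining the three phases will yield $\mathbb{P}(\mathsf{G}) \geq \eta' > 0$ and hence the desired bound.
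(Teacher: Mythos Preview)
Your plan has a genuine gap at the ``outer climb'' step. The quantity you wish to lower bound is
\[
Z_{k,TN^{2/3};h}^{a,b;\mathbf{z}',\mathbf{w}'} = \mathbb{E}_{k,TN^{2/3}}^{\mathbf{z}',\mathbf{w}'}\Bigl[e^{-\text{area}}\,\one_{Y_1\geq\cdots\geq Y_k\geq h \text{ on } [-TN^{2/3},TN^{2/3}]}\Bigr],
\]
and your strategy is to restrict to a tube event $\mathsf G$ on which both the area and the ordering are controlled. The difficulty is that on $\mathsf E_M$ you have only bounded the coordinates of $\mathbf{z}',\mathbf{w}'$ from above; nothing prevents $z_1'=z_2'=\cdots=z_k'=h(-TN^{2/3})$, or the analogous clustering at $TN^{2/3}$. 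In that configuration, on \emph{any} outer sub-interval of macroscopic length $\delta N^{2/3}$ the probability that $k$ independent bridges with common starting point remain ordered is $O(N^{-\alpha})$ for some $\alpha>0$ (by the invariance principle it converges to the probability that $k$ independent Brownian bridges started at a single point remain ordered, which is zero). No disjoint tubes can emanate from a single point, so the tube construction cannot force the mutual ordering there. Your invocation of monotonicity and the ballot theorem only addresses $Y_k\geq h$, not $Y_i\geq Y_{i+1}$; Lemma~\ref{l.monotonicity} compares two \emph{ordered} ensembles and does not convert a non-crossing probability into a product of single-curve floor-avoidance probabilities. Hence the claimed uniform-in-$(\mathbf{z}',\mathbf{w}',h)$ lower bound on $Z$ is false, and the argument cannot close as stated.

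The paper confronts exactly this issue and resolves it differently: rather than lower bounding $Z$ uniformly over all $(\mathbf z',\mathbf w',h)$ with coordinates $\leq MN^{1/3}$, it first proves (Lemma~\ref{UVsep}) that with $\mathbb{P}_N$-probability $>1-\varepsilon/4$ there exist random times $U\in[-2T,-\tfrac32T]$, $V\in[\tfrac32T,2T]$ at which the top $k$ curves \emph{and the floor} are pairwise separated by at least $\eta N^{1/3}$ and $X_{k+1}$ satisfies a one-sided slope bound. This separation is what makes a corridor argument (Lemma~\ref{Ztechlbd}) go through. Establishing the separation is itself nontrivial: it is done by induction on $k$, where the inductive step uses the partition-function lower bound at level $k-1$ together with a size-biasing identity \eqref{RNderiv} that penalizes configurations with small $Z$. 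If you want to salvage a direct approach, you would need to supply an independent proof that under $\mathbb{P}_N$ the random boundary data $\mathbf z,\mathbf w$ and the floor value are separated on scale $N^{1/3}$ with high probability; this is precisely the content that your outline omits.
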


With this result, we are equipped to prove Proposition~\ref{prop:tight} on tightness.

\begin{proof}[Proof of Proposition \ref{prop:tight}] 
	
	We first show that the sequence $\{\mu_N\}_{N\geq 1}$ of laws of $\mathbf{x}^N$, where $X\sim\mathbb{P}_N$, is tight. For $k\in\mathbb{N}$, $T>0$, a function $\mathbf{f} = (f_1,\dots,f_k) : \llbracket 1,k\rrbracket \times[-T,T] \to \mathbb{R}$, and $\delta>0$, define the modulus of continuity
	\[
	w(\mathbf{f},\delta) := \sup_{\substack{s,t\in[-T,T]\\|s-t|<\delta}} \max_{1\leq i\leq k} |f_i(s)-f_i(t)|.
	\]
	By a standard Arzel\`a--Ascoli type argument, see e.g. \cite[Lemma 2.4]{dff}, (and the fact that $X_n$ is nonnegative), it suffices to prove the following two statements:
	\begin{enumerate}
		\item For all $\varepsilon>0$, there exists $M>0$ so that
		\[
		\limsup_{N\to\infty} \mathbb{P}_N \left( x_1^N(0) > M \right) < \varepsilon.
		\]
		
		\item For all $k\in\mathbb{N}$, $T,\varepsilon,\rho>0$, there exists $\delta>0$ so that
		\[
		\limsup_{N\to\infty} \mathbb{P}_N \left(w(\mathbf{x}^N|_{\llbracket 1,k\rrbracket\times[-T,T]}, \delta) > \varepsilon\right) < \rho.
		\]
	\end{enumerate}
	
	Statement (1) follows immediately from Theorem \ref{thm:max}. In the remainder we prove (2). We adopt the notation of Proposition \ref{Zlbd}. We will use the shorthand $\mathbb{P}_N^{k,T}$ for $\mathbb{P}_{k,TN^{2/3};X_{k+1}}^{a,b;\mathbf{z},\mathbf{w}}$, and $Z_N^{k,T}$ for the corresponding partition function. For $\delta,\varepsilon,\eta,M>0$, define the events
	\[
	\mathsf{BadMod}(\delta,\varepsilon) := \left\{w(\mathbf{x}^N|_{\llbracket 1,k\rrbracket\times[-T,T]},\delta) > \varepsilon\right\},  \qquad \mathsf{Fav}(\eta,M):= \left\{Z_N^{k,T} > \eta\right\} \cap \left\{z_1,w_1 \leq MN^{1/3} \right\}.
	\]
	By Proposition \ref{Zlbd} and Theorem \ref{thm:max}, we can choose $\eta,M$ so that
	\begin{equation}\label{Favlbd}
		\limsup_{N\to\infty} \mathbb{P}_{n,N;0}^{a,b;\mathbf{u},\mathbf{v}} (\mathsf{Fav}(\eta,M)) > 1-\rho/2.	\end{equation} 
		Let $\mathcal{F}_{k,T}$ denote the $\sigma$-algebra generated by $\mathbf{z}$, $\mathbf{w}$, and $X_{k+1}^N|_{[-TN^{2/3},TN^{2/3}]}$. By the Gibbs property,
	\begin{equation}
		\begin{split}\label{BadModGibbs}
			\mathbb{P}_N \left( \mathsf{BadMod}(\delta,\varepsilon) \cap \mathsf{Fav}(\eta,M) \right) &= \mathbb{E}_N \left[ \one_{\mathsf{Fav}(\eta,M)}\, \mathbb{E}_N \left[\one_{\mathsf{BadMod}(\delta,\varepsilon)} \mid \mathcal{F}_{k,T} \right] \right]\\
			&= \mathbb{E}_N \left[ \one_{\mathsf{Fav}(\eta,M)}\, \mathbb{P}_N^{k,T} \left(\mathsf{BadMod}(\delta,\varepsilon) \right) \right].
		\end{split}
	\end{equation}
	 On the event $\{Z_N^{k,T} > \eta\} \supset \mathsf{Fav}(\eta,M)$, we have
	\begin{equation}\label{BadModprod}
		\mathbb{P}_N^{k,T} \left(\mathsf{BadMod}(\delta,\varepsilon)\right) \leq \eta^{-1} \prod_{i=1}^k \mathbb{P}_{TN^{2/3}}^{z_i,w_i}\left(w(y^N_i,\delta) > \varepsilon\right),
	\end{equation}
	where $Y_i \sim \mathbb{P}^{z_i,w_i}_{TN^{2/3}}$ are random walk bridges with no area tilt or floor, and $y_i^N(t) = N^{-1/3}Y_i(tN^{2/3})$. We claim that uniformly over the event $\{z_1,w_1 \leq MN^{1/3}\} \supset \mathsf{Fav}(\eta,M)$, each factor on the right of \eqref{BadModprod} can be made less than $\nu := (\eta\rho/2)^{1/k}$ by choosing $\delta$ small and $N$ large enough. Combining this with \eqref{BadModGibbs}, \eqref{BadModprod}, and a union bound with \eqref{Favlbd} implies (2). After shifting vertically and replacing $M$ with $2M$, we can assume $z_i=0$. The proof is then concluded by Lemma \ref{bridgemoc} below.
\end{proof}

\begin{lemma}\label{bridgemoc}
	For any $\nu,\varepsilon,T,M>0$, we can find $N_0\in\mathbb{N}$ and $\delta>0$ such that for all $N\geq N_0$,
	\begin{equation}\label{mocsup}
		\sup_{|z| \leq MN^{1/3}} \mathbb{P}^{0,z}_{TN^{2/3}} \left(w(y^N,\delta) \geq \varepsilon\right) < \nu.
	\end{equation}
\end{lemma}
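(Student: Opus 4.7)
The plan is to prove the uniform modulus-of-continuity bound \eqref{mocsup} by a compactness-plus-contradiction argument that reduces the problem to the continuity of Brownian bridge sample paths, with the reduction accomplished via the invariance principle Lemma \ref{l.invar}. The main obstacle is that a direct invariance principle only yields a bound at each fixed boundary condition $z$; upgrading this to uniformity over the continuum of values $|z| \leq M N^{1/3}$ is the nontrivial point, and compactness of $[-M, M]$ after rescaling is what I would use to circumvent it.

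Suppose for contradiction that \eqref{mocsup} fails. Then there would exist sequences $N_k \to \infty$, $\delta_k \downarrow 0$, and $z_k$ with $|z_k| \leq M N_k^{1/3}$ such that $\mathbb{P}^{0, z_k}_{T N_k^{2/3}}(w(y^{N_k}, \delta_k) \geq \varepsilon) \geq \nu$ for all $k$. Since $\{N_k^{-1/3} z_k\}_{k \geq 1} \subset [-M, M]$, I would pass to a subsequence along which $N_k^{-1/3} z_k \to z_\infty \in [-M, M]$. Then I would apply Lemma \ref{l.invar} with $\ell = -T$, $r = T$, $\alpha = 0$, $\sigma = 1$, and $N$ there replaced by $\tilde N_k := N_k^{2/3}$. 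With these choices, the lemma's spatial rescaling $\tilde N_k^{-1/2}$ matches $N_k^{-1/3}$, its time rescaling $\tilde N_k$ matches $N_k^{2/3}$, and the hypotheses $\tilde N_k^{-1/2}\cdot 0 \to 0$ and $\tilde N_k^{-1/2} z_k = N_k^{-1/3} z_k \to z_\infty$ hold. The conclusion would be that under $\mathbb{P}^{0, z_k}_{T N_k^{2/3}}$, the rescaled process $y^{N_k}$ converges weakly in the uniform topology on $C([-T, T])$ to a standard Brownian bridge $B$ from $(-T, 0)$ to $(T, z_\infty)$.

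Next, I would fix an arbitrary $\delta > 0$. For $k$ large enough that $\delta_k \leq \delta$, the monotonicity of $w(f, \cdot)$ in its second argument gives $w(y^{N_k}, \delta_k) \leq w(y^{N_k}, \delta)$, so $\mathbb{P}^{0, z_k}_{T N_k^{2/3}}(w(y^{N_k}, \delta) \geq \varepsilon) \geq \nu$. The elementary bound $|w(f, \delta) - w(g, \delta)| \leq 2\|f-g\|_\infty$ shows $w(\cdot, \delta)$ is continuous on $C([-T, T])$ in the uniform topology, so $\{f : w(f, \delta) \geq \varepsilon\}$ is closed, and the portmanteau theorem yields $\mathbb{P}(w(B, \delta) \geq \varepsilon) \geq \nu$. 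Letting $\delta \downarrow 0$ and invoking almost sure uniform continuity of $B$ on the compact interval $[-T, T]$ gives $\mathbb{P}(w(B, \delta) \geq \varepsilon) \to 0$, contradicting $\nu > 0$. This would complete the proof; no additional input beyond Lemma \ref{l.invar} and the standard properties of Brownian bridge is required.
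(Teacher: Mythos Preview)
Your proof is correct and follows essentially the same approach as the paper: extract a subsequence by compactness of $[-M,M]$, apply the invariance principle Lemma~\ref{l.invar} to pass to a Brownian bridge, use the portmanteau theorem on the closed set $\{w(\cdot,\delta)\geq\varepsilon\}$, and conclude via almost sure uniform continuity of Brownian bridge paths. The only difference is organizational: the paper first fixes $\delta$ (via the decomposition $B^z = B^0 + \ell^z$, which gives uniformity in $z$ directly) and then runs the subsequence argument for that $\delta$, whereas you wrap everything in a contradiction with $\delta_k\downarrow 0$ and extract the $\delta$-uniformity at the end.
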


\begin{proof}
	
	For each $z\in[-M,M]$, let $B^z$ denote a Brownian bridge on $[-T,T]$ from 0 to $z$. We can choose $\delta>0$ so that $\mathbb{P}(w(B^z,\delta) \geq \varepsilon) < \nu/2$ for all $z\in[-M,M]$. To see this, note that $B^z$ is equal in law to $B^0 + \ell^z$, where $\ell^z$ is the line segment from $(-T,0)$ to $(T,z)$, and $w(B^0+\ell^z,\delta) \leq w(B^0,\delta) + \frac{\delta M}{2T}$. Then take $\delta < \frac{\varepsilon T}{M}$ small enough so that $\mathbb{P}(w(B^0,\delta) \geq  \varepsilon/2) < \nu/2$. This is possible because $B^0$ is a.s. uniformly continuous, so that $w(B^0,\delta)\to 0$ a.s. and hence in probability as $\delta\downarrow 0$.
	
	Next, for each fixed $N$, choose $z_N\in [-MN^{1/3},MN^{1/3}]$ so that $P_N := \mathbb{P}^{0,z_N}_{TN^{2/3}} (w(y^N,\delta) \geq \varepsilon)$ is within $\nu/2$ of the supremum on the left-hand side of \eqref{mocsup}. Then it suffices to show that any subsequential limit as $N\to\infty$ of the sequence $\{P_N\}_{N\geq 1}$ is less than $\nu/2$. Indeed, if the statement of the lemma were to fail then we could find a subsequence of $\{P_N\}$ with all elements at least $\nu/2$, and as the sequence is bounded by 1 this would imply a further subsequential limit of at least $\nu/2$.
	
	Suppose $P$ is a subsequential limit of $\{P_N\}$. Since $z_N N^{-1/3} \in [-M,M]$, we can pass to a further subsequence $\{N_\ell\}_{\ell\geq 1}$ such that $\{z_{N_\ell}N_\ell^{-1/3}\}_{\ell\geq 1}$ converges as $\ell\to\infty$ to some $z\in[-M,M]$, and still $P_{N_\ell} \to P$. Then by the invariance principle Lemma \ref{l.invar}, the law of $y^{N_\ell}$ under $\mathbb{P}^{0,z_{N_\ell}}_{TN_\ell^{2/3}}$ converges as $\ell\to\infty$ to that of $B^z$ above. It is easy to see that the set $\{f \in C([-T,T]) : w(f,\delta) \geq \varepsilon\}$ is closed in the uniform topology, so the portmanteau theorem implies
	\[
	P = \limsup_{\ell\to\infty} P_{N_\ell} \leq \mathbb{P} \left(w(B^z,\delta) \geq \varepsilon\right) < \nu/2.
	\]
	Since $P$ was arbitrary, we are done.	
\end{proof}

\subsection{Partition function lower bound}

In this section we prove Proposition \ref{Zlbd}. The proof strategy follows that of \cite[Section 11]{ach24}, although we modify some of the arguments to avoid using KMT coupling, which would require stronger assumptions on our walk distributions. Throughout this subsection the area tilt parameters $a$ and $b$ will be fixed, so we will omit them from the notation.

We first introduce a family of auxiliary measures to be used in the proof, a trick known as ``size-biasing.'' Fix $k\in\mathbb{N}$, $T>0$, an interval $I = [\ell N^{2/3},rN^{2/3}]$ with $[-\frac{3}{2}TN^{2/3}, \frac{3}{2}TN^{2/3}] \subset I \subset [-2TN^{2/3}, 2TN^{2/3}]$, $\mathbf{x},\mathbf{y}\in\mathbb{A}_k^+$, and a function $h : I \to \mathbb{R}^+$. Let $I_T := \mathbb{Z}\cap I \setminus [-TN^{2/3}, TN^{2/3}]$. We define a measure $\widetilde{\mathbb{P}}_{k,I;h}^{\mathbf{x},\mathbf{y}}$ by taking $k$ independent random walk bridges $\widetilde{X}_1,\dots,\widetilde{X}_k$ on $I$ from $\mathbf{x}$ to $\mathbf{y}$, with Radon--Nikodym derivative proportional to
\[
\exp\left(-\frac{a}{N}\sum_{i=1}^k b^{i-1} \sum_{j\in I_T} \widetilde{X}_i(j) \right) \one_{\widetilde{X}_1(j) > \cdots > \widetilde{X}_k(j) > h(j) \mbox{ for } j\in I_T}.
\]
This is the usual measure but with the interaction and area tilt ``turned off'' on $[-TN^{2/3},TN^{2/3}]$. It is straightforward to observe that the Radon--Nikodym derivative between these two measures takes the form
\begin{equation}\label{RNderiv}
	\frac{\mathrm{d}\mathbb{P}_{k,I;h}^{\mathbf{x},\mathbf{y}}}{\mathrm{d}\widetilde{\mathbb{P}}_{k,I;h}^{\mathbf{x},\mathbf{y}}} = \frac{Z_{k,TN^{2/3};h}^{\widetilde{X}(-TN^{2/3}), \widetilde{X}(TN^{2/3})} }{\widetilde{\mathbb{E}}_{k,I;h}^{\mathbf{x},\mathbf{y}} \big[ Z_{k,TN^{2/3};h}^{\widetilde{X}(-TN^{2/3}), \widetilde{X}(TN^{2/3})} \big]}.
\end{equation}
If it can be shown that the denominator on the righthand side of \eqref{RNderiv} is uniformly positive for all $N$, then the identity \eqref{RNderiv} effectively ``biases'' the measure $\mathbb{P}_{k,I;h}^{\mathbf{x},\mathbf{y}}$ away from events where the partition function is very small, as the Radon--Nikodym derivative is small on such events. Combined with a simple Gibbs argument, this will be sufficient to prove the high-probability lower bound on $Z_N^{k,T}$ of Proposition \ref{Zlbd}.

The following technical lemma, an analogue of \cite[Lemma 11.1]{ach24}, will provide the necessary lower bound on the normalization in \eqref{RNderiv} under some natural separation conditions between neighboring curves and the floor $h$ at the boundary. The proof is illustrated in Figure \ref{fig:corr}.


\begin{lemma}\label{Ztechlbd}
	Fix $k,T,I$ as above, and $P,M,\eta > 0$. Suppose $h : I\to\mathbb{R}$ satisfies
	\begin{align*}
		h(tN^{2/3}) &\leq h(\ell N^{2/3}) + (t-\ell)PN^{1/3} \qquad \mathrm{for} \qquad t \in [\ell,\ell+1],\\
		h(tN^{2/3}) &\leq h(rN^{2/3}) + (r-t)PN^{1/3} \qquad \mathrm{for} \qquad t \in [r-1,r].
	\end{align*}
	Assume $\sup_I h\leq MN^{1/3}$, and $\mathbf{x},\mathbf{y}\in W^n_0$ are such that $\min(x_i - x_{i+1}, y_i-y_{i+1}) \geq \eta N^{1/3}$ for $1\leq i\leq k$ where $x_{k+1} = h(\ell N^{2/3})$, $y_{k+1} = h(rN^{2/3})$. Then there exist $\delta,\varepsilon>0$ and $N_0\in\mathbb{N}$ such that for all $N\geq N_0$,
	\begin{equation}\label{Ztildelbd}
		\widetilde{\mathbb{P}}_{k,I;h}^{\mathbf{x},\mathbf{y}} \left(Z_{k,TN^{2/3};h}^{\widetilde{X}(-TN^{2/3}), \widetilde{X}(TN^{2/3})} > \delta \right) > \varepsilon.
	\end{equation}
\end{lemma}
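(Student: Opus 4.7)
The plan is to identify an event $\mathcal F$ under $\widetilde{\mathbb{P}}^{\mathbf{x},\mathbf{y}}_{k,I;h}$ with positive probability that forces the endpoint vector $(\widetilde X(-TN^{2/3}), \widetilde X(TN^{2/3}))$ into a ``nice'' set on which $Z^{\cdot,\cdot}_{k,TN^{2/3};h}$ admits a direct lower bound. The argument thus decouples into a deterministic lower bound on the central partition function for nice endpoints, together with a $\widetilde{\mathbb{P}}$-probability bound on $\mathcal F$.

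For the deterministic piece, fix $\eta' \in (0, \eta)$ and declare $(\mathbf z, \mathbf w)$ \emph{nice} if its entries lie in the window $[(M+k\eta')N^{1/3}, 2(M+k\eta')N^{1/3}]$ with consecutive entries separated by at least $\eta' N^{1/3}$. For such endpoints, consider the event that each of the $k$ central bridges $X_i$ stays within the $(\eta'/8)N^{1/3}$-tube around the straight-line interpolant from $z_i$ to $w_i$. This event ensures non-crossing, keeps every walk at least $(\eta'/2)N^{1/3}$ above $h$ (using $\sup_I h \le MN^{1/3}$), and bounds the area tilt factor below by $e^{-C(a,b,k,T,M)}$ since all heights are $O(N^{1/3})$ on an interval of length $O(N^{2/3})$. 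The single-bridge tube probability is at least $c_0>0$ uniformly in the admissible endpoints by the invariance principle (\cref{l.invar}), and independence across walks yields $Z^{\mathbf z, \mathbf w}_{k,TN^{2/3};h} \ge \delta_0 := c_0^k e^{-C}$.

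For the probabilistic piece, I would construct $\mathcal F$ on each boundary piece of $I_T$, working curve by curve under $\widetilde{\mathbb{P}}$ using stochastic monotonicity (\cref{l.monotonicity}) and the strong Gibbs property. Conditional on the (random) values $\widetilde X(\pm TN^{2/3})$, the restriction of $\widetilde{\mathbb{P}}$ to each boundary piece of $I_T$ is precisely a standard area-tilted $k$-line ensemble of the form appearing in \cref{sec:recursion}. Choose plateau heights $p_i := (M + (k+i)\eta')N^{1/3}$ and apply the dropping \cref{lem:dropping-lemma} on the length-$N^{2/3}$ subinterval adjacent to $\ell N^{2/3}$ (where the slope hypothesis on $h$ keeps the floor controlled) to produce a random drop-point at which $\widetilde X_i$ lies below $p_i + (\eta'/16)N^{1/3}$, with failure probability $e^{-c}$. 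From the drop-point onward, the one-point bound (\cref{oneptbd}) and the max bound (\cref{maxbd}) pin $\widetilde X_i$ within $(\eta'/16)N^{1/3}$ of $p_i$, guaranteeing that $\widetilde X_i(-TN^{2/3})$ lies in the nice window. Symmetric construction on the right boundary piece and a union bound across curves yield $\widetilde{\mathbb{P}}(\mathcal F) \ge \varepsilon > 0$, and on $\mathcal F$ the endpoint data is nice, so $Z^{\widetilde X(\pm TN^{2/3})}_{k,TN^{2/3};h} \ge \delta_0$, yielding \eqref{Ztildelbd}.

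The main obstacle is handling the potentially very high boundary conditions $\mathbf x, \mathbf y$ (constrained only by the gap condition): under the independent bridge law the walks would not descend to the plateau $p_i$ with positive probability on the short time scale $N^{2/3}$, but under $\widetilde{\mathbb{P}}$ the area tilt on $I_T$ precisely favors this descent, and \cref{lem:dropping-lemma} makes this quantitative. A secondary delicate point is the dependence on the random inner boundary values $\widetilde X(\pm TN^{2/3})$, which I would handle by first proving a crude global max bound (analogous to \cref{lem:rec-global-max-bound}) ensuring $\widetilde X(\pm TN^{2/3})$ are not too large, after which the Gibbs identification with a standard area-tilted ensemble and the machinery of \cref{sec:recursion} apply directly.
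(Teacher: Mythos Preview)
Your overall two-step plan (a deterministic lower bound on $Z$ for ``nice'' endpoint data, plus a positive-$\widetilde{\mathbb P}$-probability event forcing nice endpoints) is the right shape, and your deterministic piece is essentially what the paper does. But your probabilistic piece takes a very different and substantially heavier route than the paper, and it runs into real hypothesis-checking problems.

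The paper's argument is a direct corridor construction on all of $I$. For each $i$ one draws a piecewise-linear profile $\mathrm{Corr}_i$ that starts at $x_i$, rises with slope $(\max(2M,P)+2(k-i))N^{1/3}$ on $[\ell,\ell+1]$, stays constant on $[\ell+1,r-1]$, and descends symmetrically to $y_i$ on $[r-1,r]$. The event $\mathsf{Corr}(\eta)$ that each $\widetilde X_i$ stays within $\tfrac12\eta N^{1/3}$ of $\mathrm{Corr}_i$ has positive probability under the \emph{free} bridge law by the invariance principle (Lemma~\ref{l.invar}) alone; on that event the curves are ordered, stay above $h$ (this is exactly where the slope hypotheses on $h$ enter), and the area tilt is bounded below by a constant. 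So both the $\widetilde{\mathbb P}$-normalization and the inner partition function at $\pm TN^{2/3}$ are bounded below, yielding \eqref{Ztildelbd}. No dropping lemma, no one-point bound, no recursion---just tubes and Donsker.

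Your proposed route through \cref{lem:dropping-lemma}, \cref{oneptbd}, \cref{maxbd}, and the recursive machinery of \cref{sec:recursion} is problematic on this scale. Those tools are calibrated to intervals of length $\gg H_\lambda^2$: the dropping lemma needs $|I|\ge 2\max(u,v)^{1/2}H_\lambda^{3/2}$, and its conclusion has exponent proportional to $|I|/H_\lambda^2$. Here each boundary piece of $I_T$ has length $O(N^{2/3})\asymp H_\lambda^2$ for $\lambda=ab^{i-1}=O(1)$, so the exponent is $O(1)$ and the length hypothesis forces $x_i,y_i=O(N^{1/3})$, which the lemma does not assume. The recursive ceilings of \cref{sec:recursion} similarly require $L\ge(\log N)^{1/3+\gamma}$ (cf.\ \eqref{def:boundary-conditions-thm:max}), i.e., intervals strictly longer than $N^{2/3}$. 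Adapting that machinery to the present diffusive window would be a separate project, whereas the corridor argument sidesteps all of it. (Your instinct that unbounded $\mathbf x,\mathbf y$ are an issue is reasonable---the paper's own proof is somewhat informal on this point and in application $x_1,y_1=O(N^{1/3})$---but the resolution is not the dropping lemma; it is that $\delta,\varepsilon$ may depend on the scale of the boundary data, which suffices for how the lemma is used.)
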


\begin{figure}
	\includegraphics[scale=2.25]{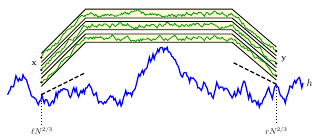}
	\vspace{-0.1 in}
	\caption{An illustration of the proof of Lemma \ref{Ztechlbd}. The two dashed lines at $\ell N^{2/3}$ and $rN^{2/3}$ have slope $PN^{1/3}$ and $-PN^{1/3}$ respectively. The rectangular corridors have width $\eta N^{1/3}$, and $\mathsf{Corr}(\eta)$ is the event that the curves remain within these corridors. The partition function is bounded below on this event. Assuming the slope condition and a global bound on the bottom curve, $\mathsf{Corr}(\eta)$ occurs with positive probability by Brownian estimates.}\label{fig:corr}
\end{figure}

\begin{proof}
	The proof is essentially the same as in \cite{ach24}. For $1\leq i\leq k$ define the piecewise linear functions $\mathrm{Corr}_i : [\ell, r] \to \mathbb{R}$ to have $\mathrm{Corr}_i(\ell) = x_i$, $\mathrm{Corr}_i(r) = y_i$, slope $Q_i := (\max(2M,P)+2(k-i))N^{1/3}$ on $[a,a+1]$ and $-Q$ on $[b-1,b]$, and constant on $[a+1,b-1]$. Let $\mathsf{Corr}(\eta)$ denote the event that $|X_i(tN^{2/3}) - \mathrm{Corr}_i(t)| \leq \frac{1}{2}\eta N^{1/3}$ for all $t\in[\ell,r]$. Note in particular this implies that the curves $X_i$ are uniformly pairwise separated by at least $\frac{1}{2}\eta N^{1/3}$, hence are non-intersecting. It is not hard to see that $\widetilde{\mathbb{P}}_{k,I;h}^{a,b;\mathbf{u},\mathbf{v}}(\mathsf{Corr}(\eta)) > \varepsilon > 0$ for all $N$. The normalization constant for this measure is clearly less than 1, so it can be ignored for a lower bound. Note that on $\mathsf{Corr}(\eta)$, the top curve $X_1$ is uniformly bounded by $Q_1 N^{2/3} = (\max(2M,P) + 2k-2)N^{1/3}$, so the area tilt is bounded below on this event by $\exp(-ab^k\cdot 2T(\max(2M,P)+2k-2))$. It remains to lower bound the probability of independent random walk bridges satisfying the conditions of $\mathsf{Corr}(\eta)$, and this follows from the invariance principle and straightforward Brownian tube estimates. 
	
	It is also easy to see that the partition function in \eqref{Ztildelbd} is bounded below by some $\delta>0$ on the event $\mathsf{Corr}(\eta)$. As already noted, the area tilt is bounded below on this event. The conditions above imply that at $\pm TN^{2/3}$ the curves $X_i$ are pairwise separated by at least $N^{1/3}$, and again by the invariance principle there is a positive probability that $k$ independent random walk bridges with such boundary conditions stay in horizontal tubes of width $\frac{1}{2}N^{1/3}$. This yields a $\delta>0$ in \eqref{Ztildelbd}. 
\end{proof}

The next lemma will establish that the conditions in Lemma \ref{Ztechlbd}  hold for some random interval $I$ with high probability. Fix $k\in\mathbb{N}$ and $P>0$, and define $U \in [-2T, -\frac{3}{2}T]$ to be the first time and $V \in [\frac{3}{2}T, 2T]$ the last time such that
\begin{equation}\label{UVdef}
	\begin{split}
		X_{k+1}(tN^{2/3}) &\leq X_{k+1}(UN^{2/3}) + (t-U)PN^{1/3} \qquad \mbox{for} \quad t \in [U,U+1],\\
		X_{k+1}(tN^{2/3}) &\leq X_{k+1}(VN^{2/3}) + (V-t)PN^{1/3} \qquad \mbox{for} \quad t \in [V-1,V].
	\end{split}
\end{equation}
If such a $U$ does not exist we set it to $-2T$, and if such a $V$ does not exist we set it to $2T$. Note the interval $[UN^{2/3},VN^{2/3}]$ defines a stopping domain. For $\eta>0$ define
\[
\mathsf{Sep}_k(\eta) := \left\{\min_{t\in\{U,V\}} \min_{1\leq i\leq k-1} (X_i(tN^{2/3}) - X_{i+1}(tN^{2/3})) > \eta N^{1/3} \right\}.
\]

\begin{lemma}\label{UVsep}
	Fix $k\in\mathbb{N}$. For any $\varepsilon>0$, there exist $\eta,P>0$ such that
	\[
	\mathbb{P}_N \left(\{-2T<U<V<2T\} \cap \mathsf{Sep}_k(\eta)\right) > 1-\varepsilon/4.
	\]
\end{lemma}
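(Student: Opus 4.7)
The plan decomposes into three parts --- a uniform upper bound on the top $k{+}1$ curves, the existence of good stopping times $U$ and $V$, and a quantitative separation estimate at those times --- using Theorem~\ref{thm:max}, monotonicity (Lemma~\ref{l.monotonicity} and Remark~\ref{rmk:remove-top}), the invariance principle (Lemma~\ref{l.invar}), the strong Gibbs property, and the ballot theorem (Theorem~\ref{t.ballot theorem}). First I would apply Theorem~\ref{thm:max} to $x_1^N$ on $[-2T, 2T]$ (valid for $N$ large enough that $2T < L$) and use monotonicity to propagate the bound to $X_2, \ldots, X_{k+1}$, yielding $M = M(\varepsilon, T, k)$ and an event
\[
\mathsf{Bd}(M) := \Big\{\max_{s \in [-2T, 2T]} \max_{1 \leq j \leq k+1} X_j(sN^{2/3}) \leq MN^{1/3}\Big\}
\]
with $\mathbb{P}_N(\mathsf{Bd}(M)) > 1 - \varepsilon/16$; on $\mathsf{Bd}(M)$ the bottom curve $X_{k+1}$ is confined to $[0, MN^{1/3}]$.

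To produce good $U$ (and symmetrically $V$): in rescaled coordinates the slope condition reads $x_{k+1}^N(t) - x_{k+1}^N(U) \leq (t-U)P$ for $t \in [U, U+1]$. Using Gibbs combined with monotonicity (removing the ceiling provided by $X_k$ and setting the area tilt to zero both stochastically enlarge $X_{k+1}$), the local increments of $x_{k+1}^N$ on a scaled window of size $1$ are dominated by those of a free random walk bridge with endpoint values at most $M$ in scaled units. Combining the invariance principle with discrete ballot-type estimates tailored to the increment distribution (Assumption~\ref{a.misc}), one shows that at any fixed candidate $U$ the slope condition holds with probability at least some $c(P) > 0$ which can be made close to $1$ by taking $P$ large. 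Partitioning $[-2T, -3T/2]$ into $\lfloor T/2\rfloor$ candidate points separated by unit scaled time, and using approximate independence at the $N^{2/3}$ scale coming from the Markov property, a union bound gives $\mathbb{P}_N(\exists \text{ good } U) \geq 1 - (1-c(P))^{\lfloor T/2\rfloor} - \mathbb{P}_N(\neg\mathsf{Bd}(M)) > 1 - \varepsilon/8$ for $P$ large.

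For the separation event $\mathsf{Sep}_k(\eta)$ I take a union bound over the pairs $(i, t) \in \llbracket 1, k-1\rrbracket \times \{U, V\}$, reducing to bounding $\mathbb{P}_N\bigl(X_i(tN^{2/3}) - X_{i+1}(tN^{2/3}) \leq \eta N^{1/3}\bigr)$ for each. Because $U$ and $V$ are $\sigma(X_{k+1})$-measurable, the strong Gibbs property (Definition~\ref{def:gibbs}) applies on a stopping-domain window around $tN^{2/3}$; resampling $(X_i, X_{i+1})$ with $X_{i-1}$ removed as a ceiling via monotonicity reduces the question to a local density bound for the gap between two non-crossing random walk bridges at an interior point. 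By the ballot theorem (Theorem~\ref{t.ballot theorem}) and its Doob $h$-transform interpretation, this conditional gap density near $z=0$ is $O(z/N^{1/3})$, yielding a small-gap probability of $O(\eta^2)$ per pair; choosing $\eta = \eta(\varepsilon, k)$ small enough makes the total union bound less than $\varepsilon/8$.

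The main obstacle is the separation step: the gap density estimate above requires that the window endpoints of the pair $(X_i, X_{i+1})$ are themselves typically separated on the $N^{1/3}$ scale. This is established by a bootstrap --- either iterating Theorem~\ref{thm:max} successively on the $i$th and $(i{+}1)$th levels to control their boundary values, or enlarging the resampling window so that the pair equilibrates to its typical gap before reaching $UN^{2/3}$ --- implemented with care to avoid circularity with the tightness being proven, since related separation statements are often downstream consequences of tightness rather than inputs to it.
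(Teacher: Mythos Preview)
Your plan has the right three-part structure, but two of the three parts diverge from the paper's approach in ways that leave genuine gaps.

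\textbf{Existence of $U,V$.} Your probabilistic argument via ``approximate independence at the $N^{2/3}$ scale'' is both vague and unnecessary. Monotonicity gives stochastic domination of \emph{values}, not \emph{increments}, so your claim that increments of $x_{k+1}^N$ are dominated by those of a free bridge does not follow. The paper instead uses a purely \emph{deterministic} fact (Lemma~\ref{lem:ach}, from \cite{ach24}): any continuous function on $[0,T]$ with total oscillation at most $M$ admits a point $U\in[0,T/2]$ where the slope condition holds with $P=3M/T$. Since Theorem~\ref{thm:max} and the floor at $0$ already confine $x_{k+1}^N$ to $[0,M]$ with high probability, the existence of $U$ and $V$ is immediate from this lemma---no probabilistic input beyond the max bound is needed.

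\textbf{Separation.} You correctly identify the circularity in your pair-resampling approach: to get an $O(\eta^2)$ gap estimate for $(X_i,X_{i+1})$ at an interior point, you need their boundary values to already be separated on scale $N^{1/3}$, which is what you are proving. Your proposed bootstrap (``iterating Theorem~\ref{thm:max}'' or ``enlarging the window'') does not resolve this: Theorem~\ref{thm:max} gives upper bounds on each curve separately, not lower bounds on gaps. The paper breaks the circularity by \emph{induction on $k$}: assuming $\mathsf{Sep}_{k-1}(2\nu)$ holds with high probability, the partition function lower bound (Proposition~\ref{Zlbd}) is available at level $k-1$, which yields a modulus-of-continuity estimate for $X_{k-1}$. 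One then resamples only the \emph{single} curve $X_k$ on a window $[(U-\delta)N^{2/3},(U+\delta)N^{2/3}]$, uses monotonicity to flatten the ceiling $X_{k-1}$ to a constant $(x_{k+1}^N(U)+\nu)N^{1/3}$ and the floor to $-N^{1/3}$ except at the single point $UN^{2/3}$, and then applies the invariance principle. The separation $X_k(UN^{2/3})-X_{k+1}(UN^{2/3})>\eta N^{1/3}$ reduces to a one-point anti-concentration bound for a single area-tilted Brownian bridge, which is elementary. No pairwise gap density or ballot-theorem $h$-transform is needed.
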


Assuming this lemma, we can prove the high-probability lower bound on the partition function.

\begin{proof}[Proof of Proposition \ref{Zlbd}]
	For $M>0$, define the event
	\[
	\mathsf{Fav}_k(\eta,M) := \{-2T < U < V < 2T\} \cap \mathsf{Sep}_k(\eta) \cap \left\{\sup_{t\in [-2T,2T]} X_{k+1}(tN^{2/3}) \leq MN^{1/3}\right\}.
	\]
	Combining Lemma \ref{UVsep} with Theorem \ref{thm:max} shows that for suitable $\eta,P,M$ we have
	\begin{equation}\label{favkem}
		\mathbb{P}_N \left(\mathsf{Fav}_k(\eta,M)\right) > 1 - \varepsilon/2.
	\end{equation}
	Define the interval $J:= [UN^{2/3},VN^{2/3}]$, and let $\mathcal{F}_{k,J}$ denote the $\sigma$-algebra generated by $X_1,\dots,X_k$ outside of $J$ and by $X_j$ for $j>k$. Write $\mathbf{x} := X(UN^{2/3})$, $\mathbf{y} := X(VN^{2/3})$. By the strong Gibbs property and \eqref{favkem},
	\begin{equation}\label{Zetagibbs}
		\begin{split}
			\mathbb{P}_N \left(Z_N^{k,T} \leq \eta\right) &\leq \mathbb{E}_N \left[ \one_{\mathsf{Fav}_k(\eta,M)}  \mathbb{P}_N \left( Z_N^{k,T} \leq \eta \mid \mathcal{F}_{k,J}\right)\right] + \varepsilon/2\\
			&= \mathbb{E}_N \left[ \one_{\mathsf{Fav}_k(\eta,M)}  \mathbb{P}_{k,J;X_{k+1}}^{a,b;\mathbf{x},\mathbf{y}} \left( Z_N^{k,T} \leq \eta \right)\right] + \varepsilon/2.
		\end{split}
	\end{equation}
	Now recall the measures $\widetilde{\mathbb{P}}_{k,J;X_{k+1}}^{\mathbf{x},\mathbf{y}}$ defined at the beginning of this section. With $\widetilde{X} \sim \widetilde{\mathbb{P}}_{k,J;X_{k+1}}^{\mathbf{x},\mathbf{y}}$, we adopt the shorthand $\widetilde{Z}_N^{k,T} := Z_{k,TN^{2/3};h}^{\widetilde{X}(-TN^{2/3}),\widetilde{X}(TN^{2/3})}$. Using \eqref{RNderiv}, the probability inside the last expectation can be rewritten as
	\begin{equation}\label{Zetafrac}
		\mathbb{P}_{k,J;X_{k+1}}^{\mathbf{x},\mathbf{y}} \left( \widetilde{Z}_N^{k,T} \leq \eta \right) = \frac{\widetilde{\mathbb{E}}_{k,J;X_{k+1}}^{\mathbf{x},\mathbf{y}} \big[ \one_{\widetilde{Z}_N^{k,T} \leq \eta} \widetilde{Z}_N^{k,T} \big]}{\widetilde{\mathbb{E}}_{k,J;X_{k+1}}^{\mathbf{x},\mathbf{y}} [ \widetilde{Z}_N^{k,T} ]}.
	\end{equation}
	On the event $\mathsf{Fav}_k(\eta,M)$, the conditions in Lemma \ref{Ztechlbd} are satisfied with $\ell=U$, $r=V$, $h=X_{k+1}$, which implies that there exists $\rho>0$ (namely $\rho = \delta\varepsilon$ from the statement of the lemma) such that
	\[
	\widetilde{\mathbb{E}}_{k,J;X_{k+1}}^{\mathbf{x},\mathbf{y}} [ \widetilde{Z}_N^{k,T} ] \geq \rho.
	\]
	Inserting this into \eqref{Zetafrac} gives an upper bound of $\rho^{-1}\eta$, and so choosing $\eta = \frac{1}{2}\rho\varepsilon$ and combining with \eqref{Zetagibbs} completes the proof.
\end{proof}

\subsection{Proof of Lemma \ref{UVsep}}

We follow a similar strategy to that in \cite[Section 11]{ach24}. The proof of separation will proceed by induction on $k$. We will use the fact, proven in the previous subsection, that if the statement of Lemma \ref{Ztechlbd} is known for index $k$, then the partition function lower bound Proposition \ref{Zlbd} for index $k$ follows. The existence of the random times $U$ and $V$ will follow from a deterministic statement about continuous functions, Lemma 10.1 in \cite{ach24}, which we record here for reference.

\begin{lemma}\label{lem:ach}\cite[Lemma 10.1]{ach24}
	Let $T>0$, $\delta\in(0,\frac{1}{30}T)$, and $f : [0,T] \to\mathbb{R}$ continuous. Suppose $M>0$ is such that $\sup_{x,y\in[0,T]} |f(x)-f(y)| \leq M$. Then with $P = 3MT^{-1}$, there exists $U \in [0,\frac{1}{2}T]$ such that
	\[
	f(x) \leq f(U) + P(x-U) \quad \mathrm{for}\quad x\in [U,U+\delta].
	\]
\end{lemma}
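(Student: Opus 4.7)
The plan is to reduce the statement to a trivial extremum argument by subtracting a linear function from $f$. Specifically, I would define $g:[0,T]\to\mathbb{R}$ by $g(x):=f(x)-Px$. The desired inequality $f(x)\leq f(U)+P(x-U)$ on $[U,U+\delta]$ rearranges exactly to $g(x)\leq g(U)$ on that interval, so it suffices to produce a point $U\in[0,T/2]$ that maximizes $g$ on a neighborhood of the form $[U,U+\delta]$. The natural candidate is to take $U$ to be a global maximizer of $g$ on all of $[0,T]$; such a maximizer exists because $g$ is continuous on a compact interval.

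The whole content of the lemma is then captured in a one-line estimate showing that this global maximizer is automatically pushed into the left portion of the interval. Namely, since $g(U)\geq g(0)=f(0)$, we get
\[
PU \;\leq\; f(U)-f(0) \;\leq\; \sup_{x,y\in[0,T]}|f(x)-f(y)| \;\leq\; M,
\]
so $U\leq M/P=T/3$, which is comfortably within $[0,T/2]$. The hypothesis $\delta<T/30$ then gives $U+\delta\leq T/3+T/30<T$, so the interval $[U,U+\delta]$ lies inside $[0,T]$, and by the defining property of $U$ we have $g(U+y)\leq g(U)$ for every $y\in[0,\delta]$, which is precisely the conclusion.

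There is essentially no obstacle: the only ``trick'' is recognizing that the correct object to extremize is the tilted function $g=f-Px$, after which the constant $P=3M/T$ and the smallness hypothesis on $\delta$ build in more than enough slack (a factor of $2$ for $U\leq T/3$ versus $T/2$, and much more for $\delta$). I would present the proof in a single short paragraph with the two displays above.
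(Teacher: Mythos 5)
Your proof is correct. Note that the paper does not prove this lemma at all; it is quoted verbatim from \cite[Lemma 10.1]{ach24}, so there is no in-paper argument to compare against. Your reduction to maximizing the tilted function $g(x)=f(x)-Px$ over $[0,T]$ is a complete and self-contained proof: the rearrangement $f(x)\leq f(U)+P(x-U)\iff g(x)\leq g(U)$ is exact, the bound $PU\leq f(U)-f(0)\leq M$ gives $U\leq M/P=T/3\leq T/2$, and $U+\delta\leq T/3+T/30<T$ ensures $[U,U+\delta]\subseteq[0,T]$ so that global maximality of $g$ at $U$ applies on the whole interval. In fact your argument shows the hypotheses have substantial slack: any $\delta\leq 2T/3$ would do, and the specific threshold $\delta<\tfrac{1}{30}T$ (which presumably reflects the bookkeeping in the original source's argument) plays no essential role here. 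This is arguably simpler than what one would need if one instead tried a greedy/iterative construction of $U$, and it is perfectly adequate for the way the lemma is used in Section 7.
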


\begin{figure}
	\includegraphics[scale=1.75]{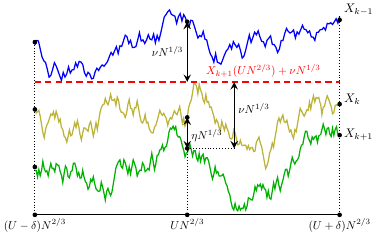}
	\vspace{-0.1in}
	\caption{An illustration of the proof of Lemma \ref{UVsep}. We have conditioned on the blue ceiling $X_{k-1}$, the green floor $X_{k+1}$, and the boundary conditions of the yellow curve $X_k$. The proof amounts to arguing that for a sufficiently small $\eta>0$, at time $UN^{2/3}$ the yellow curve will remain at least some positive distance $\eta N^{1/3}$ above the green floor. On the high-probability event $\mathsf{Fav}_k(\nu,\delta,M)$, the blue curve lies uniformly above the red dashed line at distance $\nu N^{1/3}$ above $X_{k+1}(UN^{2/3})$. Monotonicity allows us to decrease the blue ceiling to the red dashed line, decrease the green floor to height $-N^{1/3}$ on the whole interval except at $UN^{2/3}$, and decrease the boundary conditions of $X_k$ to 0.}\label{fig:sep}
\end{figure}

\begin{proof}[Proof of Lemma \ref{UVsep}] See Figure \ref{fig:sep} for an illustration of the proof. The existence of the random times $U, V$ satisfying the conditions of \eqref{UVdef} (for any fixed $k\geq 0$ and $T>30$, which causes no loss of generality) follows from Lemma \ref{lem:ach}. Namely, we apply this lemma with the random continuous function $f = x_{k+1}^N$. By Theorem \ref{thm:max}, there exists $M>0$ such that with probability at least $1-\varepsilon/2$ we have $\sup_{s,t\in[-2T,2T]} |x_{k+1}^N(s)-x_{k+1}^N(t)| \leq M$. The lemma then guarantees, for $P = 3MT^{-1}$, that
	\begin{equation}\label{UVexist}
		\mathbb{P}_N(-2T < U < V < 2T) \geq 1- \varepsilon/2.
	\end{equation}
	
	We will now show by induction on $k$ that the event $\mathsf{Sep}_k(\eta)$ holds for some $\eta>0$ with probability at least $1-\varepsilon/2$. In fact, we will only use the measurability of the times $U, V$ with respect to $\sigma(X_j : j \geq k+1)$, so this argument will prove high probability separation at any such time.
	
	We may take the base case to be $k=0$, in which case $\mathsf{Sep}_0(\eta)$ holds trivially. Assume for some $k\geq 1$ that we have a $\nu>0$ such that for large $N$,
	\begin{equation}\label{Sepk-12e}
		\mathbb{P}_N \left(\mathsf{Sep}_{k-1}(2\nu)\right) \geq 1-\varepsilon/4.
	\end{equation}
	Then it suffices to prove separation between $X_k$ and $X_{k+1}$ at $U$ with probability at least $1-\varepsilon/8$; the proof of separation at $V$ is analogous. Namely, for $j<k$ and $\eta>0$ define the event
	\[
	\mathsf{Sep}_j^k(\eta) := \left\{X_j(UN^{2/3}) - X_k(UN^{2/3}) > \eta N^{1/3} \right\}.
	\]
	We will show that there exists $\eta>0$ such that for large enough $N$,
	\begin{equation}\label{Sepk,k+1}
		\mathbb{P}_N \left(\neg \mathsf{Sep}_{k+1}^k(\eta)\right) < \varepsilon/8.
	\end{equation}
	For $\delta,\rho>0$, define the event
	\[
	\mathsf{Cont}_{k-1}(\rho,\delta) := \left\{\sup_{|t-U|<\delta} \left(X_{k-1}(tN^{2/3}) - X_{k-1}(UN^{2/3})\right) > -\rho N^{1/3}\right\}.
	\]
	By the inductive hypothesis, the partition function lower bound of Proposition \ref{Zlbd} holds for index $k-1$. Therefore, we can bound below the probability of $\mathsf{Cont}_{k-1}(\rho,\delta)$ for any $\rho$ using the same argument as in \eqref{BadModprod} with Lemma \ref{bridgemoc}. In addition, the inductive hypothesis \eqref{Sepk-12e} implies a lower bound on $\mathsf{Sep}_{k+1}^{k-1}(2\nu)$. Thus, defining
	\[
	\mathsf{Fav}_k(\nu,\delta, M) := \mathsf{Cont}_{k-1}(\nu,\delta) \cap \mathsf{Sep}_{k+1}^{k-1}(2\nu) \cap \left\{\sup_{t\in[-2T,2T]} X_{k-1}(tN^{2/3}) \leq MN^{1/3}\right\},
	\]
	we can find $\nu,\delta,M>0$ so that
	\begin{equation}\label{contk-1}
		\limsup_{N\to\infty} \mathbb{P}_N\left(\mathsf{Fav}_k(\nu,\delta,M)\right) \geq 1- \varepsilon/16.
	\end{equation}
	We will work on this event. Define $\mathcal{F}_{k,U,\delta}$ to be the $\sigma$-algebra generated by $X_j$ for $j\neq k$, and by $X_k$ outside of the interval $I_{U,\delta} := ((U-\delta)N^{2/3}, (U+\delta)N^{2/3})$. By the strong Gibbs property,
	\begin{equation}\label{sepfav}
		\begin{split}
			&\mathbb{P}_N\left(\neg \mathsf{Sep}_{k+1}^k(\eta) \cap \mathsf{Fav}_k(\nu,\delta,M)\right)  = \mathbb{E}_N \left[ \one_{\mathsf{Fav}_k(\nu,\delta,M)} \mathbb{E}_N \left[\one_{\neg\mathsf{Sep}_{k+1}^k(\eta)} \mid \mathcal{F}_{k,U,\delta} \right] \right]\\
			&\qquad\qquad\qquad\qquad\qquad\qquad = \mathbb{E}_N \left[\one_{\mathsf{Fav}_k(\nu,\delta,M)}  \mathbb{P}_{I_{U,\delta};X_{k-1},X_{k+1}}^{ab^{k-1}; w,z} \left( y^N(U) - x_{k+1}^N(U) \leq \eta \right) \right],
		\end{split}
	\end{equation}
	where $w := X_k((U-\delta)N^{2/3})$, $z := X_k((U+\delta)N^{2/3})$, and $y^N(t) := N^{-1/3}Y(tN^{2/3})$ for $Y\sim \mathbb{P}_{I_{U,\delta};X_{k-1},X_{k+1}}^{ab^{k-1}; w,z}$, the single-curve law with floor $X_{k+1}$ and ceiling $X_{k-1}$ as defined in Section \ref{s.monotonicity statement}.
	
	We now upper bound the probability inside the last expectation using monotonicity. Note that if we decrease $w$ and $z$ to height 0, and replace the floor $X_{k+1}$ with a floor at $-N^{1/3}$ except at the point $UN^{2/3}$ where we leave the floor at height $X_{k+1}(UN^{2/3})$, then the above probability increases. In order to prove \eqref{contk-1}, it suffices to show that any subsequential limit of the left-hand side satisfies the desired lower bound. Since $x_{k+1}^N(U) \leq M$ on the event $\mathsf{Fav}_k(\nu,\delta,M)$, we can pass to a further subsequence and assume that $x_{k+1}^N(U) \to x \in [0,M]$ as $N\to\infty$. Furthermore, on $\mathsf{Fav}_k(\nu,\delta,M)$ we have $x_{k-1}^N(U) > x_{k+1}^N(U) + 2\nu$ and $x_{k-1}^N(t) > x_{k-1}(U) - \nu$ for all $t\in [U-\delta,U+\delta]$, which implies that $x_{k-1}(t) > x_{k+1}^N(U) + \nu$ for all $t\in[U-\delta,U+\delta]$. Thus applying monotonicity again, we can replace the ceiling of $x_{k-1}^N$ in the last line of \eqref{sepcond} with a ceiling of $x_{k+1}^N(U)+\nu$. Altogether, the probability inside the expectation in the last line of \eqref{sepfav} is bounded above by
	\begin{equation}\label{sepcond}
		\begin{split}
			&\mathbb{P}_{I_{U,\delta};-N^{1/3}}^{ab^{k-1};0,0} \left( y^N(U) - x_{k+1}^N(U) \leq \eta \, \bigg| \, y^N(U) > x_{k+1}^N(U),\; \sup_{I_{U,\delta}} y^N < x_{k-1}^N(U)+\nu \right)\\
			&\qquad\qquad\qquad\qquad \leq \frac{\mathbb{P}_{I_{U,\delta};-N^{1/3}}^{ab^{k-1};0,0} \left(  x_{k+1}^N(U) < y^N(U) \leq x_{k+1}^N(U) + \eta \right)}{\mathbb{P}_{I_{U,\delta};-N^{1/3}}^{ab^{k-1};0,0} \left(y^N(U) > x_{k+1}^N(U), \; \sup_{I_{U,\delta}} y^N < x_{k-1}^N(U)+\nu \right)}.
		\end{split}
	\end{equation}
	We now apply the invariance principle to bound the last expression. Recall from Definition \ref{def:brownian} that $\mathbf{P}_{\delta;-1}^{ab^{k-1};0,0}$ denotes the law of a Brownian bridge $B$ on $[-\delta,\delta]$ from 0 to 0 with floor at $-1$ and area tilt $\exp(-ab^{k-1}\int_{-\delta}^\delta (B(t)+1)\,\mathrm{d}t)$. Then it is not hard to check using the invariance principle (see Lemma \ref{lem:invar} below) that the last line of \eqref{sepcond} converges as $N\to\infty$ to
	\begin{equation}\label{BBfrac}
		\frac{\mathbf{P}_{\delta;-1}^{ab^{k-1};0,0} \left( x < B(0) \leq x+\eta \right)}{\mathbf{P}_{\delta;-1}^{ab^{k-1};0,0} \left(B(0) > x, \; \sup_{|t|<\delta} B(t) < x+\nu \right)}.
	\end{equation}
	It is straightforward to see that for $\eta>0$ sufficiently small depending on $\nu$, the ratio in \eqref{BBfrac} can be made arbitrarily small uniformly over $x\in[0,M]$. Indeed, note that we can ignore the partition function in both numerator and denominator since it cancels. We then bound the unnormalized denominator below by
	\[
	e^{-2ab^{k-1}\delta(x+\nu+1)} \cdot \mathbf{P}_{\delta}^{0,0} \left(W(0) > x \mbox{ and }-1 < W(t) < x+\nu \mbox{ for all } t\in[-\delta,\delta]\right), 
	\]
	where we recall $\mathbf{P}_{\delta}^{0,0}$ is the law of a Brownian bridge $W$ on $[-\delta,\delta]$ from 0 to 0 with no floor or area tilt. This expression is strictly positive by straightforward Brownian estimates, and is clearly continuous as a function of $x\in[0,M]$, so it is bounded below by a positive constant for all such $x$. On the other hand, the numerator in \eqref{BBfrac} can be bounded above by $\mathbf{P}_{\delta}^{0,0}(x<W(0)\leq x+\eta)$ since the area tilt is at most 1. Under this law, $W(0)$ is a Gaussian with mean 0 and some variance $\sigma^2$ depending only on $\delta$, so this probability is bounded by $C\eta$ for $C>0$ independent of $x$. This allows us to choose $\eta>0$ such that the last line of \eqref{sepcond} is less than $\varepsilon/16$ for large $N$ uniformly over the event $\mathsf{Fav}_k(\nu,\delta,M)$, and combining with \eqref{sepfav} and \eqref{contk-1} implies \eqref{Sepk,k+1}. Along with \eqref{Sepk-12e} this completes the induction, and finally combining with \eqref{UVexist} finishes the proof.
\end{proof}

\subsection{Gibbs property for subsequential limits}\label{limitgibbs}

In this subsection we prove Proposition \ref{prop:gibbs}. The argument is standard, so we will be brief. The idea is to exploit the discrete Gibbs property (see Remark \ref{def:gibbs}) along with the following invariance principle for area-tilted random walk bridges on the diffusive scale. Recall the area-tilted Brownian Gibbs measures $\mathbf{P}_{k,I;g,h}^{a,b;\mathbf{u},\mathbf{v}}$ of Definition \ref{def:brownian}.

\begin{lemma}\label{lem:invar}
	Fix $I = [\ell,r]$, $h : I \to \mathbb{R}$, $\mathbf{u}\in W^k_{h(\ell)}$, $\mathbf{v}\in W^k_{h(r)}$. Let $I_N = \sigma^{-2/3}\llbracket \ell N^{2/3}, r N^{2/3}\rrbracket$, and suppose $h_N : I_N\to\mathbb{R}$, $\mathbf{u}^N \in W^k_{h_N(\ell)}$, $\mathbf{v}^N \in W^k_{h_N(r)}$ satisfy $\sigma^{-2/3} N^{-1/3}h_N(t N^{2/3}) \to h(t)$ uniformly in $t\in I$, $\sigma^{-2/3} N^{-1/3}\mathbf{u}^N \to \mathbf{u}$, and $\sigma^{-2/3} N^{-1/3}\mathbf{v}^N \to \mathbf{v}$ as $N\to\infty$. Let $\mathbf{Y}^N \sim \mathbb{P}_{k,I_N;h_N}^{a,b;\mathbf{u}^N,\mathbf{v}^N}$ and $\mathbf{y}^N(t) := \sigma^{-2/3} N^{-1/3}\mathbf{Y}^N(t\sigma^{-2/3} N^{2/3})$ for $t\in I$. Then $\mathbf{y}^N$ converges in law as $N\to\infty$ to $\mathbf{P}_{k,I;h}^{a,b;\mathbf{u},\mathbf{v}}$.
\end{lemma}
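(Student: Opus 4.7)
The plan is to transfer Lemma~\ref{l.invar}---the classical invariance principle for random walk bridges---through the Radon--Nikodym derivative that defines $\mathbb{P}_{k,I_N;h_N}^{a,b;\mathbf u^N,\mathbf v^N}$ from the independent-bridge base measure $\mathbb{P}_{k,I_N}^{\mathbf u^N,\mathbf v^N}$. Applying Lemma~\ref{l.invar} coordinate-wise shows that under the stated rescaling the base measure converges weakly in the uniform topology on $C(I)^k$ to $\mathbf{P}_{k,I}^{\mathbf u,\mathbf v}$, the law of $k$ independent Brownian bridges. Writing the prelimit law as $Z_N^{-1}F_N(\mathbf X^N)$ times this base, where $F_N(\mathbf X) = \exp(-\tfrac{a}{N}\sum_{i=1}^k b^{i-1}\sum_j [X_i(j)-h_N(j)])\cdot \one_{\mathsf{NC}}(\mathbf X)$, reduces the problem to showing that $F_N$ and $Z_N$ converge, in a manner compatible with the weak convergence of the base, to the continuum functional $F$ of Definition~\ref{def:brownian} and to $Z>0$.

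Next I would analyze each factor of $F_N$. For the area-tilt exponent, substituting $X_i(j) = \sigma^{2/3}N^{1/3}x_i^N(j\sigma^{2/3}/N^{2/3})$ converts $\frac{a}{N}\sum_j [X_i(j)-h_N(j)]$ into a Riemann sum with step $\sigma^{2/3}/N^{2/3}$, which converges to $a\int_I[x_i(t)-h(t)]\,\mathrm{d}t$ whenever $\mathbf x^N\to\mathbf x$ and the rescaled floors converge uniformly to $h$; the resulting exponential is bounded by $1$ and continuous in the uniform topology. For the non-crossing indicator, the rescaled event equals the continuum event $\mathsf{NI}$, a closed set whose topological boundary consists of configurations where two adjacent curves or the bottom curve and the floor touch tangentially. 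Under independent Brownian bridges these tangency events have measure zero by standard Gaussian arguments on the pairwise differences, so $\mathsf{NI}$ is a continuity set of the limiting base measure. With these two facts in place, I would use Skorokhod's representation to realize $\mathbf x^N\to\mathbf x$ almost surely; bounded convergence (since $F_N\leq 1$) then gives $F_N(\mathbf X^N)\to F(\mathbf x)$ a.s.\ and $Z_N\to Z>0$, and Portmanteau applied to bounded continuous test functions times $F_N$ concludes the weak convergence of the tilted laws.

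The main obstacle I anticipate is verifying that $\mathsf{NI}$ is a continuity set of the base measure in the presence of degenerate boundary data, for example $u_i=u_{i+1}$ or $u_k=h(\ell)$ (and analogously at $r$). In such cases independent Brownian bridges can coincide with positive probability at the endpoints, potentially placing mass on $\partial\mathsf{NI}$; however, the bridges separate immediately into the interior, so a small-time cutoff near the endpoints reduces the matter to the strict case, and the excluded region is handled by dominated convergence since $F_N\leq 1$. This is precisely the point at which I would adapt the invariance-principle argument from~\cite{ser23}, where the analogous degeneracies are treated robustly.
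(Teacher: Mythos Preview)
Your proposal is correct and follows essentially the same approach as the paper: the paper does not give a self-contained proof but refers to \cite[Lemma~2.1]{ser23} (where precisely this Radon--Nikodym argument via the base invariance principle, Riemann-sum convergence of the area tilt, and continuity-set analysis of the non-crossing event is carried out), noting only the adaptations for general $h$ and $\sigma\neq 1$. In particular, the Riemann-sum computation you wrote for the area tilt is exactly the one the paper highlights as the key step for $\sigma\neq 1$, and your anticipated obstacle (degenerate boundary data) is the same one handled in \cite{ser23}.
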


This is essentially the content of Lemma 2.1 of \cite{ser23}. That lemma is stated for lattice walks with $\sigma=1$, but the proof only uses the invariance principle for lattice random walk bridges from \cite{liggett}, and this can be directly adapted to the non-lattice case using the result of \cite{borisov}; see Lemma \ref{l.invar} with $\alpha=-2/3$. \cite[Lemma 2.1]{ser23} is also stated only for $h_N=0$, but the adaptation to the more general case here is trivial. To deal with $\sigma\neq 1$, the analogue of the area tilt calculation in \cite{ser23} is that if $y_i^N \to y_i$ uniformly on $I$, then we have the Riemann sum convergence
\[
\frac{1}{N} \sum_{j\in I} Y^N_i(j) = \sigma^{2/3}N^{-2/3}\sum_{j\in I} y_i^N(j\sigma^{2/3}N^{-2/3}) \longrightarrow \int_\ell^r y_i(t)\,\mathrm{d}t,
\] 
so that the limiting area tilt indeed matches that of $\mu_{a,b}$. This is the reason for the choice of the scaling $\sigma^{-2/3}$. We refer the reader to \cite{ser23} for further details of the argument.

\begin{proof}[Proof of Proposition \ref{prop:gibbs}]
	
	Let $\mathbf{x}$ be any weak subsequential limit of $\{\mathbf{x}^N\}_{N\geq 1}$, where we recall $\mathbf{x}^N(t) = N^{-1/3}\mathbf{X}(tN^{2/3})$ with $\mathbf{X}\sim\mathbb{P}_{n,N;0}^{a,b;\mathbf{u},\mathbf{v}}$. Without loss of generality we may assume that in fact $\mathbf{x}^N \to \mathbf{x}$ weakly. By the Skorohod representation theorem, by passing to another probability space we can assume that $\mathbf{x}^N\to\mathbf{x}$ uniformly on compact sets almost surely. 
	
	By a monotone class argument and the definition of conditional expectation, in order to verify \eqref{eqn:bgp} it suffices to prove the following. Fix $I = [\ell,r]$ and $\Sigma = \llbracket 1,k\rrbracket$. Then for any $\Sigma ' = \llbracket 1,k'\rrbracket \supseteq \Sigma$, $I = [\ell',r'] \supseteq I$, and any bounded continuous functional $G : C((\Sigma'\times I') \setminus (\Sigma\times I)) \to \mathbb{R}$,
	\begin{equation}\label{condexp}
		\mathbb{E}[F(\mathbf{x})G(\mathbf{x})] = \mathbb{E}\left[\mathbf{E}_{k,I;x_{k+1}}^{a,b;\mathbf{x}(\ell),\mathbf{x}(r)} [ F] G(\mathbf{x}) \right].
	\end{equation}
	Here we are using the shorthand $F(\mathbf{x}) = F(\mathbf{x}|_{\Sigma\times I})$ and $G(\mathbf{x}) = G(\mathbf{x}|_{(\Sigma'\times I')\setminus(\Sigma\times I)})$; and in the inner expectation on the right we are implicitly restricting $x_{k+1}$ to $I$ and $\mathbf{x}(\ell),\mathbf{x}(r)$ to $\Sigma$.
	
	By the a.s. uniform convergence $\mathbf{x}^N \to \mathbf{x}$ and the dominated convergence theorem,
	\begin{equation}\label{skorohod}
		\mathbb{E}[F(\mathbf{x})G(\mathbf{x})] = \lim_{N\to\infty} \mathbb{E}[F(\mathbf{x}^N)G(\mathbf{x}^N)].
	\end{equation}
	Now let $J=\llbracket \lceil \ell \sigma^{-2/3} N^{2/3}\rceil , \lfloor r\sigma^{-2/3} N^{2/3}\rfloor \rrbracket$, and let $\mathcal{F}^N_{k,J}$ denote the $\sigma$-algebra generated by $\{X_i(j) : i\notin \Sigma \mbox{ or } j\notin J\}$. Let $\mathbf{Y}\sim \mathbb{P}_{k,J;X_{k+1}}^{a,b;\mathbf{X}(\ell\sigma^{-2/3}  N^{2/3}), \mathbf{X}(r\sigma^{-2/3} N^{2/3})}$ (with shorthand as above) and define $\mathbf{y}^N(t) := \sigma^{-2/3} N^{-1/3}\mathbf{Y}(t\sigma^{-2/3} N^{2/3})$ for $t\in I$. Note that $G(\mathbf{x}^N)$ is $\mathcal{F}^N_{k,J}$-measurable. The tower property and the discrete Gibbs property for $\mathbf{X}$, see Remark \ref{def:gibbs}, then imply that
	\begin{equation}\label{eqgibbs}
		\mathbb{E}[F(\mathbf{x}^N)G(\mathbf{x}^N)] =  \mathbb{E} \left[ \mathbb{E}\left[F(\mathbf{x}^N)\mid \mathcal{F}^N_{k,J}\right] G(\mathbf{x}^N)  \right] =  \mathbb{E} \left[\mathbb{E}[F(\mathbf{y}^N)] G(\mathbf{x}^N)\right].
	\end{equation}
	Again using the a.s. uniform convergence $\mathbf{x}^N \to \mathbf{x}$, Lemma \ref{lem:invar} implies that $\mathbf{y}^N$ converges in law, a.s., to $\mathbf{P}_{k,I;x_{k+1}}^{a,b;\mathbf{x}(\ell),\mathbf{x}(r)}$ as $N\to\infty$, so that $\mathbb{E}[F(\mathbf{y}^N)] \to \mathbf{E}_{k,I;x_{k+1}}^{a,b;\mathbf{x}(\ell),\mathbf{x}(r)}[F]$ a.s. Applying this fact with dominated convergence in \eqref{eqgibbs} yields
	\[
	\lim_{N\to\infty} \mathbb{E}[F(\mathbf{x}^N)G(\mathbf{x}^N)] = \mathbb{E} \left[\mathbf{E}_{k,I;x_{k+1}}^{a,b;\mathbf{x}(\ell),\mathbf{x}(r)}[F] G(\mathbf{x})\right].
	\]
	Combining with \eqref{skorohod} proves \eqref{condexp}.
\end{proof}


\appendix
\section{Stochastic monotonicity}\label{s.monotonicity}

In this section we give the proof of Lemma \ref{l.monotonicity}. The argument is a standard application of Glauber dynamics to construct a monotone coupling of two appropriate Markov chains. In particular, our argument closely follows the one presented in \cite[Appendix B]{wu2019tightness}, while the idea of constructing Markov chains to showcase monotone couplings originates goes back to \cite{corwin2014brownian,corwin2016kpz}. Among the conditions on the random walk bridges stated in Section \ref{s.RWest}, we require here only the convexity of the random walk Hamiltonian $H_{\mathrm{RW}}$, Assumption \ref{a.convex}.

\begin{proof}
	We will first construct two Markov chains on the space of trajectories whose dynamics are coupled such that the pointwise ordering is  preserved between them. This will yield a coupling of the stationary distributions $\nu^{\shortuparrow}$ and $\nu^{\shortdownarrow}$ of the constructed Markov chains which also exhibits the pointwise ordering (assuming the Markov chains converge). We will arrange so that $\nu^* = \P^{a^*, b^*;  \mathbf{u}^*, \mathbf{v}^*}_{n,[0,T]; g^*, h^*}$ for $*\in\{\shortuparrow,\shortdownarrow\}$. We present the argument only in the nonlattice random walk case (2) in Section \ref{s.RWest}; the lattice case (1) is analogous and in fact simpler.
	
	Note that the desired stationary measures $\nu^*$ are laws on $\mathbb{R}^{n|I|}$. So as to work on a finite state space, we first discretize. We will then return to our original state space by taking a further weak limit of the stationary distributions of the discretized chains to obtain the desired laws. 
	
	Our discretization parameter will be $\varepsilon$. Consider the finite space $\mc X_{\varepsilon}:=\varepsilon\Z\cap [-\varepsilon^{-1},\varepsilon^{-1}]$ and the random variable supported on $\mc X_{\varepsilon}$ whose probability mass function at $k\varepsilon$, for $k\in\intint{-\floor{\varepsilon^{-2}}, \floor{\varepsilon^{-2}}}$, is proportional to
	\begin{align}\label{e.discretized pmf}
		\varepsilon\exp\left(-H_{\mrm{RW}}(k\varepsilon)\right);
	\end{align}
	it follows that the normalization constant is of unit order as a function of $\varepsilon$ and, indeed, converges to the normalization $\int_{\mathbb{R}} \exp(-H_{\mathrm{RW}}(x))\,dx$ for $P_{\mathrm{RW},n}$ as $\varepsilon\downarrow 0$. Let $P^{\varepsilon}_{\mrm{RW}, n}$ be the transition probability kernel of $n$ independent random walks whose increment distribution is given by the law described in \eqref{e.discretized pmf} and $P_{\mrm{RW}, n}$ be that of $n$ independent random walks with increment distribution having density proportional to $\exp(-H_{\mrm{RW}}(x))$. So, if $\mathbf{x}^\varepsilon\to \mathbf{x}$ and ${ \mathbf{x}^\varepsilon}'\to \mathbf{x}'$ as $\varepsilon\downarrow0$, then $P^{\varepsilon}_{\mrm{RW},n}(\mathbf{x}^\varepsilon, {\mathbf{x}^{\varepsilon}}') \to P_{\mrm{RW}, n}(\mathbf{x}, \mathbf{x}')$ as $\varepsilon\downarrow 0$.
	
	For $w,z\in\mathcal{X}_\ep$ with $w \geq z$, let us use the notation $\mathcal{X}^n_{\ep;w,z} := \{\mathbf{x} \in (\mathcal{X}_\ep)^n : w \geq x_1 \geq \cdots \geq x_n \geq z\}$. For $g^\varepsilon, h^\varepsilon: I\to \mc X_\varepsilon$, let us further define $\mathcal{X}_{\varepsilon;g^\ep,h^\ep}^{n,I}$ to be the set of trajectories $\mathcal{Q} = (\mathcal{Q}_i(x) : i\in \llbracket 1,n\rrbracket, \, x\in I)$ such that $\mathcal{Q}(x) \in \mathcal{X}_{\ep;g^{\ep}(x),h^{\ep}(x)}^n$ for all $x\in I$. Writing $I = [\ell, r]$, let $\u \in \mathcal{X}^n_{\ep; g^\ep(\ell),h^\ep(\ell)}$, $\v\in \mathcal{X}^n_{\ep;g^\ep(r),h^\ep(r)}$, and consider the law on $\mathcal{X}_{\varepsilon;g^\ep,h^\ep}^{n,I}$ given by the analogue of $\P^{a,b; \mathbf{u}^\varepsilon, \mathbf{v}^\varepsilon}_{n,I; g^\varepsilon, h^{\varepsilon}}$ with the random walk increment distribution defined by \eqref{e.discretized pmf} instead of having density proportional to $\exp(-H_{\mrm{RW}}(x))$; we denote this law by $\P^{a,b; \mathbf{u}^\varepsilon, \mathbf{v}^\varepsilon; \varepsilon}_{n,I; g^\varepsilon, h^\varepsilon}$. It is immediate from the portmanteau theorem that if $(\mathbf{u}^\varepsilon,\mathbf{v}^\varepsilon,g^\varepsilon, h^\varepsilon)\to (\mathbf{u},\mathbf{v},g,h)$ as $\varepsilon\downarrow0$ then this law converges weakly to $\P^{a,b; \mathbf{u},\mathbf{v}}_{n,I; g,h}$, so it suffices to prove the monotone coupling for the discretized laws. For convenience we will drop the $\varepsilon$ superscripts for $\mathbf{u},\mathbf{v},g,h$ in the following.
	We will consider a Glauber-type discrete-time dynamics for the measures $\smash{\P^{a^{\shortdownarrow}, b^{\shortdownarrow}; \mathbf{u}^{\shortdownarrow}, \mathbf{v}^{\shortdownarrow}; \varepsilon}_{n,I; g^{\shortdownarrow}, h^{\shortdownarrow}}}$ and $\smash{\P^{a^{\shortuparrow}, b^{\shortuparrow}; \mathbf{u}^{\shortuparrow}, \mathbf{v}^{\shortuparrow}; \varepsilon}_{n,I; g^{\shortuparrow}, h^{\shortuparrow}}}$. The strategy will be to couple these dynamics across $*\in\{\shortuparrow,\shortdownarrow\}$ by using the same randomness and show that then the desired pointwise ordering is maintained at every time step under the dynamics. Denote the two chains by ${\mc Q}^{\shortuparrow} = ({\mc Q}^{\shortuparrow, t})_{t\in\N}$ and ${\mc Q}^{\shortdownarrow} = ({\mc Q}^{\shortdownarrow, t})_{t\in\N}$, where ${\mc Q}^{*, t}\in \mathcal{X}_{\varepsilon;g^\ep,h^\ep}^{n,I}$ for all $t\in\N$ and $*\in\{\shortuparrow,\shortdownarrow\}$.
	
	The dynamics are as follows. Let $\mc Q^{\shortuparrow,0}$  and $\mc Q^{\shortdownarrow,0}$ be chosen to be the same (arbitrary) deterministic state. Let $(U_{t})_{t\in\N}$ be a collection of \iid random variables distributed uniformly on $[0,1]$. If $I=\llbracket \ell, r\rrbracket$, at every time step $t$ we choose $(K, X, \sigma)\in\intint{1,n}\times\intint{\ell+1,r-1}\times\{\pm 1\}$ uniformly at random and independent of everything else. Define $\widetilde{\mc Q}^{*, t+1}$ for $*\in\{\shortuparrow,\shortdownarrow\}$ by $\widetilde{\mc Q}^{*, t+1}_K(X) = {\mc Q}^{*, t}_K(X) + \varepsilon \sigma$ and $\widetilde{\mc Q}^{*, t+1}_{k}(x) = {\mc Q}^{*, t}_{k}(x)$ for all $(k,x)\neq (K,X)$. Define  $R^{*}_t$ for $*\in\{\shortuparrow,\shortdownarrow\}$ by the following:
	\begin{equation}\label{e.R definition}
		\begin{split}
			R^{*}_t &= \frac{\mathbb{P}_{n,I;g^*,h^*}^{a^*,b^*;\mathbf{u}^*,\mathbf{v}^*;\varepsilon} (\widetilde{\mathcal{Q}}^{*,t+1})}{\mathbb{P}_{n,I;g^*,h^*}^{a^*,b^*;\mathbf{u}^*,\mathbf{v}^*;\varepsilon} (\mathcal{Q}^{*,t})}\\
			&= \frac{\exp\left(-\frac{a^*}{N}\sum_{i=1}^n(b^{*})^{i-1}\mc A(\widetilde{\mc Q}^{*,t+1}_i)\right)}{\exp\left(-\frac{a^*}{N}\sum_{i=1}^n(b^{*})^{i-1}\mc A(\mc Q^{*,t}_i)\right)}\cdot \prod_{j=0}^1
			\frac{P^\varepsilon_{\mrm{RW},n}(\widetilde{\mc Q}^{*,t+1}(X+j-1),\widetilde{\mc Q}^{*,t+1}(X+j))}{P^\varepsilon_{\mrm{RW},n}(\mc Q^{*,t}(X+j-1),\mc Q^{*,t}(X+j))}.
		\end{split}
	\end{equation}
	For $*\in\{\shortuparrow,\shortdownarrow\}$, we set $\mc Q^{*,t+1} = \widetilde{\mc Q}^{*,t+1}$ if both $R^*_{t} \geq U_{t}$ and $\widetilde{\mc Q}^{*,t+1}\in \mathcal{X}_{\varepsilon;g^\ep,h^\ep}^{n,I}$, and $\mc Q^{*,t+1} = \mc Q^{*,t}$ otherwise. 
	It is an easy check that $\P^{a^*, b^*; \mathbf{u}^{*}, \mathbf{v}^{*}; \varepsilon}_{n,I; g^*, h^{*}}$ is stationary for these dynamics, in fact reversible.
	
	Note in the above that the dynamics of the $\shortuparrow$ and $\shortdownarrow$ chains are coupled by using the same uniform random variables for deciding acceptance. We claim that these dynamics are such that $\mc Q^{\shortdownarrow, t}_i(x) \leq \mc Q^{\shortuparrow, t}_i(x)$ for all $i\in\intint{1,N}$, $x\in I$, $t\in\N$. We prove this by induction on $t$. The claim holds at $t=0$ since we adopt the same starting state for both chains; assume it holds for some $t\geq 0$. In going from $t$ to $t+1$, note that at most one location on one curve is changed in each of $\mc Q^{\shortuparrow, t}$ and $\mc Q^{\shortdownarrow, t}$ (and the same site in both, if a change occurs), and the change is by $\pm\varepsilon$. Thus there are two cases where the pointwise ordering could be violated in the transition from $t$ to $t+1$: 
	\begin{enumerate}
		\item $(K,X,\sigma)$ is chosen with $\sigma=+1$, $\mc Q^{\shortuparrow, t}_K(X) = \mc Q^{\shortdownarrow, t}_K(X)$, $R^{\shortuparrow}_{t} < U_{t} \leq R^{\shortdownarrow}_{t}$, and $\widetilde{\mc Q}^{*,t+1}\in \mathcal{X}_{\varepsilon;g^\ep,h^\ep}^{n,I}$ for $*\in\{\shortuparrow,\shortdownarrow\}$, or 
		
		\item $(K,X,\sigma)$ is chosen with $\sigma=-1$, $\mc Q^{\shortuparrow, t}_K(X) = \mc Q^{\shortdownarrow, t}_K(X)$, $R^{\shortdownarrow}_{t} < U_{t} \leq R^{\shortuparrow}_{t}$, and $\widetilde{\mc Q}^{*,t+1}\in \mathcal{X}_{\varepsilon;g^\ep,h^\ep}^{n,I}$ for $*\in\{\shortuparrow,\shortdownarrow\}$.
	\end{enumerate}
	We argue that the first case cannot occur, by showing that $R^{\shortuparrow}_{t}\geq R^{\shortdownarrow}_{t}$ in that case; an analogous argument shows that the second case also cannot occur. This will complete the proof.
	
		We have by the inductive hypothesis that $\mc Q^{\shortdownarrow, t}_{k}(x)\leq \mc Q^{\shortuparrow, t}_{k}(x)$ for all $k,x$. We have to compare $R_t^{\shortuparrow}$ and $R_t^{\shortdownarrow}$ as defined in \eqref{e.R definition}. We start with the second factor there, associated with the random walk. 
		Recall $H_{\mrm{RW}}$ is convex as a function on $\R$, and let $z := \mc Q^{\shortuparrow,t}_K(X) = \mc Q^{\shortdownarrow,t}_K(X)$. We observe that
	\begin{align}
		\MoveEqLeft[8]
		\prod_{j=0}^{1}\frac{P^{\varepsilon}_{\mrm{RW},n}(\widetilde{\mc Q}^{*,t+1}(X+j-1), \widetilde{\mc Q}^{*,t+1}(X+j))}{P^{\varepsilon}_{\mrm{RW},n}(\mc Q^{*,t}(X+j-1), \mc Q^{*,t}(X+j)) }\nonumber\\
		&= \displaystyle\frac{\exp\left(-H_{\mrm{RW}}\bigl(z + \varepsilon-\mc Q^{*,t}_K(X-1)\bigr) -H_{\mrm{RW}}\bigl(\mc Q^{*,t}_K(X+1) -z -\varepsilon\bigr)\right)}{\exp\left(-H_{\mrm{RW}}\bigl(z-\mc Q^{*,t}_K(X-1)\bigr)-H_{\mrm{RW}}\bigl(\mc Q^{*,t}_K(X+1) -z\bigr)\right)}.
		\label{e.rw transition prob ratio}
	\end{align}
	For any convex function $f:\R\to\R$ and for any $z,a,b\in\R$ with $a\geq b$ and $\varepsilon>0$, it holds that
	\begin{equation}\label{e.convexity relations}
		\begin{split}
			-f(z+\varepsilon -a) + f(z-a) &\geq - f(z+\varepsilon -b) + f(z-b) \quad\text{and}\\
			-f(a - z-\varepsilon) + f(a-z) &\geq - f(b-z-\varepsilon) + f(b-z).
		\end{split}
	\end{equation}
	
	Applying this inequality in \eqref{e.rw transition prob ratio} with $f=H_{\mrm{RW}}$, and using the inductive hypothesis that $\mc Q^{\shortuparrow,t}_k(x) \geq \mc Q^{\shortdownarrow,t}_k(x)$ for $x\in\{X-1,X+1\}$, yields that
	\begin{align}\label{e.RW facto ratio}
		\prod_{j=0}^{1}\frac{P^{\varepsilon}_{\mrm{RW},n}(\widetilde{\mc Q}^{\shortuparrow,t+1}(X+j-1), \widetilde{\mc Q}^{\shortuparrow,t+1}(X+j))}{P^{\varepsilon}_{\mrm{RW},n}(\mc Q^{\shortuparrow,t}(X+j-1), \mc Q^{\shortuparrow,t}(X+j)) } \geq 
		\prod_{j=0}^{1}\frac{P^{\varepsilon}_{\mrm{RW},n}(\widetilde{\mc Q}^{\shortdownarrow,t+1}(X+j-1), \widetilde{\mc Q}^{\shortdownarrow,t+1}(X+j))}{P^{\varepsilon}_{\mrm{RW},n}(\mc Q^{\shortdownarrow,t}(X+j-1), \mc Q^{\shortdownarrow,t}(X+j)) }.
	\end{align}
	
	So far we heave dealt with the ratio of the $P^{\varepsilon}_{\mrm{RW}, n}$ factors in \eqref{e.R definition}, and next we turn to the ratio of the area-tilt factors. The conditions of case 1 imply $\widetilde{\mc Q}^{*,t+1}_K(X) = {\mc Q}^{*,t}_K(X) + \varepsilon$ and $\widetilde{\mc Q}^{*,t+1}_k(x) = {\mc Q}^{*,t}_k(x)$ for all $(k,x)\neq(K,X)$. So
	\begin{align*}
		\frac{\exp\left(-\frac{a^*}{N}\sum_{i=1}^n(b^{*})^{i-1}\mc A(\widetilde{\mc Q}^{*,t+1}_i)\right)}{\exp\left(-\frac{a^*}{N}\sum_{i=1}^n(b^{*})^{i-1}\mc A(\mc Q^{*,t}_i)\right)}
		= \exp(-\tfrac{a^*}{N}(b^{*})^{K-1}\varepsilon).
	\end{align*}
	Since $a^{\shortuparrow}\leq a^{\shortdownarrow}$ and $b^{\shortuparrow} \leq b^{\shortdownarrow}$, it holds from the previous display and \eqref{e.RW facto ratio} that $R^{\shortuparrow}_t \geq R^{\shortdownarrow}_t$ in case 1, as desired. This completes the proof.
\end{proof}

\bibliographystyle{alpha}
\bibliography{rwareatilt}

\end{document}